\author{Johannes Schmitt}
\title[Compactification of self-maps space of $\mathbb{CP}^1$ using stable maps]{A compactification of the moduli space of self-maps of $\mathbb{CP}^1$ using stable maps}
\address{Departement Mathematik, ETH Z\"urich, R\"amistrasse 101, 8092 Z\"urich, Switzerland}
\email{johannes.schmitt@math.ethz.ch}
\date{\today}
\newcommand{\dimension}{\textnormal{dim}}
\newcommand{\codim}{\textnormal{codim}}
\newcommand{\C}{\ensuremath{\mathbb{C}}}
\newcommand{\CP}{\ensuremath{\mathbb{CP}}}
\newcommand{\PP}{\ensuremath{\mathbb{P}}}
\renewcommand{\L}{\ensuremath{\mathcal{L}}}
\renewcommand{\dim}{\text{dim}}
\newcommand{\sslash}{/\!\!/}
\newcommand{\todo}[1]{}
\newcommand{\todoOld}[1]{}
\newcommand{\todoAlt}[1]{}
\newcommand{\todoFin}[1]{}
\newcommand{\taumo}[2]{\ensuremath{\tau_{#1}^{#2}}}  
\newcommand{\compomo}{\ensuremath{\mathfrak{c}}}
\newcommand{\comment}[1]{}
\newcommand{\detex}[1]{}  
\newcommand{\mapsfrom}{\mathrel{\reflectbox{\ensuremath{\mapsto}}}}
\newcommand{\Gene}[2]{\ensuremath{\mathfrak{G}_{#1,#2}}}  
\newcommand{\Basi}[2]{\ensuremath{\mathfrak{B}_{#1,#2}}}  
\begin{document}
\begin{abstract}
 We present a new compactification $M(d,n)$ of the moduli space of self-maps of $\mathbb{CP}^1$ of degree $d$ with $n$ markings. It is constructed via GIT from the stable maps moduli space $\overline M_{0,n}(\mathbb{CP}^1 \times \mathbb{CP}^1, (1,d))$. We show that it is the coarse moduli space of a smooth Deligne-Mumford stack and we compute its rational Picard group. Using the recursive boundary structure inherited from the stable maps space, we give an explicit algorithm for computing top-intersection numbers of divisors on $M(d,n)$. We also study the $m$-fold iteration map $M(d,n) \dashrightarrow M(d^m,n)$ and we give a geometric way to extend this rational map to parts of the boundary of $M(d,n)$.
\end{abstract}

 \maketitle

 \newtheoremstyle{test}
  {}
  {}
  {}
  {}
  {\bfseries}
  {.}
  { }
  {}
 
 \theoremstyle{test}
\newtheorem{Def}{Definition}[section]
\newtheorem{Exa}[Def]{Example}
\newtheorem{Rmk}[Def]{Remark}
\newtheorem{Exe}[Def]{Exercise}
\newtheorem{Theo}[Def]{Theorem}
\newtheorem{Lem}[Def]{Lemma}
\newtheorem{Cor}[Def]{Corollary}
\newtheorem{Pro}[Def]{Proposition}
\newtheorem*{Rmk*}{Remark}   
\newtheorem*{Exa*}{Example}   
\newtheorem*{Pro*}{Proposition} 
\newtheorem*{Def*}{Definition}
\newtheorem*{Cor*}{Corollary}
\newtheorem*{Lem*}{Lemma}
\newtheorem*{Theo*}{Theorem}

\section{Introduction}
\subsection{The moduli space of self-maps}
Self-maps of the projective line constitute a rich subject with connections to many branches of mathematics, including, among others, complex and arithmetic dynamics (\cite{MR2193309},\cite{MR2316407}), Hurwitz theory (\cite{hurwitz}) and enumerative geometry (\cite{MR2199225}).

Recall that a degree $d$ morphism $\varphi: \CP^1 \to \CP^1$ is given by two homogeneous polynomials of degree $d$ with no common zeros. A natural parameter space is thus the complement $\text{Rat}_d$ of the resultant hypersurface in $\CP^{2d+1}$. The automorphism group $\text{Aut}(\CP^1) = \text{PGL}_2(\mathbb{C})$ acts on $\text{Rat}_d$ by conjugation and we can form the GIT quotient $M_d = \text{Rat}_d \sslash \text{PGL}_2(\mathbb{C})$. 
%
%
%
This quotient $M_d$ is a moduli space of degree $d$ self-maps $C \to C$ of curves $C$ (abstractly) isomorphic to $\CP^1$.

To study how such maps degenerate in families, when the two defining polynomials obtain common zeroes, it is necessary to compactify $M_d$. 
In \cite{silverman}, Silverman obtains a compactification $M_d^{ss}$ of $M_d$ as a GIT quotient of $\CP^{2d+1}$ by $\text{PGL}_2(\mathbb{C})$. 
The boundary consists of self-maps $\tilde \varphi : C \to C$ of curves $C \cong \CP^1$ with degree $d'<d$ together with an effective divisor $D=\sum_{i} k_i [q_i]$ on $C$ of degree $d-d'$, where we ask that $k_i \leq (d+1)/2$ and moreover $k_i \leq (d-1)/2$ if $q_i$ is a fixed point of $\tilde \varphi$. Such a point $(\tilde \varphi, D)$ is the limit of a family of self-maps $\varphi : C \to C$ where the defining polynomials acquire common zeroes $q_i$ of multiplicities $k_i$.

\subsection{A new compactification}
Building on the work of Silverman, we construct a different compactification $M(d,0)$ (for $d \geq 2$ even), where we allow $C$ to degenerate into a nodal curve. More precisely, the points of $M(d,0)$ correspond to stable self-maps $\varphi: C \to C$ where $C$ is a connected genus $0$ curve with at worst nodal singularities, such that $\varphi$ has image in a unique irreducible component $C_0 \subset C$ and the total degree of $\varphi$ (summed over the components of the domain) is $d$. 
Here $\varphi$ is called stable if
\begin{itemize}
 \item every component $C' \neq C_0$ of $C$ contracted by $\varphi$ contains at least three nodes,
 \item all connected components $C_i$ of the complement $C \setminus C_0^{sm}$ of the smooth points of $C_0$ map with degree at most $(d+1)/2$. If in addition $C_i \cap C_0$ is a fixed point of $\varphi|_{C_0}$, we require the degree to be at most $(d-1)/2$.
\end{itemize}
By allowing the curve $C$ to have $n$ marked smooth points $p_1, \ldots, p_n$, we define the space $M(d,n)$ similarly.

To construct $M(d,n)$, we consider the stable maps moduli space 
\[Y_{d,n}=\overline M_{0,n}(\mathbb{CP}^1 \times \mathbb{CP}^1, (1,d)).\]
It parametrizes maps \[f=(\pi, \varphi): (C;p_1, \ldots, p_n) \to \CP^1 \times \CP^1\] of degree $(1,d)$ from an $n$-marked at worst nodal genus $0$ curve $C$, such that each component of $C$ contracted by $f$ contains at least three special points, i.e. nodes or markings. 
Observe that there is exactly one component $C_0$ of $C$ mapping isomorphically to $\CP^1$ via $\pi$. 

The data of $f$ is equivalent to a degree $d$ self-map 
\[C \xrightarrow{\varphi} \CP^1 \xrightarrow{\pi^{-1}} C_0 \subset C\]
of $C$ with image contained in a single component $C_0$, together with an isomorphism $\pi: C_0 \to \CP^1$. Note that when $C$ is irreducible, we have $C=C_0$ and this identifies $\text{Rat}_d \subset Y_{d,0}$ as the open subset of points $f$ with smooth source curve. We can thus see $Y_{d,0}$ as a compactification of $\text{Rat}_d$.

In order to forget the additional information of the isomorphism $\pi:C_0 \to \CP^1$ above to obtain the space $M(d,n)$, we let $\text{PGL}_2(\mathbb{C})$ act on $Y_{d,n}$ by postcomposing $\pi$ with $\psi \in \text{PGL}_2(\mathbb{C})$. The orbits of $\text{PGL}_2(\mathbb{C})$ parametrize exactly all possible ways to choose the identification $C_0 \cong \CP^1$. Then we can take the quotient using GIT.

A priori, for technical reasons, this GIT quotient only works well for $d$ even (see Lemma \ref{Lem:preimstable} and Corollary \ref{Cor:deven}). However, if we allow ourselves to put nonnegative rational weights $d_i$ on the points $p_i$, which affect the GIT stability condition, we are able to define quotient spaces $M(d|d_1, \ldots, d_n)$ also in the case $d$ odd (provided we have at least one marking). More precisely we ask that there exists $k \in \mathbb{Z}_{\geq 1}$ such that the numbers $\tilde d_i = k d_i$ are integers and such that $k(d+1)+\sum_i \tilde d_i$ is odd. The corresponding tuples $(d|d_1, \ldots, d_n)$ are called admissible. When $d$ is even and all $d_i=0$, we recover the previous description of $M(d,n)$.

\begin{Theo*}[see Corollary \ref{Cor:quotexist2}]
 For $(d|d_1, \ldots, d_n)$ admissible, there exists a GIT quotient $M(d|d_1, \ldots, d_n)$ of $Y_{d,n}$ by $\text{PGL}_2(\C)$ which is a normal projective variety. 
\end{Theo*}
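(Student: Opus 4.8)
The plan is to realise $M(d|d_1,\dots,d_n)$ as a Mumford geometric invariant theory quotient of $Y_{d,n}$; the real content will be the construction of an appropriate $\mathrm{PGL}_2(\C)$-linearised ample line bundle on $Y_{d,n}$ together with the verification that its semistable locus is nonempty, after which projectivity and normality come from the general machinery. First I would record that $Y_{d,n}$ is a good ambient space: since $\CP^1 \times \CP^1$ is convex, the moduli stack of genus $0$ stable maps to it is smooth and irreducible, so its coarse moduli space $Y_{d,n}$ is an irreducible normal projective variety (Fulton--Pandharipande); concretely, $Y_{d,n}$ contains $\mathrm{Rat}_d$ with its $n$ markings as a dense open subset. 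The group $G = \mathrm{PGL}_2(\C)$ is reductive and acts on $Y_{d,n}$ as described above, by postcomposing the degree-$1$ coordinate $\pi$ of $f=(\pi,\varphi)$ with an automorphism of the target $\CP^1$.

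Next I would build the $G$-linearised ample line bundle $L$. It is convenient to pass to the double cover $\mathrm{SL}_2(\C)\to G$ and to remember that an $\mathrm{SL}_2(\C)$-linearised bundle descends to a $G$-linearisation exactly when the central element $-\mathrm{id}$ acts trivially on fibres, i.e.\ when the relevant weights are even. On $Y_{d,n}$ one has at one's disposal a fixed $\mathrm{SL}_2(\C)$-linearised ample class $A$ (with a definite, computable weight along the first $\CP^1$-factor) together with the nef classes $\mathrm{ev}_i^*\OO(\tilde d_i,0)$ pulled back along the evaluation maps $\mathrm{ev}_i\colon Y_{d,n}\to\CP^1\times\CP^1$, where $\tilde d_i = kd_i$ for the integer $k$ supplied by admissibility. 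Choosing the exponents suitably one obtains an ample class $L$ on $Y_{d,n}$ whose total first-factor $\mathrm{SL}_2(\C)$-weight is even; that this can be arranged is precisely the content of the admissibility hypothesis that $k(d+1)+\sum_i\tilde d_i$ be odd, cf.\ Lemma \ref{Lem:preimstable} and Corollary \ref{Cor:deven}. Then $L$ carries a $G$-linearisation and Mumford's theorem produces the projective GIT quotient
\[ M(d|d_1,\dots,d_n)\ :=\ Y_{d,n}^{ss}(L)\sslash G. \]

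It remains to see that the quotient is nonempty (so that ``variety'' is meaningful) and normal. For nonemptiness I would run the Hilbert--Mumford criterion: for a one-parameter subgroup $\lambda$ of $\mathrm{SL}_2(\C)$, diagonalised so that its fixed points on the first $\CP^1$ are $0$ and $\infty$, the limit $\lim_{t\to 0}\lambda(t)\cdot f$ sprouts components of $C$ and concentrates the degree of $\varphi$ over $0$ and $\infty$, and the resulting Hilbert--Mumford weight comes out as a linear combination of $d$, the weights $\tilde d_i$, and the local degrees of $\varphi$ over $0$ and $\infty$; admissibility makes these quantities strictly of the right sign for a point $f\in\mathrm{Rat}_d$ whose markings and critical values are in general position, so that no degree or marking mass can accumulate at a single point of $\CP^1$. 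Hence such $f$ is GIT-stable and the quotient is nonempty --- this is essentially the computation carried out in Lemma \ref{Lem:preimstable}. For normality, a GIT quotient of a normal irreducible variety by a reductive group is again normal and irreducible: locally it is $\Spec$ of an invariant subring $R^G$ with $R$ a normal domain, and any element of $\mathrm{Frac}(R^G)$ integral over $R^G$ is $G$-invariant and integral over $R$, hence lies in $R^G$. Combined with projectivity from the previous step, this shows that $M(d|d_1,\dots,d_n)$ is a normal projective variety.

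The hard part will be the second step: one must exhibit a line bundle that is simultaneously ample, genuinely $\mathrm{PGL}_2(\C)$-linearised (which forces an evenness condition, hence the parity appearing in the definition of admissibility) and has nonempty semistable locus. These three demands pull the numerics in opposite directions, and admissibility of $(d|d_1,\dots,d_n)$ is exactly the arithmetic condition under which all of them can be met at once.
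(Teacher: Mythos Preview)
Your overall scaffold—construct a linearised ample bundle, invoke Mumford's machinery, verify the stable locus is nonempty, and read off normality from the invariant ring—is sound, but you have misidentified the role of admissibility, and your route diverges from the paper's in a way that leaves a real gap.

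The paper does \emph{not} run Hilbert--Mumford directly on $Y_{d,n}$. Instead it uses the $\mathrm{SL}_2$-equivariant morphism
\[
J = j \times (\pi_1\circ\mathrm{ev}_1)\times\cdots\times(\pi_1\circ\mathrm{ev}_n)\colon\ Y_{d,n}\longrightarrow Z_d\times(\PP^1)^n
\]
of Lemma~\ref{Lem:Jconstruction}, whose target is a product of projective spaces carrying the explicit ample bundle $\mathcal{L}_{\textbf{d}^\sim}=\mathcal{O}_{Z_d}(k)\boxtimes\mathcal{O}(\tilde d_1)\boxtimes\cdots\boxtimes\mathcal{O}(\tilde d_n)$. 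The numerical criterion is computed \emph{there} (Lemma~\ref{Lem:semistableadvanced}), and stability is then transferred back to $Y_{d,n}$ via Lemma~\ref{Lem:preimstable}: for any ample $\mathcal{M}$ on $Y_{d,n}$ and $N\gg 0$, the bundle $\mathcal{M}\otimes(J^*\mathcal{L}_{\textbf{d}^\sim})^{\otimes N}$ has (semi)stable locus exactly $J^{-1}\bigl((Z_d\times(\PP^1)^n)^s\bigr)$. The crucial hypothesis of that transfer lemma is that semistable equals stable on the target.

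This is exactly what admissibility delivers, and it is not what you say it is. The parity condition that $k(d+1)+\sum_i\tilde d_i$ be odd forces the right-hand side of the inequality in Lemma~\ref{Lem:semistableadvanced}, after clearing denominators by $k$, to be a half-integer while the left-hand side is an integer; equality is thus impossible and $(Z_d\times(\PP^1)^n)^{ss}=(Z_d\times(\PP^1)^n)^s$. It has nothing to do with descending an $\mathrm{SL}_2$-linearisation to $\mathrm{PGL}_2$: the paper simply works with $\mathrm{SL}_2$ throughout, and since the centre acts trivially on $Y_{d,n}$ the resulting quotient is the same. Corollary~\ref{Cor:deven}, which you cite in support of an ``evenness'' constraint, is in fact the prototype statement $Z_d^{ss}=Z_d^s$ for $d$ even, not a statement about linearisations. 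Likewise, Lemma~\ref{Lem:preimstable} is not a Hilbert--Mumford computation on $Y_{d,n}$ but the comparison lemma that makes the whole reduction to $Z_d\times(\PP^1)^n$ work. Without the correct reading of admissibility your construction does not guarantee the ample bundle you build has the right stable locus, and the argument does not close.
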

For later use we mention here that the natural map $Y_{d,n} \to \overline M_{0,n}$ forgetting the map $f$ and stabilizing the curve $(C;p_1, \ldots, p_n)$ is $\text{PGL}_2(\mathbb{C})$-invariant and thus induces forgetful maps $M(d|d_1, \ldots, d_n) \to \overline M_{0,n}$.

Let us point out some differences between the compactifications $M(d,0)$ and $M_d^{ss}$ of $M_d$. When a map $\varphi \in M_d$ degenerates, instead of introducing a base point $q_i$ and recording its multiplicity $k_i$ as in $M_d^{ss}$, we insert a new component $C'$ of $C$ over the point $q_i$ together with a map $C' \to C$ of degree $k_i$. Thus in this map $C' \to C$ we can hope to preserve information about the behaviour of $\varphi$ around $q_i$ as it degenerates. Also note that the locus of maps $\tilde \varphi$ in $M_d^{ss}$ with a base point of multiplicity $k$ has codimension $2k-1$. On the other hand, the corresponding locus of $\varphi:C \to C$ in $M(d,0)$ having a component $C' \neq C_0$ with degree $k$ under $\varphi$ is a divisor. Thus it seems easier to study such degenerations in $M(d,0)$. 
\subsection{Modular interpretation}
We can also give a rigorous modular interpretation for $M(d|d_1, \ldots, d_n)$. It is the coarse moduli space of a smooth Deligne-Mumford stack $\mathcal{M}(d|d_1, \ldots, d_n)$, which is an open substack of the quotient stack $\overline{ \mathcal{M}}_{0,n}(\CP^1 \times \CP^1,(1,d))/\text{PGL}_2(\C)$. While every single geometric point of this stack can be interpreted as a self-map of an $n$-pointed nodal curve $C$ as described above, this does not generalize well to self-maps of families of nodal curves. Instead, it is necessary to work with two different families of curves.
\begin{Theo*}[see Theorem \ref{Theo:modprop}]
 Given a scheme $S$, the objects of the  stack $\overline{ \mathcal{M}}_{0,n}(\CP^1 \times \CP^1, (1,d))/\text{PGL}_2(\mathbb{C})$ over $S$ are diagrams
\begin{equation*}  
\begin{tikzpicture}
 \draw (0,2) node{$\mathcal{C}$};
 \draw[->] (-0.1,1.7) -- (-0.1,1) node[left]{$\pi$} -- (-0.1,0.3);
 \draw[->] (0.1,0.3) -- (0.1,1) node[right]{$\sigma_i$} -- (0.1,1.7);
 \draw (0,0) node{$S$};
 \draw[->] (0.25,2.05) --(1,2.05) node[above]{$\mu$} -- (1.8,2.05);
 \draw[->] (0.25,1.95) --(1,1.95) node[below]{$\varphi$} -- (1.8,1.95);
 \draw (2,2) node{$\mathcal{\tilde C}$};
 \draw[->] (1.7,1.7) -- (1,1) node[right]{$\tilde \pi$} -- (0.3,0.3);
\end{tikzpicture}
\end{equation*}
where $\pi, \tilde \pi$ are flat, projective families of quasi-stable genus $0$ curves, all geometric fibres of $\tilde \pi$ are isomorphic to $\CP^1$ and the maps $\mu, \varphi$ satisfy $\mu_*([\mathcal{C}_s]) = [\mathcal{\tilde C}_s]$ and $\varphi_*([\mathcal{C}_s]) = d [\mathcal{\tilde C}_s]$ for all geometric points $s \in S$ (plus the stable maps condition for the map $(\mu_s,\phi_s) : (\mathcal{C}_s;\sigma_1(s), \ldots, \sigma_n(s)) \to \mathcal{\tilde C}_s \times \mathcal{\tilde C}_s$).
\end{Theo*}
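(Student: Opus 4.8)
The plan is to unwind the definition of the quotient stack $\overline{\mathcal{M}}_{0,n}(\CP^1 \times \CP^1,(1,d))/\text{PGL}_2(\C)$ as a category fibered in groupoids and then rewrite the resulting data in the form of the stated diagram. Recall that for an action of a group $G$ on a stack $\mathcal{X}$, an object of $\mathcal{X}/G$ over $S$ consists of a principal $G$-bundle $P \to S$ together with a $G$-equivariant map $P \to \mathcal{X}$. Here $\mathcal{X} = \overline{\mathcal{M}}_{0,n}(\CP^1 \times \CP^1,(1,d))$, so a $G$-equivariant map $P \to \mathcal{X}$ is an object of $\mathcal{X}$ over $P$, i.e.\ a family of stable maps $(\mathcal{C}_P; \sigma_{1,P}, \ldots, \sigma_{n,P}) \to \CP^1 \times \CP^1$ over $P$, equipped with descent/equivariance data for the $\text{PGL}_2(\C)$-action, which acts on the target $\CP^1 \times \CP^1$ through the second factor (postcomposition with $\psi$).

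The key translation step is to replace the trivial target $\CP^1$ (the second factor, on which $\text{PGL}_2$ acts) by the $\CP^1$-bundle $\tilde{\pi}: \tilde{\mathcal{C}} \to S$ associated to $P$ via the standard action of $\text{PGL}_2(\C)$ on $\CP^1$. Concretely, $\tilde{\mathcal{C}} = (P \times \CP^1)/\text{PGL}_2(\C)$; this is a flat projective family of genus $0$ curves all of whose geometric fibres are isomorphic to $\CP^1$, and such bundles are exactly the ones classified by $\text{PGL}_2$-torsors. The first $\CP^1$ factor of the target is untouched by the group, so it descends to a constant family which we can identify with the second copy of $\tilde{\mathcal{C}}$ (or, more precisely, the two-step composition $\mathcal{C} \xrightarrow{(\mu,\varphi)} \tilde{\mathcal{C}} \times \tilde{\mathcal{C}}$ absorbs both); the source family $\mathcal{C}_P$, being equipped with equivariance data over $P$, descends to a family $\pi: \mathcal{C} \to S$ with sections $\sigma_i$, and the equivariant stable map $\mathcal{C}_P \to \CP^1 \times \CP^1$ descends to a pair of $S$-morphisms $\mu, \varphi : \mathcal{C} \to \tilde{\mathcal{C}}$. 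Tracking the bidegree $(1,d)$ through this descent gives precisely $\mu_*([\mathcal{C}_s]) = [\tilde{\mathcal{C}}_s]$ and $\varphi_*([\mathcal{C}_s]) = d[\tilde{\mathcal{C}}_s]$, and the stable maps condition is preserved because it is fibrewise and $\text{PGL}_2(\C)$-invariant. For the converse direction, given such a diagram one recovers the torsor $P \to S$ as the relative isomorphism scheme (frame bundle) $\underline{\mathrm{Isom}}_S(\tilde{\mathcal{C}}, S \times \CP^1)$, which is a $\text{PGL}_2(\C)$-torsor since $\tilde\pi$ is a family of curves all isomorphic to $\CP^1$, and pulling back $\mathcal{C}, \mu, \varphi$ to $P$ trivializes $\tilde{\mathcal{C}}$ and produces an honest equivariant stable map to $\CP^1 \times \CP^1$. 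One then checks this is an equivalence of groupoids, naturally in $S$, by verifying that morphisms (Cartesian diagrams of the curve families compatible with $\mu,\varphi$) correspond on both sides.

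The main obstacle I expect is not any single hard theorem but rather being careful about \emph{which} $\CP^1$-bundles arise: the statement asserts that $\tilde\pi$ may be taken to be an arbitrary flat projective family of quasi-stable genus $0$ curves with all geometric fibres $\cong \CP^1$, and one must confirm that every such family is étale-locally (indeed Zariski-locally, by Tsen's theorem / triviality of Brauer classes on curves and the fact that $H^1(S,\text{PGL}_2)$ and $H^1(S,\mathbb{G}_m)$ interact through $\text{Br}(S)$) induced from a $\text{PGL}_2(\C)$-torsor, so that the frame bundle construction genuinely lands in torsors and the equivalence is essentially surjective. Equivalently, one needs that $\underline{\mathrm{Isom}}_S(\tilde{\mathcal{C}}, S \times \CP^1)$ is fppf-locally nonempty, which follows since the fibres are geometrically $\CP^1$ and $\mathrm{Aut}(\CP^1) = \text{PGL}_2(\C)$ is smooth; the remaining bookkeeping — matching the bidegree conventions, the sections, and the stable-maps stability condition on both sides — is routine. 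Finally, the smoothness of the Deligne–Mumford stack $\mathcal{M}(d|d_1,\ldots,d_n)$ and the fact that it is an open substack follow by combining this modular description with the GIT construction and the earlier stability lemmas, but that is the content of the surrounding results rather than of this theorem's proof.
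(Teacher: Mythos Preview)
Your overall strategy matches the paper's: interpret objects of the quotient stack as a $\text{PGL}_2$-torsor $P\to S$ together with an equivariant family of stable maps over $P$, then descend everything to $S$; for the converse, recover the torsor as $\underline{\mathrm{Isom}}_S(\tilde{\mathcal{C}}, S\times\CP^1)$. That skeleton is correct.

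However, there is a genuine error in how you identify the group action. You write that $\text{PGL}_2$ acts on the target $\CP^1\times\CP^1$ ``through the second factor''. In fact the action is the \emph{diagonal} one, $g\cdot(p,q)=(gp,gq)$; this is what corresponds to conjugation of self-maps (see the discussion around Lemma~\ref{Lem:jConstruction} and the explicit formula $g\cdot(f;p_1,\dots,p_n)=(g_*\circ f;p_1,\dots,p_n)$ with $g_*(p,q)=(gp,gq)$). Consequently your claim that ``the first $\CP^1$ factor \dots\ is untouched by the group, so it descends to a constant family'' is false, and the parenthetical attempt to then ``identify'' this constant family with $\tilde{\mathcal{C}}$ cannot work unless the torsor is trivial. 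The correct picture is that \emph{both} factors get twisted by the same torsor, which is exactly why both $\mu$ and $\varphi$ land in the single family $\tilde{\mathcal{C}}$ in the statement. In the paper this is handled by rewriting the map $(f_1,f_2):\mathcal{C}\to\CP^1\times\CP^1$ over $T$ as the pair $(\pi\times f_1,\,\pi\times f_2):\mathcal{C}\to T\times\CP^1$ and then quotienting the single target $T\times\CP^1$ by $G$ to obtain $\tilde{\mathcal{C}}$.

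Two smaller points. First, you do not address why the descended families $\pi,\tilde\pi$ remain \emph{projective}; a priori the quotient by a free $G$-action yields only algebraic spaces, and one needs $G$-linearized relatively ample line bundles to apply descent (the paper builds these from $\omega_\pi$, pullbacks of $\mathcal{O}_{\CP^1}(2)$, and the divisors of the sections). Second, the Tsen/Brauer digression is off target: the base $S$ is arbitrary, not a curve, and the relevant fact is simply that $\tilde\pi$ is smooth with $\CP^1$-fibres, hence has sections \'etale-locally, so $\underline{\mathrm{Isom}}_S(\tilde{\mathcal{C}},S\times\CP^1)$ is an \'etale-locally trivial $\text{PGL}_2$-torsor.
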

Over the points $s$ with $\mathcal{C}_s \cong \CP^1$, the morphism $\mu^{-1} \circ \phi$ becomes a self-map of the family $\mathcal{C}$ of degree $d$ as expected.
While it might seem that the use of two different families of curves no longer allows an interpretation as a self-map, the second family $\tilde C$ will be an important ingredient in defining self-composition and iteration maps on $M(d|d_1, \ldots, d_n)$ in Section \ref{Sect:Iterate}.
\subsection{General properties and the Picard group of \texorpdfstring{$M(d|d_1, \ldots, d_n)$}{M(d|d1, ..., dn)}}
From a result of Levy (\cite{levy}), we conclude that the space $M(d|d_1, \ldots, d_n)$ is rational. As it is the coarse moduli space of smooth Deligne-Mumford stack, it has finite quotient singularities. This allows us to compute its rational Picard group.
\begin{Theo*}[see Theorem \ref{Theo:Generators}, Theorem \ref{Theo:relations} and Corollary \ref{Cor:preBasis}]
 For $d \geq 2$ even and $n \geq 0$, the rational Picard group of $M(d,n)$ is generated by boundary divisors together with the divisor class $\mathcal{H}$ descending from the evaluation class $\text{ev}_1^* \mathcal{O}_{\CP^1\times \CP^1}(1,0)$ on $Y_{d,n}$ for $n=1,2$. The relations between these generators are  generated by the pullbacks of the relations
 \begin{equation*}\sum_{\substack{i,j \in A\\ k,l \in B}} D(A;B) = \sum_{\substack{i,k \in A\\ j,l \in B}} D(A;B) \end{equation*}
 between boundary divisors in  $\overline M_{0,n}$ (for $\{i,j,k,l\} \subset \{1, \ldots, n\}$) under the forgetful map $M(d,n) \to \overline M_{0,n}$. 
\end{Theo*}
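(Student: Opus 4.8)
The plan is to compute $\text{Pic}(M(d,n))_\mathbb{Q}$ directly by excision, using the open locus $U \subseteq M(d,n)$ parametrizing self-maps with smooth (hence irreducible) source curve, rather than first determining the full Picard group of $Y_{d,n}$ and descending. Since $M(d,n)$ is the coarse space of a smooth Deligne--Mumford stack it is normal and $\mathbb{Q}$-factorial, so writing $\partial M(d,n) = \bigcup_D D$ for the union of the boundary divisors we have an exact sequence
\[ 0 \longrightarrow \mathcal{O}^*(U)/\C^* \longrightarrow \bigoplus_{D} \mathbb{Q}\cdot [D] \longrightarrow \text{Pic}(M(d,n))_\mathbb{Q} \longrightarrow \text{Pic}(U)_\mathbb{Q} \longrightarrow 0. \]
A point of $U$ is a degree-$d$ morphism $\CP^1 \to \CP^1$ with $n$ distinct marked points modulo $\text{PGL}_2(\C)$; rigidifying the first projection identifies the smooth-source locus of $Y_{d,n}$ with $\text{Rat}_d \times V$, where $V = (\CP^1)^n \setminus \bigcup_{i<j}\Delta_{ij}$ is the configuration space, so that $U = (\text{Rat}_d \times V)/\text{PGL}_2(\C)$. (This locus is contained in $Y_{d,n}^{s}$ because the stability conditions are vacuous on irreducible curves; here the hypothesis that $d$ is even enters, via Corollary~\ref{Cor:deven}.)

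Since $\text{Rat}_d$ is the complement of the \emph{irreducible} resultant hypersurface in $\CP^{2d+1}$, one has $\mathcal{O}^*(\text{Rat}_d) = \C^*$ and $\text{Pic}(\text{Rat}_d)_\mathbb{Q} = 0$; consequently, by Rosenlicht's lemma, $\mathcal{O}^*(\text{Rat}_d \times V)/\C^* \cong \mathcal{O}^*(V)/\C^*$ — the group of balanced monomials $\prod_{i<j}(p_i-p_j)^{a_{ij}}$ with $\sum_{j\neq i} a_{ij} = 0$ for all $i$ — and $\text{Pic}(\text{Rat}_d \times V)_\mathbb{Q} \cong \text{Pic}(V)_\mathbb{Q}$. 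A direct check shows the balanced monomials are exactly the $\text{PGL}_2(\C)$-invariant units and are pulled back from $M_{0,n} = V/\text{PGL}_2(\C)$. As $\text{PGL}_2(\C)$ is connected with trivial character group and the remaining descent obstructions are torsion, passing to the quotient $U$ preserves rational Picard groups and units (up to $\text{PGL}_2(\C)$-invariants): $\mathcal{O}^*(U)/\C^* \cong \mathcal{O}^*(M_{0,n})/\C^*$ (pulled back along the forgetful map $\text{st}|_U \colon U \to M_{0,n}$) and $\text{Pic}(U)_\mathbb{Q} \cong \text{Pic}(V)_\mathbb{Q}$, which equals $\mathbb{Q}\cdot\mathcal{H}$ — with $\mathcal{H}$ the class of $\text{ev}_1^*\OO_{\CP^1\times\CP^1}(1,0)$ — for $n \in \{1,2\}$ and $0$ otherwise. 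Plugging this into the exact sequence yields Theorem~\ref{Theo:Generators}: $\text{Pic}(M(d,n))_\mathbb{Q}$ is generated by the boundary divisors together with $\mathcal{H}$ in the cases $n=1,2$ (this class lifts from $\text{Pic}(U)_\mathbb{Q}$, being defined already on $Y_{d,n}$).

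For the relations (Theorem~\ref{Theo:relations}), restricting a relation to $U$ annihilates every $[D]$, so no nontrivial relation involves $\mathcal{H}$, and the relations among the $[D]$ are precisely the image of $\mathcal{O}^*(U)/\C^* = \mathcal{O}^*(M_{0,n})/\C^*$. Writing $\text{st}\colon M(d,n)\to\overline M_{0,n}$ for the forgetful map, a unit $h$ on $M_{0,n}$ gives the relation $\text{div}(\text{st}^* h) = \text{st}^*(\text{div}\, h) = 0$, and each boundary divisor $D'$ of $\overline M_{0,n}$ occurring in $\text{div}\, h$ satisfies $\text{st}^* D' = \sum_{\text{st}(D) = D'} m_D [D]$; since, by Keel's presentation, the divisors $\text{div}\, h$ span exactly the span of the relations $\sum_{i,j\in A,\,k,l\in B} D(A;B) = \sum_{i,k\in A,\,j,l\in B} D(A;B)$, and since $\text{st}|_U^*$ is an isomorphism on units, every relation on $M(d,n)$ is the pullback of such a Keel relation (the comparison of the two excision sequences through $\text{st}$ also shows $\text{st}^*$ is injective on rational Picard groups). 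For the basis (Corollary~\ref{Cor:preBasis}) one sorts the boundary divisors $D$ of $M(d,n)$ by their image under $\text{st}$: after forgetting the map and stabilizing, a genus-$0$ tail with at most one marking gets contracted, so $D$ either dominates $\overline M_{0,n}$ or maps onto a unique $D(A;B)$, and then $\text{st}^* D(A;B) = \sum_{\text{st}(D) = D(A;B)} [D]$ with all multiplicities $1$. Since a pulled-back Keel relation constrains only these sums, a basis is given by Keel's basis of $\text{Pic}(\overline M_{0,n})_\mathbb{Q}$ (pulled back), all boundary divisors dominating $\overline M_{0,n}$, all but one of the divisors over each $D(A;B)$, and $\mathcal{H}$ when $n \in \{1,2\}$.

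I expect the main obstacle to be twofold. First, making the descent input precise: that $U$ is genuinely the geometric quotient $(\text{Rat}_d \times V)/\text{PGL}_2(\C)$ sitting inside $Y_{d,n}^{ss}$, that the $\text{PGL}_2(\C)$-descent survives $\otimes\mathbb{Q}$, and — the point that is really new compared to $\overline M_{0,n}$ — that the only rational units on $U$ are the cross-ratios pulled back from $M_{0,n}$; this last fact is what forbids relations beyond the pulled-back Keel ones, and it ultimately rests on the irreducibility of the resultant hypersurface (so that $\text{Rat}_d$ contributes neither Picard classes nor units). Second, on the combinatorial side, the explicit enumeration — using the sharp degree bounds $(d+1)/2$ and $(d-1)/2$ from the stability conditions — of the boundary divisors of $M(d,n)$ that lie over a given $D(A;B)$, which is what makes the basis in Corollary~\ref{Cor:preBasis} fully explicit.
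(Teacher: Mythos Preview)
Your approach is correct and genuinely different from the paper's. The paper works upstairs on $Y_{d,n}$: it constructs explicit one-parameter families $C_{B,k}$ in $Y_{d,n}$ (Definition~\ref{Def:psiCBk}, Proposition~\ref{Pro:CBk}), computes their intersection numbers with the generators, and uses these to establish linear independence of $\Delta_0\cup\Delta_1\cup\Delta_{n-1}\cup\Delta_n$ modulo $\Delta'$ and to force any relation among $\Delta'$ to have coefficients constant in $k$ and symmetric under $A\leftrightarrow B$, hence to be $F^*$ of a relation in $\overline M_{0,n}$; finally it invokes Keel. You instead run the excision sequence with units directly on $M(d,n)$, and the whole argument is reduced to two inputs: (i) $\mathcal O^*(\text{Rat}_d)=\mathbb C^*$ because the resultant hypersurface is irreducible, so via Rosenlicht every unit on the smooth locus $U$ comes from the configuration-space factor and hence from $M_{0,n}$; (ii) Keel's identification of the relations on $\overline M_{0,n}$ with the four-point relations. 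This is more conceptual and avoids constructing any test curves. What the paper's route buys is that the intersection matrix with the $C_{B,k}$ is reused throughout Section~\ref{Sect:Picard} (Proposition~\ref{Pro:iddivisors} and the identification of geometric divisors) and later in Section~\ref{Sect:IntTh}; what your route buys is a cleaner proof of the theorem itself, with the ``no extra relations'' statement explained structurally by the absence of nonconstant units on $\text{Rat}_d$. Two points you should tighten: the assertion that $\text{st}^*D(A;B)$ has all multiplicities $1$ needs a word (the paper gets it from flatness of $F$, Theorem~\ref{Theo:flatforgetful}, and it descends to $M(d,n)$ via the identification of rational Picard groups in Corollary~\ref{Cor:Picidentification}); and the descent step for units, i.e.\ that every unit on $\text{Rat}_d\times V$ is $\text{PGL}_2$-invariant, follows because each diagonal $\Delta_{ij}$ is $\text{PGL}_2$-stable and $\text{PGL}_2$ has no nontrivial characters, but this deserves to be said explicitly rather than folded into ``a direct check''.
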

This solves the case when all $d_i=0$. If some of them are not, we have for $d=0$ an additional generator $\mathcal{G}$ descending from $\text{ev}_1^* \mathcal{O}_{\CP^1\times \CP^1}(0,1)$. On the other hand, we can have additional relations: we need to divide by the classes of all codimension one loci which are not GIT-stable with respect to $(d|d_1, \ldots, d_n)$ (see Corollary \ref{Cor:preBasis}).

The relations above are determined using a method adapted from \cite{pandhaintersect}, which is based on intersecting possible relations with test curves $C_{B,k}$. In Proposition \ref{Pro:iddivisors}, we give an algorithm for computing the class of a divisor $D$ in terms of the generators above from its intersection numbers with such test curves. We also determine the classes of some interesting divisors, such as the locus $D_{i=\text{fix}}$ where the $i$-th marking is a fixed point of the self-map (\ref{Def:fixdiv}). 
\subsection{Iteration maps}
Other geometrically significant divisors can be obtained by studying composition and iteration of maps on the moduli spaces $Y_{d,n}$ and $M(d|d_1, \ldots, d_n)$. We give a geometric interpretation for an extension
\[\compomo : Y_{d_1,0} \times Y_{d_2,0} \dashrightarrow Y_{d_1 d_2,0}\]
of the composition morphism 
\[\text{Rat}_{d_1} \circ \text{Rat}_{d_2} \to \text{Rat}_{d_1 d_2}, (f_1, f_2) \mapsto f_2 \circ f_1\]
to parts of the boundary. This allows us to define an $m$-fold self-composition map
\[\mathfrak{sc}_m : M(d|d_1, \ldots, d_n) \dashrightarrow M(d^m|d_1, \ldots, d_n).\]
Using this map, we define divisors $\text{Per}_m(\lambda) \subset M(d|d_1, \ldots, d_n)$ parametrizing self-maps $\varphi: C \to C$ having an $m$-periodic point $p$ with a given multiplier $d \varphi^{\circ m}|_p = \lambda$. We relate these cycles to similar varieties studied by Milnor (\cite{milnorquaddyn}).
\subsection{Intersection theory of \texorpdfstring{$M(d|d_1, \ldots, d_n)$}{M(d|d1, ..., dn)}}
Another advantage of using the stable maps spaces for the compactification is that their intersection theory is very well-understood and many of the results carry over to the quotients $M(d|d_1, \ldots, d_n)$. In Section \ref{Sect:psi} we define $\psi$-classes and show that their intersection numbers satisfy recursions corresponding to the usual String, Dilaton and Divisor equations. But we are also able to compute general top-intersections of divisors $D_1, \ldots, D_{2d-2+n}$ on $M(d|d_1, \ldots, d_n)$. When $D_1$ is a boundary divisor, it is the quotient of a fibre product of smaller-dimensional moduli spaces. In Section \ref{Sect:EquivarProduct} we look at the equivariant intersection theory of such products in some more generality. Using these results, we can determine the intersection of the restrictions of the divisors $D_2, \ldots, D_{2d-2+n}$ on $D_1$. This leads to the following result.
\begin{Theo*}[see Section \ref{Sect:algo}]
 There is a recursive algorithm computing top-intersection numbers of divisors on $M(d|d_1, \ldots, d_n)$, using as input the base cases $d=0, n=2,3$ and $d=1,n=1$ together with the torus-equivariant intersection theory of divisors on $\overline M_{0,n'}(\CP^1,d')$.
\end{Theo*}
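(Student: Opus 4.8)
The plan is to prove the statement by induction on the pair $(d,n)$, ordered lexicographically with $d$ as the primary parameter; the base cases $d=0,\ n\in\{2,3\}$ and $d=1,\ n=1$ are spaces of dimension $\le 1$ in which the finitely many intersection numbers are computed by hand (a point; a $\CP^1$ with an explicitly determined polarization; and a rational curve whose $\mathcal{H}$- and $\mathcal{G}$-degrees one reads off directly), and the evident cases in which $M(d|d_1,\dots,d_n)$ has dimension $\le 0$ are disposed of at once. For the inductive step, given divisors $D_1,\dots,D_{2d-2+n}$ I first apply the algorithm of Proposition \ref{Pro:iddivisors} together with the description of $\text{Pic}_{\mathbb{Q}}\big(M(d|d_1,\dots,d_n)\big)$ from Theorem \ref{Theo:Generators} and Corollary \ref{Cor:preBasis} to rewrite each $D_i$ as a $\mathbb{Q}$-linear combination of the standard generators --- boundary divisors, together with the classes $\mathcal{H}$ and $\mathcal{G}$ in the low-$n$ cases where those are not themselves already combinations of boundary divisors. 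By multilinearity it suffices to compute the top-intersection when every $D_i$ is such a generator. If none of the $D_i$ is a boundary divisor, then each $D_i\in\{\mathcal{H},\mathcal{G}\}$ and (since these classes descend from classes pulled back via $\text{ev}_1$ from a single $\CP^1$-factor of $\CP^1\times\CP^1$) one has $\mathcal{H}^2=\mathcal{G}^2=0$ already on $Y_{d,n}$, hence on the quotient, so the intersection number vanishes unless $2d-2+n\le 2$, and a short check shows the surviving cases are among the base cases. Otherwise some $D_i$, say $D_1$, is a boundary divisor and
\[
D_1\cdot D_2\cdots D_{2d-2+n}=\int_{D_1}\iota^*D_2\cdots\iota^*D_{2d-2+n},\qquad \iota\colon D_1\hookrightarrow M(d|d_1,\dots,d_n).
\]

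The geometric core of the argument is the structure of $D_1$. Inherited from the boundary of the stable maps space $Y_{d,n}$ and compatible with the GIT quotient, $D_1$ is a finite quotient of a fibre product: the source curve develops a node separating the distinguished component $C_0$, carrying the markings in a subset $A$ and mapping with bidegree $(1,d')$, from a $\pi$-contracted tail carrying the markings in the complementary set $B$ and mapping with bidegree $(0,d'')$, where $d'+d''=d$. The first factor is a space of type $M(d'|\dots)$ with one extra marking $\star$ at the node; the second factor is $\overline M_{0,B\cup\{\star\}}(\CP^1,d'')$, the $\pi$-contracted tail together with the residual $\text{PGL}_2$-action postcomposing its degree-$d''$ map; and the fibre product imposes that the two branches of the node have the same image. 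One then expresses $\iota^*\mathcal{H}$, $\iota^*\mathcal{G}$ and $\iota^*$ of the remaining boundary divisors through the corresponding classes on the two factors, and each repeated occurrence of $D_1$ through the cotangent-line classes $\iota^*D_1=-\psi_\star-\psi_\star'$ at the node, which are again divisors on the two factors. This turns $\int_{D_1}(\cdots)$ into a sum of integrals over $M(d'|\dots)$ of divisor- and $\psi$-classes, coupled through the node to integrals over $\overline M_{0,B\cup\{\star\}}(\CP^1,d'')$ of divisor- and $\psi$-classes. Crucially, either $d'<d$, or $d'=d$ and the tail carries $|B|\ge 2$ markings so that the $M(d'|\dots)$-factor has fewer than $n$ markings; in both cases $(d',n')$ strictly precedes $(d,n)$.

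To evaluate such a coupled integral I would invoke the equivariant intersection theory of fibre products of moduli spaces set up in Section \ref{Sect:EquivarProduct}. Replacing $\text{PGL}_2$ by a maximal torus $T\cong\C^*$ and localizing, the fibre product over the node unwinds into a pairing, over $M(d'|\dots)$, of an explicit $T$-equivariant class --- coming from the geometry of the $C_0$-branch, and itself a combination of the generators of $\text{Pic}_{\mathbb{Q}}\big(M(d'|\dots)\big)$ with coefficients in $\text{CH}^*_T(\text{pt})$ --- against the $T$-equivariant pushforward of the tail contribution, which by construction is a torus-equivariant intersection number of divisors on $\overline M_{0,n'}(\CP^1,d')$, precisely the allowed input. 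The top-intersection numbers on $M(d'|\dots)$ that then remain are supplied by the inductive hypothesis, and since each application strictly decreases $(d,n)$ the recursion terminates at the base cases; the residual bookkeeping of coefficients and signs is finite and explicit.

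The step I expect to be the main obstacle is making the previous two paragraphs rigorous, namely disentangling the $\text{PGL}_2$-quotient from the fibre-product structure of the boundary. On the quotient the image of the node is no longer a point of a fixed $\CP^1$, so the fibre product must be formed over the universal target curve $\widetilde{\mathcal{C}}$ of Theorem \ref{Theo:modprop}, which is a $\CP^1$-bundle over $M(d'|\dots)$; one has to identify the residual group action and the finite stabilizer groups correctly, check that $D_1$ is indeed the resulting finite quotient of a fibre product, and then verify that $T$-localization along this bundle reproduces exactly the claimed equivariant integrals over $\overline M_{0,B\cup\{\star\}}(\CP^1,d'')$ with the correct multiplicities. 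Once this identification is in place, the remainder of the algorithm is a mechanical unwinding of the reductions above.
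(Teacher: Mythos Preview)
Your overall plan matches the paper's strategy closely: reduce to generators, peel off a boundary divisor, and use the equivariant product structure of Section~\ref{Sect:EquivarProduct} to separate the two factors, recursing on the self-map side and feeding the tail side into the torus-equivariant intersection theory of $\overline M_{0,n'}(\PP^1,d')$. That part is fine, and your description of the boundary structure and of $\iota^* D_1 = -\psi_\star - \psi_\star'$ is essentially what is in Proposition~\ref{Pro:DivRestr}.

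There is, however, a genuine gap in your treatment of the case where no $D_i$ is a boundary divisor. You argue that $\mathcal{H}^2=\mathcal{G}^2=0$ ``already on $Y_{d,n}$, hence on the quotient''. The first half is true, but the implication fails: the identification of Chow groups is $A^*(M(d|d_1,\dots,d_n))_\mathbb{Q}\cong A^*_G(Y_{d,n}^{ss,\textbf d})_\mathbb{Q}$ with $G=\text{SL}_2$, i.e.\ \emph{equivariant} Chow, not $A^*(Y_{d,n}^{ss,\textbf d})_\mathbb{Q}$. The forgetful map $A^*_G\to A^*$ kills $\mathcal{H}^2$ but is not injective; concretely, $A^*_G(\PP^1)_\mathbb{Q}=\mathbb{Q}[h,t^2]/(h^2-t^2)$, and pulling back along $\pi_1\circ\text{ev}_1$ one obtains $c_1(\mathcal{H})^2=t^2$, the generator of $A^*_G(\text{pt})$, which is nonzero on $M(\textbf d)$ in general. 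So pure powers of $\mathcal{H}$ (or $\mathcal{G}$) are not automatically trivial and can contribute nonzero intersection numbers.

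The paper handles this by exploiting $c_1(\mathcal{H}_{1,1})^2=c_1(\mathcal{H}_{1,2})^2$ together with the identity $\mathcal{H}_{1,2}=d\,\mathcal{H}_{1,1}+R$ of Proposition~\ref{Pro:Hprime}, where $R$ is a combination of boundary divisors (and $\mathcal{G}$ if $d=0$). For $d\neq 1$ this gives $(1-d^2)c_1(\mathcal{H})^2 = c_1(R)(2d\,c_1(\mathcal{H})+c_1(R))$, which rewrites $\mathcal{H}^2$ using at most one copy of $\mathcal{H}$ and a boundary class, re-entering the recursion. The exceptional case $d=1,n=2$ is treated via the relation $D_{i=\text{fix}}=0$ forced by semistability, and the case $d=0$ with multiple copies of $\mathcal{G}$ via expanding $\mathcal{H}_{1,1}$ in the basis (which for $n\ge 4$ contains no $\mathcal{G}$-term). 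A related point you gloss over: the equivariant integral on the tail factor $\overline M_B$ typically returns a nontrivial polynomial in $t^2$, and this class must be reinserted on the $M(d'|\dots)$-side as $c_1(\mathcal{H})^2$ before continuing the recursion. Your proposal should incorporate these steps; without them the ``no boundary divisor'' branch is not correctly terminated.
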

This algorithm has been implemented and tested using \verb+SAGE+.
\subsection{Plan of the paper}
The paper is organized as follows: in Section \ref{Sect:Construction} we present the construction of the spaces $M(d|d_1, \ldots, d_n)$. Their general properties as well as some key examples for small $d$ and $n$ are studied in Section \ref{Sect:Properties}. The rational Picard groups of $Y_{d,n}$ and $M(d|d_1, \ldots, d_n)$ are computed in Section \ref{Sect:Picard}. Section \ref{Sect:Iterate} studies composition and iteration maps and loci of maps with given multipliers at periodic points. Finally, in Section \ref{Sect:IntTh} we give a recursive algorithm for computing intersection numbers of divisors on $M(d|d_1, \ldots, d_n)$.

Appendix \ref{App:notations} contains  a list of notations used throughout the text. A collection of small technical results, which we include due to the lack of a good reference, can be found in Appendix \ref{App:generalities}. In Appendix \ref{App:stackaction} we treat group actions on stacks and the corresponding quotient stacks, based on the work of Romagny (\cite{romagny2005}). 
\section*{Acknowledgements}
I would like to express my gratitude towards my advisor Rahul Pandharipande, who proposed the construction of $M(d|d_1, \ldots, d_n)$ to me and guided me in my research. I also want to thank 
Felix Janda, Georg Oberdieck, Christoph Schie\ss{}l, Junliang Shen and Qizheng Yin
for their advice and for helpful discussions about various points in this paper. I am indebted to Emily Clader and Javier Fres\'an for helping me improve the quality of this text considerably. Finally, I want to thank Jakob Oesinghaus, who taught me much about stacks and was always willing to help me with the finer and coarser technical points concerning them. 

I am supported by the grant SNF-200020162928.

\section*{Conventions}
Throughout the paper we will work over the complex numbers. Hence in the future, notations such as $\PP^1, \text{GL}_n, \text{SL}_n,\text{PGL}_n$ will always denote the corresponding objects defined over $\mathbb{C}$.

By a coarse moduli space of a stack $\mathcal{X}$ we mean a morphism $\mathcal{X} \to X$ from $\mathcal{X}$ to an algebraic space $X$, which is initial among morphisms to algebraic spaces and induces a bijection on geometric points.

\tableofcontents


\section{Construction of the moduli space of self-maps} \label{Sect:Construction}
We want to study self-maps of $\mathbb{P}^1$ of a fixed degree $d$ modulo the conjugation action of $\text{Aut}(\PP^1)=\text{PGL}_2$. For this we will start with a moduli space which was already studied by Silverman in \cite{silverman} and chose a different way to compactify it, also allowing markings now. 
\subsection{The space of degree \texorpdfstring{$d$}{d} maps} \label{Sect:degdmaps}
A morphism of degree $d\geq 0$ from $\PP^1$ to itself is given by two sections $F_a,F_b$ of $\mathcal{O}_{\PP^1}(d)$ not vanishing simultaneously. Identifying global sections of $\mathcal{O}_{\PP^1}(d)$ with homogeneous polynomials in $X,Y$ of degree $d$, the morphism is of the form
\begin{align*}
 [X:Y] \mapsto &[F_a(X,Y),F_b(X,Y)]\\ = &[a_0 X^d + a_1 X^{d-1}Y + \ldots + a_d Y^d:b_0 X^d  + \ldots + b_d Y^d],
\end{align*}
where the two polynomials $F_a, F_b$ have no common factor. This last condition can be rephrased as
\[\text{Res}(F_a,F_b) \neq 0.\]
Here $\text{Res}(F_a,F_b)$ is the resultant of $F_a,F_b$, which is a homogeneous polynomial of degree $2d$ in the coefficients $a_0, \ldots, b_d$. As the pair $\lambda F_a, \lambda F_b$ for $\lambda \in \C \setminus \{0\}$ gives the same map, we see that the degree $d$ maps from $\PP^1$ to itself are exactly given by elements 
\[[a_0:a_1: \ldots :a_d : b_0 : \ldots : b_d] \in \PP^{2d+1} \setminus \{\text{Res}(F_a,F_b) = 0\}.\]
Until now we have only repeated the definitions used in \cite{silverman}. The numbers $a_i,b_i$ are interpreted as the parameters of the map they describe. Instead we want to interpret a morphism $\varphi: \PP^1 \to \PP^1$ as its graph $\Gamma_\varphi \subset \PP^1\times \PP^1$. This graph will be the zero-locus of a section $\gamma \in H^0(\PP^1 \times \PP^1, \mathcal{O}(d,1))$. Indeed, by the Kunneth formula we have
\[H^0(\PP^1 \times \PP^1, \mathcal{O}(d,1)) = H^0(\PP^1, \mathcal{O}(d)) \otimes_\C H^0(\PP^1, \mathcal{O}(1)).\]
So $\gamma$ can uniquely be written as $\gamma = f \otimes S + g \otimes T$ where $f,g \in H^0(\PP^1, \mathcal{O}(d))$ and $S,T$ denote coordinates on the second factor of $\PP^1\times \PP^1$. Then one checks that $\gamma$ vanishes exactly on the graph of the morphism $\phi = [g:-f]$. Let $Z_d=\PP(H^0(\PP^1 \times \PP^1, \mathcal{O}(d,1)))$, then (the graphs of) rational maps of degree $d$ are parametrized by an open subset $\text{Rat}_d \subset Z_d$. Indeed we have the following modular interpretation for $\text{Rat}_d$.
\begin{Lem} \label{Lem:Umodinterpret}
 The variety $\text{Rat}_d$ carries a universal family 
 \[
  \begin{CD}
   \PP^1_{\text{Rat}_d} @>>> \PP^1_{\text{Rat}_d}\\
   @VVV @VVV\\
   \text{Rat}_d @= \text{Rat}_d
  \end{CD}
 \]
 and it represents the functor 
 \begin{align*}
  \underline{\text{Rat}}_d : \textbf{Sch}_\C &\to \textbf{Sets},\\
   S &\mapsto \{S\text{-morphisms } \phi: \PP^1_S \to \PP^1_S\text{ with } \phi^* \mathcal{O}(1) = \mathcal{O}(d)\}.
 \end{align*}
\end{Lem}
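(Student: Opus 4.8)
The plan is to establish the two claims — existence of a universal family and representability of the functor $\underline{\text{Rat}}_d$ — essentially at the same time, by exhibiting the family over $Z_d = \PP(H^0(\PP^1 \times \PP^1, \OO(d,1)))$ explicitly and then verifying the universal property.

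First I would construct the candidate universal family. Over $\PP^1 \times \PP^1 \times Z_d$ there is a tautological section $\gamma$ of $\OO(d,1) \boxtimes \OO_{Z_d}(1)$ (up to the usual twist), whose zero locus $\Gamma \subset \PP^1 \times \PP^1 \times Z_d$ cuts out, fibrewise over each point $[\gamma] \in Z_d$, the vanishing locus of $\gamma$ in $\PP^1 \times \PP^1$. I would then let $\text{Rat}_d \subset Z_d$ be the open locus where this fibre is the graph of a degree $d$ morphism; concretely, writing $\gamma = f \otimes S + g \otimes T$, this is the non-vanishing of $\text{Res}(f,g)$, which is an open condition, so $\text{Rat}_d$ is open in $Z_d$. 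Restricting $\Gamma$ over $\text{Rat}_d$, the first projection $\Gamma|_{\text{Rat}_d} \to \PP^1_{\text{Rat}_d}$ is an isomorphism (a graph projects isomorphically to its source), and composing its inverse with the second projection gives the morphism $\phi : \PP^1_{\text{Rat}_d} \to \PP^1_{\text{Rat}_d}$; by construction $\phi^* \OO(1) = \OO(d)$ on each fibre, and one checks the line-bundle identity holds on the total family (possibly after twisting by a line bundle pulled back from the base, which does not affect the morphism). This produces the universal family displayed in the statement.

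Next I would verify representability. Given a scheme $S$ and an $S$-morphism $\phi : \PP^1_S \to \PP^1_S$ with $\phi^* \OO(1) = \OO(d)$, its graph $\Gamma_\phi \subset \PP^1_S \times_S \PP^1_S = (\PP^1 \times \PP^1)_S$ is a relative effective Cartier divisor, and I would compute its class in $\text{Pic}((\PP^1\times\PP^1)_S)$ to be $\OO(d,1)$ twisted by the pullback of a line bundle $N$ on $S$ — this uses exactly the hypothesis $\phi^*\OO(1) = \OO(d)$, the projection formula, and the seesaw principle. Hence $\Gamma_\phi$ is cut out by a section of $\OO(d,1) \otimes \pr_S^* N$, i.e. by a nowhere-zero map $N^\vee \to \pr_{S,*}(\OO(d,1) \otimes \cdots)$; since $\pr_{S,*}\OO(d,1)$ is locally free of rank $2d+2$ with formation commuting with base change (cohomology and base change, as $H^1$ of the fibres vanishes), this is exactly the data of a morphism $S \to \PP(H^0(\PP^1\times\PP^1,\OO(d,1))) = Z_d$. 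The condition that each fibre be an honest graph lands this morphism in $\text{Rat}_d$, and one checks that pulling back the universal $\phi$ recovers the given $\phi$, and that this assignment is inverse to $S \mapsto (S \to \text{Rat}_d) \mapsto \text{pullback of universal family}$. Functoriality in $S$ is immediate from the construction via pullback of divisors and sections.

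The main obstacle I expect is the bookkeeping around the line bundle $N$ on the base: the graph of $\phi$ is canonically a divisor, but presenting it as the zero locus of a section of a fixed line bundle requires twisting, and one must make sure that these twists are absorbed correctly so that the resulting map to the \emph{projectivization} $Z_d$ is well-defined and independent of choices. The clean way to handle this is to phrase everything in terms of the relative $\text{Proj}$ / the functor of lines in $H^0(\PP^1\times\PP^1,\OO(d,1))^\vee$, where the ambiguity by $\text{Pic}(S)$ disappears by definition of $\PP(\cdot)$; the remaining content — that a section has the right divisor class iff it comes from a point of $Z_d$, and that the open condition $\text{Res}\neq 0$ exactly carves out graphs of morphisms — is then routine. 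A minor secondary point to check is the edge case $d = 0$, where the "morphism" is a constant map and $\text{Res}$ degenerates, but the same formalism goes through with $\OO(0,1)$.
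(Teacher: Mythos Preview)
Your argument is correct and is the standard route to this representability statement. The paper itself does not give a proof: it simply cites \cite[Theorem 3.1]{silverman}. So there is nothing to compare at the level of mathematical ideas; you have written out in full what the paper outsources.

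One small remark on the functor's hypothesis: the condition $\phi^*\OO(1)=\OO(d)$, read literally as an equality in $\text{Pic}(\PP^1_S)$, already forces your twist $N$ to be trivial (since $\text{Pic}(\PP^1_S)\cong \text{Pic}(S)\times\mathbb{Z}$). Your treatment, which allows a general $N$ and then absorbs it into the projectivization, is more robust and in fact proves the slightly stronger statement where one only asks that $\phi$ have fibrewise degree $d$; either way the classifying map to $\text{Rat}_d$ comes out the same, so this does not affect correctness.
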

\begin{proof}
 See \cite[Theorem 3.1]{silverman}.
\end{proof}
We also want to find an interpretation for the elements of $Z_d \setminus \text{Rat}_d$. They correspond to $f,g$ having some common roots, say we have $p_i = [c_i:d_i] \in \PP^1$ distinct roots of multiplicity $e_i > 0$ for $i=1, \ldots, k$. Then
\[f = \prod_{i=1}^k (d_iX-c_iY)^{e_i} \tilde f, g = \prod_{i=1}^k (d_iX-c_iY)^{e_i} \tilde g\]
with $\tilde f, \tilde g$ homogeneous polynomials of degree $\tilde d= d - \sum_{i=1}^k e_i$ without common factor. Then the zero locus of $\gamma = f \otimes S + g \otimes T$ in $\PP^1 \times \PP^1$ consists of one \emph{horizontal section} $h$, namely the graph of the degree $\tilde d$ map given by $[\tilde g: -\tilde f]$, together with the \emph{vertical sections} $v_i=\{p_i\} \times \PP^1$, on which $\gamma$ vanishes with multiplicity $e_i$. This corresponds to a rational map of degree $\tilde d$ with locus of indeterminacy formed by the points $p_i$. After cancellation, this gives a morphism of degree $\tilde d$. We will see soon that the vertical sections fit naturally with our new geometric compactification of $\text{Rat}_d$.
\subsection{The conjugation action and stability}
Now we want to study self-maps of $\PP^1$ up to change of coordinates on $\PP^1$, that is we want to divide by the conjugation action of $\text{Aut}(\PP^1)=\text{PGL}_2$. For technical reasons we will instead divide by the action of $G=\text{SL}_2$. We have a natural action of $G$ on $\PP^1$, given by 
\[\begin{pmatrix}
   a & b\\ c &d
  \end{pmatrix} [X:Y] = [aX+bY: cX + dY].
\]
Then for a map $\varphi: \PP^1 \to \PP^1$ and an element $g \in G$ we define
\[(g \varphi)(x) = g \varphi(g^{-1}x).\]
Via the identification above, this also induces a (projectively) linear action of $G$ on $Z_d$. The action leaves the subset $\text{Rat}_d$ invariant. When viewing elements of $Z_d$ as the corresponding graphs $\Gamma \subset \PP^1 \times \PP^1$, the action comes from the diagonal action $g(p,q) = (gp, gq)$ of $\text{SL}_2$ on $\PP^1 \times \PP^1$. 

Now in order to define a quotient of $Z_d$ by $G$, we will use Geometric Invariant Theory as described by Mumford in \cite{git}. As $Z_d \cong \PP^{2d+1}$, it has a natural line bundle $\L = \mathcal{O}_{\PP^{2d+1}}(1)$, which is ample and carries a canonical $G$-linearization (see \cite[1.§3]{git}). In \cite[Proposition 2.2]{silverman}, Silverman describes the set of (semi)stable points $Z_d^{ss}, Z_d^s$ corresponding to the action above. His results have the following convenient interpretation in terms of vertical sections.
\begin{Lem} \label{Lem:numstability}
 An element $\Gamma \in Z_d$ consisting of a horizontal section $h$ and vertical sections $v_i=\{p_i\} \times \PP^1$ of multiplicities $e_i$ ($i=1, \ldots, k$) is
 \begin{itemize}
  \item semistable, iff $e_i \leq \frac{d-1}{2}$ or we have $e_i \leq \frac{d+1}{2}$ and $p_i$ is not a fixed point of the induced morphism $\PP^1 \to \PP^1$ of lower degree for $i=1, \ldots, k$,
  \item stable, iff $e_i < \frac{d-1}{2}$ or we have $e_i < \frac{d+1}{2}$ and $p_i$ is not a fixed point of the induced morphism $\PP^1 \to \PP^1$ of lower degree for $i=1, \ldots, k$.
 \end{itemize}
\end{Lem}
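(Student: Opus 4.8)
The plan is to translate Silverman's numerical stability criterion from \cite[Proposition 2.2]{silverman} into the language of vertical sections. Recall that by the discussion in Section~\ref{Sect:degdmaps}, an element $\Gamma \in Z_d$ with horizontal section $h$ and vertical sections $v_i = \{p_i\}\times \PP^1$ of multiplicity $e_i$ corresponds to the pair $(f,g)$ of degree $d$ homogeneous polynomials where $f = \prod_i (d_iX-c_iY)^{e_i}\tilde f$ and $g = \prod_i (d_iX-c_iY)^{e_i}\tilde g$, with $p_i = [c_i:d_i]$ and $\tilde f, \tilde g$ of degree $\tilde d = d - \sum_i e_i$ without common factor. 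Under the identification $Z_d \cong \PP^{2d+1}$ used by Silverman, this $\Gamma$ is the point with coordinates given by the coefficients of the pair of polynomials $(g, -f)$ (up to scalar), i.e.\ essentially the map $[g:-f]$ of degree $\tilde d$ with base points at the $p_i$. So the first step is just to carefully match conventions: Silverman works with a single pair of polynomials, we work with the section $\gamma = f \otimes S + g \otimes T$, and the translation is exactly $[X:Y] \mapsto [g(X,Y): -f(X,Y)]$.

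Next I would recall the precise form of Silverman's criterion. In \cite[Proposition 2.2]{silverman}, the (semi)stability of a point of $\PP^{2d+1}$ is characterized in terms of a one-parameter subgroup analysis: after diagonalizing, one looks at the orders of vanishing of the two polynomials at the two fixed points $[1:0]$ and $[0:1]$ of the chosen torus, and the Hilbert--Mumford numerical criterion produces inequalities. Because of the $\text{SL}_2$-symmetry, it suffices to check these at an arbitrary point $p \in \PP^1$, which we may move to $[1:0]$. The relevant quantity at $p$ is the common order of vanishing of $f$ and $g$ at $p$, which is precisely the multiplicity $e_i$ of the vertical section $v_i$ over $p = p_i$ (and $0$ if $p$ is not among the $p_i$). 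The subtlety in the criterion is the extra case distinction: Silverman's inequality is strict or non-strict depending on whether $p$ is a fixed point of the reduced map $[\tilde g: -\tilde f]$ of degree $\tilde d$, because a fixed point contributes an additional coincidence between the graph and the diagonal, which shifts the Hilbert--Mumford weight by one. This is exactly the source of the two cases $e_i \leq (d-1)/2$ versus $e_i \leq (d+1)/2$.

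The main work, then, is to carry out the Hilbert--Mumford computation (or to cite Silverman's and re-interpret it) and check that the bound on the weight $\mu(\Gamma, \lambda)$ for a one-parameter subgroup $\lambda$ fixing $p$ comes out as: $\mu \leq 0$ (resp.\ $< 0$) for all $\lambda$ iff $e_i \leq (d+1)/2$ for all $i$, with the improvement to $e_i \leq (d-1)/2$ forced exactly at fixed points of the reduced map. Concretely, if $p = [1:0]$ and $\lambda(t) = \mathrm{diag}(t, t^{-1})$, one writes $f = \sum a_j X^{d-j}Y^j$, $g = \sum b_j X^{d-j}Y^j$, computes the weights of the monomials under the induced action on $H^0(\mathcal{O}(d,1))$, and finds the maximal weight among nonvanishing coordinates of $\gamma$. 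One then checks this maximum is $\leq 0$ precisely under the stated numerical condition. The fixed-point refinement enters because $p$ being a fixed point of $[\tilde g:-\tilde f]$ means $\tilde g(1,0) = 0$ (the reduced map sends $[1:0]$ to $[0:1]$... or the appropriate fixed-point equation holds), which kills one more low-weight coordinate and therefore tightens the inequality. I expect the bookkeeping of weights and the careful handling of both torus fixed points $[1:0]$ and $[0:1]$ simultaneously (only one of which needs to be a $p_i$) to be the fiddly part, but since Silverman has already done the core computation, the real content of the lemma is the clean reinterpretation: the abstract GIT condition on coefficients is nothing but a bound on the multiplicities of vertical components of the graph, with a bonus for fixed points. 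Once the dictionary is set up, the statement follows by direct comparison with \cite[Proposition 2.2]{silverman}.
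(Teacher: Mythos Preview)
Your proposal is correct and follows essentially the same route as the paper: cite \cite[Proposition 2.2]{silverman}, translate the vanishing-order conditions on the coefficients at $[1:0]$ into the multiplicity $e_i$ of the vertical section plus the fixed-point correction, and then use the transitivity of the $\text{SL}_2$-action on $\PP^1$ to remove the reference to the specific point. The paper's proof is slightly more streamlined in that it does not redo any Hilbert--Mumford computation but simply quotes Silverman's coefficient condition and observes that ``$F_b$ vanishes at $p$ to strictly higher order than $F_a$'' is exactly the condition that $p$ is a fixed point of the reduced map; your hesitation on this point (``or the appropriate fixed-point equation holds'') is the only place where you should be more precise.
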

\begin{proof}
 By \cite[Proposition 2.2]{silverman}, the rational map given by $[a_0 X^d + \ldots + a_d Y^d:b_0 X^d + b_1 X^{d-1}Y + \ldots + b_d Y^d]$ is unstable if and only if after conjugation by $G$ we can reach 
 \begin{equation}
 a_i = 0\text{ for }i \leq \frac{d-1}{2}\text{ and }b_i = 0\text{ for }i \leq \frac{d+1}{2}. \label{eqn:stability1}
 \end{equation}
 Condition (\ref{eqn:stability1}) is equivalent to saying that
 \begin{align}
 &F_a\text{ vanishes at }[1:0]\text{ to order }>\frac{d-1}{2}\nonumber \\
 \text{ and }&F_b\text{ vanishes at }[1:0]\text{ to order }> \frac{d+1}{2}. \label{eqn:stability2}
 \end{align}
 One checks that the negation of (\ref{eqn:stability2}) is equivalent to the semistability condition given above for $p_i = [1:0]$. Here one uses that $p=[1:0]$ is a fixed point of the induced morphism iff $F_b$ vanishes at $p$ with strictly higher order than $F_a$. But now our formulation of the stability condition no longer refers to the specific point $[1:0]$ and as $G$ acts transitively on $\PP^1$, we have the desired criterion for semistability of $\Gamma$. The stable case follows similarly.
\end{proof}
From the above one sees easily, that for $d$ even the situation is particularly nice.
\begin{Cor} \label{Cor:deven}
 For $d$ even we have $Z_d^{ss} = Z_d^s$.
\end{Cor}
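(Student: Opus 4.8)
The plan is to deduce this directly from the numerical criterion of Lemma \ref{Lem:numstability}. Since every stable point is in particular semistable, we always have $Z_d^s \subseteq Z_d^{ss}$, so the only content is the reverse inclusion when $d$ is even.

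First I would record the key arithmetic observation: for $d$ even, both $\frac{d-1}{2}$ and $\frac{d+1}{2}$ are half-integers, i.e. of the form $k + \tfrac12$ with $k \in \Z$. Consequently, for an integer multiplicity $e_i$ one has
\[ e_i \leq \frac{d-1}{2} \iff e_i \leq \frac{d}{2} - 1 \iff e_i < \frac{d-1}{2}, \]
and likewise
\[ e_i \leq \frac{d+1}{2} \iff e_i \leq \frac{d}{2} \iff e_i < \frac{d+1}{2}. \]
So the weak and strict versions of each inequality occurring in Lemma \ref{Lem:numstability} are equivalent.

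Next I would compare the two lists of conditions in Lemma \ref{Lem:numstability} term by term over the vertical sections $v_i$. For each $i$, the clause ``$e_i \leq \frac{d-1}{2}$, or $e_i \leq \frac{d+1}{2}$ and $p_i$ is not a fixed point of the induced lower-degree morphism'' defining semistability becomes, by the observation above, literally the same as the clause ``$e_i < \frac{d-1}{2}$, or $e_i < \frac{d+1}{2}$ and $p_i$ is not a fixed point'' defining stability. Since $\Gamma \in Z_d^{ss}$ exactly when the first clause holds for all $i$, and $\Gamma \in Z_d^{s}$ exactly when the second clause holds for all $i$, the two loci coincide, which gives $Z_d^{ss} = Z_d^s$.

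There is essentially no serious obstacle here; the only point to be slightly careful about is that the equivalence of the weak and strict inequalities must be applied uniformly to all vertical sections at once, which is immediate since the quantifier over $i$ is identical in the semistability and stability criteria of Lemma \ref{Lem:numstability}.
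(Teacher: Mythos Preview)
Your proof is correct and is exactly the argument the paper has in mind: the Corollary is stated without proof immediately after Lemma~\ref{Lem:numstability}, with the remark that it follows easily, and your argument spells out precisely why---for $d$ even the bounds $\frac{d-1}{2}$ and $\frac{d+1}{2}$ are non-integers, so the weak and strict inequalities on the integer multiplicities $e_i$ coincide.
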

It is also obvious that $\text{Rat}_d \subset Z_d^s$. By \cite[Theorem 2.1]{silverman} we can now define quotients $M_d = \text{Rat}_d \sslash G$, $M_d^{ss} = Z_d^{ss} \sslash G$ , $M_d^s = Z_d^s \sslash G$. We will not work with those in the future, but mention them for completeness.

Finally, we want to add a remark that was mostly proved in Silverman's paper and we include the full result here.
\begin{Pro}
 The space $M_d$ is a coarse moduli space for the functor
\begin{align*}
  \underline{M}_d : \textbf{Sch}_\C &\to \textbf{Sets}\\
   S &\mapsto \underline{\text{Rat}}_d(S) / \sim,
 \end{align*}
 where two $S$-morphisms $\phi,\psi: \PP^1_S \to \PP^1_S$ satisfy $\phi \sim \psi$ if there exists an isomorphism $f : \PP^1_S \to \PP^1_S$ over $S$ such that $\phi \circ f = f \circ \psi$.
\end{Pro}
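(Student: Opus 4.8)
I would read the statement as: equip $M_d$ with a natural transformation $\eta : \underline M_d \to h_{M_d}$ to the functor of points, and show that $\eta$ is a bijection on geometric points and that $(M_d,\eta)$ is initial among algebraic spaces receiving a map from $\underline M_d$, as in the Conventions. The plan is to verify these three things in turn. First I would construct $\eta$: given $\phi \in \underline{\text{Rat}}_d(S)$, Lemma \ref{Lem:Umodinterpret} provides a unique classifying morphism $c_\phi : S \to \text{Rat}_d$, and I set $\eta(\phi) = q \circ c_\phi$, where $q : \text{Rat}_d \to M_d$ is the GIT quotient map. It then has to be checked that this factors through $\underline M_d(S) = \underline{\text{Rat}}_d(S)/{\sim}$. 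If $\phi \sim \psi$ via an $S$-automorphism $f$ of $\PP^1_S$, so $\psi = f^{-1}\circ\phi\circ f$, then because $\text{Aut}_S(\PP^1_S) = \text{PGL}_2(S)$ and $\text{SL}_2 \to \text{PGL}_2$ is a $\mu_2$-torsor, $f$ lifts to $\widetilde f \in G(S') = \text{SL}_2(S')$ over some fppf cover $S' \to S$; since the linear $G$-action on $Z_d$ from Section \ref{Sect:Construction} is, under the identification of Lemma \ref{Lem:Umodinterpret}, exactly the conjugation action, over $S'$ the map $c_\psi$ is the composite of $c_\phi$ with the automorphism of $\text{Rat}_d$ induced by $\widetilde f$. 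As $q$ is $G$-invariant, $q\circ c_\psi$ and $q\circ c_\phi$ agree over $S'$, hence over $S$ since $M_d$ is an fppf sheaf. Naturality in $S$ is immediate.

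For the bijection on geometric points: since $\text{Rat}_d \subset Z_d^s$, every point is stable, so $q$ is a geometric quotient and $M_d(\Omega) = \text{Rat}_d(\Omega)/G(\Omega)$ for every algebraically closed $\Omega \supseteq \C$. By construction $\underline M_d(\Spec \Omega) = \text{Rat}_d(\Omega)/\text{PGL}_2(\Omega)$. Now $G(\Omega) \to \text{PGL}_2(\Omega)$ is surjective (as $H^1(\Omega,\mu_2)=0$) with kernel $\{\pm 1\}$ acting trivially on $\text{Rat}_d$, so the two orbit sets coincide, and one checks this identification is precisely the map induced by $\eta$.

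For initiality, let $T$ be an algebraic space and $\alpha : \underline M_d \to h_T$ a natural transformation. Write $[\mathrm{univ}] \in \underline M_d(\text{Rat}_d)$ for the class of the universal family, so that $\eta([\mathrm{univ}]) = q$, and set $g := \alpha([\mathrm{univ}]) : \text{Rat}_d \to T$. Pulling the universal family back along the automorphism $a_h$ of $\text{Rat}_d$ induced by $h \in G$ yields the $h$-conjugate of the universal family, which is $\sim$-equivalent to $[\mathrm{univ}]$ (via the constant automorphism $h^{-1}$); with naturality of $\alpha$ this gives $g\circ a_h = g$, i.e.\ $g$ is $G$-invariant. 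Since $\text{Rat}_d = \PP^{2d+1}\setminus\{\text{Res}=0\}$ is affine (the complement of an ample divisor) and $G$ is reductive, $q$ is a good quotient, which is categorical in the category of algebraic spaces (for instance $M_d$ is the good moduli space of $[\text{Rat}_d/G]$, and such are initial among algebraic spaces); hence $g = \bar\alpha\circ q$ for a unique $\bar\alpha : M_d \to T$. Finally, any $\xi \in \underline M_d(S)$ admits a lift $\phi \in \underline{\text{Rat}}_d(S)$ with $\xi = c_\phi^*[\mathrm{univ}]$, so $\bar\alpha(\eta(\xi)) = \bar\alpha\circ q\circ c_\phi = g\circ c_\phi = \alpha(\xi)$ by naturality; thus $\bar\alpha\circ\eta = \alpha$, and $\bar\alpha$ is unique because $\bar\alpha\circ q = g$ is forced and $q$ is an epimorphism.

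The step that will need the most care is the interface between $\text{SL}_2$ and $\text{PGL}_2$: the functor $\underline M_d$ records changes of coordinate lying in $\text{Aut}_S(\PP^1_S) = \text{PGL}_2(S)$, while the GIT quotient is formed for the $\text{SL}_2$-action, so the well-definedness of $\eta$ genuinely requires the fppf-local lifting above, and the same comparison is what makes the geometric-point statement go through. A second point to pin down precisely is that $q$ is a categorical quotient not merely among schemes but among all algebraic spaces, as the definition of coarse moduli space here demands; this is standard for good quotients but should be cited rather than left implicit. Everything else — that the linear $G$-action really is conjugation, that conjugate families are $\sim$-equivalent, and the sheaf/epimorphism bookkeeping — is routine.
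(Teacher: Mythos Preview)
Your argument is correct and follows essentially the same route as the paper: build $\eta$ from the universal family on $\text{Rat}_d$ composed with the quotient map, verify bijectivity on geometric points, and for initiality show that the induced morphism $\text{Rat}_d \to T$ is $G$-invariant so as to invoke the categorical-quotient property of $q$. You are in fact more careful than the paper on the two points you single out --- the fppf $\text{SL}_2/\text{PGL}_2$ lifting needed for well-definedness of $\eta$, and the upgrade of the categorical-quotient statement to algebraic spaces --- while the paper is slightly more precise in one place: it checks $G$-invariance schematically by comparing the pullbacks $\sigma^*\mathcal{U}$ and $\pi_2^*\mathcal{U}$ over $G \times \text{Rat}_d$, whereas your ``$g\circ a_h = g$ for $h \in G$'' reads as a pointwise check; to conclude $g\circ\sigma = g\circ\pi_2$ as morphisms (for $T$ an arbitrary, not necessarily separated, algebraic space) you should run your same argument with $h$ the universal point of $G$, i.e.\ over $G \times \text{Rat}_d$.
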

\begin{proof}
 This is essentially the content of Theorem 3.2 in \cite{silverman}. There, a natural transformation $\psi: \underline{M}_d \to \text{Hom}(-,M_d)$ is constructed such that the diagram 
 \[\begin{CD} 
    \underline{\text{Rat}}_d @= \text{Hom}(-,\text{Rat}_d)\\
    @VVV @V\circ \pi VV\\
    \underline{M}_d @>\psi >> \text{Hom}(-,M_d)
   \end{CD}
\]
 commutes, where $\pi: \text{Rat}_d \to M_d$ is the quotient map. It is also shown that $\psi$ induces a bijection on geometric points. To complete the proof that $\psi$ is initial among such natural transformations, one can use the fact that $\text{Rat}_d \to M_d$ is a categorical quotient. Indeed, assume that $\tilde \psi : \underline{M}_d \to \text{Hom}(-,Z)$ is another natural transformation. Then composing with the morphism $\underline{\text{Rat}}_d \to \underline{M}_d$ we obtain a functor $\underline{\text{Rat}}_d \to \text{Hom}(-,Z)$. But this morphism of functors is equivalent to a morphism $\varphi: \text{Rat}_d \to Z$ of schemes. We would like to show that this morphism is $G$-invariant. Indeed, consider the maps $\sigma, \pi_2: G \times \text{Rat}_d \to \text{Rat}_d$, where $\sigma$ is the action map and $\pi_2$ is the projection on the second factor. We need to show $\varphi \circ \sigma = \varphi \circ \pi_2$. But these maps are the pullbacks of $\varphi \in \text{Hom}(\text{Rat}_d,Z)$ under $\sigma, \pi_2$. By functoriality, they come from the pullbacks 
 \[\sigma^* \mathcal{U}, \pi_2^* \mathcal{U} \in \underline{\text{Rat}}_d(G \times \text{Rat}_d)\]
 of the universal self-map 
 \[\mathcal{U} = (\PP^1_{\text{Rat}_d} \to \PP^1_{\text{Rat}_d}) \in \underline{\text{Rat}}_d(\text{Rat}_d)\]
 over $\text{Rat}_d$. 
 However, from their definition one sees immediately that $\sigma^* \mathcal{U} \cong \pi_2^* \mathcal{U}$, so they induce the same maps $\varphi \circ \sigma = \varphi \circ \pi_2$. Now as $\text{Rat}_d \to M_d$ is a categorical quotient, the map $\varphi$ factors through a unique map $M_d \to Z$. One checks that this gives the desired natural transformation $\text{Hom}(-,M_d) \to \text{Hom}(-,Z)$ using that $\underline{\text{Rat}}_d \to \underline{M}_d$ is surjective on objects over all schemes $S$. 
\end{proof}
\subsection{Parametrized graphs}
We have already interpreted $Z_d$ as the space of (generalized) graphs of rational self maps of $\PP^1$ of degree $d$. Now we will consider parametrizations of these graphs by trees of $\PP^1$s, that is, nodal curves of genus $0$.

To be more precise, consider the stable maps space 
\[Y_{d,n} = \overline M_{0,n}(\PP^1 \times \PP^1, (1,d)).\]
The notation $Y_{d,n}$ is not standard but as the space will be used frequently in the future, we use $Y_{d,n}$ for brevity. From \cite[Theorem 2]{fultonpandha} we see that it is a normal, projective variety of pure dimension $2d+1+n$ and it is locally the quotient of a nonsingular variety by a finite group.  

We now show that there is a natural action of $\text{PGL}_2$ on $Y_{d,n}$. At this point it will be advantageous to construct this action as an action of the group scheme $\text{PGL}_2$ on the moduli stack $\overline{ \mathcal{M}}_{0,n}(\PP^1 \times \PP^1, (1,d))$ inducing an action of $\text{PGL}_2$ on the coarse moduli space $Y_{d,n}$. Here we use the definition of \cite{romagny2005} for a group action on a stack (see also Appendix \ref{App:stackaction}).
\begin{Cor}
 There exists a natural strict action of the group $\text{PGL}_2$ on the stack $\overline{\mathcal{M}}_{0,n}(\PP^1 \times \PP^1, (1,d))$ induced by the diagonal action of $\text{PGL}_2$ on $\PP^1 \times \PP^1$. For $g \in \text{PGL}_2$ and $(f : C \to \PP^1 \times \PP^1; p_1 , \ldots, p_n) \in \overline{\mathcal{M}}_{0,n}(\PP^1 \times \PP^1, (1,d))$ a pair of $\mathbb{C}$-valued points, we have
 \[g (f;p_1, \ldots, p_n) = (g_* \circ f; p_1, \ldots, p_n),\] 
 where $g_* : \PP^1 \times \PP^1, (p,q) \mapsto (gp, gq)$.
\end{Cor}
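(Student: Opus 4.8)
The plan is to deduce the statement from the general functoriality of the stable-maps construction under isomorphisms of the target, combined with the fact that the diagonal $\text{PGL}_2$-action on $\PP^1 \times \PP^1$ is a \emph{strict} action of a group scheme on a scheme; this is precisely the form in which the general results of Appendix \ref{App:stackaction} (following \cite{romagny2005}) apply. First I would recall that for any scheme $T$ and any isomorphism $h : \PP^1_T \times_T \PP^1_T \to \PP^1_T \times_T \PP^1_T$ over $T$, postcomposition with $h$ sends a $T'$-family of stable maps $(f : \mathcal{C} \to (\PP^1 \times \PP^1)_{T'}; \sigma_1, \ldots, \sigma_n)$ (for $T' \to T$) to $(h_{T'} \circ f; \sigma_1, \ldots, \sigma_n)$; since $h$ is an isomorphism, no contraction occurs, no stabilization is needed, the stable-maps condition is automatically preserved, and one only has to track the effect on the curve class. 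This defines a morphism of stacks over $T$, and the assignment is strictly functorial in $h$: composition of morphisms of schemes is strictly associative and postcomposition commutes strictly with base change in the family parameter, so $(h' \circ h)$ and the composite of the two induced morphisms of stacks agree on the nose.

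Next I would apply this to $h = g_* := a(g,-)$, where $a : \text{PGL}_2 \times (\PP^1\times\PP^1) \to \PP^1\times\PP^1$, $(g,(p,q)) \mapsto (gp,gq)$, is the diagonal action and $g$ ranges over $T$-points of $\text{PGL}_2$. The only nontrivial check here is that the bidegree is preserved: from $\text{pr}_i \circ g_* = g \circ \text{pr}_i$ and the fact that $g$ acts as an automorphism of $\PP^1$ (hence trivially on $H_2(\PP^1) \cong \Z$), one gets $(g_*)_*(1,d) = (1,d)$. Running this construction over the universal $T$-point — i.e.\ over $T = \text{PGL}_2$, applied to the pullback of the universal family — produces a morphism of stacks $\alpha : \text{PGL}_2 \times \overline{\mathcal{M}}_{0,n}(\PP^1\times\PP^1,(1,d)) \to \overline{\mathcal{M}}_{0,n}(\PP^1\times\PP^1,(1,d))$, and specializing to $T = \Spec \C$ recovers the formula $g(f;p_1,\ldots,p_n) = (g_* \circ f; p_1, \ldots, p_n)$ claimed on $\C$-valued points.

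Finally I would verify that $\alpha$ satisfies the axioms of a strict action in Romagny's sense: compatibility with the multiplication and the unit of $\text{PGL}_2$, with the associativity and unit $2$-morphisms being identities. These follow formally, because the equalities $(gh)_* = g_* \circ h_*$ and $e_* = \mathrm{id}$ already hold as equalities of morphisms of $\PP^1\times\PP^1$ (the action $a$ being strict), and applying the strictly functorial construction above turns them into the required equalities of morphisms of stacks; the action descends to the coarse moduli space $Y_{d,n}$ by functoriality of coarse spaces. The one point that genuinely needs care — and the reason to invoke the appendix rather than give a one-line argument — is \textbf{strictness}: one must check that the $2$-isomorphisms witnessing associativity are the identity, not merely canonical isomorphisms, which is exactly why it is essential to start from the strict group-scheme action $a$ and to use that $\overline{\mathcal{M}}_{0,n}(-,(1,d))$ is a strict (not merely pseudo-)functor under postcomposition by isomorphisms.
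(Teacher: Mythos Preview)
Your proposal is correct and follows essentially the same approach as the paper: the paper's proof is a one-line invocation of Lemma~\ref{Lem:stackyaction} applied to the diagonal $\text{PGL}_2$-action on $X=\PP^1\times\PP^1$, and your argument simply unpacks the content of that lemma in this specific case (postcomposition by $g_*$, preservation of the class $(1,d)$, and strictness from the equalities $(gh)_*=g_*\circ h_*$, $e_*=\mathrm{id}$ on the level of morphisms of schemes). The only superfluous remark is the sentence about descent to the coarse moduli space $Y_{d,n}$, which is not part of the Corollary itself but is stated separately just after it in the paper.
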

\begin{proof}
This follows from Lemma \ref{Lem:stackyaction} applied to the diagonal action of $\text{PGL}_2$ on $X= \PP^1 \times \PP^1$.
\end{proof}
We immediately obtain an induced action of $\text{PGL}_2$ on the coarse moduli space $Y_{d,n}$ of $\overline{\mathcal{M}}_{0,n}(\PP^1 \times \PP^1, (1,d))$. For taking the GIT quotient, we will work with the induced action of $G=\text{SL}_2$.
\todoOld{Insert consequences, action on the coarse moduli space etc.} \todoOld{What technical reasons?}
\begin{Lem} \label{Lem:jConstruction}
 There exists a natural $G$-equivariant morphism $j : Y_{d,n} \to Z_d$ uniquely defined by requiring that the image 
 \[s=j((f;p_1, \ldots, p_n)) \in Z_d = \PP(H^0(\PP^1 \times \PP^1, (d,1)))\]
 of the class of the morphism $f : C \to \PP^1 \times \PP^1$ satisfies
 \[\text{div}(s) = f_* [C].\]
 In particular we see that $j$ does not depend on the marked points. 
\end{Lem}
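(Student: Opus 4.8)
The plan is to construct the morphism $j$ first on the level of the moduli stack $\overline{\mathcal{M}}_{0,n}(\PP^1 \times \PP^1, (1,d))$ and then descend it to the coarse space $Y_{d,n}$. Over the stack there is a universal curve $\pi : \mathcal{C} \to \overline{\mathcal{M}}_{0,n}(\PP^1 \times \PP^1, (1,d))$ together with the universal stable map $F : \mathcal{C} \to \PP^1 \times \PP^1$. The key geometric input is that for a family of stable maps $f_S : \mathcal{C}_S \to \PP^1 \times \PP^1$ of class $(1,d)$ over a base $S$, the pushforward cycle $(f_S)_* [\mathcal{C}_S]$ is an effective Cartier divisor in $\PP^1_S \times \PP^1_S$ of class $(d,1)$ — this uses that $\PP^1 \times \PP^1$ is smooth of dimension two, that the fibres of $\mathcal{C}_S \to S$ are curves, and that the class is primitive enough that no component of a fibre is contracted onto a point of a divisor with unexpected multiplicity; concretely, one checks fibrewise that $F_* [C]$ is a divisor of bidegree $(d,1)$ and that these fit into a flat family. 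Such a family of divisors is the same as a line bundle on $\PP^1_S \times \PP^1_S$ with a section, i.e. (after twisting by $\mathcal{O}(d,1)$ pulled back from a point) a line bundle $\mathcal{L}_S$ on $S$ together with a map $\mathcal{L}_S \to H^0(\PP^1\times\PP^1, \mathcal{O}(d,1)) \otimes \mathcal{O}_S$, which is exactly the datum of a morphism $S \to Z_d = \PP(H^0(\PP^1 \times \PP^1, \mathcal{O}(d,1)))$. Applying this to the universal family gives a morphism of stacks $\overline{\mathcal{M}}_{0,n}(\PP^1 \times \PP^1, (1,d)) \to Z_d$.

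Next I would observe that this morphism of stacks is $G$-equivariant: the $G$-action on the stack is by postcomposing $F$ with $g_* : \PP^1 \times \PP^1 \to \PP^1 \times \PP^1$, and since $g_*$ is an automorphism we have $(g_* \circ F)_*[C] = (g_*)_* (F_*[C])$, which matches the (projectively) linear $G$-action on $Z_d$ coming from the diagonal action on $\PP^1 \times \PP^1$. Because $Z_d$ is a scheme (in particular an algebraic space) and the morphism to it is $G$-equivariant, it factors uniquely through the coarse moduli space $Y_{d,n}$, using the universal property of the coarse space recalled in the conventions; and the induced map $j : Y_{d,n} \to Z_d$ is again $G$-equivariant since the $G$-action on $Y_{d,n}$ is the one induced from the stack. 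Uniqueness of $j$ is immediate from the stated requirement $\operatorname{div}(j(f;p_1,\dots,p_n)) = f_*[C]$ on $\C$-valued points together with the fact that $Y_{d,n}$ is reduced and $Z_d$ is separated, so a morphism is determined by its values on closed points. Finally, independence of the marked points is visible from the construction: the cycle $f_*[C]$ and hence the section $s$ only sees the map $f$, not the positions of the $p_i$, so $j$ factors through the morphism $Y_{d,n} \to Y_{d,0}$ forgetting the markings (and stabilizing).

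The main obstacle I expect is the first step — verifying carefully that $F_*[\mathcal{C}]$ is a genuine \emph{Cartier} divisor varying in a flat family, rather than just a cycle, and pinning down that its class is $\mathcal{O}(d,1)$ uniformly. Fibrewise this is clear: for a stable map $f : C \to \PP^1 \times \PP^1$ of class $(1,d)$ the image cycle consists of the one non-contracted "horizontal" component mapping isomorphically under the first projection (contributing a section of bidegree $(\tilde d, 1)$) together with "vertical" $\PP^1$-fibres over the branch points with appropriate multiplicities, and the multiplicities add up so that the total class is $(d,1)$; one must also check that contracted components of $C$ contribute nothing. The subtlety is the family version: one wants $R^0 (\pi \times \mathrm{id})_*$ of the appropriate ideal sheaf to be a line bundle and to commute with base change, which follows from the fibrewise computation together with cohomology-and-base-change once one knows the relevant cohomology groups on the fibres have constant dimension. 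Alternatively, and perhaps more cleanly, one can invoke that the assignment "stable map of class $(1,d)$" $\mapsto$ "its image divisor" is a known construction (the image of a stable map to a surface, or the relation between $\overline M_{0,n}(\PP^1 \times \PP^1,(1,d))$ and the linear system $|\mathcal{O}(d,1)|$), citing it if a reference is available; but I would include the direct argument for completeness since the paper emphasizes the lack of good references for such points.
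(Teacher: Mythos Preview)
Your proposal is correct and self-contained, but the paper takes a much shorter route: for $n=0$ it simply cites Givental's ``Main Lemma'' (\cite{givental}), where exactly this map from the stable-maps space to the linear system $|\mathcal{O}(d,1)|$ is constructed, and then for general $n$ defines $j$ as the composition of the forgetful map $Y_{d,n}\to Y_{d,0}$ with Givental's map. Equivariance is checked on closed points only, using that $Y_{d,n}$ is a variety and $Z_d$ is separated.

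The practical difference is that the paper outsources precisely the technical point you flagged as the ``main obstacle'' --- namely, that the pushforward of the universal curve is a flat family of Cartier divisors of bidegree $(d,1)$ --- to Givental's paper, whereas you sketch a direct argument via cohomology and base change. Your route is more explicit and would make the paper more self-contained; the paper's route is shorter and avoids reproving a known construction. Either is acceptable, and you correctly identified that citing a reference is the cleaner option if one is available.
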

\begin{proof}
 The map $j$ was constructed by Givental in \cite[The Main Lemma]{givental} for the case $n=0$. From the proof there it is clear that on the closed points it is given by the set theoretic map described above. We define it for arbitrary $n$ by precomposing the map for $n=0$ with the forgetful map of all marked points. Considering the action of $G$ on closed points of $Y_{d,n}$, one sees immediately that the forgetful map $Y_{d,n} \to Y_{d,0}$ is $G$-equivariant. As the $G$-action on $Z_d$ is given by the componentwise action of $G$ on $\PP^1 \times \PP^1$ when considering elements of $Z_d$ as their corresponding vanishing schemes in $\PP^1 \times \PP^1$, one verifies that the map $j$ is $G$-equivariant in the $n=0$ case by considering the action on closed points. 
\end{proof}
In the case $n \geq 1$, the map $j$ loses information by forgetting the marked points. However, by using the evaluation maps, we can preserve this information in many cases. 
\begin{Lem} \label{Lem:Jconstruction}
 Let $\text{ev}_i:Y_{d,n}=\overline M_{0,n}(\PP^1 \times \PP^1, (1,d)) \to \PP^1 \times \PP^1$ be the evaluation associated to the $i$th marked point, and let $\pi_1 : \PP^1 \times \PP^1 \to \PP^1$ be the projection to the first component. Let $\Delta \subset (\PP^1)^n$ be the union of all the diagonals, that is
 \[\Delta = \{(p_1, \ldots, p_n) \in (\PP^1)^n; \exists i \neq j \text{ such that } p_i = p_j\}.\]
 Then the map 
 \begin{equation}
  J : Y_{d,n} \to Z_d  \times (\PP^1)^n , J = j \times (\pi_1 \circ \text{ev}_1) \times \ldots \times (\pi_1 \circ \text{ev}_n)
 \end{equation}
 is equivariant with respect to the diagonal action of $G$ on $(\PP^1)^n$. Furthermore, it is an isomorphism over $\text{Rat}_d \times ((\PP^1)^n \setminus \Delta)$.
\end{Lem}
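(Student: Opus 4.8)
The plan is to treat the two assertions separately: $G$-equivariance is essentially formal, while the isomorphism statement reduces to a geometric identification of $U := J^{-1}(\text{Rat}_d \times ((\PP^1)^n \setminus \Delta))$ together with the construction of an inverse morphism. For equivariance, recall that $j$ is $G$-equivariant by Lemma~\ref{Lem:jConstruction}, and that each evaluation map $\text{ev}_i : Y_{d,n} \to \PP^1 \times \PP^1$ intertwines the $G$-action on $Y_{d,n}$ (postcomposition with $g_\ast : (p,q) \mapsto (gp,gq)$) with the diagonal action on $\PP^1 \times \PP^1$; this holds on closed points directly from the definition of the action, hence as a morphism since $Y_{d,n}$ and $\PP^1 \times \PP^1$ are reduced and separated. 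Postcomposing with the $G$-equivariant projection $\pi_1$, each $\pi_1 \circ \text{ev}_i$ is equivariant for the standard $G$-action on $\PP^1$, so $J = j \times (\pi_1 \circ \text{ev}_1) \times \cdots \times (\pi_1 \circ \text{ev}_n)$ is equivariant for the product action, which is diagonal on $(\PP^1)^n$.

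Write $V = \text{Rat}_d \times ((\PP^1)^n \setminus \Delta)$. The geometric heart of the proof is the claim that a closed point $(f : C \to \PP^1 \times \PP^1; p_1, \dots, p_n)$ of $Y_{d,n}$ lies in $U = J^{-1}(V)$ precisely when $C$ is irreducible (so $C \cong \PP^1$), $f$ is the graph of a morphism $\varphi \in \text{Rat}_d$, and the points $\pi_1(f(p_1)), \dots, \pi_1(f(p_n))$ are pairwise distinct. Only one implication needs an argument. Suppose $j(f) \in \text{Rat}_d$ and the $\pi_1(f(p_i))$ are distinct. Then $f_\ast[C] = \text{div}(j(f))$ is the graph $\Gamma_\varphi$, which is irreducible of bidegree $(1,d)$ and contains no vertical component; writing the bidegrees of the components of $C$ as $(a_i,b_i)$ with $a_i,b_i \geq 0$ and $\sum a_i = 1$, $\sum b_i = d$, a component with $a_i = 0$ and $b_i > 0$ would map onto a fibre of the first projection and contribute a vertical component to $f_\ast[C]$, which is impossible; hence exactly one component $C_0$ has bidegree $(1,d)$ — and $\pi_1 \circ f$ restricts to an isomorphism $C_0 \xrightarrow{\sim} \PP^1$ identifying $f|_{C_0}$ with $\Gamma_\varphi$ — while every other component is contracted by $f$. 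If $C \neq C_0$, some connected component $T$ of $\overline{C \setminus C_0}$ is a nonempty tree of contracted $\PP^1$'s attached to $C_0$ at a single node; $T$ always contains a component with exactly one node — namely $T$ itself if it is a single $\PP^1$, or a leaf of $T$ not meeting $C_0$ otherwise — and by the stable maps condition this component carries at least two of the markings. Since $T$ is contracted by $f$, hence by $\pi_1 \circ f = \pi_1 \circ \text{ev}_i$, those two markings have the same first coordinate, so the image tuple lies in $\Delta$, a contradiction. Thus $C = C_0 \cong \PP^1$. Transporting the markings along the isomorphism $\pi_1 \circ f : C \xrightarrow{\sim} \PP^1$, a point of $U$ is exactly a pair $(\varphi, (p_1,\dots,p_n)) \in V$, and $J$ induces this identification.

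To upgrade the resulting bijection $J|_U : U \to V$ to an isomorphism one may invoke Zariski's main theorem — $J|_U$ is a quasi-finite, birational morphism onto the normal (indeed smooth) variety $V$ — or, more robustly, write down an inverse. The graph of the universal family $\mathcal U : \PP^1_{\text{Rat}_d} \to \PP^1_{\text{Rat}_d}$ of Lemma~\ref{Lem:Umodinterpret} is a family of bidegree $(1,d)$ stable maps over $\text{Rat}_d$; pulling it back along the projection $V \to \text{Rat}_d$ and adjoining the $n$ tautological sections coming from the $(\PP^1)^n$-factor — pairwise disjoint exactly because $\Delta$ has been removed — yields a family of $n$-marked stable maps over $V$, hence a morphism $\Phi : V \to Y_{d,n}$. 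By the description above $\Phi$ takes values in $U$, and the composites $J|_U \circ \Phi$ and $\Phi \circ J|_U$ are the identity on closed points; since all the spaces in play are reduced and separated over $\C$, two morphisms of varieties agreeing on closed points are equal, so $\Phi$ is a two-sided inverse and $J$ is an isomorphism over $V$.

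The main obstacle is the middle step: the combinatorial analysis of how the source curve of a stable map lying over $\text{Rat}_d$ can degenerate, and the observation that every contracted bubble tree is forced by the stability condition to absorb at least two markings, which then collide after applying $\pi_1 \circ \text{ev}$ and so push the point into $\Delta$. The remaining point is merely formal: because $Y_{d,n}$ is only a coarse moduli space, the inverse has to be produced from an explicit family of stable maps rather than read off a universal object on $Y_{d,n}$, and reducedness of the spaces is what lets one promote the evident bijection to an isomorphism.
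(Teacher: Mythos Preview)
Your proof is correct and follows essentially the same route as the paper: both establish $G$-equivariance formally from the equivariance of $j$ and of the evaluation maps, then argue that any stable map landing over $\text{Rat}_d \times ((\PP^1)^n \setminus \Delta)$ must have irreducible source by a tree-leaf stability argument, yielding a bijection on closed points. The paper concludes directly via Proposition~\ref{Pro:bijectiso} (bijection onto a normal target implies isomorphism), whereas you additionally construct an explicit inverse from the universal family over $\text{Rat}_d$; this is a pleasant and slightly more robust alternative, but not a genuinely different strategy.
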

\begin{proof}
 The $G$-equivariance follows directly from the equivariance of the map $j$ and the definition of the $G$-action on $Y_{d,n}$. 
 If a closed point $(f:C \to \PP^1 \times \PP^1; p_1, \ldots, p_n) \in Y_{d,n}$ maps to $([s], q_1, \ldots, q_n) \in \text{Rat}_d \times ((\PP^1)^n \setminus \Delta)$, then all components of $C$ must map with degrees $(1,d)$ or $(0,0)$, as $s$ has no vertical sections. But as also the marked points $p_1, \ldots, p_n$ map to distinct points in $\PP^1 \times \PP^1$, there can be no contracted components for stability reasons. Hence $C$ is irreducible and $\pi_1 \circ f : C \to \PP^1$ is an isomorphism. But then we can uniquely reconstruct the positions of the markings $p_i \in C$ from the points $q_i = \pi_1(f(p_i)) \in \PP^1$. All in all, this shows that the restriction of $J$ to the preimage of $\text{Rat}_d \times ((\PP^1)^n \setminus \Delta)$ induces a bijection on closed points. Thus it is an isomorphism by Proposition \ref{Pro:bijectiso}. 
\end{proof}
For an algebraic group $H$ acting on a scheme $X$, we denote by $\text{Pic}^H(X)$ the group of $H$-linearized line bundles on $X$ (c.f. \cite[1.§3]{git}). We want to obtain an ample $G$-linearized line bundle $\mathcal{M}'$ on $Y_{d,n}$ allowing us to form a quotient of $Y_{d,n}$ by $G$. Additionally, we want the stable and semistable loci of $\L'$ to coincide and to be given by the preimage of the stable locus of $\L$ in $Z_d$ via the map $j$. For this, we use the following construction adapted from \cite[Proposition 2.18]{git}.
\begin{Lem} \label{Lem:preimstable}   
 Let $G=\text{SL}_2$ act on projective varieties $Y,Z$ and let $j:Y \to Z$ be a $G$-equivariant map. Assume we have $\L \in \text{Pic}^G(Z)$,  $\mathcal{M}\in \text{Pic}^G(Y)$ both ample, $G$-linearized line bundles and assume $Z^{ss}=Z^{s}$ ((semi-)stability with respect to $\mathcal{L}$). Then there exists $N>0$ such that for all $n\geq N$ we have that $\mathcal{M}'=\mathcal{M}\otimes (j^*\L)^{\otimes n}$ is an ample $G$-linearized line bundle on $Y$ with $Y^{ss}=Y^s=j^{-1}(Z^s)$.
\end{Lem}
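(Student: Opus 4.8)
The key input is the Hilbert--Mumford numerical criterion, which reduces the statement to an inequality of Hilbert--Mumford weights computed with respect to one-parameter subgroups of $G = \text{SL}_2$. First I would recall the setup: for a one-parameter subgroup $\lambda$ of $G$ and a point $y \in Y$ lying over a point $z = j(y) \in Z$, the weight $\mu^{\mathcal{M}'}(y,\lambda)$ is additive in tensor products of linearized bundles, so
\begin{equation*}
 \mu^{\mathcal{M}'}(y,\lambda) = \mu^{\mathcal{M}}(y,\lambda) + n \cdot \mu^{j^*\mathcal{L}}(y,\lambda) = \mu^{\mathcal{M}}(y,\lambda) + n \cdot \mu^{\mathcal{L}}(z,\lambda),
\end{equation*}
the last equality because $j$ is $G$-equivariant and $\lim_{t\to 0}\lambda(t)\cdot y$ maps under $j$ to $\lim_{t\to 0}\lambda(t)\cdot z$, so the action weights on the respective fibres agree. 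Semistability (resp.\ stability) of $y$ with respect to $\mathcal{M}'$ means $\mu^{\mathcal{M}'}(y,\lambda) \geq 0$ (resp.\ $>0$) for all nontrivial $\lambda$, and similarly for $z$ with respect to $\mathcal{L}$.

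**The boundedness step.** The crucial point is that the weights $\mu^{\mathcal{M}}(y,\lambda)$ and $\mu^{\mathcal{L}}(z,\lambda)$, while a priori depending on $y$ and $\lambda$, take only finitely many values once we normalize $\lambda$ appropriately. Concretely, every nontrivial one-parameter subgroup of $\text{SL}_2$ is, up to conjugation, of the form $t \mapsto \text{diag}(t^{a},t^{-a})$ with $a > 0$; since the sign of the Hilbert--Mumford weight is unchanged under scaling $\lambda$ by a positive integer and under conjugating $(y,\lambda)$ simultaneously, it suffices to bound the weights over all $y$ and the single normalized subgroup $\lambda_0(t) = \text{diag}(t,t^{-1})$. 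Embedding $Y$ and $Z$ $G$-equivariantly in projective spaces $\mathbb{P}(V)$, $\mathbb{P}(W)$ via (powers of) $\mathcal{M}$ and $\mathcal{L}$, the weight $\mu(y,\lambda_0)$ equals the negative of the smallest $\lambda_0$-weight occurring in the span of a lift of $y$; this ranges over the finite set of $\lambda_0$-weights of $V$ (resp.\ $W$), which is bounded. Hence there is a constant $C$ with $|\mu^{\mathcal{M}}(y,\lambda)| \leq C \cdot \|\lambda\|$ uniformly, where $\|\lambda\|$ is the natural norm, and $\mu^{\mathcal{L}}(z,\lambda)$ takes only finitely many values (in $\frac{1}{m}\mathbb{Z}$ for a fixed $m$) per unit norm of $\lambda$.

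**Conclusion.** Because $Z^{ss} = Z^{s}$, for every $z \in Z$ that is \emph{not} stable there is a $\lambda$ with $\mu^{\mathcal{L}}(z,\lambda) < 0$, and by the finiteness above such negative values are bounded away from zero: there is $\varepsilon > 0$ with $\mu^{\mathcal{L}}(z,\lambda) \leq -\varepsilon\|\lambda\|$ whenever it is negative. Now choose $N > C/\varepsilon$. If $y \in Y$ lies over an unstable $z = j(y) \in Z \setminus Z^s$, picking a destabilizing $\lambda$ gives, for $n \geq N$,
\begin{equation*}
 \mu^{\mathcal{M}'}(y,\lambda) = \mu^{\mathcal{M}}(y,\lambda) + n\,\mu^{\mathcal{L}}(z,\lambda) \leq C\|\lambda\| - n\varepsilon\|\lambda\| < 0,
\end{equation*}
so $y$ is not semistable; this shows $Y^{ss} \subseteq j^{-1}(Z^s)$. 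Conversely, if $z = j(y)$ is stable, then $\mu^{\mathcal{L}}(z,\lambda) \geq \varepsilon\|\lambda\|$ for all nontrivial $\lambda$ (again using $Z^{ss}=Z^s$ and finiteness), whence $\mu^{\mathcal{M}'}(y,\lambda) \geq -C\|\lambda\| + n\varepsilon\|\lambda\| > 0$ for $n \geq N$, so $y$ is stable. Thus $j^{-1}(Z^s) \subseteq Y^s \subseteq Y^{ss} \subseteq j^{-1}(Z^s)$, forcing equality throughout, and $\mathcal{M}'$ is ample as a tensor product of an ample bundle with the $n$-th power of the nef bundle $j^*\mathcal{L}$ (indeed $\mathcal{M}\otimes(j^*\mathcal{L})^{\otimes n}$ is ample for any $n\geq 0$ since $\mathcal{M}$ is ample and $j^*\mathcal{L}$ is globally generated), and it is $G$-linearized by construction. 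I expect the main obstacle to be the careful bookkeeping in the boundedness/uniformity step — making precise that the Hilbert--Mumford weights are controlled linearly in $\|\lambda\|$ with constants independent of the point — rather than the final arithmetic, which mirrors \cite[Proposition 2.18]{git}.
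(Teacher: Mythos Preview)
Your proposal is correct and follows essentially the same approach as the paper: both reduce to the Hilbert--Mumford criterion, use additivity of $\mu$ under tensor products and functoriality under $j$, normalize one-parameter subgroups of $\text{SL}_2$ up to conjugation and positive scaling, and then invoke a uniform bound on $\mu^{\mathcal{M}}(y,\lambda)$ (the paper cites \cite[Proposition 2.14]{git} for this, while you spell out the finite-weight argument directly). The only cosmetic difference is that the paper implicitly uses integrality of the normalized weights to get the gap away from zero, whereas you phrase it via an explicit $\varepsilon$; both are fine.
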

\begin{proof}
 As $\mathcal{M}$ is ample and $\L$ is base-point free, it is clear that $\mathcal{M}'$ is ample and $G$-linearized. To identify (semi)stable points, we want to use the Hilbert Mumford numerical criterion (see \cite[Theorem 2.1]{git}). We note that for a given $y \in Y$ and a one-parameter subgroup $\lambda$ in $G$ we have
 \[\mu^{\mathcal{M}\otimes (j^*\L)^{\otimes n}}(y,\lambda) = \mu^{\mathcal{M}}(y,\lambda) + n\mu^{\L}(j(y),\lambda).\]
 Now by \cite[Proposition 7.5]{mukai}, all one-parameter subgroups of $\text{SL}_2$ are conjugate to a multiple of the standard diagonal one-parameter subgroup
 \[T : \mathbb{G}_m \to \text{SL}_2, t \mapsto \left( \begin{matrix}
                                                         t & 0\\
                                                         0 & t^{-1}
                                                        \end{matrix}
 \right)\]
 For the analysis of stability, we may restrict ourselves to those conjugate to $T$ or $T^{-1}$. Note further that as $Z^{ss}=Z^s$, we either have $\mu^{\L}(j(y),\lambda) >0$ if $y \in j^{-1}(Z^s)$ or $\mu^{\L}(j(y),\lambda)<0$ otherwise. Thus if we can uniformly bound $\mu^{\mathcal{M}}(y,\lambda)$ over all $y \in Y$ and $\lambda = T, T^{-1}$, we can choose $N$ larger and then the stability of $y$ is only determined by the stability of $j(y)$ as desired. Note that here we use $\mu^L(z,g^{-1} T g) = \mu^L(gz, T)$. But that such a bound exists follows immediately from \cite[Proposition 2.14]{git}.
\end{proof}
We now apply this result to the map $j$ constructed in Lemma \ref{Lem:jConstruction}. In order to achieve $Z_d^{ss}=Z_d^s$, we have to restrict to the case $d$ even. 
\begin{Cor} \label{Cor:quotexist1}
 Let $\mathcal{M}$ be any $G$-linearized ample line bundle on $Y_{d,n}$, let $N>0$ as in Lemma \ref{Lem:preimstable} and $\mathcal{M}'=\mathcal{M}\otimes (j^*\L)^{\otimes N}$. Then $Y_{d,n}^{ss}=Y_{d,n}^{s}$ is independent of the choice of $\mathcal{M}$ and it admits a uniform geometric quotient $\phi: Y_{d,n}^s \to M(d,n)$. Here $\phi$ is affine and universally submersive and $M(d,n)$ is a normal projective variety over $\C$. 
\end{Cor}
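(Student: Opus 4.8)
The plan is to reduce everything to Lemma~\ref{Lem:preimstable} together with the standard machinery of geometric invariant theory. Apply Lemma~\ref{Lem:preimstable} with $Y = Y_{d,n}$, $Z = Z_d$, the $G$-equivariant morphism $j : Y_{d,n} \to Z_d$ of Lemma~\ref{Lem:jConstruction}, and the ample line bundle $\L = \mathcal{O}_{\PP^{2d+1}}(1)$ on $Z_d \cong \PP^{2d+1}$ with its canonical $G$-linearization. The variety $Y_{d,n}$ is normal and projective by \cite[Theorem 2]{fultonpandha}, $Z_d$ is projective, and since $d$ is even Corollary~\ref{Cor:deven} gives $Z_d^{ss} = Z_d^s$ with respect to $\L$. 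Thus, for any ample $G$-linearized $\mathcal{M}$ on $Y_{d,n}$ and $N$ sufficiently large, $\mathcal{M}' = \mathcal{M} \otimes (j^*\L)^{\otimes N}$ is ample and $G$-linearized with
\[
Y_{d,n}^{ss} = Y_{d,n}^s = j^{-1}(Z_d^s).
\]
Since the right-hand side makes no reference to $\mathcal{M}$, the stable locus is independent of the choice of $\mathcal{M}$ (a different $\mathcal{M}$ may require a larger threshold $N$, but the resulting open set is the same).

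Next I would invoke Mumford's existence theorem. As $G = \text{SL}_2$ is reductive and acts on the projective variety $Y_{d,n}$ with the ample $G$-linearized line bundle $\mathcal{M}'$, the GIT quotient $M(d,n) := Y_{d,n}^{ss} \sslash G$ exists and is a projective variety by \cite[Theorem 1.10]{git}. Because $Y_{d,n}^{ss} = Y_{d,n}^s$, this quotient is simultaneously a universal --- in particular uniform --- geometric quotient $\phi : Y_{d,n}^s \to M(d,n)$; moreover, being built by gluing spectra of invariant rings, $\phi$ is affine, and as a geometric quotient it is universally submersive (all of this is part of \cite[Theorem 1.10]{git}). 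Independence of $Y_{d,n}^s$ from $\mathcal{M}$ then yields the independence of $M(d,n)$ from $\mathcal{M}$.

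For normality: $Y_{d,n}^s$ is open in the normal variety $Y_{d,n}$, hence normal, and $M(d,n)$ is covered by affine opens of the form $\Spec(A^G)$ with $A$ a normal domain, so it is normal because the ring of invariants of a normal domain is again normal (being an integrally closed subring of $A$). Together with projectivity this finishes the proof.

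I do not expect a substantial obstacle here: the genuine content was already isolated in Lemmas~\ref{Lem:jConstruction} and~\ref{Lem:preimstable} and in Corollary~\ref{Cor:deven}. The only points requiring care are (i) checking the hypotheses of Lemma~\ref{Lem:preimstable}, where the essential input is the equality $Z_d^{ss} = Z_d^s$ valid precisely for $d$ even, and (ii) reading off from \cite[Theorem 1.10]{git} that the quotient restricted to the stable locus is genuinely a geometric --- not merely categorical --- quotient carrying the stated affineness and submersion properties; it is exactly the coincidence $Y_{d,n}^{ss} = Y_{d,n}^s$ that makes this automatic.
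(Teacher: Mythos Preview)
Your proposal is correct and follows essentially the same route as the paper's proof: both apply Lemma~\ref{Lem:preimstable} (using Corollary~\ref{Cor:deven} for $Z_d^{ss}=Z_d^s$ when $d$ is even) to identify $Y_{d,n}^{ss}=Y_{d,n}^s=j^{-1}(Z_d^s)$ independently of $\mathcal{M}$, and then invoke \cite[Theorem 1.10]{git} for the existence of the affine, universally submersive, uniform geometric quotient and its projectivity. The only cosmetic difference is that the paper cites \cite[0.\S2 (2)]{git} for normality and the remark above \cite[Converse 1.12]{git} for projectivity, whereas you spell out the normality argument via invariant subrings of normal domains; both are standard.
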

\begin{proof}
 Let $\mathcal{M}_0$ be an ample line bundle on the projective, normal variety $Y_{d,n}$, then by \cite[Corollary 1.6]{git} some power $\mathcal{M}$ is $G$-linearizable. It is clear from Lemma \ref{Lem:preimstable} that $Y_{d,n}^{ss}=Y_{d,n}^{s}=j^{-1}(Z_d^s)$ is independent of the constructed $\mathcal{M'}$. By \cite[Theorem 1.10]{git} we conclude the existence of the affine, universally submersive uniform geometric quotient $\phi$, reducedness and normality of $M(d,n)$ follow from \cite[0.§2 (2)]{git} and projectivity from the remark above \cite[Converse 1.12]{git}.  
\end{proof}
The spaces $M(d,n)$ (and their generalizations $M(d|d_1, \ldots, d_n)$ defined in the following section) will now be our main object of study. Note that the $G$-equivariant map $j:Y_{d,n} \to Z_d$ descends to a map $\tilde j: M(d,n) \to M_d^s$.

\subsection{Weighted points}
In order to analyze the recursive boundary structure of $M(d,n)$, it will be necessary to generalize the construction above. We will do so by attributing nonnegative rational weights to the marked points in $\overline M_{0,n}(\PP^1 \times \PP^1, (1,d))$. This will not change the action, but it will affect the set of (semi)stable points and hence the geometric quotient. 

Let $d,n \geq 0$, $k \geq 1$ and $\tilde d_1, \ldots, \tilde d_n \in \mathbb{Z}_{\geq 0}$ nonnegative integers such that $d_S= k(d+1) + \sum_{i=1}^n \tilde d_i$ is positive and odd. We write $\textbf{d}^\sim=(d,k|\tilde d_1, \ldots, \tilde d_n)$. 
Consider the map 
\[J : Y_{d,n} \to Z_d  \times (\PP^1)^n , J = j \times (\pi_1 \circ \text{ev}_1) \times \ldots \times (\pi_1 \circ \text{ev}_n)\]
defined in Lemma \ref{Lem:Jconstruction}. On the target 
\[Z_d  \times (\PP^1)^n = \PP(H^0(\PP^1 \times \PP^1, (d,1)))  \times (\PP^1)^n\]
we have the basepoint free line bundle
\[\mathcal{L}_{\textbf{d}^\sim} = \mathcal{O}_Z(k) \boxtimes \mathcal{O}(\tilde d_1) \boxtimes \ldots \boxtimes \mathcal{O}(\tilde d_n).\]
We want to apply Lemma \ref{Lem:preimstable} to obtain an ample linearized line bundle $\mathcal{M}'$ on $Y_{d,n}$ such that a point in $Y_{d,n}$ is (semi)stable iff it maps to a (semi)stable point in $Z_{d} \times (\PP^1)^n$ with respect to the line bundle $\mathcal{L}_{\textbf{d}^\sim}$\footnote{Note in the following that Mumford's numerical criterion and its consequence Lemma \ref{Lem:preimstable} were formulated for ample line bundles. Therefore if some of the numbers $d$ or $d_i$ are zero, we have to modify the map $J$ to leave out the corresponding factors $Z_{d}$ or $\PP^1$ in the target to make the modified line bundle $\mathcal{L}_{\textbf{d}^\sim}$ ample. However, the analysis will not be affected by this so we will ignore this technicallity henceforth.}. Therefore we need to analyze (semi)stability for the action of $\text{SL}_2$ on $Z_{d} \times (\PP^1)^n$ with respect to $\mathcal{L}_{\textbf{d}^\sim}$.
\begin{Lem} \label{Lem:semistableadvanced}
 A point $q=([s],p_1, \ldots, p_n) \in Z_{d} \times (\PP^1)^n$ is semistable with respect to $\mathcal{L}_{\textbf{d}^\sim}$ iff for all $p \in \PP^1$ we have
 \begin{equation} \label{eqn:advancedsemistable}
  \nu_p([s]) + \delta_{p=\text{fix}}([s]) + \sum_{i: p_i = p}  \frac{\tilde d_i}{k} \leq \frac{d+1+\sum_i  \frac{\tilde d_i}{k}}{2},
 \end{equation}
 Here $\nu_p([s])$ is the order of vanishing of $s$ on the cycle $\{p\} \times \PP^1$ or equivalently the multiplicity of a potential vertical section over $p$. The number $\delta_{p=\text{fix}}([s])$ is $1$ if $p$ is a fixed point of the underlying map $\tilde \varphi$ from the horizontal section of $s$ and $0$ otherwise. The point $q$ is stable iff the inequality above is strict for all $p$.
\end{Lem}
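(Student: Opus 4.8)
The natural strategy is the Hilbert--Mumford numerical criterion (\cite[Theorem 2.1]{git}). Note first that $\mathcal{L}_{\textbf{d}^\sim}$ is canonically $G$-linearized, the factors $\mathcal{O}_Z(1)$ and $\mathcal{O}(1)$ being the duals of the tautological bundles on the projective spaces of the genuine $G$-representations $H^0(\PP^1\times\PP^1,\mathcal{O}(d,1))$ and $\C^2$. Exactly as in the proof of Lemma \ref{Lem:preimstable}, every nontrivial one-parameter subgroup of $G=\text{SL}_2$ is conjugate to a positive power of the standard diagonal subgroup $\lambda_0(\tau)=\text{diag}(\tau,\tau^{-1})$; since the numerical weight scales linearly under taking powers and satisfies $\mu^L(x,g\lambda g^{-1})=\mu^L(g^{-1}x,\lambda)$, the point $q=([s],p_1,\ldots,p_n)$ is semistable (resp.\ stable) if and only if $\mu^{\mathcal{L}_{\textbf{d}^\sim}}(g^{-1}q,\lambda_0)\geq 0$ (resp.\ $>0$) for all $g\in G$. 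So everything comes down to computing this one weight for a general point of $Z_d\times(\PP^1)^n$ and $\lambda_0$, and to observing that the answer only sees what happens over the single point $\infty:=[1:0]$.

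To compute $\mu^{\mathcal{L}_{\textbf{d}^\sim}}(q,\lambda_0)$ for a general $q=([s],p_1,\ldots,p_n)$ we use that the weight is additive over the external tensor product, so it equals $k\,\mu^{\mathcal{O}_Z(1)}([s],\lambda_0)+\sum_i\tilde d_i\,\mu^{\mathcal{O}(1)}(p_i,\lambda_0)$, each summand being $-\,(\text{smallest }\lambda_0\text{-weight occurring in a lift of the point to the relevant representation})$. For a point of $\PP^1=\PP(\C^2)$ this immediately gives $\mu^{\mathcal{O}(1)}(p_i,\lambda_0)=1$, unless $p_i=\infty$, in which case it is $-1$. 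For $Z_d$ write $s=f\otimes S+g\otimes T$; the monomials $X^{d-i}Y^i\otimes S$ and $X^{d-i}Y^i\otimes T$ of $H^0(\PP^1\times\PP^1,\mathcal{O}(d,1))$ have $\lambda_0$-weights $2i-d-1$ and $2i-d+1$, so with $\alpha=\text{ord}_\infty f$ and $\beta=\text{ord}_\infty g$ the smallest occurring weight is $2\alpha-d-1$ if $\alpha\leq\beta$ and $2\beta-d+1$ if $\alpha>\beta$, whence $\mu^{\mathcal{O}_Z(1)}([s],\lambda_0)$ equals $(d+1)-2\alpha$ in the first case and $(d-1)-2\beta$ in the second. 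Writing $f=h\tilde f$, $g=h\tilde g$ with $h=\gcd(f,g)$, one has $\nu_\infty([s])=\text{ord}_\infty h=\min(\alpha,\beta)$, while $\infty$ is a fixed point of the residual morphism $[\tilde g:-\tilde f]$ exactly when $\tilde f$ vanishes at $\infty$ and $\tilde g$ does not --- the two orders of vanishing cannot both be positive because $\gcd(\tilde f,\tilde g)=1$ --- i.e.\ exactly when $\alpha>\beta$. Substituting, one finds in both cases
\[\mu^{\mathcal{O}_Z(1)}([s],\lambda_0)=(d+1)-2\bigl(\nu_\infty([s])+\delta_{\infty=\text{fix}}([s])\bigr).\]
(For $n=0$ this last identification is the content of Lemma \ref{Lem:numstability}, i.e.\ of \cite[Proposition 2.2]{silverman}, and could simply be quoted.)

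Putting the pieces together,
\[\mu^{\mathcal{L}_{\textbf{d}^\sim}}(q,\lambda_0)=k(d+1)+\sum_i\tilde d_i-2k\bigl(\nu_\infty([s])+\delta_{\infty=\text{fix}}([s])\bigr)-2\sum_{i:\,p_i=\infty}\tilde d_i,\]
and dividing by $2k>0$ shows that this weight is $\geq 0$ if and only if inequality (\ref{eqn:advancedsemistable}) holds at $p=\infty$. Finally, apply this formula with $g^{-1}q$ in place of $q$: since conjugating the data by $g$ carries the vertical sections, fixed points and markings lying over $g\cdot\infty$ to ones lying over $\infty$, the weight $\mu^{\mathcal{L}_{\textbf{d}^\sim}}(g^{-1}q,\lambda_0)$ is given by the same expression with $\infty$ replaced by $p=g\cdot\infty$; as $g$ ranges over $G$, transitivity of $\text{SL}_2$ on $\PP^1$ lets $p$ range over all of $\PP^1$, and we conclude that $q$ is semistable if and only if (\ref{eqn:advancedsemistable}) holds for every $p\in\PP^1$. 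The stable case is identical with all inequalities made strict. (As in the footnote, if $d=0$ or some $\tilde d_i=0$ the corresponding factor of $Z_d\times(\PP^1)^n$ is omitted so that $\mathcal{L}_{\textbf{d}^\sim}$ stays ample, but such a factor contributes $0$ to every weight and the conclusion is unchanged.)

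I expect the only real work to be the weight bookkeeping for the $Z_d$ factor in the second paragraph: keeping mutually consistent the sign convention in $\mu$, the $\mathbb{G}_m$-weights on $H^0(\PP^1\times\PP^1,\mathcal{O}(d,1))$, and the three-way translation between ``common zero of $f,g$ of multiplicity $\nu$'', ``orders of vanishing of $f,g$ at $\infty$'', and ``$\infty$ is a fixed point of the residual lower-degree morphism''. The remaining steps are a routine application of the numerical criterion together with the transitivity of $\text{SL}_2$ on $\PP^1$.
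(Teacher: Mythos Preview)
Your proof is correct and follows essentially the same route as the paper: apply the Hilbert--Mumford criterion, reduce to the diagonal one-parameter subgroup via conjugacy, compute the weight additively over the factors of $\mathcal{L}_{\textbf{d}^\sim}$, and then use the transitivity of $\text{SL}_2$ on $\PP^1$ to let the distinguished point $[1:0]$ range over all $p$. The only difference is that you spell out the weight bookkeeping on the $Z_d$ factor explicitly (the monomial weights and the $\alpha,\beta$ case analysis), whereas the paper simply imports this from the proof of \cite[Proposition 2.2]{silverman}; your direct computation is a nice self-contained check and recovers the same formula $\mu^{\mathcal{O}_Z(1)}([s],\lambda_0)=(d+1)-2(\nu_\infty+\delta_{\infty=\text{fix}})$.
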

\begin{proof}
 The Lemma follows by applying the Hilbert-Mumford numerical criterion \cite[Theorem 2.1]{git}. As in the proof of Lemma \ref{Lem:numstability}, we can restrict to compute $\mu=\mu^{\mathcal{L}_{\textbf{d}^\sim}}(q,\lambda)$ for the diagonal one-parameter subgroup $\lambda \subset \text{SL}_2$. From the proof of \cite[Proposition 2.2]{silverman}, we see that the line bundle $\mathcal{O}_{Z_d}(k)$ on the factor $Z_d$ contributes a summand
 \[k\left(d+1-2\nu_{[1:0]}([s]) + 2\delta_{[1:0]=\text{fix}}([s])\right)\]
 to $\mu$. On the other hand, the line bundle $\mathcal{O}(\tilde d_i)$ on the $i$-th factor $\PP^1$ contributes $\tilde d_i$ for $p_i \neq [1:0]$ and $-\tilde d_i$ for $p_i = [1:0]$. This can be seen by using \cite[Proposition 2.3]{git}. The conditions $\mu \geq 0$ for semistability and $\mu > 0$ for stability then translate to the claimed result by rearranging the terms and dividing by $2k$.
\end{proof}
We remark that multiplying the inequality (\ref{eqn:advancedsemistable}) by $k$, the left side is an integer. So for $k(d+1) + \sum_i \tilde d_i$ odd, the inequality is satisfied iff it is satisfied strictly. Thus semistability is the same as stability. Then we can apply Lemma \ref{Lem:preimstable} as described above to obtain an ample linearized line bundle $\mathcal{M}'$ on $Y_{d,n}$ to define semistability. For brevity in the later text we make the following definition.
\begin{Def} \label{Def:admissible}
 A tuple $\textbf{d}=(d|d_1, \ldots, d_n) \in \mathbb{Z}_{\geq 0} \times (\mathbb{Q}_{\geq 0})^n$ is called admissible if there exists an integer $k \geq 1$ such that all numbers $\tilde d_i = k d_i$ are integers and such that $k(d+1) + \sum_i \tilde d_i$ is odd. 
\end{Def}
We are now able to define the space $M(d|d_1, \ldots, d_n)$.
\begin{Cor} \label{Cor:quotexist2}
 Let $\textbf{d}=(d|d_1, \ldots, d_n)$ be admissible, then the set 
 \[Y_{d,n}^{ss,\textbf{d}}=Y_{d,n}^{s,\textbf{d}} = J^{-1}((Z_{d}\times (\PP^1)^n)^{s,\mathcal{L}_{\textbf{d}^\sim}})\] admits a uniform geometric quotient $\phi: Y_{d,n}^{s,\textbf{d}} \to M(d|d_1, \ldots, d_n)$. Here $\phi$ is affine and universally submersive and $M(d|d_1, \ldots, d_n)$ is a normal projective variety over $\C$. 
\end{Cor}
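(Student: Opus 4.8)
The plan is to repeat the argument of Corollary \ref{Cor:quotexist1} almost verbatim, replacing the pair $(j, \L)$ there by the pair $(J, \mathcal{L}_{\textbf{d}^\sim})$ here. First I would record what admissibility buys us: it supplies an integer $k \geq 1$ with $\tilde d_i = k d_i \in \Z_{\geq 0}$ and $k(d+1) + \sum_i \tilde d_i$ odd, so that $\mathcal{L}_{\textbf{d}^\sim}$ is a basepoint-free $G$-linearized line bundle on $Z_d \times (\PP^1)^n$, and by the remark following Lemma \ref{Lem:semistableadvanced} its semistable and stable loci coincide, $(Z_d \times (\PP^1)^n)^{ss, \mathcal{L}_{\textbf{d}^\sim}} = (Z_d \times (\PP^1)^n)^{s, \mathcal{L}_{\textbf{d}^\sim}}$. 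This is exactly the hypothesis $Z^{ss} = Z^s$ required to apply Lemma \ref{Lem:preimstable}. If some of $d, d_1, \ldots, d_n$ vanish, I would first pass to the restriction of $J$ and of $\mathcal{L}_{\textbf{d}^\sim}$ to the nonzero factors, as in the footnote to Lemma \ref{Lem:semistableadvanced}, in order to make the line bundle ample; this affects neither the stable locus nor the resulting quotient.

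Next I would choose an ample line bundle $\mathcal{M}_0$ on the projective, normal variety $Y_{d,n}$, pass to a power $\mathcal{M}$ of it which is $G$-linearizable by \cite[Corollary 1.6]{git}, and apply Lemma \ref{Lem:preimstable} to the $G$-equivariant morphism $J : Y_{d,n} \to Z_d \times (\PP^1)^n$, the ample $G$-linearized $\mathcal{M}$, and the basepoint-free $G$-linearized $\mathcal{L}_{\textbf{d}^\sim}$. This produces $N > 0$ such that $\mathcal{M}' = \mathcal{M} \otimes (J^* \mathcal{L}_{\textbf{d}^\sim})^{\otimes N}$ is an ample $G$-linearized line bundle on $Y_{d,n}$ whose semistable and stable loci coincide and equal $J^{-1}((Z_d \times (\PP^1)^n)^{s, \mathcal{L}_{\textbf{d}^\sim}}) = Y_{d,n}^{s,\textbf{d}}$; in particular this locus is independent of the auxiliary choice of $\mathcal{M}$. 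Then, exactly as in the proof of Corollary \ref{Cor:quotexist1}, \cite[Theorem 1.10]{git} yields the uniform geometric quotient $\phi : Y_{d,n}^{s,\textbf{d}} \to M(d|d_1, \ldots, d_n)$, which is affine and universally submersive; reducedness and normality of the target follow from \cite[0.§2 (2)]{git}, and projectivity from the remark above \cite[Converse 1.12]{git}, using the ampleness of $\mathcal{M}'$.

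The only place where the admissibility hypothesis is genuinely used — and hence the single step that deserves care — is the equality $(Z_d \times (\PP^1)^n)^{ss} = (Z_d \times (\PP^1)^n)^{s}$: if $k(d+1) + \sum_i \tilde d_i$ were even, inequality (\ref{eqn:advancedsemistable}) could hold with equality, strictly semistable points would exist, Lemma \ref{Lem:preimstable} would not apply, and one would obtain only a good (categorical) quotient rather than a geometric one. Everything else is a formal consequence of Lemma \ref{Lem:preimstable} together with the cited results of \cite{git}, so I expect no further obstacle.
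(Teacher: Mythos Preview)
Your proposal is correct and is precisely an unpacking of the paper's own proof, which reads in full: ``This is exactly the same proof as for Corollary \ref{Cor:quotexist1}.'' Your additional remarks on the footnote about zero weights and on where admissibility is genuinely used are accurate elaborations, but the logical skeleton is identical to the paper's.
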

\begin{proof}
 This is exactly the same proof as for Corollary \ref{Cor:quotexist1}.
\end{proof}
\todoOld{Check rationality makes no trouble}
\section{Examples and properties} \label{Sect:Properties}
\subsection{Examples}
In the following we will study some of the spaces $\overline{M}_{0,n}(\PP^1 \times \PP^1, (1,d))$ and the corresponding quotient spaces $M(d|d_1, \ldots, d_n)$ in more detail. For $B \subset \{1, \ldots, n\}$ and $0 \leq k \leq d$ with $k \geq 1$ or $|B|\geq 2$, we denote by 
\[D_{B,k} = D(\{1, \ldots, n\} \setminus B, (1,d-k)|B,(0,k))  \subset Y_{d,n}\]
the boundary divisor with general point $(f: C \to \PP^1 \times \PP^1; p_1, \ldots, p_n)$ for $C$ having two irreducible components $C_1, C_2$ carrying the markings $\{1, \ldots, n\} \setminus B$ and $B$ and mapping with degrees $(1,d-k), (0,k)$, respectively.
\subsubsection{\texorpdfstring{$d=0, n\leq2$}{d=0, n<=2}}
It is clear that we have an isomorphism
\[\overline{M}_{0,n}(\PP^1 \times \PP^1, (1,0)) \cong \PP^1 \times \overline{M}_{0,n}(\PP^1,1).\]
For small $n$ the space $\overline{M}_{0,n}(\PP^1,1)$ is easy to describe: 
\[\overline{M}_{0,1}(\PP^1,1) \cong \PP^1, \overline{M}_{0,2}(\PP^1,1) \cong (\PP^1)^2,\]
both via the evaluation maps of the markings.
\detex{
\begin{Lem} \label{Lem:M(0,n)}
 The map $\text{ev}= \text{ev}_1 \times \ldots \times \text{ev}_n: \overline{M}_{0,n}(\PP^1,1) \to (\PP^1)^n$ is an isomorphism for $n \leq 2$.
\end{Lem}
\begin{proof}
 For $n \leq 1$ we have $\overline{M}_{0,n}(\PP^1,1) = {M}_{0,n}(\PP^1,1)$ and in general it holds that 
 \[{M}_{0,n}(\PP^1,1) \cong (\PP^1)^n \setminus \Delta,\]
 where $\Delta$ is the union of all diagonals in $(\PP^1)^n$. This finishes the proof for $n=0,1$.
 
 For $n=2$ we have the open set $(\PP^1)^n \setminus \Delta$ corresponding to maps with smooth source curve and the boundary divisor $D(\emptyset,1; \{1,2\},0)$. Then one sees easily that a geometric point $f: C \to \PP^1$ of this boundary divisor is uniquely determined by $\text{ev}_1(f) = \text{ev}_2(f)$. Hence $\text{ev}$ is a bijection on geometric points and by Proposition \ref{Pro:bijectiso} it is an isomorphism.
\end{proof}
For $n \leq 2$ one sees that $\overline{M}_{0,n}(\PP^1 \times \PP^1, (1,0)) \cong (\PP^1)^{n+1}$ and one sees that the action of $\text{SL}_2$ (or $\text{PGL}_2$) is given by the usual componentwise action.} 

Concerning the spaces $M(0|d_1, \ldots, d_n)$ with $n \leq 2$, one checks using Lemma \ref{Lem:semistableadvanced} that the only cases with nonempty semistable sets occur for $n=2$ and the semistable set does not depend on the choice of $d_1,d_2$. Thus these nonempty spaces $M(0|d_1,d_2)$ are all isomorphic to $M(0|1,1)$. 
\begin{Lem} \label{Lem:M(0;1,1)}
 The space $M(0|1,1)$ is isomorphic to a single point $\text{Spec}(\mathbb{C})$, without isotropy.
\end{Lem}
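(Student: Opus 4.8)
The plan is to make the GIT quotient completely explicit, following the same pattern as for the spaces $M(0|d_1,d_2)$ in general: identify $Y_{0,2}$ with $(\PP^1)^3$, read off the stable locus from Mumford's numerical criterion, and recognize it as a single $\text{SL}_2$-orbit. To begin, the tuple $\textbf{d}=(0|1,1)$ is admissible: with $k=1$ we have $\tilde d_1=\tilde d_2=1$ and $k(d+1)+\tilde d_1+\tilde d_2=3$, which is odd. Hence by Corollary \ref{Cor:quotexist2} the geometric quotient $M(0|1,1)=Y_{0,2}^{s,\textbf{d}}/\text{SL}_2$ exists and is a normal projective variety, and it suffices to show it consists of a single reduced point carrying trivial stabilizer.

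Next I would unwind the map $J$ of Lemma \ref{Lem:Jconstruction} on $Y_{0,2}=\overline{M}_{0,2}(\PP^1\times\PP^1,(1,0))$. A $\C$-point here is a stable map $f=(\pi,\varphi):(C;p_1,p_2)\to\PP^1\times\PP^1$ of bidegree $(1,0)$; the map $\varphi$ has degree $0$, hence is constant with some value $q\in\PP^1$, and $\pi$ contracts every component of $C$ except the unique component mapping isomorphically to $\PP^1$, so $f_*[C]=\PP^1\times\{q\}$ is a horizontal section. Thus $j(f)\in Z_0=\PP(H^0(\PP^1\times\PP^1,\OO(0,1)))$ is the section cutting out $\PP^1\times\{q\}$, and identifying $Z_0\cong\PP^1$ $\text{SL}_2$-equivariantly via $[s]\mapsto q$ we get $J(f)=(q,x_1,x_2)\in(\PP^1)^3$ with $x_i=\pi_1(\text{ev}_i(f))=\pi(p_i)$. (This is just the identification $Y_{0,2}\cong\PP^1\times\overline{M}_{0,2}(\PP^1,1)\cong(\PP^1)^3$ noted above, rendered concretely.)

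Then I would apply Lemma \ref{Lem:semistableadvanced} with $d=0$ and $d_1=d_2=1$. Here $\nu_p([s])=0$ for all $p$, since the divisor of $s$ is horizontal, while $\delta_{p=\text{fix}}([s])=1$ precisely for $p=q$: the underlying map of degree $0$ is the constant map $x\mapsto q$, whose unique fixed point is $q$. Using $\tilde d_i/k=d_i=1$, the (semi)stability inequality (\ref{eqn:advancedsemistable}) becomes $[p=q]+\#\{i:x_i=p\}\le 3/2$ for every $p\in\PP^1$, i.e., since the left-hand side is an integer, $[p=q]+\#\{i:x_i=p\}\le 1$ for all $p$ — which says exactly that $q,x_1,x_2$ are pairwise distinct. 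This locus lies over $\text{Rat}_0\times((\PP^1)^2\setminus\Delta)$ (note $\text{Rat}_0=Z_0$, as a bidegree $(0,1)$ section never has a vertical component), so Lemma \ref{Lem:Jconstruction} yields a $\text{SL}_2$-equivariant isomorphism $Y_{0,2}^{s,\textbf{d}}\cong\{(q,x_1,x_2)\in(\PP^1)^3:\ q,x_1,x_2\text{ pairwise distinct}\}$, with $\text{SL}_2$ acting diagonally. Since $\text{SL}_2$, and a fortiori $\text{PGL}_2$, acts transitively on ordered triples of distinct points of $\PP^1$, this is a single orbit, so $M(0|1,1)=\Spec(\C)$; and the $\text{PGL}_2$-stabilizer of such a triple is trivial (a Möbius transformation fixing three distinct points is the identity), so the point carries no isotropy.

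Everything above is elementary bookkeeping; the one place calling for care — which I would flag as the main (minor) obstacle — is to not overlook the term $\delta_{p=\text{fix}}$ in Lemma \ref{Lem:semistableadvanced}. It is precisely this term that forbids $x_i=q$ and thereby collapses the stable locus to a single orbit; without it one would only get $x_1\ne x_2$ and hence a positive-dimensional quotient.
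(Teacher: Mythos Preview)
Your proof is correct and follows essentially the same approach as the paper's: identify $Y_{0,2}^{ss,(0|1,1)}$ with $(\PP^1)^3\setminus\Delta$ via $J$ and the semistability criterion, then use strict $3$-transitivity of $\text{PGL}_2$ on $\PP^1$. Your write-up is in fact more careful than the paper's, spelling out admissibility, the precise role of the $\delta_{p=\text{fix}}$ term, and why Lemma~\ref{Lem:Jconstruction} applies.
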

\begin{proof}
 The semistable points in $\overline{M}_{0,2}(\PP^1 \times \PP^1, (0,1)) \cong (\PP^1)^3$ are exactly those where the two marked points have different first coordinates $p_1, p_2$ and both of them are not fixed points of the induced degree $0$ morphism, which is constant equal to $q \in \PP^1$. Under the above ismorphism, this corresponds exactly to the point $(q,p_1, p_2)$. Hence 
 \[\overline{M}_{0,2}(\PP^1 \times \PP^1, (0,1))^{ss,(1,1)} = (\PP^1)^3 \setminus \Delta.\]
 As the action of $\text{PGL}_2$ on $\PP^1$ is strictly $3$-transitive, the quotient is isomorphic to a single point without isotropy.
\end{proof}

\subsubsection{\texorpdfstring{$d=1, n=1$}{d=1,n=1}} \label{Sect:d1n0}
First we note that the only nonempty moduli spaces $M(1|d_1)$ are those with $0<d_1<2$ and all of them are isomorphic to $M(1|1)$ (by an analysis of stable loci using Lemma \ref{Lem:semistableadvanced}).

For $\overline M_{0,1}(\PP^1 \times \PP^1 , (1,1))$ we first look at the locus of stable maps with smooth source curve and find
\[M_{0,1}(\PP^1 \times \PP^1 , (1,1)) \cong \text{PGL}_2 \times \PP^1\]
by Lemma \ref{Lem:Jconstruction}. Here $\text{PGL}_2$ acts on itself by conjugation and on $\PP^1$ in the usual way. There are two boundary divisors on $Y_{1,1}$, namely $D_{\emptyset,1}$ and $D_{\{1\},1}$. 
Then we have the following. \todoOld{Identify as toric stack. also identify universal families}
\begin{Lem} \label{Lem:M(1,1)}
 Consider coordinates $[X:Y],[S:T]$ on $\PP^1 \times \PP^1$ and the rational map 
 \begin{align*}
 \varphi: Z_1=\PP(H^0(\mathcal{O}_{\PP^1 \times \PP^1}(1,1)) &\dashrightarrow \PP^1\\
 [aXS+bXT+cYS+dYT] &\mapsto  [-bc+da:b^2-2bc+c^2].
 \end{align*}
 It is $\text{PGL}_2$-invariant and the image of 
 \[j:\overline M_{0,1}(\PP^1 \times \PP^1 , (1,1))^{ss,(1|1)} \to Z_1\]
 lies in the domain of definition of $\varphi$. Hence $\varphi \circ j$ induces a map $\psi: M(1|1) \to \PP^1$ and this map is an isomorphism. All points in $M(1|1)$ have trivial $\text{PGL}_2$-stabilizers, except for the point corresponding to the orbit of 
 \[f_0 = \left( \left[\begin{matrix}
                 -1 & 0 \\ 0 & 1
                \end{matrix} \right], [1:1]
\right) = ([z \mapsto -z], z=1) \in \text{PGL}_2 \times \PP^1\]
 which has a $\mathbb{Z}/2 \mathbb{Z}$-isotropy given by
 \[B_0=\left[\begin{matrix}
                 0 & 1 \\ 1 & 0
                \end{matrix} \right] = [z \mapsto 1/z] \in \text{PGL}_2.\]
\end{Lem}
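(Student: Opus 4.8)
The plan is to verify the claim in several discrete steps, most of which amount to concrete but finite computations on $Z_1 = \PP(H^0(\mathcal{O}_{\PP^1 \times \PP^1}(1,1))) \cong \PP^3$. First I would check that $\varphi$ is $\text{PGL}_2$-invariant: using the explicit description of the $\text{SL}_2$-action on the coefficients $a,b,c,d$ coming from the diagonal action on $\PP^1 \times \PP^1$, one computes how the two quantities $-bc+da$ and $b^2 - 2bc + c^2$ transform. The key observation is that the $(1,1)$-form $\gamma = aXS + bXT + cYS + dYT$ can be viewed as a bilinear form, and $-bc + da = \det\begin{pmatrix} a & b \\ c & d\end{pmatrix}$ is the determinant of the associated matrix, which is $\text{SL}_2 \times \text{SL}_2$-invariant (here both factors act by the same group element), while $b^2 - 2bc + c^2 = (b-c)^2$, where $b - c$ is (up to sign) the coefficient measuring the failure of the matrix to be symmetric; conjugating by $g \in \text{SL}_2$ scales $b-c$ by $\det(g) = 1$ times a factor that cancels in the projectivized ratio. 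So the ratio $[-bc+da : (b-c)^2]$ is genuinely $\text{PGL}_2$-invariant, which gives the first claim.

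Next I would analyze the indeterminacy locus of $\varphi$ and show it misses the image of $j$ on the semistable locus. The map $\varphi$ is undefined exactly where $-bc+da = 0$ and $b = c$, i.e. where the associated bilinear form is both degenerate and symmetric. Geometrically, a degenerate symmetric $(1,1)$-form on $\PP^1 \times \PP^1$ vanishes on a union of a horizontal and a vertical line meeting on the diagonal — which corresponds in $Y_{1,1}$ to a point whose vertical section sits over a fixed point of the (constant or degree-1) induced map. Using Lemma 3.7 (the semistability criterion of Lemma \ref{Lem:semistableadvanced} with $\mathbf{d} = (1|1)$: the condition reads $\nu_p + \delta_{p=\text{fix}} + \sum_{i:p_i=p} 1 < (2 + 1)/2 = 3/2$), I would check that no semistable point of $Y_{1,1}$ maps under $j$ into this bad locus: a vertical section over a fixed point already violates the inequality. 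Hence $\varphi \circ j$ is a well-defined morphism on $Y_{1,1}^{ss,(1|1)}$, and since it is $\text{PGL}_2$-invariant it descends to $\psi : M(1|1) \to \PP^1$.

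Then I would prove $\psi$ is an isomorphism. Since $M(1|1)$ is a normal projective variety (Corollary \ref{Cor:quotexist2}) and $\PP^1$ is smooth, by Proposition \ref{Pro:bijectiso} it suffices to show $\psi$ is a bijection on closed points — or even just that it is non-constant and injective, since a non-constant morphism from an irreducible projective curve to $\PP^1$ that is injective on points is an isomorphism onto its image, which is then all of $\PP^1$. Using the identification $M_{0,1}(\PP^1\times\PP^1,(1,1)) \cong \text{PGL}_2 \times \PP^1$ from Lemma \ref{Lem:Jconstruction} (conjugation action on the first factor, standard action on the second), I would compute $\varphi \circ j$ on a conjugacy-class representative: a degree-$1$ self-map is, up to conjugation, either $z \mapsto \lambda z$ or $z \mapsto z+1$, and the marked point can then be normalized, so the quotient is at most one-dimensional and the invariant should come out to be essentially the multiplier $\lambda$ (suitably Möbius-transformed to land in $\PP^1$), which separates conjugacy classes of the pair (map, marked point). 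I would check surjectivity by exhibiting preimages of $0, 1, \infty$ and that $\psi$ is non-constant, and injectivity by the normal-form analysis, also covering the two boundary divisors $D_{\emptyset,1}$ and $D_{\{1\},1}$ (which contribute the two "missing" points, e.g. $\lambda = 0$ and $\lambda = \infty$). Finally, for the stabilizer statement I would compute $\text{Stab}_{\text{PGL}_2}(f)$ for each $f = ([z\mapsto \lambda z], z = z_0)$: the conjugation-stabilizer of $z\mapsto \lambda z$ (with $\lambda \neq 0, 1$, generically) is the diagonal torus, and requiring it to also fix the marked point $z_0 \in \PP^1$ forces the stabilizer to be trivial unless $z_0 \in \{0,\infty\}$ — but for those the map is not semistable under $(1|1)$ unless handled via the special symmetric case $\lambda = -1$, $z_0 = 1$, whose stabilizer is generated by $z \mapsto 1/z$, giving the stated $\mathbb{Z}/2\mathbb{Z}$.

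The main obstacle I expect is the stabilizer/bijectivity bookkeeping at the special point and along the two boundary divisors: one must carefully match up the stable-maps-theoretic description of the boundary (a contracted or vertical component meeting the main component) with the GIT-semistability inequality of Lemma \ref{Lem:semistableadvanced} and with the normal forms $z \mapsto \lambda z$, $z \mapsto z+1$, making sure exactly the point $f_0 = ([z \mapsto -z], z=1)$ survives with nontrivial isotropy and that $\psi$ is injective there (i.e. no other orbit maps to the same value of the invariant). The $\text{PGL}_2$-invariance of $\varphi$ and the indeterminacy check are, by contrast, routine once the determinant/symmetry interpretation of the coordinates on $Z_1$ is in hand.
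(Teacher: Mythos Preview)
Your overall strategy matches the paper's: verify $\text{PGL}_2$-invariance of $\varphi$, check the semistable locus avoids the indeterminacy, reduce $\psi$ to an explicit invariant on normal forms, and use Proposition~\ref{Pro:bijectiso} for the isomorphism. Your bilinear-form interpretation of the coordinates on $Z_1$ (so that $-bc+da$ is the determinant and $b-c$ the antisymmetric part) is a nice conceptual shortcut the paper does not spell out; the paper instead writes out $j$ on $\text{PGL}_2 \times \PP^1$ and identifies $\varphi \circ j$ directly as $[\det(A):\text{tr}(A)^2]$, deriving invariance a posteriori from equivariance of $j$.

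There is one concrete error in your boundary analysis. You expect both $D_{\emptyset,1}$ and $D_{\{1\},1}$ to survive in the quotient and supply the ``two missing points'' $\lambda = 0, \infty$. In fact $D_{\{1\},1}$ is \emph{entirely unstable} for $(1|1)$: at the position $p$ of the vertical section the marking (of weight $1$) sits on that section, so the left side of the inequality in Lemma~\ref{Lem:semistableadvanced} is $\nu_p + d_1 = 1 + 1 = 2 > 3/2$ regardless of whether $p$ is a fixed point. Only $D_{\emptyset,1}$ contributes, and (as the paper checks) its semistable part is a single $\text{PGL}_2$-orbit mapping to $[0:1]$. The opposite point $[1:0]$ of $\PP^1$ is \emph{not} a boundary contribution at all: it is the image of the interior orbit $f_0 = ([z \mapsto -z], 1)$, since $[\det:\text{tr}^2] = [-1:0]$ there. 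This is exactly the orbit with $\mathbb{Z}/2\mathbb{Z}$-isotropy, so your bijectivity and stabilizer bookkeeping need to be reorganized around this: one boundary point, and the special interior point $f_0$ covering the other pole. Once you make this correction the rest of your plan goes through essentially as in the paper.
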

\begin{proof}
 From the semistability condition in Lemma \ref{Lem:semistableadvanced} it follows that
 \[M_{0,1}(\PP^1 \times \PP^1 , (1,1))^{ss,(1|1)} = \text{PGL}_2 \times \PP^1 \setminus \underbrace{\{([A],p): Ap = p\}}_{=\text{Fix}}.\]
 We will first study the conjugation action of $\text{PGL}_2$ on itself. Here it is clear that in each orbit of a given $[A]\in \text{PGL}_2$ there is a matrix in Jordan canonical form and by scaling we may pick the representative $A'$ of this matrix in $\text{PGL}_2$ to have entries $\alpha,1$ on the diagonal. 
 
 Now we determine the stabilizer under the conjugation action. If $B \in \text{GL}_2$ satisfies $B A' B^{-1} = \lambda A'$ for some $\lambda \in \mathbb{C}^*$ then taking trace and determinant, we have 
 \begin{align*}
  \alpha + 1 = \text{tr}(A') = \text{tr}(B A' B^{-1})  =  \text{tr}(\lambda A') = \lambda (\alpha +1),\\
  \alpha = \text{det}(B A' B^{-1}) = \text{det}(\lambda A') = \lambda^2 \alpha.
 \end{align*}
 Thus $\lambda = \pm 1$ and $\lambda = 1$ for $\alpha \neq -1$.
 
 We first look for the solutions $B$ of $BA'=A'B$, so the equation for $\lambda=1$. If $A$ is diagonalizable, the matrix $A'$ has the form 
 \[A'=\left( \begin{matrix}
       \alpha & 0\\ 0 & 1
      \end{matrix}\right).
 \]
 Note that for $\alpha= 1$, the induced map $[A]\in \text{PGL}_2$ is the identity. Therefore all points are fixed and thus this case does not occur in the semistable set above. Excluding this case let $B= \left(\begin{matrix}
       a & b\\ c & d
      \end{matrix} \right)$ with $B A' = A' B$. Spelling this out means exactly $\alpha b = b$, $\alpha c = c$. So 
 as $\alpha \neq 1$, we have $b=c=0$, so $B$ is a diagonal matrix. Note that the diagonal matrices in $\text{PGL}_2$ are  isomorphic to $\mathbb{C}^*$. But now we also want to take into account the additional marked point above. The complement of the fixed points of $A'$ are the points $p$ in $\PP^1 \setminus \{[0:1],[1:0]\}$. This eliminates the remaining stabilizing elements in $\mathbb{C}^*$, for instance moving the marked point to $[1:1]$.
 
 On the other hand, for $\alpha=-1$ we have to check the case $BA' = - A'B$. This amounts to $-a=a, -d=d$. Thus in this case the stabilizer of $[A']$ also contains antidiagonal matrices. As above, we can use the $\mathbb{C}^*$-part of the stabilizer to move the marking to $[1:1]$. Of the antidiagonal matrices, exactly the class of the element $B_0=\left( \begin{matrix}
       0 & 1\\ 1 & 0
      \end{matrix}\right)$ fixes the marked point $[1:1]$. This gives precisely the stabilizer of 
 the point $f_0$ above.\\
 Finally there is the case where $A$ is not diagonalizable. Necessarily its eigenvalues have to coincide, amounting to $\alpha=1$ (and thus $\lambda=1$) above, and the matrix $A'$ has the form
 \[A'=\left( \begin{matrix}
       1 & 1\\ 0 & 1
      \end{matrix}\right).
 \]
 The equation $BA' = A'B$ gives $c=0$, $a=d$, so the stabilizer consists of matrices $B= \left(\begin{matrix}
       a & b\\ 0 & a
      \end{matrix} \right)$. By scaling the diagonal to $a=1$, we see that this is exactly isomorphic to
 $\mathbb{C}$. But now the only fixed point of $A'$ is $[1:0]$, so we see that the marked point $p \in \mathbb{C} = \PP^1 \setminus \{[1:0]\}$ eliminates the remaining stabilizer. 
 
 We also have to analyze the boundary. Note that by Lemma \ref{Lem:semistableadvanced}, the entire divisor $D_{\{1\},1}$ is unstable. 
 On the other hand we know from \cite{fultonpandha} that there is a bijective gluing morphism
 \[\overline{M}_{0,2}(\PP^1 \times \PP^1, (1,0)) \times_{\PP^1 \times \PP^1} \underbrace{\overline{M}_{0,1}(\PP^1 \times \PP^1, (0,1))}_{=\PP^1 \times \PP^1} \to D_{\emptyset,1}.\]
 We see that the left side is actually isomorphic to $\overline{M}_{0,2}(\PP^1 \times \PP^1, (1,0))$. One checks that the preimage of the (semi)stable locus $D_{\emptyset,1}^{ss,(1|1)}$ is precisely $\overline{M}_{0,2}(\PP^1 \times \PP^1, (1,0))^{ss,(0|1,1)}$. 
 As seen in the proof of Lemma \ref{Lem:M(0;1,1)} this is exactly the variety $\text{PGL}_2$ and the corresponding action is given by left-multiplication. Hence the boundary divisor $D_{\emptyset,1}$ in $M(1|1)$ is isomorphic to a point (without stabilizer).
 
 Now we are ready to prove that the map $\psi$ is bijective on closed points. An element of the boundary divisor $D_{\emptyset, 1}$ corresponds to a horizontal section $[e:f]\times \PP^1$ and a vertical section $\PP^1 \times [g:h]$ with $[e:f] \neq [g:h]$. Under $j$ it maps to sections of the form
 \[(Xf-Ye)(Sh-Tg) = fh XS - fg XT - eh YS + eg YT \in H^0(\PP^1 \times \PP^1, \mathcal{O}(1,1)).\]
 Thus the image under $\psi$ is 
 \begin{align*}&[-(-fg)(-eh)+(eg)(fh):(fg)^2 - 2 (-fg)(-eh) + (eh)^2]\\ = &[0:(fg-eh)^2] = [0:1],\end{align*}
 where we use $fg-eh \neq 0$ as $[e:f] \neq [g:h]$. 
 
 On the other hand we look at $(\text{PGL}_2 \times \PP^1) \setminus \text{Fix}$.
 Note that $[A] = [(a_{i,j})_{i,j=1}^2]\in \text{PGL}_2$ corresponds to the map 
 \[\PP^1 \to \PP^1 \times \PP^1, [x:y] \mapsto ([x:y],[a_{1,1}x + a_{1,2}y: a_{2,1}x + a_{2,2}y]).\]
 Under $j$ the point $([A],p)$ maps to $[S(a_{2,1}X + a_{2,2}Y)-T(a_{1,1}X + a_{1,2}Y)]$. Hence our proposed map $\psi$ here simply takes the form
 \[\psi([A],p) = [a_{1,1}a_{2,2}-a_{2,1}a_{1,2}:a_{1,1}^2 + 2a_{1,1}a_{2,2} + a_{2,2}^2]= [\text{det}(A): \text{tr}(A)^2].\]
 Note that this map is independent of the choice of representative $A$ in the class $[A] \in \text{PGL}_2$ and also invariant under the conjugation action. Because $j$ is dominant and $\text{PGL}_2$-equivariant, we also obtain that $\varphi$ is $\text{PGL}_2$ invariant.
 
 We know $\text{det}(A)\neq 0$ and by scaling we may reach $\text{det}(A)=1$. The scaling constant is unique up to a sign. On the other hand if we know $\psi([A],p)$ then $\text{tr}(A)$ is determined up to a sign, giving us the eigenvalues of $A$ up to a common factor $\pm 1$. But this specifies the Jordan canonical form (as the identity matrix is excluded). Thus $\psi$ in injective on closed points. On the other hand one sees quickly that every point $[1:t]$ is in the image of $\psi$. Thus by Proposition \ref{Pro:bijectiso}, the space $M(1|1)$ is isomorphic to $\PP^1$ via the map $\psi$. 
\end{proof}
\subsubsection{\texorpdfstring{$d=2, n=0$}{d=2,n=0}}
\begin{Lem} \label{Lem:dn20}
 For $d=2,n=0$ the map $j:Y_{d,n} \to Z_d$ is an isomorphism over $Z_d^s$ and thus the map $\tilde j : M(2,0) \to M_{2}^s$ is also an isomorphism. Hence, the space $M(2,0)$ is isomorphic to $\mathbb{P}^2$.
\end{Lem}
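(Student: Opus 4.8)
The plan is to deduce the statement from the explicit description of the stable locus $Z_2^{s}$ provided by Lemma~\ref{Lem:numstability}, combined with the ``comb'' shape of genus $0$ stable maps of class $(1,d)$. Since $n=0$, the factor $(\PP^1)^n$ is a point and the map $J$ of Lemma~\ref{Lem:Jconstruction} coincides with $j$; in particular $j$ is already known to restrict to an isomorphism over $\mathrm{Rat}_2\subset Z_2$. Recall also that $j:Y_{2,0}\to Z_2$ is a morphism of projective varieties, hence proper; that $Y_{2,0}^{s}=j^{-1}(Z_2^{s})$ by Corollary~\ref{Cor:quotexist1}; and that $Z_2^{s}=Z_2^{ss}$ by Corollary~\ref{Cor:deven}. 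It therefore suffices to show that $j$ restricts to a bijection on closed points $Y_{2,0}^{s}\to Z_2^{s}$, for then $j$ induces an isomorphism $Y_{2,0}^{s}\cong Z_2^{s}$ by Proposition~\ref{Pro:bijectiso}; since this isomorphism is $G$-equivariant and $M(2,0)$, $M_2^{s}$ are the (unique) geometric quotients of $Y_{2,0}^{s}$, $Z_2^{s}$ by $G$, the induced map $\tilde j:M(2,0)\to M_2^{s}$ is an isomorphism as well.

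Next I would read off $Z_2^{s}$ from Lemma~\ref{Lem:numstability}: for $d=2$, a point $\Gamma\in Z_2$ is stable if and only if every vertical section of $\Gamma$ has multiplicity exactly $1$ and lies over a point which is not a fixed point of the residual horizontal map. As the total bidegree is $(1,2)$, this leaves only three strata: (i) $\mathrm{Rat}_2$, where there is no vertical section and the horizontal part is the graph of a degree $2$ map; (ii) one simple vertical section over a point $p$ together with the graph of a degree $1$ map $\psi$ with $\psi(p)\neq p$; (iii) two simple vertical sections over distinct points $p_1\neq p_2$ together with a constant horizontal section $\PP^1\times\{q\}$ with $p_1,p_2\neq q$. (Every configuration with a vertical section of multiplicity $\geq 2$, or a simple vertical section over a fixed point, is unstable.)

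To prove bijectivity of $j$ over $Z_2^{s}$, note first that surjectivity is automatic: $j(Y_{2,0})$ is closed and contains the dense subset $\mathrm{Rat}_2$, hence equals $Z_2$, and any preimage of a point of $Z_2^{s}$ lies in $j^{-1}(Z_2^{s})=Y_{2,0}^{s}$. For injectivity I would use that in a genus $0$ stable map $f:C\to\PP^1\times\PP^1$ of class $(1,d)$ there is a unique component $C_0$ on which $\pi_1\circ f$ is an isomorphism, and every connected component of $C\setminus C_0$ is contracted by $\pi_1\circ f$ and hence maps into a single fibre $\{p\}\times\PP^1$, so $C$ is a comb with handle $C_0$. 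Over a point of $Z_2^{s}$ all vertical multiplicities equal $1$, so each tooth of the comb must be a single $\PP^1$ mapping isomorphically onto its vertical section, $C_0$ maps isomorphically onto the horizontal section, and the positions of the attaching nodes are then forced by the requirement that they map to the intersection points of the horizontal and vertical components. A short case check against the list above---one tooth with $\deg(\pi_2\circ f|_{C_0})=1$ in case (ii); two teeth with $\deg(\pi_2\circ f|_{C_0})=0$ in case (iii), where $p_1\neq p_2$ ensures that the comb is a nodal curve; and a bidegree computation excluding the possibility that the horizontal graph is ``split off'' $C_0$---shows that the stable map is uniquely determined in each case. Hence $j$ is bijective on closed points over $Z_2^{s}$, and we conclude as above that $M(2,0)\cong M_2^{s}$.

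Finally, it remains to identify $M_2^{s}$ with $\PP^2$. This is the classical description of the GIT compactification of the moduli space of quadratic rational maps; I would quote it from Silverman~\cite{silverman} (who builds on Milnor~\cite{milnorquaddyn}). Alternatively one can compute directly, using $Z_2=\PP(\mathrm{Sym}^2 V\otimes V)$ with $V=\mathbb{C}^2$ and the invariant theory of a binary cubic together with a binary linear form, that the graded ring of $\mathrm{SL}_2$-invariants on $Z_2$ has $\mathrm{Proj}$ equal to $\PP^2$. I expect the main obstacle to be essentially bookkeeping: organizing the finite list of strata of $Z_2^{s}$ and the corresponding comb-type degenerations in $Y_{2,0}$ so that the fibre of $j$ over each point is visibly a single reduced point; once Lemma~\ref{Lem:numstability} and Proposition~\ref{Pro:bijectiso} are available and the $\PP^2$-identification is cited, everything else is formal.
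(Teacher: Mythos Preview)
Your proposal is correct and follows essentially the same approach as the paper: reduce to a bijection on closed points over $Z_2^{s}$ via Lemma~\ref{Lem:numstability}, then invoke Proposition~\ref{Pro:bijectiso}, and cite Silverman/Milnor for $M_2^{s}\cong\PP^2$. The only notable difference is in the packaging of injectivity: where you run an explicit comb-and-tooth case analysis, the paper observes more directly that for $[s]\in Z_2^{s}\setminus\mathrm{Rat}_2$ the vanishing locus $\Gamma=V(s)\subset\PP^1\times\PP^1$ is itself a nodal genus~$0$ curve, so any stable map $f:C\to\PP^1\times\PP^1$ with $j(f)=[s]$ factors through an isomorphism $C\xrightarrow{\sim}\Gamma$ --- which simultaneously handles surjectivity (take the inclusion $\Gamma\hookrightarrow\PP^1\times\PP^1$) and uniqueness. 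Your stratum-by-stratum argument reaches the same conclusion and is perfectly valid; just be careful that the passage ``so $C$ is a comb'' is not automatic for general stable maps of class $(1,d)$ but follows here from the multiplicity-$1$ constraint together with $n=0$ stability, as you effectively argue in the next sentence.
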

\begin{proof}
 We will show that the map $j:Y_{d,n}^s \to Z_d^s$ is bijective on closed points and then apply Proposition \ref{Pro:bijectiso}. We already know that over $\text{Rat}_d \subset Z_d^s$, this morphism is an isomorphism, hence bijective, by Lemma \ref{Lem:Jconstruction}. By Lemma \ref{Lem:numstability} all remaining points in $Z_d^s \setminus \text{Rat}_d$ are classes $[s]$ of sections $s \in H^0(\PP^1 \times \PP^1, (d,1))$ corresponding to graphs $\Gamma=V(s) \subset \PP^1\times\PP^1$ with one or two vertical sections $v_i$ over $p_i \in \PP^1$ of multiplicity exactly $1$ such that $p_i$ is not a fixed point of the induced map $\tilde \varphi: \PP^1 \to \PP^1$ of degree  $1$ or $0$. But one sees easily that the inclusion $\Gamma \hookrightarrow \PP^1 \times \PP^1$ is a stable map of genus $0$ curves corresponding to a point in $Y_{d,n}^s$ mapping to $[s]$. Thus $j: Y_{d,n}^s \to Z_d^s$ is surjective. On the other hand if $f:C \to \PP^1 \times \PP^1$ is a stable map such that $j(f) = [s]$, one sees that it defines an isomorphism $C \to \Gamma$. 
 Thus up to isomorphism of stable maps, the inclusion $C=\Gamma \hookrightarrow \PP^1 \times \PP^1$ is the only curve in the preimage of $[s]$, which shows injectivity.
\end{proof}

\subsection{Isotropy and Singularities}
In this section we first show that the spaces $M(d|d_1, \ldots, d_n)$ we constructed have nice singularities. This will be important for analyzing their Picard group. In the following, let $\textbf{d}=(d|d_1, \ldots, d_n)$ be admissible.
\begin{Lem} \label{Lem:codimauto}
  Let $d\geq 0, n\geq 0$ with $(d,n) \neq (0,0), (0,1), (1,0)$. Then for the action of $\tilde G= \text{PGL}_2$ let $A_{d,n} \subset \overline M_{0,n}(\PP^1 \times \PP^1, (1,d))$ be the locus of points where $\tilde G$ acts with nontrivial stabilizer. Then $A_{d,n}$ is of codimension at least $1$ for all $d,n$ as above. Even more, for $d \geq 1$ and $(d,n) \neq (1,1), (2,0)$ the set $A_{d,n}$ is of codimension at least $2$ and for $d=0$ we have that 
  \[A_{d,n} \setminus D_{\{1, \ldots, n\}, 0} \subset \overline M_{0,n}(\PP^1 \times \PP^1, (1,d)) \setminus D_{\{1, \ldots, n\}, 0} \]
  is of codimension at least $2$. 
\end{Lem}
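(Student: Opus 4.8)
The plan is to describe $A_{d,n}$ via the fixed loci of nontrivial subgroups of $\tilde G=\text{PGL}_2$ and to bound their dimensions. First, one records that for a closed point $((\pi,\varphi);p_1,\ldots,p_n)$ of $Y_{d,n}$ with distinguished component $C_0$, an element $g\in\tilde G$ lies in its stabilizer exactly when there is an automorphism $\theta$ of $(C;p_\bullet)$ with $\theta(C_0)=C_0$, with $\pi\circ\theta|_{C_0}=g\circ\pi|_{C_0}$, and with $\varphi\circ\theta=g\circ\varphi$; moreover $\theta$ is then uniquely determined, so a connected subgroup $H$ contained in the stabilizer genuinely acts on $C$ fixing every component, and under the identification $C_0\cong\PP^1$ by $\pi$ the self-map $\varphi|_{C_0}$ commutes with $H$. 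Since the stabilizer of a point is a closed subgroup of $\text{PGL}_2$, a nontrivial one contains a conjugate of the diagonal torus $\mathbb{G}_m$, of a unipotent $\mathbb{G}_a$, or of $\mu_p=\langle z\mapsto\zeta_p z\rangle$ for some prime $p$; writing $Y^H$ for the closed locus of points whose stabilizer contains the fixed subgroup $H$, we get $A_{d,n}=\text{PGL}_2\cdot Y^{\mathbb{G}_m}\cup\text{PGL}_2\cdot Y^{\mathbb{G}_a}\cup\bigcup_p\text{PGL}_2\cdot Y^{\mu_p}$, where only finitely many $p$ contribute above any fixed bound. Because the fibre of $\text{PGL}_2\times Y^H\to Y_{d,n}$ over any point of its image contains a coset of the normalizer $N(H)$, which has dimension $\geq 1$ in all cases, one has $\dim(\text{PGL}_2\cdot Y^H)\leq\dim Y^H+2$. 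Thus it suffices to show $\dim Y^H\leq\dim Y_{d,n}-3$ for the first claim, and $\dim Y^H\leq\dim Y_{d,n}-4$ for the second, in each case away from the listed exceptions.

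For $H=\mathbb{G}_m$ or $\mathbb{G}_a$: since $\varphi|_{C_0}$ and each $\varphi|_T$ (for $T$ a subtree of $C$ not meeting $C_0$, on which $H$ acts) intertwine the $H$-action with a character of $H$, every component of $C$ carries a $\varphi$-map of degree at most $1$, and the remaining $H$-equivariant data — positions of nodes and markings, which must be concentrated at the $\leq 2$ fixed points of $H$ on each component — is rigid up to a bounded moduli. A short count then gives $\dim Y^H\leq\dim Y_{d,n}-4$ for all $(d,n)$ with $d+n\geq 3$, hence for all relevant $(d,n)$ except $(1,1)$ and $(2,0)$. For $d=0$ there is one extra phenomenon: the locus $\text{PGL}_2\cdot Y^{\mathbb{G}_m}$ has a component of codimension $1$, namely the divisor $D_{\{1,\ldots,n\},0}$ — a curve $C_0$ meeting a $\varphi$-contracted $\PP^1$ carrying all markings, in a point that is not a fixed point of the constant map $\varphi|_{C_0}$ — which is precisely why $D_{\{1,\ldots,n\},0}$ is removed in the statement; the other components still have codimension $\geq 2$.

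The main work is the case $H=\mu_p$, which we stratify by the combinatorial type of the $\mu_p$-action on the dual tree of $C$. The component $C_0$ is preserved, with $\mu_p$ acting on it as $z\mapsto\zeta_p z$ (fixed points $0,\infty$); the restriction $\varphi|_{C_0}$ is a $\mu_p$-equivariant self-map of $\PP^1$ of some degree $e_0\leq d$, and the family of such maps has dimension $\leq 2(e_0+1)/p$. Away from $\{0,\infty\}$ on $C_0$, the special points and subtrees of $C$ come in free $\mu_p$-orbits of size $p$ — each contributing to the moduli only the data of a single subtree with an arbitrary stable map of bidegree $(0,\ast)$ — while the subtrees attached at $0$ or $\infty$ are $\mu_p$-invariant and carry $\mu_p$-equivariant data, to which the same analysis applies recursively. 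Since, on passing to the moduli of $Y^{\mu_p}$, the total $\varphi$-degree and the number of markings spread over $C_0$ and the free orbits get divided by at least $p$, one obtains an estimate $\dim Y^{\mu_p}\leq\frac{1}{p}(2d+n+1)+c$ with an explicit small constant $c$; for $p\geq 2$ this yields $\dim Y^{\mu_p}\leq\dim Y_{d,n}-4$ once $d+n$ is large enough. The finitely many remaining small cases are checked by hand using the explicit descriptions from Lemmas \ref{Lem:M(0;1,1)}, \ref{Lem:M(1,1)} and \ref{Lem:dn20}; in particular $(2,0)$ is genuinely exceptional, since by Lemma \ref{Lem:dn20} the space $M(2,0)\cong\PP^2$ carries a codimension-$1$ locus of maps with extra automorphisms (e.g. $z\mapsto z^2$), and likewise $(1,1)$ by Lemma \ref{Lem:M(1,1)}.

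The first claim (codimension $\geq 1$) is in fact immediate from the description of stabilizers: for $(d,n)\notin\{(0,0),(0,1),(1,0)\}$ the general point of $Y_{d,n}$ is an irreducible curve equipped with a general degree-$d$ self-map $\varphi$ and general markings, and its stabilizer is trivial — because for $d\geq 2$ a general degree-$d$ self-map has trivial centralizer in $\text{PGL}_2$, and for $d\leq 1$ its centralizer is a torus whose two fixed points are missed by general markings (this last point is exactly what fails for $(0,0)$, $(0,1)$, $(1,0)$, where the general point already has positive-dimensional stabilizer). I expect the main obstacle to lie in the $\mu_p$-stratification of the previous paragraph: one has to organize the recursion into the $\mu_p$-fixed subtrees carefully and keep the additive constant $c$ small enough that exactly $(1,1)$ and $(2,0)$ — together with the $d=0$ divisor — fall out as exceptions, rather than a longer list.
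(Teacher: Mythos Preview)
Your approach is genuinely different from the paper's and worth contrasting. The paper does \emph{not} analyze fixed loci of subgroups at all. Instead it runs an induction on $n$ via the forgetful map $F:Y_{d,n+1}\to Y_{d,n}$: since $F(A_{d,n+1})\subset A_{d,n}$ and the fibres are one–dimensional, the codimension of $A_{d,n}$ is nondecreasing in $n$, and it strictly increases whenever only finitely many points in a generic fibre over $A_{d,n}$ have isotropy. This reduces everything to a short list of base cases $(d,n)=(0,2),(1,1),(1,2),(2,0),(2,1),(3,0),(4,0),\ldots$, each handled in a few lines; the crucial interior input for $d\ge 2,\ n=0$ is the citation that $A_{d,0}\cap\text{Rat}_d$ has codimension $d-1$ in $\text{Rat}_d$. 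The boundary is controlled by the same induction through the gluing maps. The whole thing is elementary bookkeeping plus one external reference.

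Your fixed-locus stratification $A_{d,n}=\bigcup_H \text{PGL}_2\cdot Y^H$ with $H\in\{\mathbb G_m,\mathbb G_a,\mu_p\}$ is a legitimate alternative and, if completed, would be self-contained (no need to cite the codimension-$(d-1)$ result). The normalizer bound $\dim(\text{PGL}_2\cdot Y^H)\le\dim Y^H+2$ is correct, and your analysis of the connected cases is on the right track: equivariant self-maps of $\PP^1$ for $\mathbb G_m$ or $\mathbb G_a$ really are degree $\le 1$, forcing the dual tree to be a chain with tightly constrained data. However, the proof as written has a real gap: the $\mu_p$ case is only outlined. You assert an estimate $\dim Y^{\mu_p}\le\frac1p(2d+n+1)+c$ but do not establish it, and you yourself flag that keeping the constant $c$ small enough so that precisely $(1,1)$, $(2,0)$ and the $d=0$ divisor fall out is the main obstacle. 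That is exactly the content of the lemma, so this is not a detail one can wave past. To make your route work you would need to set up the recursion on $\mu_p$-invariant subtrees explicitly (tracking degrees, markings, and the $\le 2$ fixed points per component), prove the estimate with a concrete $c$, and then genuinely check the finitely many small $(d,n)$ by hand.

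In short: your strategy is sound and arguably more conceptual, but it is currently a sketch where the paper gives a complete (if less illuminating) argument. If you want a self-contained proof avoiding the external citation, your method is the natural one to pursue; if you just want the lemma, the paper's forgetful-map induction is far quicker.
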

\begin{proof}
 Given a fixed $d$, our proof will basically be an induction on $n$. For this, consider the forgetful map 
 \[F: \overline M_{0,n+1}(\PP^1 \times \PP^1, (1,d)) \to \overline M_{0,n}(\PP^1 \times \PP^1, (1,d)).\]
 The fibre over a closed point point $[(f:C \to \PP^1 \times \PP^1;p_1, \ldots, p_n)]$ is exactly isomorphic to $C / \text{Aut}(C)$, corresponding to the possibilites to add another marked point, possibly having to add additional components to $C$ afterwards. It is clear that $F(A_{d,n+1}) \subset A_{d,n}$. By standard theorems on fibre dimensions this shows that 
 \[\text{codim}(A_{d,n+1}) \leq \text{codim}(A_{d,n})\]
 with a strict inequality if for some $(d,n)$ only a finite number of points in the fibre over a general point $[(f:C \to \PP^1 \times \PP^1;p_1, \ldots, p_n)] \in A_{d,n}$ have a $\text{PGL}_2$-isotropy.
 
 Using this we see that for $d\geq 1$ it suffices to prove the assertions above if 
 \[(d,n)=(1,1), (1,2), (2,0),(2,1), (3,0), (4,0) \ldots.\] 
 Our analysis of $A_{d,n}$ will be split in analyzing the interior of the moduli space $\overline M_{0,n+1}(\PP^1 \times \PP^1, (1,d))$ and its boundary. By \cite[Lemma 12]{fultonpandha} the boundary divisor $D_{B,k}$ admits a surjective, birational map from
 \begin{equation}\overline M_{0,n-|B|+1}(\PP^1\times \PP^1, (1,d -k)) \times_{\PP^1 \times \PP^1} \overline M_{0,|B|+1}(\PP^1 \times \PP^1, (0,k)), \label{eqn:interdiv}\end{equation}
 which amounts to gluing two curves and maps at a single point. One sees easily that this gluing map is $\text{PGL}_2$-equivariant with respect to the natural actions. Note that $D_{B,k}$ is irreducible by \cite[Corollary 2]{pandhaconnected} and already has codimension $1$. Thus in order to show that the boundary part of $A_{d,n}$ has codimension at least $2$, we only have to show that a general point in $D_{B,k}$ has trivial stabilizer. But this immediately follows if we know that a general point of $\overline M_{0,n-|B|+1}(\PP^1\times \PP^1, (d -k,1))$ has trivial stabilizer. For almost all $B,k$ this will be implied by our inductive argument below, where we first induce over $d$ and then for fixed $d$ over $n$.  
 The only exceptional case is $B=\{1, \ldots, n\}$ and $k=d$. Here \detex{using Lemma \ref{Lem:M(0,n)}} we have
 \begin{align*}
  D_{B,k} &\cong \overline M_{0,1}(\PP^1\times \PP^1, (1,0)) \times_{\PP^1 \times \PP^1} \overline M_{0,n+1}(\PP^1 \times \PP^1, (0,d))\\
  &\cong (\PP^1 \times \PP^1) \times_{\PP^1 \times \PP^1} (\PP^1 \times \overline M_{0,n+1}(\PP^1, d))\\
  &\cong \PP^1 \times \overline M_{0,n+1}(\PP^1, d).
 \end{align*}
 Under these identifications, $\text{PGL}_2$ acts on the first factor in the usual way and on the second factor by postcomposition. By another inductive argument using a forgetful morphism, it is enough to consider the case $n=0$. We claim that for $d \geq 2$, a general point of the space above has trivial stabilizer.
 
 Indeed let $(q, [\varphi: \PP^1 \to \PP^1; p])$ be a general element of $\PP^1 \times M_{0,n+1}(\PP^1, d)$. Then we may assume that $q \neq \varphi(p)$ and we can use our $\text{PGL}_2$ action to move those points to $[0:1],[1:0]$, respectively. Now assume $\psi \in \text{PGL}_2$ fixes $[0:1],[1:0]$ and satisfies $[\psi \circ \varphi]=[\varphi] \in M_{0,0}(\PP^1,d)$. In other words, there exists $B \in \text{Aut}(\PP^1)$ such that $\psi \circ \varphi = \varphi \circ B$. Then $B$ must fix the preimages $\varphi^{-1}([0:1])$ and $\varphi^{-1}([1:0])$. As $\varphi$ was supposed to be general, these are two sets of $d$ points, all distinct. For $d\geq 2$ one sees that this implies $B=\text{id}$ so $\psi = \text{id}$ as desired.
 
 To conclude, we will now show the claimed results in the order
 \[(d,n) = (0,2), (0,3), (0,4), \ldots; (1,1), (1,2), (2,0), (2,1), (3,0), (4,0), \ldots\]
 where for $d \geq 2$ we only have to consider the points of $A_{d,n}$ in the interior $M_{0,n}(\PP^1 \times \PP^1, (1,d))$ of $Y_{d,n}$.\\
 \underline{$d= 0$ and $n \geq 2$}
 
 It is clear that
 \[M_{0,n}(\PP^1 \times \PP^1, (1,0)) \cong \PP^1 \times ((\PP^1)^n \setminus \Delta)\]
 where $\text{PGL}_2$ acts in the usual way on each component.
 Hence for $n \geq 2$, a general point of this space has trivial stabilizer. For $n \geq 3$ we even have that no point in this open locus has any stabilizer, so it remains to consider the boundary components. For a proper subset $B \subsetneq \{1, \ldots, n\}$ with $|B| \geq 2$, a general element of $D_{B,0}$ will consist of a horizontal inclusion $\PP^1 \to \PP^1 \times \{q\}$ with at least one marking from $\{1, \ldots, n\} \setminus B$ mapping to the point $(p,q)$  and a component of the source curve containing the marks $B$ contracted to a point $(p',q)$. For $q,p,p'$ pairwise distinct this element has trivial stabilizer as desired.\\
 \underline{$d= 1$ and $n =1$}
 
 This case was analyzed in great detail in Lemma \ref{Lem:M(1,1)}, where in particular it is shown that a general point of $M_{0,1}(\PP^1 \times \PP^1, (1,1))$ has trivial stabilizer.\\
 \underline{$d= 1$ and $n =2$}
 
 Here we will look at the fibres of the forgetful map 
 \[F:\overline M_{0,2}(\PP^1 \times \PP^1, (1,1)) \to \overline M_{0,1}(\PP^1 \times \PP^1, (1,1)).\]
 On $M_{0,1}(\PP^1 \times \PP^1, (1,1)) = \text{PGL}_2 \times \PP^1$ we have seen that away from the locus $\text{Fix} \subset \text{PGL}_2 \times \PP^1$, the only orbit with nontrivial stabilizer is the orbit of $f_0 = [(z \mapsto -z, 1)]$, which has finite stabilizer. Therefore all but finitely many points in the fibre $\PP^1 / \text{Aut}(f_0)$ are not fixed points of this stabilizer as desired.
 
 In the closed subset $\text{Fix}$ there are two types of elements: firstly we have $([A],p)$ with $[A]\neq [\text{id}]$ and $p$ one of the finitely many fixed point of $A$. Here as above we see that adding an additional marked point in a sufficiently general position (i.e. different from the fixed points) removes the remaining stabilizer. The other remaining case are the points $([\text{id},p])$ with $p \in \PP^1$ arbitrary. But this locus is already itself of codimension $3$ so its preimage under $F$ also has codimension $3$.
 
 In Lemma \ref{Lem:M(1,1)} we have already seen that a general point on the boundary divisor $D_{\emptyset,1}$ has no isotropy and similarly it is easy to see the same for $D_{\{1\},1}$. Hence $A_{1,1}$ intersected the boundary of $\overline M_{0,1}(\PP^1 \times \PP^1, (1,1))$ has codimension at least $2$.\\
 \underline{$d\geq 2$ and $n =0$}
 
 By our preparations, we need to show that $A_{d,n} \cap M_{0,n}(\PP^1 \times \PP^1, (1,d))$ has codimension at least $1$ for $d=2$ and at least $2$ for $d \geq 3$. But by \cite[Corollary 4]{autolocus} $A_{d,0}$ has codimension $d-1$ in $M_{0,n}(\PP^1 \times \PP^1, (1,d))$. Here we use that $j$ is an isomorphism over $\text{Rat}_d$ by Lemma \ref{Lem:jConstruction}.\\
 \underline{$d = 2$ and $n =1$}
 
 All points in $M_{0,0}(\PP^1 \times \PP^1, (1,2)) \cong \text{Rat}_2$ have finite $\text{PGL}_2$-isotropy by \cite[Proposition 4.65]{MR2316407}. Hence as above we only have finitely many points in each fibre of $F$ with nontrivial isotropy. This finishes the proof.
\end{proof}
\begin{Cor} \label{Cor:codimfree}
 Let $\textbf{d}=(d|d_1, \ldots, d_n)$ be admissible.
 Then $\tilde G= \text{PGL}_2$ acts on $Y_{d,n}^{ss,\textbf{d}}$ with finite\detex{, reduced} stabilizers at geometric points and for $(d,n) \neq (2,0),(1,1)$, the action is free on an invariant open set with complement of codimension at least $2$.
\end{Cor}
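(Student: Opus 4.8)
The plan is to prove the two assertions separately. Finiteness of stabilizers is the general fact that GIT-stable points have $0$-dimensional stabilizers, which applies here thanks to admissibility; the statement about the free locus is a translation of Lemma~\ref{Lem:codimauto} to the semistable locus, with one extra input needed in the case $d=0$. For the first assertion I would argue as follows. Since $\textbf{d}$ is admissible, the remark after Lemma~\ref{Lem:semistableadvanced} gives $Y_{d,n}^{ss,\textbf{d}}=Y_{d,n}^{s,\textbf{d}}$, so it suffices to recall that a GIT-stable point has finite stabilizer: the geometric quotient $\phi$ of Corollary~\ref{Cor:quotexist2} is affine, so each orbit is a fibre of $\phi$ over an affine open and hence an affine homogeneous space $\text{PGL}_2/\mathrm{Stab}$; by Matsushima's criterion $\mathrm{Stab}$ is reductive, and a positive-dimensional reductive subgroup of $\text{PGL}_2$ contains a one-parameter subgroup $\lambda$, which is impossible for a stable point $x$ since the Hilbert--Mumford criterion would require $\mu(x,\lambda)>0$ and $\mu(x,\lambda^{-1})>0$ at once. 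Over $\C$ such finite stabilizers are automatically reduced.

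For the free locus, fix $(d,n)\neq(2,0),(1,1)$. We may assume $Y_{d,n}^{ss,\textbf{d}}\neq\emptyset$, which discards $(d,n)=(1,0)$ (not admissible, as $k(d+1)=2k$ is never odd) and $(d,n)=(0,0),(0,1)$ (here Lemma~\ref{Lem:semistableadvanced} at the point $p=q$, with $q$ the image of the constant underlying map, already destroys stability, so the stable locus is empty and the assertion is vacuous). With these cases discarded, Lemma~\ref{Lem:codimauto} applies. Let $A_{d,n}\subset Y_{d,n}$ be the closed locus where $\text{PGL}_2$ acts with nontrivial stabilizer and put $U:=Y_{d,n}^{ss,\textbf{d}}\setminus A_{d,n}$; this is invariant, open, and the action on it is free, so — since $Y_{d,n}^{ss,\textbf{d}}$ is a dense open of the irreducible variety $Y_{d,n}$ — it suffices to show $\dim\big(A_{d,n}\cap Y_{d,n}^{ss,\textbf{d}}\big)\leq\dim Y_{d,n}-2$. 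For $d\geq1$ we have $(d,n)\notin\{(1,1),(2,0)\}$, and Lemma~\ref{Lem:codimauto} gives $\codim A_{d,n}\geq2$ already in all of $Y_{d,n}$, so we are done.

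The remaining case $d=0$ (where then $n\geq2$) is the crux, because Lemma~\ref{Lem:codimauto} only controls $A_{0,n}$ away from the boundary divisor $D_{\{1,\ldots,n\},0}$, so this divisor is the only possible codimension-one component of $A_{0,n}$ and must be dealt with directly. The plan is to show that the generic point of $D_{\{1,\ldots,n\},0}$ is not semistable. Such a point is a stable map with domain $C_1\cup C_2$, where $C_1\cong\PP^1$ maps isomorphically to a horizontal line $\PP^1\times\{q\}$ and $C_2$ is contracted to a point $(p',q)$ with $p'\neq q$ carrying all $n$ markings; then all of $\pi_1\circ\mathrm{ev}_i$ equal $p'$, there are no vertical sections, and the underlying constant map has its unique fixed point at $q$. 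So the stability inequality of Lemma~\ref{Lem:semistableadvanced} at $p=q$ reads $1<\tfrac12\big(1+\sum_i d_i\big)$, while at $p=p'$ it reads $\sum_i d_i<\tfrac12\big(1+\sum_i d_i\big)$; these are contradictory, so the point is unstable. (Conceptually, every $g\in\text{PGL}_2$ fixing the distinct points $q$ and $p'$ stabilizes this stable map — postcomposition by $g$ being undone by the automorphism of the domain that is $g^{-1}$ on $C_1$ and the identity on $C_2$ — so the stabilizer contains a $\mathbb{G}_m$ and the first part of the proof again forces instability.) Since $D_{\{1,\ldots,n\},0}$ is irreducible by \cite[Corollary 2]{pandhaconnected}, it follows that $D_{\{1,\ldots,n\},0}\cap Y_{0,n}^{ss,\textbf{d}}$ is a proper closed subset of that divisor, hence of dimension $\leq\dim Y_{0,n}-2$; combined with the bound of Lemma~\ref{Lem:codimauto} on $A_{0,n}\setminus D_{\{1,\ldots,n\},0}$, this yields $\dim\big(A_{0,n}\cap Y_{0,n}^{ss,\textbf{d}}\big)\leq\dim Y_{0,n}-2$, completing the argument. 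I expect the verification that the generic point of $D_{\{1,\ldots,n\},0}$ is unstable to be the only step requiring genuine thought; everything else is bookkeeping around Lemma~\ref{Lem:codimauto}.
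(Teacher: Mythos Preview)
Your proof is correct and structurally parallel to the paper's: both reduce the codimension-$2$ claim to Lemma~\ref{Lem:codimauto} and isolate the $d=0$ case, where the divisor $D_{\{1,\ldots,n\},0}$ must be shown to avoid the semistable locus. Two differences are worth noting. For finiteness of stabilizers, the paper argues globally on all of $Y_{d,n}$ via local constancy of stabilizer dimension and connectedness, then finds a single free point; you instead invoke the GIT fact that stable points have finite stabilizers directly (your Matsushima--Hilbert--Mumford route is valid but more than necessary---one can simply cite the definition of properly stable). For $d=0$, the paper asserts without details that the entire divisor $D_{\{1,\ldots,n\},0}$ is unstable, whereas you verify Lemma~\ref{Lem:semistableadvanced} explicitly at the generic point and use irreducibility to conclude; your weaker statement suffices, and your verification (including the alternative observation that the $\mathbb{G}_m$ fixing $q$ and $p'$ stabilizes such a map) makes the claim transparent.
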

\begin{proof}
 By \cite[1.§4 (1)]{git} the function $y \mapsto \dimension(\text{Stab}(y))$ is locally constant on $Y_{d,n}$. But note that $Y_{d,n}$ is connected (see \cite[Corollary 1]{pandhaconnected}). Hence for showing that the action has finite stabilizers on the semistable set, it suffices to find any semistable point with finite stabilizer. One sees quickly that for all $\textbf{d}$ as above such that there are semistable points at all, the locus $Y_{d,n}^{ss,\textbf{d}} \setminus A_{d,n}$, where $\tilde G$ acts freely, is nonempty by Lemma \ref{Lem:codimauto}. Thus every geometric point has finite\detex{, reduced} stabilizer\detex{ \footnote{To be very precise, we showed that the stabilizer $\text{Stab}(y)$ of a geometric point $y$ is quasi-finite over $\text{Spec}(\mathbb{C})$. But every quasi-finite scheme over the spectrum of a field is finite and every separated, finite type group scheme over a field of characteristic $0$ is reduced.}}.
 
 Moreover, Lemma \ref{Lem:codimauto} immediately implies that $A_{d,n} \cap Y_{d,n}^{ss,\textbf{d}}$ is of codimension at least $2$ for $(d,n) \neq (2,0),(1,1)$ and $d \geq 1$. Finally for $d=0$ we see that $D_{\{1, \ldots, n\}, 0}$ is always disjoint from the locus of semistable points by Lemma \ref{Lem:semistableadvanced}, so again we can apply Lemma \ref{Lem:codimauto}.
\end{proof}

Denote by
\[\mathcal{Y}_{d,n} = \overline{ \mathcal{M}}_{0,n}(\PP^1 \times \PP^1, (1,d))\]
the moduli stack of stable maps to $\PP^1 \times \PP^1$ with coarse moduli space $\mathcal{Y}_{d,n} \to Y_{d,n}$. Let $\mathcal{Y}_{d,n}^{ss,\textbf{d}}$ be the preimage of the locus of semistable points $Y_{d,n}^{ss,\textbf{d}}$. 
\begin{Lem} \label{Lem:Mfinitequotsing}
 The space $M(d|d_1, \ldots, d_n)$ is the coarse moduli space of the smooth Deligne-Mumford stack
 \begin{equation*}
  \mathcal{M}(d|d_1, \ldots, d_n) = \mathcal{Y}_{d,n}^{ss,\textbf{d}} / \text{PGL}_2.
 \end{equation*}
 In particular, it has finite quotient singularities.
\end{Lem}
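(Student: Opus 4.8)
The plan is to verify three things in turn: that $\mathcal{M}=\mathcal{M}(d|d_1,\ldots,d_n)=\mathcal{Y}_{d,n}^{ss,\textbf{d}}/\text{PGL}_2$ is a smooth Deligne-Mumford stack, that $M(d|d_1,\ldots,d_n)$ is a coarse moduli space for $\mathcal{M}$ in the sense of the Conventions, and that these two facts together force $M(d|d_1,\ldots,d_n)$ to have finite quotient singularities.

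First I would establish smoothness and the Deligne-Mumford property. Since $\PP^1\times\PP^1$ is a homogeneous projective variety, hence convex, the stable maps stack $\mathcal{Y}_{d,n}=\overline{\mathcal{M}}_{0,n}(\PP^1\times\PP^1,(1,d))$ is a smooth proper Deligne-Mumford stack; this is the stacky counterpart of the statement quoted from \cite{fultonpandha} that $Y_{d,n}$ is locally the quotient of a nonsingular variety by a finite group. The semistable locus $\mathcal{Y}_{d,n}^{ss,\textbf{d}}$ is, by definition, the preimage of the open subset $Y_{d,n}^{ss,\textbf{d}}$, hence a $\text{PGL}_2$-invariant open substack, and so again smooth and Deligne-Mumford. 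As $\text{PGL}_2$ is a smooth affine group scheme, the atlas $\mathcal{Y}_{d,n}^{ss,\textbf{d}}\to\mathcal{M}$ is a $\text{PGL}_2$-torsor, in particular smooth and surjective, so $\mathcal{M}$ is a smooth algebraic stack. It remains to check that its automorphism groups at geometric points are finite, since over a field of characteristic zero this already forces the stack to be Deligne-Mumford. By the description of quotient stacks recalled in Appendix \ref{App:stackaction}, the automorphism group of a point of $\mathcal{M}$ lying over $y\in\mathcal{Y}_{d,n}^{ss,\textbf{d}}$ is an extension of the stabilizer $\text{Stab}_{\text{PGL}_2}(y)$ by the finite group $\text{Aut}_{\mathcal{Y}_{d,n}}(y)$; and $\text{Stab}_{\text{PGL}_2}(y)$ is finite by Corollary \ref{Cor:codimfree} (note that the $\text{SL}_2$-action used in the GIT construction is induced by the diagonal action of $\text{SL}_2$ on $\PP^1\times\PP^1$, which factors through $\text{PGL}_2$, so the stabilizer computations apply equally to $\text{PGL}_2$).

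Next I would identify the coarse space. Consider the composite
\[
\mathcal{Y}_{d,n}^{ss,\textbf{d}}\longrightarrow Y_{d,n}^{ss,\textbf{d}}\xrightarrow{\phi} M(d|d_1,\ldots,d_n),
\]
where the first map is the coarse moduli morphism of $\mathcal{Y}_{d,n}^{ss,\textbf{d}}$ and $\phi$ is the uniform geometric quotient from Corollary \ref{Cor:quotexist2}. Both maps are $\text{PGL}_2$-invariant --- the second because $\phi$ is a GIT quotient and, as just noted, the $\text{SL}_2$-action factors through $\text{PGL}_2$ --- so the composite descends to a morphism $\overline{q}: \mathcal{M}\to M(d|d_1,\ldots,d_n)$. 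I claim $\overline{q}$ exhibits $M(d|d_1,\ldots,d_n)$ as a coarse moduli space of $\mathcal{M}$. It is bijective on geometric points: these are $\text{PGL}_2$-orbits of geometric points of $\mathcal{Y}_{d,n}^{ss,\textbf{d}}$, equivalently of $Y_{d,n}^{ss,\textbf{d}}$, and $\phi$ being a geometric quotient puts those orbits in bijection with the points of $M(d|d_1,\ldots,d_n)$. For the universal property, let $T$ be an algebraic space; a morphism $\mathcal{M}\to T$ is the same datum as a $\text{PGL}_2$-invariant morphism $\mathcal{Y}_{d,n}^{ss,\textbf{d}}\to T$, which, since $T$ has no nontrivial automorphisms, factors uniquely through the coarse space $Y_{d,n}^{ss,\textbf{d}}$ and stays $\text{PGL}_2$-invariant there; since $\phi$ is an affine geometric quotient it is a categorical quotient in the category of algebraic spaces, so the map factors uniquely through $\phi$, and the resulting morphism $M(d|d_1,\ldots,d_n)\to T$ is checked (using that the coarse and quotient maps are epimorphisms) to be the unique factorisation through $\overline{q}$. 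The formal compatibility of coarse moduli spaces with the quotient-stack construction that makes this argument precise is collected in Appendix \ref{App:stackaction}.

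Finally, a smooth Deligne-Mumford stack is \'etale-locally of the form $[U/\Gamma]$ with $U$ a smooth scheme and $\Gamma$ a finite group, so passing to coarse spaces, $M(d|d_1,\ldots,d_n)$ is \'etale-locally isomorphic to $U/\Gamma$, i.e. it has finite quotient singularities. I expect the main obstacle to be the second step --- matching the GIT geometric quotient $M(d|d_1,\ldots,d_n)$ with the coarse moduli space of the stack quotient $\mathcal{M}$ --- which forces one to work with categorical quotients in the category of algebraic spaces rather than schemes and with the precise formalism of group actions on stacks, exactly the technical material gathered in Appendix \ref{App:stackaction}.
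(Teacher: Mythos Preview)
Your proposal is correct and follows essentially the same approach as the paper: invoke Corollary \ref{Cor:codimfree} for finite stabilizers, use the appendix machinery (specifically Lemma \ref{Lem:quotcoarsemod} with Remark \ref{Rmk:quotcoarsemod} for the coarse moduli identification, and Proposition \ref{Pro:smoothDMquotient} for the smooth Deligne-Mumford property), and then deduce finite quotient singularities from the \'etale-local structure of smooth DM stacks. The only packaging difference is that the paper's Proposition \ref{Pro:smoothDMquotient} establishes the DM property via the frame-bundle trick rather than your direct description of automorphism groups as extensions, and the paper handles the categorical-quotient-in-algebraic-spaces subtlety you flag at the end by quoting Alper's good/tame moduli space theory in Remark \ref{Rmk:quotcoarsemod}.
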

\begin{proof}
 As $M(d|d_1, \ldots, d_n)$ is a GIT quotient of $Y_{d,n}^{ss,\textbf{d}}=Y_{d,n}^{s,\textbf{d}}$ and as $Y_{d,n}^{ss,\textbf{d}}$ is a coarse moduli space for $\mathcal{Y}_{d,n}^{ss,\textbf{d}}$, we obtain that $M(d|d_1, \ldots, d_n)$ is a coarse moduli space for the quotient stack $\mathcal{Y}_{d,n}^{ss,\textbf{d}} / \text{PGL}_2$ by Lemma \ref{Lem:quotcoarsemod} and Remark \ref{Rmk:quotcoarsemod}. To show that $\mathcal{M}(d|d_1, \ldots, d_n)$ is a smooth Deligne-Mumford stack, we want to apply Proposition \ref{Pro:smoothDMquotient}. The fact that $\mathcal{Y}_{d,n}$ is an orbifold is proved in \cite{fultonpandha} and \cite{pandhaconnected}. The action of $\text{PGL}_2$ on $\mathcal{Y}_{d,n}^{ss,\textbf{d}}$ has finite\detex{, reduced} stabilizers by Corollary \ref{Cor:codimfree}. Thus, the conditions of Proposition \ref{Pro:smoothDMquotient} are verified and the proof of the first part is finished. Finally, $M(d|d_1, \ldots, d_n)$ has finite quotient singularities as it is normal and the coarse moduli space of a smooth Deligne-Mumford stack (see \cite[Proposition 2.8]{vistoliintersection}).
\end{proof}
\begin{Cor} \label{Cor:QCartier}
 Every Weil divisor on $M(d|d_1, \ldots, d_n)$ is $\mathbb{Q}$-Cartier.
\end{Cor}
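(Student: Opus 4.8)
The plan is to derive the statement from Lemma \ref{Lem:Mfinitequotsing}, which says that $M(d|d_1, \ldots, d_n)$ has finite quotient singularities: I want to show that any normal variety with finite quotient singularities is $\mathbb{Q}$-factorial, and $\mathbb{Q}$-factoriality is precisely the assertion. Since being $\mathbb{Q}$-Cartier at a point is a local (indeed étale-local, by faithfully flat descent) property of a Weil divisor, I would first reduce to the model situation: an appropriate neighbourhood of a point $x$ of the form $X = U/H$, where $U$ is a smooth variety, $H$ a finite group acting on it, and $\pi : U \to X$ the quotient map.

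Given a prime Weil divisor $D$ on $X$, I would take its divisorial preimage $\widetilde D \subset U$, i.e.\ the closure of $\pi^{-1}(D \cap X_{\mathrm{reg}})$. This is a Weil divisor on $U$, and it is $H$-invariant since $\pi$ is $H$-invariant. As $U$ is smooth, hence locally factorial, $\widetilde D$ is Cartier and defines an invertible sheaf $\OO_U(\widetilde D)$. The tensor product $\mathcal{N} = \bigotimes_{h \in H} h^* \OO_U(\widetilde D)$ then carries a canonical $H$-linearization permuting the factors, and is isomorphic to $\OO_U(|H|\,\widetilde D)$ because $\widetilde D$ is $H$-invariant. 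By the standard descent criterion for finite quotients — an $H$-linearized line bundle descends to $X$ as soon as every point stabilizer $H_u$ acts trivially on the fibre over $u$ — and since each $H_u$ acts on the one-dimensional fibre through a character of order dividing $|H|$, the power $\mathcal{N}^{\otimes |H|}$ descends to a genuine line bundle $\mathcal{L}$ on $X$. Pulling back, $\pi^*\mathcal{L} \cong \OO_U(|H|^2\,\widetilde D)$, so the Weil divisor class of $\mathcal{L}$ pulls back to $|H|^2\,\widetilde D$; since $\pi$ is finite and surjective and $\widetilde D$ is the divisorial preimage of $D$, a positive integer multiple of $D$ is Cartier near $x$. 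Hence $D$ is $\mathbb{Q}$-Cartier, and as $x$ was arbitrary the proof is complete.

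The only real subtlety is the descent step: one must verify that the obstruction to descending the linearized line bundle $\mathcal{N}$ along $\pi$ — coming from the possibly nontrivial action of point stabilizers on fibres — is killed by passing to the $|H|$-th power, and keep track of which multiple of $D$ ends up Cartier. This is routine. Alternatively, one may simply invoke the well-known fact that normal varieties with finite quotient singularities are $\mathbb{Q}$-factorial (for instance because their local divisor class groups $\Cl(\widehat{\OO}_{X,x})$ are finite), combined with Lemma \ref{Lem:Mfinitequotsing}.
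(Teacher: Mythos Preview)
Your proposal is correct and follows essentially the same approach as the paper: both deduce $\mathbb{Q}$-factoriality from Lemma \ref{Lem:Mfinitequotsing}, which established that $M(d|d_1,\ldots,d_n)$ has finite quotient singularities. The paper simply cites a reference for the implication ``finite quotient singularities $\Rightarrow$ $\mathbb{Q}$-factorial'', whereas you spell out a descent argument for this standard fact; your final sentence (invoking the well-known result directly) is in fact exactly what the paper does.
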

\begin{proof}
 This follows as $M(d|d_1, \ldots, d_n)$ has at most finite quotient singularities (see \cite[Proposition 5.15]{moribirat}). 
\end{proof}

\subsection{Modular interpretations}
Let still $\textbf{d} = (d|d_1, \ldots, d_n)$ be admissible. We want to find a way to interpret the quotient $M(d|d_1, \ldots, d_n)$ as a moduli space for a functor of stable self-maps, in a sense yet to be defined. Our strategy is as follows: first we identify the stack $\mathcal{Y}_{d,n}/ \text{PGL}_2$ with an explicit category fibred in groupoids. Its objects over a scheme $S$ are a natural generalization of families with fibres $\PP^1$ together with $n$ markings and a degree $d$ self-map of the family. We can identify $\mathcal{M}(d|d_1, \ldots, d_n)$ as an open substack defined by requiring that the self-maps satisfy a stability condition. By passing from this stack to its coarse moduli space $M(d|d_1, \ldots, d_n)$, we obtain the functor from schemes to sets, which associates to $S$ the set of isomorphism classes of families in $\mathcal{M}(d|d_1, \ldots, d_n)(S)$. Finally, we see that over the locus $M^*$ of points $[p] \in M(d|d_1, \ldots, d_n)$ without $\text{PGL}_2$-isotropy or automorphisms (of $p \in Y_{d,n}$), we have a universal family of curves with a self-map (in the category of schemes).
Let $\mathcal{M}^{d,n}$ be the category fibred in groupoids over $\text{Sch}_{\mathbb{C}}$, whose objects over a scheme $S$ are diagrams
\begin{equation}  \label{eqn:selfmapfam}
\begin{tikzpicture}
 \draw (0,2) node{$\mathcal{C}$};
 \draw[->] (-0.1,1.7) -- (-0.1,1) node[left]{$\pi$} -- (-0.1,0.3);
 \draw[->] (0.1,0.3) -- (0.1,1) node[right]{$\sigma_i$} -- (0.1,1.7);
 \draw (0,0) node{$S$};
 \draw[->] (0.25,2.05) --(1,2.05) node[above]{$\mu$} -- (1.8,2.05);
 \draw[->] (0.25,1.95) --(1,1.95) node[below]{$\varphi$} -- (1.8,1.95);
 \draw (2,2) node{$\mathcal{\tilde C}$};
 \draw[->] (1.7,1.7) -- (1,1) node[right]{$\tilde \pi$} -- (0.3,0.3);
\end{tikzpicture}
\end{equation}
where 
\begin{itemize}
 \item $\pi, \tilde \pi$ are flat, projective families of quasi-stable genus $0$ curves, all geometric fibres of $\tilde \pi$ are isomorphic to $\PP^1$
 \item $\mu, \varphi$ satisfy $\mu_*([\mathcal{C}_s]) = [\mathcal{\tilde C}_s]$ and $\varphi_*([\mathcal{C}_s]) = d [\mathcal{\tilde C}_s]$ for all geometric points $s \in S$
 \item every component of a geometric fibre $\mathcal{C}_s$, which is contracted by both $\mu_s, \varphi_s$ contains at least three special points (nodes or markings)
\end{itemize}
The morphisms between $\mathcal{F} \in \mathcal{M}^{d,n}(S)$ and $\mathcal{F}' \in \mathcal{M}^{d,n}(S')$ lying over a morphism $\psi: S \to S'$ of schemes are exactly the pullback-diagrams identifying $\mathcal{F}$ as the base change of the diagram $\mathcal{F}'$ by $\psi$.
\begin{Theo} \label{Theo:modprop}
 There is a natural isomorphism between the quotient stack $\mathcal{Y}_{d,n}/ \text{PGL}_2$ and $\mathcal{M}^{d,n}$. Here, an element of $\left( \mathcal{Y}_{d,n}/ \text{PGL}_2 \right)(S)$ coming from a family 
 \[(\pi: \mathcal{C} \to S; \sigma_1, \ldots, \sigma_n : S \to \mathcal{C}; (f_1, f_2): \mathcal{C} \to \PP^1 \times \PP^1) \in \mathcal{Y}_{d,n}(S)\]
 is identified with the diagram
 \begin{equation*} 
\begin{tikzpicture}
 \draw (0,2) node{$\mathcal{C}$};
 \draw[->] (-0.1,1.7) -- (-0.1,1) node[left]{$\pi$} -- (-0.1,0.3);
 \draw[->] (0.1,0.3) -- (0.1,1) node[right]{$\sigma_i$} -- (0.1,1.7);
 \draw (0,0) node{$S$};
 \draw[->] (0.25,2.05) --(1,2.05) node[above]{$\pi \times f_1$} -- (1.8,2.05);
 \draw[->] (0.25,1.95) --(1,1.95) node[below]{$\pi \times f_2$} -- (1.8,1.95);
 \draw (2.5,2) node{$S \times \PP^1$};
 \draw[->] (1.7,1.7) -- (1,1) node[right]{} -- (0.3,0.3);
\end{tikzpicture}
\end{equation*}
in $\mathcal{M}^{d,n}(S)$.
\end{Theo}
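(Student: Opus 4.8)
The plan is to construct the equivalence of categories fibred in groupoids explicitly in both directions and check it is functorial and compatible with base change. Recall that by the general theory of quotient stacks (as set up in Appendix \ref{App:stackaction} following Romagny), an object of $\mathcal{Y}_{d,n}/\text{PGL}_2$ over $S$ is a $\text{PGL}_2$-torsor $P \to S$ together with a $\text{PGL}_2$-equivariant morphism $P \to \mathcal{Y}_{d,n}$. First I would describe the functor $\Phi: \mathcal{Y}_{d,n}/\text{PGL}_2 \to \mathcal{M}^{d,n}$. Given such a torsor $P\to S$ with equivariant map classifying a family $(\pi_P: \mathcal{C}_P \to P; \sigma_i; (f_1,f_2): \mathcal{C}_P \to \PP^1\times\PP^1)$, one descends along $P \to S$: the curve $\mathcal{C}_P$ descends to $\pi: \mathcal{C}\to S$ (étale-locally on $S$ the torsor is trivial and we glue using the $\text{PGL}_2$-action on $\PP^1\times\PP^1$), the sections $\sigma_i$ descend, and crucially $\tilde{\mathcal{C}} := P \times^{\text{PGL}_2} \PP^1 \to S$ is the associated $\PP^1$-bundle. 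The two projections $f_1, f_2$ of the equivariant map then glue, over $P$, into a $\text{PGL}_2$-equivariant pair $\mathcal{C}_P \to P\times\PP^1$, which descends to maps $\mu, \varphi : \mathcal{C} \to \tilde{\mathcal{C}}$. The degree/stability conditions are local on $S$ hence inherited from the stable-maps conditions defining $\mathcal{Y}_{d,n}$. Over the trivial torsor $P = S\times\text{PGL}_2$ this recovers precisely the explicit formula in the statement with $\tilde{\mathcal{C}} = S\times\PP^1$ and $\mu = \pi\times f_1$, $\varphi = \pi\times f_2$.

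Next I would construct the quasi-inverse $\Psi: \mathcal{M}^{d,n} \to \mathcal{Y}_{d,n}/\text{PGL}_2$. The key point is that since every geometric fibre of $\tilde\pi:\tilde{\mathcal{C}}\to S$ is $\cong\PP^1$ and $\tilde\pi$ is flat projective of genus $0$, the sheaf of isomorphisms $P := \underline{\mathrm{Isom}}_S(\tilde{\mathcal{C}}, S\times\PP^1)$ — equivalently the frame bundle of the $\PP^1$-bundle $\tilde{\mathcal{C}}$ — is a $\text{PGL}_2$-torsor over $S$ (this uses that $\text{Aut}(\PP^1) = \text{PGL}_2$ and that $\PP^1$-bundles are étale-locally trivial). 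Pulling the whole diagram \eqref{eqn:selfmapfam} back to $P$ trivializes $\tilde{\mathcal{C}}_P \cong P\times\PP^1$; the composition $\mathcal{C}_P \xrightarrow{(\mu,\varphi)} \tilde{\mathcal{C}}_P\times_P\tilde{\mathcal{C}}_P \cong P\times(\PP^1\times\PP^1)$ then gives, together with the pulled-back sections, a family of stable maps, i.e. a $\text{PGL}_2$-equivariant morphism $P \to \mathcal{Y}_{d,n}$. One must check the numerical classes match: $\mu_*[\mathcal{C}_s] = [\tilde{\mathcal{C}}_s]$ and $\varphi_*[\mathcal{C}_s] = d[\tilde{\mathcal{C}}_s]$ translate exactly to bidegree $(1,d)$, and the three-special-points condition on components contracted by $(\mu,\varphi)$ is the stable-maps condition.

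Finally I would verify that $\Phi$ and $\Psi$ are mutually quasi-inverse and functorial. Functoriality in $S$ and compatibility with pullback diagrams is formal once the constructions are spelled out, since both torsor-formation and descent commute with base change. For $\Psi\circ\Phi \simeq \mathrm{id}$: starting from a torsor $P$, the $\PP^1$-bundle $\tilde{\mathcal{C}} = P\times^{\text{PGL}_2}\PP^1$ has frame bundle canonically $P$ again, which gives back the original equivariant map. For $\Phi\circ\Psi \simeq \mathrm{id}$: descending the trivialized family along the frame bundle of $\tilde{\mathcal{C}}$ reconstructs $\mu,\varphi$ by construction. I expect the main obstacle to be the careful bookkeeping of the descent: one must ensure that the curve $\mathcal{C}$, the maps, and the markings all descend \emph{together} compatibly along the (only étale-locally trivial) torsor $P\to S$, which requires knowing that the relevant moduli functors — and the stability conditions — are fppf (or étale) stacks/local conditions, and that forming the associated $\PP^1$-bundle is inverse to taking the frame bundle. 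This is where one invokes the results of Appendix \ref{App:stackaction} and the fact that $\mathcal{Y}_{d,n}$ is itself a stack; the geometric content beyond that is the classical equivalence between $\PP^1$-bundles and $\text{PGL}_2$-torsors.
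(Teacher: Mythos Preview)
Your proposal is correct and follows essentially the same route as the paper: both directions are built from Romagny's torsor description of the quotient stack, with $\Phi$ obtained by descent along the torsor (the paper phrases this as ``take the quotient by $G$ at every point of the diagram'') and $\Psi$ obtained via the $\text{PGL}_2$-torsor $\text{Isom}_S(\tilde{\mathcal{C}}, S\times\PP^1)$, exactly as you write. The only places where the paper is more explicit than your sketch are (i) unwinding precisely how the 2-morphism data in a map of $G$-stacks $T\to\mathcal{Y}_{d,n}$ amounts to a compatible $G$-action on the total space $\mathcal{C}$, and (ii) verifying that after descent the families $\pi',\tilde\pi'$ remain \emph{projective} (not merely proper), which requires building $G$-linearized relatively ample line bundles out of $\omega_\pi$, the pullbacks of $\mathcal{O}_{\PP^1}(2)$, and the divisors $s_i(T)$, and then invoking SGA1 descent; you should be prepared to supply this step.
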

\begin{proof}
 We give a natural identification of the objects of $\mathcal{Y}_{d,n}/ \text{PGL}_2$ and $\mathcal{M}^{d,n}$ over a scheme $S$, which respects isomorphisms of these objects. Let $G=\text{PGL}_2$ for the remainder of the proof.
 
 Using \cite[Theorem 4.1]{romagny2005}, we know that an object of $\mathcal{Y}_{d,n}/ \text{PGL}_2$ over $S$ is nothing but a $G$-torsor $\psi: T \to S$ over $S$ together with a morphism $(f, \sigma): T \to \mathcal{Y}_{d,n}$ of $G$-stacks. Note that as $G$ is quasi-affine, the torsor $T$ is actually a scheme (not an algebraic space), see \cite[VII, Corollaire 7.9]{sga1}. The map $f: T \to \mathcal{Y}_{d,n}$ is just a usual (1)-morphism, so it is equivalent to specifying the data 
 \begin{equation} \label{eqn:equivarfamiliy}
  \begin{tikzpicture}
 \draw (0,2) node{$\mathcal{C}$};
 \draw[->] (-0.1,1.7) -- (-0.1,1) node[left]{$\pi$} -- (-0.1,0.3);
 \draw[->] (0.1,0.3) -- (0.1,1) node[right]{$s_i$} -- (0.1,1.7);
 \draw (0,0) node{$T$};
 \draw[->] (0.25,2) --(1,2) node[above]{$(f_1,f_2)$} -- (1.8,2);
 \draw (2.5,2) node{$\PP^1 \times \PP^1$};
\end{tikzpicture}
 \end{equation}
 of a family of stable maps to $\PP^1 \times \PP^1$ with degree $(1,d)$ and $n$ markings. The 2-morphism $\sigma$ relates the two paths in the commutative diagram
 \[\begin{CD}
    G \times T @> \text{id}_G \times \psi >> G \times \mathcal{Y}_{d,n}\\
    @V \mu_T VV @V \mu_Y VV\\
    T @> \psi >> \mathcal{Y}_{d,n}
   \end{CD},
 \]
 where the two vertical arrows are the corresponding actions. Now the morphism $\psi \circ \mu_T$ corresponds to the data of the family 
 \[(\mu_T^* \pi: \mu_T^* \mathcal{C} \to G \times T; \mu_T^* s_i : G \times T \to \mu_T^* \mathcal{C}; (f_1, f_2): \mu_T^* \mathcal{C} \to \PP^1 \times \PP^1)\]
 of stable maps. On the other hand $\mu_Y \circ (\text{id}_G \times \psi)$ corresponds to the data
 \[(\text{id}_G \times \pi: G \times \mathcal{C} \to G \times T; \text{id}_G \times  s_i : G \times T \to G \times \mathcal{C}; (g,c) \mapsto (g.f_1(c), g.f_2(c)) ).\]
 Giving a 2-morphism $\sigma$ between $\psi \circ \mu_T$ and $\mu_Y \circ (\text{id}_G \times \psi)$ is thus equivalent to an isomorphism between these two families. One sees that the necessary data is a morphism
 \[\eta : G \times \mathcal{C} \to \mathcal{C}\]
 making the diagram
 \begin{equation} \label{eqn:equiCdiag}
 \begin{CD}
    G \times \mathcal{C} @> \eta >> \mathcal{C}\\
    @V \text{id}_G \times \pi VV @V \pi VV\\
    G \times T @> \mu_T >> T
   \end{CD}
\end{equation}
 cartesian. Moreover, the map $\eta$ should be compatible with the sections $s_i$ and the maps to $\PP^1 \times \PP^1$.
 
 Now for $(f,\sigma)$ to be a 1-morphism of $G$-stacks, the map $\sigma$ needs to satisfy a compatibility condition. For any scheme $Z$ and $g,h \in G(Z), x \in T(Z)$ let 
 \[\sigma_g^x : g. \psi(x) \to \psi(g.x)\]
 be the isomorphism of families of stable maps over $Z$ given by $\sigma$. Then we require
 \[\sigma_g^{h.x} \circ \left(g . \sigma_h^x \right) = \sigma_{gh}^x.\]
 Unwinding the definitions, one sees that this is exactly equivalent to the map $\eta$ satisfying $\eta(g,\eta(h,x)) = \eta(gh,x)$. As the diagram (\ref{eqn:equiCdiag}) is cartesian, we know that the identity element of $G$ acts by an isomorphism. Hence, the compatibility condition exactly requires $\eta$ to be a group action. Before we summarize, we note that assuming $\eta$ is a group action, the diagramm (\ref{eqn:equiCdiag}) is automatically cartesian if it is commutative.
 
 We have seen above that an object of $\mathcal{Y}_{d,n}/ \text{PGL}_2$ over $S$ is equivalent to a $G$-torsor $\psi : T \to S$, a family (\ref{eqn:equivarfamiliy}) of stable maps and an action of $G$ on $\mathcal{C}$ making all the arrows in the diagram (\ref{eqn:equivarfamiliy}) $G$-equivariant. We now rearrange this data slightly as the following diagram
 \begin{equation} \label{eqn:diagone} \begin{tikzpicture}
 \draw (0,2) node{$\mathcal{C}$};
 \draw[->] (-0.1,1.7) -- (-0.1,1) node[left]{$\pi$} -- (-0.1,0.3);
 \draw[->] (0.1,0.3) -- (0.1,1) node[right]{$s_i$} -- (0.1,1.7);
 \draw (0,0) node{$T$};
 \draw[->] (0.25,2.05) --(1,2.05) node[above]{$\pi \times f_1$} -- (1.8,2.05);
 \draw[->] (0.25,1.95) --(1,1.95) node[below]{$\pi \times f_2$} -- (1.8,1.95);
 \draw (2.4,2) node{$T \times \PP^1$};
 \draw[->] (1.7,1.7) -- (1,1) node[right]{$\tilde \pi$} -- (0.3,0.3);
\end{tikzpicture}
 \end{equation}
 and then take the quotient by $G$ at every point of the diagram. As all $G$-actions are free, the quotients are again algebraic spaces. Also, as $T$ is a $G$-torsor over $S$, we have $T/G=S$. Thus the quotient diagram looks like
 \begin{equation} \label{eqn:diagtwo}\begin{tikzpicture}
 \draw (0,2) node{$\mathcal{C}'$};
 \draw[->] (-0.1,1.7) -- (-0.1,1) node[left]{$\pi'$} -- (-0.1,0.3);
 \draw[->] (0.1,0.3) -- (0.1,1) node[right]{$s_i'$} -- (0.1,1.7);
 \draw (0,0) node{$S$};
 \draw[->] (0.25,2.05) --(1,2.05) node[above]{$g_1$} -- (1.8,2.05);
 \draw[->] (0.25,1.95) --(1,1.95) node[below]{$g_2$} -- (1.8,1.95);
 \draw (2,2) node{$\mathcal{\tilde C}$};
 \draw[->] (1.7,1.7) -- (1,1) node[right]{$\tilde \pi'$} -- (0.3,0.3);
\end{tikzpicture}\end{equation}
 and all maps between the quotients keep their respective fppf-local properties (flat, proper, finite type, etc.) because of Remark \ref{Rmk:quotinherit}. To prove that $\pi', \tilde \pi'$ are still projective, we will construct $G$-linearized relatively ample line bundles on $\mathcal{C},T \times \PP^1$ and then use a descent argument. The ingredients for these line bundles are the following:
 \begin{itemize}
  \item The relative dualizing sheaf $\omega_{\pi'}$ carries a natural $G$-action (this follows easily from the compatibility of the formation of relative dualizing sheaves with base change).
  \item The line bundle $\mathcal{O}_{\PP^1}(2)$ has a natural $G$-linearization and hence its pullbacks via $f_1, f_2$ and via the projection $T \times \PP^1 \mapsto \PP^1$ have induced $G$-linearizations.
  \item On $\mathcal{C}$ we also want to have line bundles corresponding to the Weyl divisors $s_i(T)$. To assure that they carry a $G$-linearization, we will make a detour via the quotient $\mathcal{C}'=\mathcal{C}/G$, because $\text{Pic}_G(\mathcal{C}) = \text{Pic}(\mathcal{C}')$. Now we have that $s_i' : S \to \mathcal{C}'$ is a section of the finite type, flat, separated morphism $\pi'$. If we knew that $\mathcal{C}'$ was a scheme, then Proposition \ref{Pro:SectionEmbedding} would imply that $s_i'$ is a Cartier divisor, hence it would give rise to our desired G-linearized line bundle $\mathcal{O}(s_i)$ on $\mathcal{C}$. However, being a Cartier divisor can be checked fpqc-locally. This is because a Cartier divisor is the same as a Koszul-regular immersion of codimension $1$ (see \cite[\href{http://stacks.math.columbia.edu/tag/061T }{Tag 061T} ]{stacks} for a definition) and this notion is fpqc-local on the target (see  \cite[\href{http://stacks.math.columbia.edu/tag/0694}{Tag 0694} ]{stacks}). But to check this, we again pull back via the map $\mathcal{C} \to \mathcal{C}'$ and now Proposition \ref{Pro:SectionEmbedding} really shows that $s_i$, which are the pullbacks of the maps $s_i'$, are Cartier divisors.
 \end{itemize}
 Clearly $\mathcal{O}_{\PP^1}(2)$ is $\tilde \pi$-relatively ample on $T \times \PP^1$. On the other hand, as the family $\pi: \mathcal{C} \to T$ was supposed to be stable, the bundle $\omega_{\pi}(s_1+ \ldots + s_n) \otimes f_1^* \mathcal{O}(l) \otimes f_2^* \mathcal{O}(m)$ is $\pi$-ample for $l,m$ sufficiently large.
 
 But then by \cite[VIII,Proposition 7.8]{sga1}, the spaces $\mathcal{C}', \mathcal{\tilde C}$ are actually schemes and the line bundles above descend to relatively ample line bundles on them for the morphisms $\pi', \tilde \pi'$ \todoOld{Correct interpretation?}. As these morphisms are also finite type and proper, they are projective.
 
 Note further that the geometric fibres of $\tilde \pi'$ are isomorphic to $\PP^1$. Hence we have obtained a family in $\mathcal{M}^{d,n}(S)$. Conversely, for such a family as above, we can consider the morphism
 \[\psi : T=\text{Isom}_S(\mathcal{\tilde C}, S \times \PP^1) \to S.\]
 Here for schemes $X,Y$ over $S$, the scheme $\text{Isom}_S(X,Y)$ represents the functor
 \[\text{Sch}_S \to \text{Sets}, S' \mapsto \{\psi : X_{S'} \to Y_{S'} : \psi_s\text{ isomorphism } \forall s \in S'\}.\]
 The existence of this scheme follows from the fact that $\pi', \tilde \pi'$ are proper and that $\pi'$ is  flat\detex{\footnote{For a good overview see Proposition 2.2 and Lemma 2.3 in \cite{ossermansemquot}.}}. We want to show that $\psi$ is a $G$-torsor. It clearly has a $G$-action by postcomposition. On the other hand, the map $\tilde \pi'$ is smooth. Thus it has sections \'etale locally and hence is a trivial $\PP^1$-bundle after an \'etale base change $S' \to S$. But then the pullback of $T$ to $S'$ is simply $S' \times \text{PGL}_2$, a trivial $G$-torsor.
 
 Now after pulling back the diagram (\ref{eqn:diagtwo}) above by the map $\psi$, we can clearly trivialize the $\PP^1$-bundle $\psi^* \mathcal{\tilde C}$. Indeed, by definition the scheme $T$ parametrizes all possible ways to perform this trivialization fibrewise. Now we have again a diagram as in (\ref{eqn:diagone}) and the action of $G$ on $T$ lifts to an action of the other spaces under the pullback $T \to S$. This is not canonical, but it exactly mirrors the choice we had when translating from the 2-morphism $\sigma$ above to an action map $\eta: G \times \mathcal{C} \to \mathcal{C}$. 
 
 One verifies that the two operations we described are inverse to one-another and one checks, that they respect the isomorphisms inherent in the objects of $\mathcal{Y}_{d,n}/ \text{PGL}_2$ and $\mathcal{M}^{d,n}$ over $S$. Here one uses that starting with a diagram (\ref{eqn:diagone}), the $G$-torsors $T$ and $\text{Isom}_S(\mathcal{\tilde C}, S \times \PP^1)$ over $S$ are isomorphic. This is because there exists a $G$-equivariant map $T \to  \text{Isom}_S(\mathcal{\tilde C}, S \times \PP^1)$ by the universal property of  $\text{Isom}_S(\mathcal{\tilde C}, S \times \PP^1)$. 
 
 The explicit description of the image of a family in $\mathcal{Y}_{d,n}/ \text{PGL}_2$ coming from a family in $\mathcal{Y}_{d,n}$ is obvious from the above construction. Hence, the proof is finished.
\end{proof}

\begin{Cor}
 For $\textbf{d}=(d|d_1, \ldots, d_n)$ admissible, we have that the scheme $M(d|d_1, \ldots, d_n)$ is a coarse moduli space for the functor $\text{Sch}_{\mathbb{C}} \to \text{Sets}$, associating to a scheme $S$ the set of isomorphism classes of diagrams (\ref{eqn:selfmapfam}) in $\mathcal{M}^{d,n}$ such that for all $s \in S$ the map 
 \[(\mathcal{C}_s; \sigma_1(s), \ldots, \sigma_n(s)) \to \mathcal{\tilde C}_s \times \mathcal{\tilde C}_s\]
 is semistable with respect to $\textbf{d}$ as in Lemma \ref{Lem:semistableadvanced}.
\end{Cor}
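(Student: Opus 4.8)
The plan is to assemble the results already at hand. Theorem \ref{Theo:modprop} identifies the quotient stack $\mathcal{Y}_{d,n}/\text{PGL}_2$ with the stack $\mathcal{M}^{d,n}$ of self-map diagrams (\ref{eqn:selfmapfam}), and Lemma \ref{Lem:Mfinitequotsing} exhibits $M(d|d_1,\ldots,d_n)$ as the coarse moduli space of the open substack $\mathcal{M}(d|d_1,\ldots,d_n) = \mathcal{Y}_{d,n}^{ss,\textbf{d}}/\text{PGL}_2$. Writing $\underline{F}$ for the functor in the statement, two steps remain: (i) show that under the identification of Theorem \ref{Theo:modprop} the substack $\mathcal{M}(d|d_1,\ldots,d_n)$ is exactly the one whose $S$-points are the diagrams (\ref{eqn:selfmapfam}) all of whose geometric fibres are semistable with respect to $\textbf{d}$, so that $\underline{F}(S)$ is the set of isomorphism classes of $\mathcal{M}(d|d_1,\ldots,d_n)(S)$; and (ii) check the formal fact that the coarse moduli space of a stack (in the sense of the Conventions) is also a coarse moduli space for the functor of isomorphism classes of its objects.

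For (i), an object of $\mathcal{Y}_{d,n}/\text{PGL}_2$ over $S$ is a $\text{PGL}_2$-torsor $T \to S$ with a $\text{PGL}_2$-equivariant morphism $T \to \mathcal{Y}_{d,n}$, and since $\mathcal{Y}_{d,n}^{ss,\textbf{d}}$ is open in $\mathcal{Y}_{d,n}$, the object factors through $\mathcal{Y}_{d,n}^{ss,\textbf{d}}/\text{PGL}_2$ iff every geometric point of $T$ maps into $\mathcal{Y}_{d,n}^{ss,\textbf{d}}$. As $T \to S$ is surjective on geometric points, this is a condition on the geometric fibres of the associated diagram, so I reduce to a single $\mathbb{C}$-point $f = (f_1,f_2): (C;p_1,\ldots,p_n) \to \PP^1\times\PP^1$ of $\mathcal{Y}_{d,n}$, whose image in $\mathcal{M}^{d,n}$ under Theorem \ref{Theo:modprop} is the diagram with $\mathcal{C} = C$, $\mathcal{\tilde C} = \PP^1$, $\sigma_i = p_i$. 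By Corollary \ref{Cor:quotexist2} together with Lemma \ref{Lem:preimstable}, $f \in \mathcal{Y}_{d,n}^{ss,\textbf{d}}$ iff $J(f) \in Z_d\times(\PP^1)^n$ is semistable for $\mathcal{L}_{\textbf{d}^\sim}$, which by Lemma \ref{Lem:semistableadvanced} is the inequality (\ref{eqn:advancedsemistable}) for every $p \in \PP^1$; and (\ref{eqn:advancedsemistable}) is by definition the assertion that $(\mathcal{C}_s;\sigma_1(s),\ldots,\sigma_n(s)) \to \mathcal{\tilde C}_s \times \mathcal{\tilde C}_s$ is semistable with respect to $\textbf{d}$. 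Hence $\mathcal{M}(d|d_1,\ldots,d_n)(S)$ is the full subcategory of $\mathcal{M}^{d,n}(S)$ on the diagrams (\ref{eqn:selfmapfam}) all of whose geometric fibres are semistable, and $\underline{F}(S)$ is its set of isomorphism classes (functorial in $S$, as semistability is geometric and hence stable under base change).

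For (ii), I would argue as in the earlier proof that $M_d$ is a coarse moduli space for $\underline{M}_d$. The coarse map $c : \mathcal{M}(d|d_1,\ldots,d_n) \to M(d|d_1,\ldots,d_n)$ induces a natural transformation $\underline{F} \to \Hom(-, M(d|d_1,\ldots,d_n))$, taking a diagram over $S$ (as an object of $\mathcal{M}(d|d_1,\ldots,d_n)(S)$) to the induced morphism $S \to M(d|d_1,\ldots,d_n)$; this is well defined on isomorphism classes because the target is an algebraic space, and it is a bijection on $\mathbb{C}$-points because $\underline{F}(\text{Spec}(\mathbb{C}))$ is the set of isomorphism classes of $\mathbb{C}$-points of $\mathcal{M}(d|d_1,\ldots,d_n)$ and $c$ induces a bijection on geometric points. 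Given any natural transformation $\underline{F} \to \Hom(-, Z)$ with $Z$ an algebraic space, precomposing with $\mathcal{M}(d|d_1,\ldots,d_n)(S) \to \underline{F}(S)$ gives a morphism of stacks $\mathcal{M}(d|d_1,\ldots,d_n) \to Z$ (as $Z$ has trivial automorphism groups), which factors uniquely through $c$ by the universal property of the coarse moduli space; unwinding the factorization produces the required unique factorization of the natural transformation through $M(d|d_1,\ldots,d_n)$.

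The main obstacle is step (i), and inside it the one genuine point is checking that the GIT stability data computed via the map $j$ and the evaluation maps (Lemmas \ref{Lem:Jconstruction} and \ref{Lem:semistableadvanced}) is exactly the intrinsic fibrewise semistability of the self-map diagram under the dictionary of Theorem \ref{Theo:modprop}: concretely, that $\nu_p$ and $\delta_{p=\text{fix}}$ in (\ref{eqn:advancedsemistable}) are read off from the components of $\mathcal{C}_s$ and their degrees under $\varphi_s$ precisely as the semistability condition of Lemma \ref{Lem:semistableadvanced} prescribes. Step (ii) is purely formal and adds nothing beyond the coarse-moduli arguments already used for $M_d$.
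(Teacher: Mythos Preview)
Your proposal is correct and follows essentially the same approach as the paper: identify $\mathcal{M}(d|d_1,\ldots,d_n)=\mathcal{Y}_{d,n}^{ss,\textbf{d}}/\text{PGL}_2$ with the open substack of $\mathcal{M}^{d,n}$ cut out by fibrewise semistability via Theorem \ref{Theo:modprop}, then invoke that $M(d|d_1,\ldots,d_n)$ is its coarse moduli space. The paper's proof is terser—it cites Lemma \ref{Lem:quotcoarsemod} directly rather than Lemma \ref{Lem:Mfinitequotsing}, and treats your step (ii) as automatic—but the logical skeleton is the same; you have simply unpacked the details the paper leaves implicit.
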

\begin{proof}
 The substack $\mathcal{Y}_{d,n}^{ss,\textbf{d}} \subset \mathcal{Y}_{d,n}$ is open and $G$-invariant. By Lemma \ref{Lem:quotcoarsemod}, the coarse moduli space of its quotient $\mathcal{Y}_{d,n}^{ss,\textbf{d}}/G$ is exactly $M(d|d_1, \ldots, d_n)$. On the other hand, Theorem \ref{Theo:modprop} identifies this quotient with the open substack of $\mathcal{M}^{d,n}$ whose objects are exactly those described above. But then the functor sending $S$ to the set of isomorphism classes of such objects naturally has $M(d|d_1, \ldots, d_n)$ as a coarse moduli space. \todoOld{Check with Jakob}
\end{proof}

\begin{Rmk}
 If in the diagram (\ref{eqn:selfmapfam}), the fibres $\mathcal{C}_s$ of $\mathcal{C}$ are smooth, the map $\mu$ is an isomorphism and thus $\mathcal{C} \cong \mathcal{\tilde C}$. Thus in this case, the scheme $S$ really parametrizes a family of self-maps $\mathcal{C}_s \to \mathcal{C}_s$ of degree $d$. Thus the locus $M(d|d_1, \ldots, d_n)^o$ of classes of maps with smooth source curve is a coarse moduli space for the functor 
 \begin{align*}
  \text{Sch}_{ \mathbb{C}} &\to \text{Sets}\\
  S &\mapsto \left\{ \vcenter{ \hbox{\begin{tikzpicture}
 \draw (0,2) node{$\mathcal{C}$};
 \draw[->] (0.3,2.05) to [out=390,in=330,looseness=12] (0.3, 1.95);
 \draw (0.9,2) node{$\phi$};
 \draw[->] (-0.1,1.7) -- (-0.1,1) node[left]{$\pi$} -- (-0.1,0.3);
 \draw[->] (0.1,0.3) -- (0.1,1) node[right]{$\sigma_i$} -- (0.1,1.7);
 \draw (0,0) node{$S$};
\end{tikzpicture} }}  : \begin{array}{l}\pi \text{ flat, projective family of}\\\text{smooth genus }0\text{ curves,} \\ \sigma_1, \ldots, \sigma_n \text{ disjoint sections of }\pi\\ \phi\text{ self-map over }S\text{ with }\\ \phi_* [\mathcal{C}_s] = d [\mathcal{C}_s]\text{ for all }s \in S \end{array}  \right\}/\text{iso}.
\end{align*}
Here, an isomorphism of families as above is an isomorphism $\mathcal{C}_1 \to \mathcal{C}_2$ making all diagrams commute.

Note that if some of the $d_i$ are greater than $(d-1+\sum_{j=1}^n d_i)/2$, we need to require above that $\sigma_i(s)$ is not a fixed point of $\phi_2$ for all $s \in S$.
\end{Rmk}
\begin{Rmk} \label{Rem:coarsefamily}
 The forgetful map $F: Y_{d,n+1} \to Y_{d,n}$ of the last marked point is $G$-equivariant and the preimage of $Y_{d,n}^{ss,\textbf{d}}$ is $Y_{d,n+1}^{ss,(\textbf{d},0)}$, with $(\textbf{d},0) = (d|d_1, \ldots, d_n,0)$. Thus $F$ induces a map 
 \[F: M(d|d_1, \ldots, d_n,0) \to M(d|d_1, \ldots, d_n).\] 
 Similarly one constructs sections 
 \[\sigma_i : M(d|d_1, \ldots, d_n) \to M(d|d_1, \ldots, d_n,0)\]
 of $F$. We note that over the locus $M^*$ of points $[f] \in M(d|d_1, \ldots, d_n)$ with trivial $G$-isotropy and such that $f \in Y_{d,n}$ has no automorphisms, the family $F$ and the sections $\sigma_i$ are exactly the universal family $\pi: \mathcal{C}|_{M^*} \to M^*$ from above. This follows because by \cite{fultonpandha}, the locus $Y_{d,n}^*$ of points without automorphisms in $Y_{d,n}$ carries the restriction of the forgetful map from $Y_{d,n+1}$ as a universal family. On the other hand, the geometric quotient of a scheme by a free action is isomorphic to the corresponding quotient stack.
\end{Rmk}

\subsection{Rationality}
In this section, we show that the spaces $M(d|d_1, \ldots, d_n)$ are rational. First note that for different weights $d_1, \ldots, d_n$, all (nonempty) spaces $M(d|d_1, \ldots, d_n)$  are canonically birational. Indeed, they are categorical quotients for varying open, invariant subsets of $Y_{d,n}$, hence they are isomorphic over the image of the intersection of those subsets. Thus, given $d,n$ we can choose the weights arbitarily without changing the birational class of $M(d|d_1, \ldots, d_n)$. This also shows that they are birational to a categorical quotient of some nonempty, $G=\text{PGL}_2$-invariant subset of $\text{Rat}_d \times ((\PP^1)^n \setminus \Delta)$ by $G$, if this quotient exists.
\begin{Theo} \label{Theo:rational}
 The spaces $M(d|d_1, \ldots, d_n)$ are rational.
\end{Theo}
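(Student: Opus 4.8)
The plan is to build on the reduction established in the paragraph preceding the statement: $M(d|d_1,\ldots,d_n)$ is birational to a categorical quotient of a dense open $G$-invariant subset of $\text{Rat}_d\times\left((\PP^1)^n\setminus\Delta\right)$ by $G=\text{PGL}_2$, where $G$ acts by conjugation on $\text{Rat}_d$ and in the standard way on each $\PP^1$-factor. Since $\text{Rat}_d$ is a dense open subset of $\PP^{2d+1}$, it is enough to produce, on a dense open locus, a rational cross-section for this action --- that is, to rigidify the $\text{PGL}_2$-symmetry using canonical geometric data of a general point. I would split the argument according to whether $n=0$ or $n\geq 1$.

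If $n=0$, admissibility forces $d$ even, and $\text{Rat}_d\subset Y_{d,0}$ is a dense open subset consisting of stable points (cf.\ Corollary \ref{Cor:quotexist1}), so that $M_d=\text{Rat}_d/G$ is a dense open subset of $M(d,0)$. Here I would simply invoke the theorem of Levy (\cite{levy}) that $M_d$ is a rational variety. This is the step I expect to be the essential obstacle: rigidifying a general degree-$d$ self-map up to conjugation has no obvious elementary solution --- the natural invariants (fixed points together with their multipliers, or critical points together with critical values) come in unordered collections and there is no canonical way to single out three points --- so I would treat Levy's result as a black box.

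For $n\geq 1$ and $d\geq 1$ I would instead rigidify using the first marked point. For a general pair $(\varphi;p_1,\ldots,p_n)\in\text{Rat}_d\times(\PP^1)^n$ the three points $p_1,\ \varphi(p_1),\ \varphi(\varphi(p_1))$ of $\PP^1$ are pairwise distinct, so there is a unique $s=s(\varphi;p_\bullet)\in\text{PGL}_2$ carrying the ordered triple $(p_1,\varphi(p_1),\varphi(\varphi(p_1)))$ to $(0,\infty,1)$. A short computation gives the identity $s(h\cdot x)=s(x)\circ h^{-1}$ for $h\in G$, so $x\mapsto s(x)\cdot x$ is a $G$-invariant rational map restricting to the identity on the slice $\Sigma=\{(\psi;0,p_2,\ldots,p_n):\psi\in\text{Rat}_d,\ \psi(0)=\infty,\ \psi(\infty)=1\}$; since every general orbit meets $\Sigma$ in exactly one point, $\Sigma$ is a rational cross-section. (Generic freeness of the action on this locus is automatic: an element of $G$ fixing such a point fixes three distinct points of $\PP^1$, hence is trivial.) Thus the quotient is birational to $\Sigma\cong\{\psi\in\text{Rat}_d:\psi(0)=\infty,\ \psi(\infty)=1\}\times(\PP^1)^{n-1}$, and writing $\psi=[a_0X^d+\cdots+a_dY^d:b_0X^d+\cdots+b_dY^d]$ the two conditions on $\psi$ become the independent linear equations $b_d=0$ and $a_0=b_0$; hence the first factor is a dense open subset of a linear $\PP^{2d-1}\subset\PP^{2d+1}$, so $M(d|d_1,\ldots,d_n)$ is birational to an open subset of $\PP^{2d-1}\times(\PP^1)^{n-1}$ and is in particular rational.

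It remains to treat $d=0$, which forces $n\geq 2$. Here $\text{Rat}_0\cong\PP^1$, so the relevant quotient is $(\PP^1)^{n+1}/G$, which is birational to $(\PP^1)^{n-2}$ by using three of the factors to rigidify the diagonal $\text{PGL}_2$-action (consistently with Lemma \ref{Lem:M(0;1,1)}), hence rational. Throughout, the only point needing care is that the various categorical quotients are to be compared only birationally, on dense open loci where the action is generically free and a geometric quotient exists --- which is precisely what the cross-section constructions provide.
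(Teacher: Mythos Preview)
Your proof is correct and hinges on the same key rigidification as the paper: for $n\geq 1$ and $d\geq 1$ you move the triple $(p_1,\varphi(p_1),\varphi^{\circ 2}(p_1))$ to $(0,\infty,1)$, and for $n=0$ you invoke Levy. The paper, however, only uses this slice argument for $n=1$; for $n\geq 2$ it instead runs an induction on $n$ via the forgetful map $M(d|d_1,\ldots,d_n,0)\to M(d|d_1,\ldots,d_n)$, which over the generic locus is a flat $\PP^1$-fibration admitting the section $\sigma_1$ (image $D_{\{1,n\},0}$), hence Zariski-locally trivial. Your direct approach is more elementary and avoids appealing to the universal-family structure and the fact that a $\PP^1$-bundle with a section is locally trivial; the paper's inductive route, on the other hand, ties the result into the recursive boundary picture used elsewhere. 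Both treatments of $d=0$ are essentially the same. One minor point worth making explicit in your write-up: for $d=1$ the three points $p_1,\varphi(p_1),\varphi^{\circ 2}(p_1)$ are generically distinct because a generic M\"obius transformation has no $2$-cycles other than its fixed points, so your slice argument does cover $d=1,\ n\geq 1$ without needing the paper's separate appeal to $M(1|1)\cong\PP^1$.
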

\begin{proof}
 Our argument will basically be an induction on $n$ for every fixed $d \geq 0$. In the case $n=0$, Levy shows that $\text{Rat}_d \sslash G$ is rational for $d \geq 2$ (see \cite[Theorem 4.1]{levy}).
 
 For $n=1$ we must now also cover the case $d=1$, but then we have seen that all nonempty moduli spaces $M(1|d_1)$ are isomorphic to $M(1|1) \cong \PP^1$, which is rational.
 
 For $d \geq 2$ consider the open subset
 \[\mathcal{U} = \{(f,p): p \neq f(p) \neq f(f(p)) \neq p\} \subset \text{Rat}_d \times \PP^1.\]
 By the definition of the action of $G$, this subset is invariant. But on $\mathcal{U}$ we can use the action of $G$ to move $p$ to $0=[0:1]$, $f(p)$ to $\infty=[1:0]$ and $f(f(p))$ to $1=[1:1]$ in a unique way. More precisely, let
 \[\mathcal{U}_0 = \{(f,0): f(0)= \infty, f(\infty) = 1\} \subset \mathcal{U}.\]
 Then the action map restricted to $G \times \mathcal{U}_0$ is an isomorphism onto $\mathcal{U}$. Thus $\mathcal{U} \sslash G = \mathcal{U}_0$. But the conditions $f(0)=\infty, f(\infty)=1$ give linear conditions on the coefficients of $f \in \text{Rat}_d \subset \PP^{2d+1}$ and hence $\mathcal{U}_0$ is rational.
 
 Finally, for $n \geq 2$ we note that the case $d=0,n=2$ is clear, as the only nonempty moduli spaces here are isomorphic to $M(0|1,1)$ (which is a point by Lemma \ref{Lem:M(0;1,1)}). By the discussion preceding the Theorem, we may restrict to the cases $M=M(0|1,1,0, \ldots, 0)$, $M=M(d|0, \ldots, 0)$ for $d$ even and $M=M(d|1,0,\ldots, 0)$ for $d$ odd. But by the forgetful map $F : M \to \tilde M$ of the last marked point (which carries weight $0$), these spaces map to the corresponding spaces $\tilde M$ with one mark less. These are rational by induction. Using Remark \ref{Rem:coarsefamily}, the forgetful maps are flat, projective $\PP^1$-fibrations over the locus in $\tilde M$ parametrizing self-maps of smooth curves without isotropy or automorphisms. Moreover, as $n\geq 2$, the map $F$ has a section $\sigma_1$ (whose image is the divisor $D_{\{1,n\},0}$). But any flat $\PP^1$-fibration with a section is actually locally trivial (see \cite[Proposition 25.3]{hartdeformation}), so indeed $M$ is rational.
%
\end{proof}

\section{The Picard group of \texorpdfstring{$M(d|d_1, \ldots, d_n)$}{M(d|d1, ..., dn)}} \label{Sect:Picard}
\begin{Lem} \label{Lem:Picisomorph}
 Let $\tilde G = \text{PGL}_2$ act freely on a normal variety $X$ and let $\phi: X \to X'$ be a geometric quotient. Then the map \[\phi^* : \text{Pic}(X') \otimes_{\mathbb{Z}} \mathbb{Q} \to \text{Pic}(X) \otimes_{\mathbb{Z}} \mathbb{Q}\] is an isomorphism.
\end{Lem}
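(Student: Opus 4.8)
The plan is to factor $\phi^*$ through the group $\text{Pic}^{\tilde G}(X)$ of $\tilde G$-linearized line bundles on $X$, and then to exploit two special features of $\tilde G = \text{PGL}_2$: it has no nontrivial characters, and its Picard group is finite of order $2$.

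First I would observe that, since the action is free and $\phi$ is a geometric quotient, $\phi$ is a $\tilde G$-torsor; as $\tilde G$ is smooth this torsor is \'etale-locally trivial, so $\phi$ is in particular faithfully flat. Equivariant descent of quasi-coherent sheaves along $\phi$ then produces an isomorphism $\phi^*_{\mathrm{lin}} : \text{Pic}(X') \xrightarrow{\ \sim\ } \text{Pic}^{\tilde G}(X)$, a line bundle on $X'$ being equipped with the tautological linearization coming from the $\tilde G$-equivariance of $\phi$ (the action on $X'$ being trivial). Writing $q : \text{Pic}^{\tilde G}(X) \to \text{Pic}(X)$ for the homomorphism that forgets the linearization, we have $\phi^* = q \circ \phi^*_{\mathrm{lin}}$, so it suffices to show that $q$ is injective with cokernel of finite exponent.

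For this I would appeal to the standard analysis of linearizations for a connected linear algebraic group acting on a normal variety (\cite[Ch.~1, \S 3]{git}, together with \cite[Corollary 1.6]{git}): the kernel of $q$ is a quotient of the character group $\Hom(\tilde G,\mathbb{G}_m)$, while the cokernel of $q$ embeds into $\text{Pic}(\tilde G)$ via the obstruction class $L \mapsto (\sigma^*L \otimes p_2^*L^{-1})|_{\tilde G\times\{x_0\}}$, where $\sigma$ is the action map and $p_2$ the projection of $\tilde G\times X$ onto $X$. Now $\text{PGL}_2$ is semisimple, hence equal to its own commutator subgroup, so $\Hom(\tilde G,\mathbb{G}_m) = 0$ and $q$ is injective. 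Moreover $\text{SL}_2\to\text{PGL}_2$ is a $\mu_2$-torsor and $\text{Pic}(\text{SL}_2)=0$, whence $\text{Pic}(\tilde G)=\text{Pic}(\text{PGL}_2)\cong\mathbb{Z}/2\mathbb{Z}$; therefore the cokernel of $q$ is killed by $2$. Combining, $\phi^*$ is injective with cokernel of exponent dividing $2$, so $\phi^*\otimes_{\mathbb{Z}}\mathbb{Q}$ is an isomorphism.

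The bulk of the argument is just assembling these standard inputs; the points that need care are the claim that $\phi$ is a $\tilde G$-torsor (so that descent applies), which uses freeness of the action together with the properties of the geometric quotient established earlier, and the identification $\text{Pic}(\text{PGL}_2)\cong\mathbb{Z}/2\mathbb{Z}$ — it is exactly this $2$-torsion that forces the passage to $\mathbb{Q}$-coefficients, as one already sees for $\phi:\text{PGL}_2\to\Spec(\mathbb{C})$, where $\phi^*$ is injective with cokernel $\mathbb{Z}/2\mathbb{Z}$.
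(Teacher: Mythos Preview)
Your proposal is correct and follows essentially the same route as the paper: factor $\phi^*$ through $\text{Pic}^{\tilde G}(X)$, use descent (the paper cites \cite[1.\S3]{git}) for the first isomorphism, and then control the forgetful map $q$ via the vanishing of characters and the finiteness of $\text{Pic}(\tilde G)$ (the paper cites \cite[Theorem 7.1, Exercise 7.2, Corollary 7.2]{dolgachev} for this). Your treatment is slightly more explicit in computing $\text{Pic}(\text{PGL}_2)\cong\mathbb{Z}/2\mathbb{Z}$ directly from the $\mu_2$-torsor $\text{SL}_2\to\text{PGL}_2$, whereas the paper just appeals to Dolgachev.
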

\begin{proof}
 The map $\phi^*$ factors as the composition 
 \[\text{Pic}(X') \otimes_{\mathbb{Z}} \mathbb{Q} \to \text{Pic}^{\tilde G}(X) \otimes_{\mathbb{Z}} \mathbb{Q} \to \text{Pic}(X) \otimes_{\mathbb{Z}} \mathbb{Q}\]
 By \cite[1.§3]{git} the first map is an isomorphism (even before tensoring with $\mathbb{Q}$). By \cite[Thereom 7.1, Exercise 7.2]{dolgachev} the second map is injective (using that $\text{PGL}_2$ has only trivial characters as the same is true for $\text{SL}_2$). Finally \cite[Corollary 7.2]{dolgachev} implies that the right map is surjective.
\end{proof}

\begin{Cor} \label{Cor:Picidentification}
 Let $\textbf{d}=(d|d_1, \ldots, d_n)$ be admissible with $(d,n) \neq (2,0)$, $(1,1)$. Then the quotient map 
 \[\phi: Y_{d,n}^{ss,\textbf{d}} \to M(d|d_1, \ldots, d_n)\]
 induces an isomorphism 
 \[\phi^* : \text{Pic}(M(d|d_1, \ldots, d_n)) \otimes_{\mathbb{Z}} \mathbb{Q} \to \text{Pic}(Y_{d,n}^{ss,\textbf{d}}) \otimes_{\mathbb{Z}} \mathbb{Q}\]
 and we have $\text{Pic}(Y_{d,n}^{ss,\textbf{d}}) \otimes_{\mathbb{Z}} \mathbb{Q} \cong \text{Cl}(Y_{d,n}^{ss,\textbf{d}}) \otimes_{\mathbb{Z}} \mathbb{Q}$.
\end{Cor}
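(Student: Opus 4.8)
The plan is to reduce the statement to Lemma \ref{Lem:Picisomorph} by restricting to the open locus on which $\tilde G = \text{PGL}_2$ acts freely, exploiting that all the varieties in sight have at worst finite quotient singularities, so that rational Picard and class groups coincide.

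I would first dispatch the second assertion. Since $Y_{d,n}$ is locally the quotient of a nonsingular variety by a finite group (\cite{fultonpandha}), so is its open subvariety $Y_{d,n}^{ss,\textbf{d}}$, and hence, by the same argument as in Corollary \ref{Cor:QCartier}, every Weil divisor on $Y_{d,n}^{ss,\textbf{d}}$ is $\mathbb{Q}$-Cartier. Thus the natural injection $\text{Pic}(Y_{d,n}^{ss,\textbf{d}}) \hookrightarrow \Cl(Y_{d,n}^{ss,\textbf{d}})$ becomes an isomorphism after $\otimes_{\mathbb{Z}}\mathbb{Q}$. The same holds for $M(d|d_1,\ldots,d_n)$ by Lemma \ref{Lem:Mfinitequotsing}, so that also $\text{Pic}(M(d|d_1,\ldots,d_n))\otimes\mathbb{Q} \cong \Cl(M(d|d_1,\ldots,d_n))\otimes\mathbb{Q}$.

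For the first assertion, let $U \subseteq Y_{d,n}^{ss,\textbf{d}}$ be the $\tilde G$-invariant open subset on which $\tilde G$ acts freely; by Corollary \ref{Cor:codimfree}, and this is exactly where the hypothesis $(d,n) \neq (2,0),(1,1)$ enters, its complement has codimension at least $2$. Because $U$ is $\tilde G$-invariant and the fibres of the geometric quotient $\phi$ are single orbits, $U$ is saturated, so $V := \phi(U)$ is open and $\phi|_U : U \to V$ is again a geometric quotient, now for a free action. Moreover each fibre of $\phi$ is a $3$-dimensional orbit (the stabilizers being finite by Corollary \ref{Cor:codimfree}), so $\phi(Y_{d,n}^{ss,\textbf{d}} \setminus U)$ has codimension at least $2$ in $M(d|d_1,\ldots,d_n)$ as well. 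Since removing a closed subset of codimension $\geq 2$ from a normal variety does not change its class group, restriction induces isomorphisms $\Cl(Y_{d,n}^{ss,\textbf{d}}) \cong \Cl(U)$ and $\Cl(M(d|d_1,\ldots,d_n)) \cong \Cl(V)$; combined with the previous paragraph (applied also to $U$ and $V$, which again have finite quotient singularities, being open in spaces that do) this gives $\text{Pic}(Y_{d,n}^{ss,\textbf{d}})\otimes\mathbb{Q} \cong \text{Pic}(U)\otimes\mathbb{Q}$ and $\text{Pic}(M(d|d_1,\ldots,d_n))\otimes\mathbb{Q} \cong \text{Pic}(V)\otimes\mathbb{Q}$ via restriction of line bundles. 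These restriction maps are compatible with pullback along $\phi$ and $\phi|_U$, and Lemma \ref{Lem:Picisomorph} applied to the free geometric quotient $\phi|_U : U \to V$ shows that $(\phi|_U)^*$ is an isomorphism on rational Picard groups; a short diagram chase then identifies $\phi^*$ with an isomorphism.

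The main obstacle I anticipate is the codimension bookkeeping: one must verify carefully that the free locus $U$ is open, $\tilde G$-invariant and saturated, that $\phi|_U$ really is a geometric quotient (restriction of a geometric quotient to a saturated open subset), and that the image of the non-free locus under $\phi$ has the expected codimension, which rests on the equidimensionality of $\tilde G$-orbits guaranteed by the finiteness of stabilizers in Corollary \ref{Cor:codimfree}. Once this is in place, everything is a formal consequence of Lemma \ref{Lem:Picisomorph} together with the finite-quotient-singularity structure, which forces $\text{Pic}\otimes\mathbb{Q} = \Cl\otimes\mathbb{Q}$ throughout.
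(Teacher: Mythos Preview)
Your proposal is correct and follows essentially the same approach as the paper: restrict to the free locus $U$ (using Corollary \ref{Cor:codimfree}), pass to its image $V$ under $\phi$, use finite quotient singularities to identify rational Picard and class groups, use invariance of the class group under removing codimension $\geq 2$ subsets, and then apply Lemma \ref{Lem:Picisomorph} to the free geometric quotient $\phi|_U: U \to V$. The paper argues the codimension bound on $M(d|d_1,\ldots,d_n)\setminus V$ slightly differently (via $U = \phi^{-1}(V)$ rather than via orbit equidimensionality), but the structure is otherwise identical.
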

\begin{proof}
 By Corollary \ref{Cor:codimfree} we know that there is an open, $\tilde G$-invariant subset $Y^f \subset Y_{d,n}^{ss,\textbf{d}}$ with complement of codimension at least $2$, on which $\tilde G$ acts freely. Let $M^f \subset M(d|d_1, \ldots, d_n)$ be the image of $Y^f$ under $\phi$. Then as $\phi$ is a geometric quotient, this set is open with $Y^f=\phi^{-1}(M^f)$. But this implies that $M^f$ has complement of codimension at least $2$, too. As $Y_{d,n}$ and $M(d,n)$ only have finite quotient singularities, their corresponding rational Class groups and Picard groups coincide, and using that the Class group does not change when removing sets of codimension at least $2$ we obtain
 \begin{align*}
  \text{Pic}(Y_{d,n}^{ss,\textbf{d}}) \otimes \mathbb{Q} &\cong \text{Cl}(Y_{d,n}^{ss,\textbf{d}}) \otimes \mathbb{Q} \cong \text{Cl}(Y^f) \otimes \mathbb{Q}\cong \text{Pic}(Y^f) \otimes \mathbb{Q}\\
  \text{Pic}(M(d|d_1, \ldots, d_n)) \otimes \mathbb{Q} &\cong \text{Cl}(M(d|d_1, \ldots, d_n)) \otimes \mathbb{Q} \cong \text{Cl}(M^f) \otimes \mathbb{Q} \\ &\cong \text{Pic}(M^f) \otimes \mathbb{Q}.
 \end{align*}
 Note further that the restricted map $\phi: Y^f \to M^f$ is not only a geometric quotient for the action of $G= \text{SL}_2$, but also for the induced action of $\tilde G$ on $Y^f$. Using Lemma \ref{Lem:Picisomorph} we conclude the desired statement.
\end{proof}
We see that we can reduce the computation of $\text{Pic}(M(d|d_1, \ldots, d_n)) \otimes \mathbb{Q}$ to the computation of $\text{Cl}(Y_{d,n}^{ss,\textbf{d}}) \otimes \mathbb{Q}$. As $Y_{d,n}^{ss,\textbf{d}} \subset Y_{d,n}$ is an open set whose complement is a union of divisors, we will first compute generators and relations for the whole group $\text{Cl}(\overline M_{0,n}(\PP^1 \times \PP^1, (1,d))) \otimes \mathbb{Q}$ and then obtain the desired group as a quotient by the classes coming from the unstable loci.

\subsection{The Picard group of \texorpdfstring{$\overline M_{0,n}(\PP^1 \times \PP^1, (1,d))$}{\textbackslash bar M0n(P1xP1,(1,d))}}
\todoOld{Reread with respect to $d=0$ matters} We will use methods adapted from \cite{pandhaintersect} to compute the Picard group of $Y_{d,n}=\overline M_{0,n}(\PP^1 \times \PP^1, (1,d))$.  First we find a set of divisors generating the group. Then we construct test curves, which we intersect with those divisors. As the intersection numbers only depend on the class of the divisors in the Picard group,  we will be able to show linear independence for some subsets of our generators by comparing intersection numbers. We mention that in \cite{opreadivisors}, Oprea has computed a system of generators for the rational Picard group of $\overline M_{0,n}(X, \beta)$ for $X=G/P$ a projective homogeneous space, which of course also covers $X=\PP^1 \times \PP^1$. 
\subsubsection{Generators}
Consider the open set $Y_{d,n}^o = M_{0,n}(\PP^1 \times \PP^1, (1,d)) \subset Y_{d,n}$ corresponding to maps with a smooth domain and its complement $Y_{d,n}^\partial = Y_{d,n} \setminus Y_{d,n}^o$, the boundary of $Y_{d,n}$. Then we have an exact sequence
\begin{equation}
 A_{2d+n} Y_{d,n}^\partial \to A_{2d+n} Y_{d,n} \to A_{2d+n} Y_{d,n}^o \to 0.
\end{equation}
of groups of algebraic $(2d+n)$-cycles modulo rational equivalence (see \cite[Proposition 1.8]{inttheory}). We note that as $\dim(Y_{d,n}) = \dim(Y_{d,n}^o) = 2d+1+n$, we have $A_{2d+n} Y_{d,n} = \text{Cl}(Y_{d,n})$, $A_{2d+n} Y_{d,n}^o = \text{Cl}(Y_{d,n}^o)$. Furthermore by \cite[§3.1]{fultonpandha}, $Y_{d,n}^\partial$ is of pure dimension $2d+n$. Recall that by \cite[Corollary 2]{pandhaconnected} we know that its irreducible components are the divisors 
\[D_{B,k} = D(\{1, \ldots, n\} \setminus B, (1,d-k); B, (0,k)),\]
where $B \subset \{1, \ldots, n\}$, $0 \leq k \leq d$ with $|B|\geq 2$ for $k=0$. In our interpretation as pointed graphs of rational maps, this is the divisor of graphs, where the vertical section contains the markings $B$ and maps with degree $k$. We conclude that $A_{2d+n} Y_{d,n}^\partial$ is the free abelian group generated by these divisors. 

It remains to find generators for the Class group of $Y_{d,n}^o$. For this we use the fact that the map $J$ defined in Lemma \ref{Lem:Jconstruction} is an isomorphism over this locus.
\begin{Cor} \label{Cor:PicYo}
 For $d \geq 1$, $n \geq 0$, the rational Picard group of $Y_{d,n}^o$ is given by
 \begin{equation*}
  \text{Pic}(Y_{d,n}^o) \otimes \mathbb{Q} = \begin{cases}
                                        0 &\text{for } n=0,\\
                                        \mathbb{Q} \pi_{\PP^1}^* \mathcal{O}(1) &\text{for } n=1,2,\\
                                        0 &\text{for } n\geq 3,
                                       \end{cases}
 \end{equation*}
 where for $n=1,2$ the map $\pi_{\PP^1} : Y_{d,n}^o \cong \text{Rat}_d \times ((\PP^1)^n \setminus \Delta) \to \PP^1$ is the projection on the first factor $\PP^1$.
 
 In the case $d=0$, we have $Y_{d,n}^o = M_{0,n}(\PP^1 \times \PP^1, (0,1)) = M_{0,n}(\PP^1,1) \times \PP^1$ and an induced projection $p: Y_{d,n}^o \to  \PP^1$ on the last factor. This gives an additional direct summand $\mathbb{Q} \mathcal{G}$ for $\mathcal{G} = p^*(\mathcal{O}_{\PP^1}(1))$ to the formula above.
\end{Cor}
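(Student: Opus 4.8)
The starting point is Lemma~\ref{Lem:Jconstruction}, which I would use to identify $Y_{d,n}^o=M_{0,n}(\PP^1\times\PP^1,(1,d))$ explicitly. For a point of $Y_{d,n}^o$ the source $C$ is smooth, hence $\cong\PP^1$, so $\pi_1\circ f$ is an isomorphism, the graph of the self-map lies in $\text{Rat}_d$, and the markings have pairwise distinct first coordinates; conversely, if $J$ sends a stable map to $\text{Rat}_d\times((\PP^1)^n\setminus\Delta)$ then its section $s$ has no vertical component, so every component of $C$ maps with degree $(1,d)$ or $(0,0)$, and, since the markings lie on distinct $\pi_1$-fibres, stability forbids contracted components, so $C$ is irreducible. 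Thus $J$ restricts to an isomorphism $Y_{d,n}^o\cong\text{Rat}_d\times((\PP^1)^n\setminus\Delta)$ for every $d\ge 0$ (with $\text{Rat}_0=Z_0=\PP^1$; this recovers $Y_{0,n}^o\cong M_{0,n}(\PP^1,1)\times\PP^1$ once one notes $M_{0,n}(\PP^1,1)\cong(\PP^1)^n\setminus\Delta$ via the evaluation maps). Since $\text{Rat}_d=\PP^{2d+1}\setminus V(\text{Res})$ with $\text{Res}$ a form of degree $2d$ (Section~\ref{Sect:degdmaps}), and $\Delta_{ij}=\{p_i=p_j\}$ is a smooth hypersurface of $(\PP^1)^n$ of multidegree $(1,1)$ in the $i$-th and $j$-th factors, $Y_{d,n}^o$ is the complement in $\PP^{2d+1}\times(\PP^1)^n$ of the divisor $\bigl(V(\text{Res})\times(\PP^1)^n\bigr)\cup\bigcup_{i<j}\bigl(\PP^{2d+1}\times\Delta_{ij}\bigr)$.

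Next I would run the excision exact sequence for divisor class groups. All spaces in sight are smooth, so $\text{Pic}=\text{Cl}$, and $\text{Pic}(\PP^{2d+1}\times(\PP^1)^n)=\mathbb{Z}H\oplus\bigoplus_{i=1}^n\mathbb{Z}e_i$ with $H$ the hyperplane class of $\PP^{2d+1}$ and $e_i$ the pullback of $\mathcal{O}_{\PP^1}(1)$ from the $i$-th $\PP^1$. Excision gives a surjection $\mathbb{Z}H\oplus\bigoplus_i\mathbb{Z}e_i\to\text{Pic}(Y_{d,n}^o)$ whose kernel is generated by the classes of the removed divisor components; these are multiples of $H$ summing to $2dH$, together with the classes $e_i+e_j$ of the diagonals. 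Tensoring with $\mathbb{Q}$, for $d\ge1$ the relation $2dH=0$ kills $H$ (irreducibility of $\text{Res}$ is irrelevant here — only the $H$-direction is affected by its components), so $\text{Pic}(Y_{d,n}^o)\otimes\mathbb{Q}\cong\bigl(\bigoplus_i\mathbb{Q}e_i\bigr)/\langle e_i+e_j:i<j\rangle$; for $d=0$ nothing is removed in the $\PP^1=Z_0$ factor, $H$ survives, and it contributes the extra summand $\mathbb{Q}\mathcal{G}$ with $\mathcal{G}=H$ the pullback of $\mathcal{O}_{\PP^1}(1)$ from $Z_0$ (equivalently $\text{ev}_1^*\mathcal{O}(0,1)$).

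It then remains to compute $\bigl(\bigoplus_{i=1}^n\mathbb{Q}e_i\bigr)/\langle e_i+e_j:i<j\rangle$, which is pure linear algebra: for $n=0$ it is $0$; for $n=1$ there is no relation, giving $\mathbb{Q}e_1=\mathbb{Q}\pi_{\PP^1}^*\mathcal{O}(1)$; for $n=2$ the single relation $e_1+e_2=0$ identifies the two generators up to sign, giving $\mathbb{Q}e_1$ again; and for $n\ge3$, picking three indices $i,j,k$ the relations $e_i+e_j=e_i+e_k=e_j+e_k=0$ force $2e_i=0$, hence $e_i=0$ for all $i$ and the quotient vanishes. Finally one checks that under $J$ the generator $e_1$ is indeed $(\pi_1\circ\text{ev}_1)^*\mathcal{O}(1)=\pi_{\PP^1}^*\mathcal{O}(1)$.

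The only step needing genuine care is the first one: correctly identifying $Y_{d,n}^o$ as an open subset of $\PP^{2d+1}\times(\PP^1)^n$ and reading off the multidegrees of the removed divisors (that $\text{Res}$ has degree exactly $2d$, already noted in Section~\ref{Sect:degdmaps}, and that $\{p_i=p_j\}$ has bidegree $(1,1)$). Everything after that is the standard excision computation plus elementary linear algebra, so there is no real obstacle — one just has to be attentive to the bookkeeping in the $d=0$ versus $d\ge1$ cases.
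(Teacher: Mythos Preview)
Your proof is correct and follows essentially the same strategy as the paper: identify $Y_{d,n}^o$ via $J$ with $\text{Rat}_d\times((\PP^1)^n\setminus\Delta)$, then compute the Picard group by excision and elementary linear algebra on the relations $e_i+e_j=0$. The only organizational difference is that the paper first invokes a K\"unneth-type result (the Picard group of a product with a rational variety splits as a direct sum) and then runs excision on each factor separately, whereas you embed directly into $\PP^{2d+1}\times(\PP^1)^n$ and excise on the product all at once; your route is marginally more self-contained since it sidesteps that citation, but the substance is identical.
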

\begin{proof}
 From Lemma \ref{Lem:Jconstruction} we see that $Y_{d,n}^o \cong \text{Rat}_d \times ((\PP^1)^n \setminus \Delta)$. By \cite{picardproduct}, the Picard group of the product of a rational variety with another variety is the direct sum of their corresponding Picard groups. Now for $d \geq 1$, the set $\text{Rat}_d$ is the complement of a hypersurface in $\PP^{2d+1}$ and thus has a finite Picard group, which vanishes after tensoring with $\mathbb{Q}$. For $d=0$ the degree $d$ maps from $\PP^1$ to itself are exactly the constant maps, so $\text{Rat}_d=\PP^1$ giving the additional summand $\mathbb{Q} \mathcal{G}$. 
 
 For the other factor we note that the rational Picard group of $(\PP^1)^n$ is freely generated by the classes 
 \[H_i=\pi_i^* \mathcal{O}_{\PP^1}(1),\]
 where $\pi_i : (\PP^1)^n \to \PP^1$ is the projection on the $i$-th factor. 
 The set $\Delta$ we remove is the union of irreducible divisors 
 \[\Delta_{ij} = \{(p_1, \ldots, p_n); p_i = p_j\}.\]
 Their divisor class equals $[\Delta_{ij}] = H_i + H_j$. By the excision exact sequence we have 
 \[\text{Pic}( (\PP^1)^n \setminus \Delta) \otimes \mathbb{Q} = \bigoplus_{i=1}^n \mathbb{Q} H_i / \bigoplus_{i \neq j} \mathbb{Q} [\Delta_{ij}].\]
 For $n=0$ this is obviously trivial, for $n=1$ we have exactly $\mathbb{Q} H_1$. For $n=2$ we obtain $\mathbb{Q} H_1 \oplus \mathbb{Q} H_2 / \mathbb{Q} (H_1+H_2) \cong \mathbb{Q} H_1$. For $n \geq 3$ we note that 
 \[[\Delta_{1,2}] + [\Delta_{1,3}] - [\Delta_{2,3}] = H_1+H_2+H_1+H_3-H_2-H_3 = 2 H_1.\]
 Similarly we can represent all other classes $H_i$ and thus one sees that the Picard group of $(\PP^1)^n \setminus \Delta$ is trivial in this case.
\end{proof}
\begin{Theo} \label{Theo:Generators}
 For $d \geq 0$ and $n \geq 0$, the rational Picard group of $Y_{d,n}$ is generated by $D_{B,k}$ for $B \subset \{1, \ldots, n\}$, $0 \leq k \leq d$ and $|B|\geq 2$ if $k=0$, together with the divisor class 
 \[\mathcal{H} = (\pi_1 \circ \text{ev}_1)^*(\mathcal{O}_{\PP^1}(1))\]
 in the cases $n=1,2$ and the class $\mathcal{G} = \pi^*(\mathcal{O}_{\PP^1}(1))$ for $d=0$, where 
 \[\pi: \overline M_{0,n}(\PP^1 \times \PP^1, (0,1)) = \overline M_{0,n}(\PP^1,1) \times \PP^1 \to \PP^1\]
 is the projection on the factor $\PP^1$. 
\end{Theo}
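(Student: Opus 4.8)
The plan is to combine the excision exact sequence
\[A_{2d+n} Y_{d,n}^\partial \to A_{2d+n} Y_{d,n} \to A_{2d+n} Y_{d,n}^o \to 0\]
set up above with the computation of $\Cl(Y_{d,n}^o)\otimes\mathbb{Q}$ from Corollary \ref{Cor:PicYo}. First I would reduce to a statement about Class groups: since $Y_{d,n}$ is locally the quotient of a nonsingular variety by a finite group (\cite{fultonpandha}), it has finite quotient singularities and is in particular $\mathbb{Q}$-factorial, so $\text{Pic}(Y_{d,n})\otimes\mathbb{Q}\cong\Cl(Y_{d,n})\otimes\mathbb{Q}=A_{2d+n}Y_{d,n}\otimes\mathbb{Q}$; the open part $Y_{d,n}^o$ is smooth, so the same holds there. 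Thus it suffices to show that $A_{2d+n}Y_{d,n}\otimes\mathbb{Q}$ is generated by the classes $[D_{B,k}]$ together with $\mathcal{H}$ (for $n=1,2$) and $\mathcal{G}$ (for $d=0$).

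Next I would use the geometry of the exact sequence. As recalled just before the theorem, $Y_{d,n}^\partial$ is of pure dimension $2d+n$ and, by \cite[Corollary 2]{pandhaconnected}, its irreducible components are exactly the divisors $D_{B,k}$; hence $A_{2d+n}Y_{d,n}^\partial$ is the free abelian group on the $[D_{B,k}]$, and by exactness the $\mathbb{Q}$-span of the $[D_{B,k}]$ in $A_{2d+n}Y_{d,n}\otimes\mathbb{Q}$ is precisely the kernel of restriction to $Y_{d,n}^o$. So the theorem will follow once every generator of $\Cl(Y_{d,n}^o)\otimes\mathbb{Q}$ provided by Corollary \ref{Cor:PicYo} is seen to be the restriction of a global class, and that class is identified with the claimed $\mathcal{H}$ or $\mathcal{G}$.

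For this identification I would write down the isomorphism $J$ of Lemma \ref{Lem:Jconstruction} explicitly. It identifies $Y_{d,n}^o$ with $\text{Rat}_d\times((\PP^1)^n\setminus\Delta)$ in such a way that the first $\PP^1$-coordinate of the target corresponds to $\pi_1\circ\text{ev}_1$; therefore the generator $\pi_{\PP^1}^*\mathcal{O}(1)$ appearing for $n=1,2$ in Corollary \ref{Cor:PicYo} is exactly the restriction to $Y_{d,n}^o$ of $\mathcal{H}=(\pi_1\circ\text{ev}_1)^*\mathcal{O}_{\PP^1}(1)$, a class defined on all of $Y_{d,n}$. When $d=0$ the isomorphism $\overline M_{0,n}(\PP^1\times\PP^1,(0,1))\cong\overline M_{0,n}(\PP^1,1)\times\PP^1$ is valid globally, and the extra summand $\mathbb{Q}\mathcal{G}$ of $\Cl(Y_{0,n}^o)\otimes\mathbb{Q}$ is the restriction of $\pi^*\mathcal{O}_{\PP^1}(1)$ for $\pi$ the projection onto this $\PP^1$-factor, i.e.\ of $\mathcal{G}$. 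Finally, given any $\alpha\in A_{2d+n}Y_{d,n}\otimes\mathbb{Q}$, its restriction to $Y_{d,n}^o$ is by Corollary \ref{Cor:PicYo} a $\mathbb{Q}$-combination of the restrictions of $\mathcal{H}$ and $\mathcal{G}$ (or zero); subtracting the corresponding combination of $\mathcal{H},\mathcal{G}$ yields a class restricting to $0$ on $Y_{d,n}^o$, hence lying in the span of the $[D_{B,k}]$, and the theorem follows.

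The one point that is not purely formal is the bookkeeping in the previous paragraph: one must check that the chosen generator of $\Cl(Y_{d,n}^o)\otimes\mathbb{Q}$ extends to the \emph{named} global class and not merely to some divisor supported off the boundary. This is where unwinding the definition of $J$ is needed — but since $\pi_1\circ\text{ev}_1$ and, for $d=0$, the second-factor projection are already morphisms on all of $Y_{d,n}$, the extension is immediate and no real obstacle remains; the rest of the argument is a direct application of the excision sequence together with the results already established.
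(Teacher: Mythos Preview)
Your proposal is correct and follows essentially the same approach as the paper: the paper's proof simply says the result is ``a combination of the discussion at the beginning of this section together with Corollary \ref{Cor:PicYo}'', noting that $\mathcal{H}$ and $\mathcal{G}$ obviously restrict to the generators of $\text{Pic}(Y_{d,n}^o)\otimes\mathbb{Q}$ found there. You have unpacked exactly this argument in full detail.
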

\begin{proof}
 The first part of the Theorem is a combination of the discussion at the beginning of this section together with Corollary \ref{Cor:PicYo}. Here we note that the divisor $\mathcal{H}$ above obviously restricts to $\pi_{\PP^1}^* \mathcal{O}(1)$ on $Y_{d,n}^o$ and similarly for $\mathcal{G}$. 
\end{proof}
In the following, we denote the set of generators above by $\Gene{d}{n}$.
\subsubsection{Relations}
Now we want to find all relations between the generators of the rational Picard group of $Y_{d,n}$ found above. Using techniques adapted from \cite{pandhaintersect} we construct curves in $Y_{d,n}$ and intersect them with the divisors above. Relations among the classes of the divisors would imply relations between these intersection numbers. Hence using test curves for which the vectors of intersection numbers are linearly independent, we can show the linear independence of some of the generators above. The construction of the curves will rely on the following explicit family of stable maps.

Consider the variety $S=\PP^1 \times \PP^1$ with coordinates $([z:w],[x_0:x_1])$ and the projections $\pi_1, \pi_2$ to the two factors. Let $d \geq 0$, $0 \leq k \leq d$ and 
 \[\mathcal{N} = \mathcal{O}_S(d,k) = \pi_1^*(\mathcal{O}_{\PP^1}(d)) \otimes \pi_2^*(\mathcal{O}_{\PP^1}(k)).\]
 Two global sections $s_1, s_2 \in H^0(S,\mathcal{N})$  are given by
 \begin{align*}
  s_1 &= (z-a_1 w)(z-a_2 w)\ldots (z-a_{d-k}w)(x_1 z - b_1 x_0 w)  \ldots (x_1 z - b_k x_0 w)\\
  s_2 &= (z-c_1 w)(z-c_2 w)\ldots (z-c_{d-k}w)(x_1 z - d_1 x_0 w)  \ldots (x_1 z - d_k x_0 w),
 \end{align*}
 where $a_i, b_j, c_i, d_j \in \mathbb{C}^*$ for $i=1, \ldots, d-k$, $j=1, \ldots, k$ are sufficiently general. In the case $d=0$ we have $\mathcal{N} = \mathcal{O}$ and we set $s_1=a, s_2=c$ with $(a,c) \neq (0,0)$. These sections define a rational map
 \begin{align*}
  \mu: &S \, - \rightarrow \PP^1 \times \PP^1,\\
  &([z:w],[x_0:x_1]) \mapsto ([z:w], [s_1:s_2]).
 \end{align*}
 For $k = 0$, this is a morphism and induces a constant map of degree $d$ on the fibres of $\pi_2$. For $k \geq 1$ its \textit{base points} are exactly 
 \begin{itemize}
  \item the $k(d-k)$ points $([c_i:1],[c_i/b_j:1])$ for $i=1, \ldots, d-k$, $j=1, \ldots, k$, 
  \item the $k(d-k)$ points $([a_i:1],[a_i/d_j:1])$ for $i=1, \ldots, d-k$, $j=1, \ldots, k$, 
  \item the $2$ points $([0:1],[0:1])$ and $([1:0],[1:0])$.
 \end{itemize}
 To illustrate this, consider the following picture. We draw for $d=4, k=2$ the vanishing sets of $s_1, s_2$ on $\PP^1 \times \PP^1$. The horizontal lines of these sets come from the zeroes at $[z:w]=[a_i:1]$ and $[z:w]=[c_i:1]$, respectively. The curved parts depict the graphs of the linear maps $[x_0:x_1] \mapsto [b_j x_0:x_1]$ and $[x_0:x_1] \mapsto [d_j x_0:x_1]$, respectively. The base points are exactly the intersection points of $V(s_1)$ with $V(s_2)$. In particular, we see the two special intersection points over $[0:1]$ and $[1:0]$, where $k$ zeroes of $s_1$ and $s_2$ come together.
 \begin{center}
 \begin{tikzpicture}
  \draw (-3,-3.5) rectangle (3,3.5) node[below right]{$\PP^1 \times \PP^1$};
  \draw[->] (0,-3.7) -- (0,-4.2) node[right]{$\pi_2$} -- (0,-4.7);
  \draw (-3,-5) -- (3,-5) node[right]{$\PP^1$} ;
  \draw (3,-5.1) node[below right]{$[x_0:x_1]$};
  
  \draw (-2.5,-4.9) -- (-2.5,-5.1) node[below]{$[0:1]$};
  \draw (2.5,-4.9) -- (2.5,-5.1) node[below]{$[1:0]$};
  
  \draw[domain=-3:3,smooth,variable=\x,blue] plot ({\x},{\x + 0.13*(\x+2.5)*(\x-2.5)});
  \draw[domain=-3:3,smooth,variable=\x,blue] plot ({\x},{\x + 0.07*(\x+2.5)*(\x-2.5)});
  \draw[domain=-3:3,smooth,variable=\x,red] plot ({\x},{\x + -0.12*(\x+2.5)*(\x-2.5)});
  \draw[domain=-3:3,smooth,variable=\x,red] plot ({\x},{\x + -0.05*(\x+2.5)*(\x-2.5)});
  
  \draw[domain=-3:3,smooth,variable=\x,red] plot ({\x},{1.6});  
  \draw[domain=-3:3,smooth,variable=\x,red] plot ({\x},{1.1});
  \draw[domain=-3:3,smooth,variable=\x,blue] plot ({\x},{-1.5});  
  \draw[domain=-3:3,smooth,variable=\x,blue] plot ({\x},{-1.2});  
  
  \draw[red] (-2,1.9) node{$V(s_1)$};
  \draw[blue] (2,-0.9) node{$V(s_1)$};
 \end{tikzpicture}
 \end{center}
 From the formulas above, we conclude that for sufficiently general $a_i, b_j, c_i, d_j$ all base points have distinct $[x_0:x_1]$-coordinates. 
 
 Now let $n \geq 0$ and assume we have sections $\sigma_1, \ldots, \sigma_n : \PP^1 \to S$ of $\pi_2 : S \to \PP^1$. An \textit{intersection point} is a point $p \in S$ lying in the image of at least two of the sections $\sigma_i$. The set of \textit{special points} is the union of base points and intersection points.
\begin{Lem} \label{Lem:testcurves}
  Assume that there are finitely many intersection points and that at each of them, the sections $\sigma_i$ meeting there have distinct tangent directions. Furthermore, assume that the $[x_0:x_1]$-coordinates of all special points are pairwise distinct.
  
  Then the blow-up $\overline S$ of $S$ at all special points resolves the indeterminacies of $\mu$ and the induced maps $\mu: \overline S \to \PP^1 \times \PP^1$, $\sigma_1, \ldots, \sigma_n : \PP^1 \to \overline S$ give a Kontsevich stable family of $n$-pointed genus $0$ curves
  \[\mathcal{C} = (\pi_2 : \overline S \to \PP^1; \sigma_1, \ldots, \sigma_n; \mu : \overline S \to \PP^1 \times \PP^1)\]
  over $\PP^1$ which induces a map $\psi: \PP^1 \to Y_{d,n} = \overline M_{0,n}(\PP^1 \times \PP^1, (1,d))$.
  
  Moreover, for a sufficiently general choice of the parameters $a_i, b_i, c_i, d_i$, the map $\psi$ intersects the boundary of $Y_{d,n}$ transversally and a point $x=[x_0:x_1]$ maps to the boundary if and only if there is a special point $p \in S$ with $x=\pi_2(p)$. Let 
  \[B = \{i: \sigma_i \text{ passes through }p\} \subset \{1, \ldots, n\}\]
  be the set of indices of sections through $p$ and let $m=0$ if $p$ is not a base point, $m=1$ if $p$ is a base point different from $([1:0],[1:0])$ and $([0:1],[0,1])$ and $m=k$ otherwise. Then at $x$ the map $\psi$ intersects exactly the boundary divisor $D_{B,m}$.
\end{Lem}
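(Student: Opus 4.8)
The plan is to analyze the rational map $\mu : S \dashrightarrow \PP^1 \times \PP^1$ fibrewise over $\PP^1$ (with coordinate $[x_0:x_1]$) and use the explicit description of its base locus. First I would verify that blowing up $S$ at the finitely many special points resolves the indeterminacy of $\mu$. For a base point $p$ with $\pi_2(p) = x$, the two sections $s_1, s_2$ restricted to the fibre $\pi_1^{-1}(x)\cong \PP^1$ (which is a copy of $\PP^1$ with coordinate $[z:w]$, since the fibre of $\pi_2$ through $p$ is $\{[z:w]\}\times\{x\}$, wait — one must be careful which projection is which) acquire a common zero at $p$; because the parameters are general, this common zero has multiplicity exactly $m$ (with $m=1$ for the $2k(d-k)$ generic base points and $m=k$ for the two special ones over $[0:1]$ and $[1:0]$). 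Blowing up $p$ once suffices to separate $s_1$ and $s_2$ in the case $m=1$; for $m=k$ one checks that a single blow-up of the reduced point $p$ on the smooth surface $S$ still resolves the map because $\mu$ is given by a pencil whose members meet with intersection multiplicity $k$ at $p$ transverse to the exceptional direction — here I would compute in local coordinates that after one blow-up the proper transforms of $V(s_1), V(s_2)$ become disjoint. (If a single blow-up does not suffice one replaces ``blow-up at all special points'' by the minimal resolution; but the statement as written claims one blow-up, so I expect the genericity of $b_j, d_j$ to make the common zero ordinary of the right type.)

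Next I would identify, for each $x\in\PP^1$, the fibre $\overline{S}_x$ of $\pi_2 : \overline S \to \PP^1$ together with the restriction of $\mu$ and the marked points. For $x$ not the image of any special point, $\overline S_x = S_x \cong \PP^1$ maps to $\PP^1\times\PP^1$ by $([z:w]) \mapsto ([z:w],[s_1(x):s_2(x)])$, which has bidegree $(1,d)$ and is an honest morphism, and the $n$ marked points $\sigma_i(x)$ are distinct — so $\psi(x)$ lies in the interior $Y_{d,n}^o$. For $x = \pi_2(p)$ a special point, the fibre $\overline S_x$ is the blow-up of $\PP^1$ at $p$ viewed inside $S$, i.e. it acquires one exceptional $\PP^1$, call it $E$. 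On the proper transform of $S_x$ the map $\mu$ still has bidegree $(1, d-m)$, while on $E$ the map $\mu$ is constant in the first factor and has degree $m$ in the second factor (this is exactly the ``vertical section of multiplicity $m$'' from the discussion in Section \ref{Sect:degdmaps}); the sections $\sigma_i$ with $i\in B$ now pass through $E$ (their tangent directions being distinct ensures they hit $E$ at distinct points and that $E$ is not contracted for stability reasons, or rather: $E$ carries the node plus the $|B|$ markings, and if $m=0$ we need $|B|\ge 2$ which holds since an intersection point lies on $\ge 2$ sections). Thus $\psi(x)$ lies in $D_{B,m}$, and conversely every boundary point of the family arises this way since the family is a flat family of stable maps (Kontsevich stability of $\mathcal C$ follows from the above: all components are either the main $(1,d-m)$ component or exceptional $\PP^1$s with $\ge 3$ special points, and contracted components cannot occur because $\mu$ has positive degree on each exceptional curve — unless $m=0$, where $|B|\ge 2$ plus the node give $3$ special points).

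Finally I would establish transversality of $\psi$ to the boundary. Since $\psi(x)\in D_{B,m}$ precisely when the fibre $\overline S_x$ degenerates, and the degeneration is governed by a single special point $p$ with $\pi_2(p) = x$, it suffices to show that as $x$ varies the corresponding one-parameter deformation of the nodal fibre is versal, i.e. the smoothing parameter of the node has nonzero derivative at $x$. This is a local computation on the surface $\overline S$ near the exceptional curve $E\subset\overline S_x$: the node of $\overline S_x$ is the intersection of $E$ with the proper transform of $S_x$, and smoothing it corresponds to moving $x$ off the special value; because $\overline S$ is smooth and $\pi_2$ is flat with $\overline S_x$ reduced nodal, the family is automatically a versal deformation of the node in the $x$-direction. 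The genericity of $a_i,b_j,c_i,d_j$ guarantees the special values $x=\pi_2(p)$ are pairwise distinct (this is the hypothesis that special points have distinct $[x_0:x_1]$-coordinates), so no two degenerations collide and $\psi$ meets each $D_{B,m}$ transversally in one point. I expect the main obstacle to be the careful local-coordinate verification that one blow-up of the reduced point $p$ suffices to resolve $\mu$ in the $m=k$ case and simultaneously produces exactly an exceptional curve of $\mu$-degree $(0,k)$ meeting the rest transversally; everything else is bookkeeping with bidegrees and the stable maps stability condition.
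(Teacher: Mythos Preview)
Your proposal is correct and follows essentially the same route as the paper. The paper carries out exactly the local-coordinate check you anticipate (writing the extended map on $\text{Bl}_p\mathbb{A}^2$ explicitly at both a simple base point and at $(0,0)$, where genericity of the $b_j,d_j$ is what makes the factors $(T-b_jS)$ and $(T-d_jS)$ share no zero on the exceptional line), and for transversality it invokes precisely the fact that the total space $\overline S$ is smooth, citing \cite[Section~4.4]{MR1718648} rather than spelling out the versal-deformation argument you sketch.
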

\begin{proof}
 We first show that the blow-up of the special points resolves the indeterminacies of our rational map. Note that all base points except $([1:0],[1:0])$ (which is easily seen to behave similar to $([0:1],[0:1])$) lie in $\mathbb{A}^2 \subset \PP^1 \times \PP^1$. Thus we will set $w=x_1=1$ and use affine coordinates $z,x_0$.
 
 Around the point $p=(c_i, c_i/b_j)$ we can identify
 \[\text{Bl}_p \mathbb{A}^2 = \{((z,w),[T:S]): (z-c_i)S - (x_0 - c_i/b_j)T\} \subset \mathbb{A}^2 \times \PP^1.\]
 Then the second component of the map the map $\mu$ extends around $p$ by sending $((z,x_0),[T:S])$ to 
 \[\frac{(z-a_1) \ldots (z - b_{j-1} x_0)(T - b_{j} S)(z - b_{j+1} x_0) \ldots (z - b_k x_0)}{(z-c_1) \ldots (z-c_{i-1}) T (z-c_{i+1}) \ldots (z - d_k x_0)}.\]
 Similarly for $q=(0,0)$ we have
 \[\text{Bl}_q \mathbb{A}^2 = \{((z,w),[T:S]): zS - x_0T\} \subset \mathbb{A}^2 \times \PP^1.\]
 Here, the second component of $\mu$ can be extended by sending $((z,x_0),[T:S])$ to
 \[ \frac{(z-a_1) \ldots (z-a_{d-k})(T - b_{1} S) \ldots (T - b_{k} S)}{(z-b_1) \ldots (z-b_{d-k})(T - d_{1} S) \ldots (T - d_{k} S)}.\]
 It is also clear\detex{ from the universal property of blowups} that the sections $\sigma_i$ factor through $\overline S$. As their tangent directions in every special point are distinct, they map to distinct points on the exceptional divisors. As $\pi_2 : \overline S \to \PP^1$ is dominant, it is flat and as blowups are projective, it is projective. The fibres of $\pi_2$ are isomorphic to $\PP^1$ except for the fibres over projections of special points, which are nodal genus $0$ curves with two branches. The exceptional divisors are one of the branches and as can be seen above, they map to $\PP^1 \times \PP^1$ with degree $(0,1)$ or $(0,k)$ at the base points and are contracted for the intersection points which are not base points, and thus map with degree $(0,0)$. The fact that the induced map $\PP^1 \to Y_{d,n}$ meets the boundary transverally follows because the total space $\overline S$ of the family is smooth\detex{ as the blowup of $\PP^1 \times \PP^1$ at finitely many points} (see \cite[Section 4.4]{MR1718648}).\detex{\todo{fix} can be seen from Proposition \ref{Pro:transverseboundary}. Furthermore, we can reach that the maps $\overline S_{x} \to \PP^1 \times \PP^1$ of fibres containing special points do not have automorphisms. Indeed, the component mapping with degree $(1,*)$ can never have automorphisms. The vertical components, that can occur, are either lying
 \begin{itemize}
  \item over the intersection of several sections $\sigma_i$, mapping with degree $0$ with one node and at least two markings,
  \item over one of the $2k(d-k)$ simple base points, hence mapping with degree $(0,1)$,
  \item over $([0:1],[0:1])$ or $([1:0],[1:0])$. Here, if $k \geq 1$ we can adjust the parameters $b_i, d_i$ such that the degree $k$ map from the vertical section to its image does not have automorphisms leaving the image of the node invariant. 
 \end{itemize}
 Hence all conditions of Proposition \ref{Pro:transverseboundary} are satisfied and the intersection with the boundary is transverse.}
\end{proof}
We will find that for a given subset $B \subset \{1, \ldots, n\}$, a particular choice of the sections $\sigma_i$ will be very useful.
\begin{Def} \label{Def:psiCBk}
 For $\alpha \in \mathbb{C}^*$ we define
 \[S_\alpha : \PP^1 \to \PP^1 \times \PP^1, [x_0:x_1] \mapsto ([\alpha x_0:x_1],[x_0:x_1]).\]
 For $B \subset \{1, \ldots, n\}$ and $0 \leq k \leq d$ we set 
 \begin{itemize}
  \item $\sigma_j = S_{\alpha_j}$ for general $\alpha_j \in \mathbb{C}\setminus\{0,1\}$ for $j \in B$,
  \item $\sigma_j = (p_j, \text{id})$ with general $p_j=[p_j:1] \in \mathbb{C} \subset \PP^1$ for $j \notin B$.
 \end{itemize}
 We will specify the required generality for the points $\alpha_i, p_j$ in the proof of Proposition \ref{Pro:CBk} below. We denote by $\psi_{B,k}: \PP^1 \to Y_{d,n}$ the corresponding curve from Lemma \ref{Lem:testcurves} and by $C_{B,k}=(\psi_{B,k})_*([\PP^1])$ its image cycle in $Y_{d,n}$. 
\end{Def}
For later use we compute the evaluation of the $i$th point along the map $\psi_{B,k}$. Here for brevity we identify points $[\lambda:1]\in \PP^1$ with $\lambda \in \mathbb{C}$.
 \begin{align}&(\text{ev}_i \circ \psi_{B,k})([x_0:x_1]) \nonumber\\
 =& \begin{cases} 
       (p_j,\frac{(p_j-a_1)\ldots (p_j-a_{d-k})(x_1 p_j - b_1 x_0) \ldots (x_1 p_j - b_k x_0)}{(p_j-b_1)\ldots (p_j-b_{d-k})(x_1 p_j - d_1 x_0) \ldots (x_1 p_j - d_k x_0)} ) \text{ for }i \notin B,\\
       ([\alpha_i x_0:x_1],\frac{(\alpha_i x_0-a_1 x_1)\ldots (\alpha_i x_0-a_{d-k}x_1) (x_1 \alpha_i x_0 - b_1 x_0 x_1) \ldots (x_1 \alpha_i x_0 - b_k x_0 x_1)}{(\alpha_i x_0-c_1 x_1)\ldots (\alpha_i x_0-c_{d-k}x_1) (x_1 \alpha_i x_0 - d_1 x_0 x_1) \ldots (x_1 \alpha_i x_0 - d_k x_0 x_1)} )\\
       =([\alpha_i x_0:x_1],\frac{(\alpha_i x_0-a_1 x_1)\ldots (\alpha_i x_0-a_{d-k}x_1) (\alpha_i - b_1) \ldots ( \alpha_i  - b_k )}{(\alpha_i x_0-c_1 x_1)\ldots (\alpha_i x_0-c_{d-k}x_1) ( \alpha_i  - d_1 ) \ldots ( \alpha_i  - d_k )} ) \text{ for }i \in B.
    \end{cases} \label{eqn:evipsi}
\end{align}
We will see now that the curves $C_{B,k}$ above were constructed to meet very specific divisors in $Y_{d,n}$.
\begin{Pro} \label{Pro:CBk}
 The nonzero intersection numbers of the curves $C_{B,k}$ with the generators of the rational Picard group of $Y_{d,n}$ are exactly
 \begin{itemize}
  \item $(C_{B,k},\mathcal{H}) = 1$ if $1 \in B$ and $0$ otherwise
  \item $(C_{B,k},D_{\{a,b\}, 0})=1$ for $a \in A=\{1, \ldots, n\} \setminus B, b \in B$
  \item $(C_{B,k},D_{B,k})=2$ 
  \item $(C_{B,k}, D_{\emptyset, 1})=2k(d-k)$ 
 \end{itemize}
 Note that for $B=\emptyset, k=1$ we have $(C_{B,k}, D_{\emptyset, 1})=2d = 2 + 2(d-1)$, so in this case the two different numbers from above are added.
\end{Pro}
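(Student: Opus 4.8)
The plan is to read off every intersection number directly from the explicit family underlying $\psi_{B,k}$, using that $(C_{B,k},D)=\deg\psi_{B,k}^*\OO(D)$ on $\PP^1$. For the class $\HH$ this is a one-line computation; for the boundary divisors it reduces to \emph{counting} the special points of $S$, since Lemma~\ref{Lem:testcurves} guarantees (for a sufficiently general choice of parameters) that $\psi_{B,k}$ meets the boundary transversally and that over $x=\pi_2(p)$ it meets exactly the single divisor $D_{B(p),m(p)}$ attached to the special point $p$.

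For $\HH=(\pi_1\circ\text{ev}_1)^*\OO_{\PP^1}(1)$ I would use the evaluation formula (\ref{eqn:evipsi}): the first $\PP^1$-coordinate of $\text{ev}_1\circ\psi_{B,k}$ is the constant $p_1$ when $1\notin B$ and the automorphism $[x_0:x_1]\mapsto[\alpha_1 x_0:x_1]$ when $1\in B$, so the pullback of $\OO_{\PP^1}(1)$ has degree $0$ or $1$ accordingly. For $d=0$ there is also the generator $\mathcal{G}$ pulled back from the second $\PP^1$-factor of the evaluations, but then $\mu$ has constant second coordinate $[s_1:s_2]=[a:c]$, hence $\psi_{B,k}^*\mathcal{G}=0$; this explains the absence of $\mathcal{G}$ from the list.

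The heart of the argument is the enumeration of special points together with the verification, for general $a_i,b_j,c_i,d_j$ and $\alpha_j,p_l$, of the hypotheses of Lemma~\ref{Lem:testcurves}. There are three families. (1) For $k\geq 1$ the $2k(d-k)$ ``simple'' base points $([c_i:1],[c_i/b_j:1])$ and $([a_i:1],[a_i/d_j:1])$: for general parameters no section $\sigma_i$ passes through them, so each has $B(p)=\emptyset$, $m(p)=1$ and contributes $1$ to $(C_{B,k},D_{\emptyset,1})$ (for $k\in\{0,d\}$ this family is empty and $2k(d-k)=0$). (2) The two points $([0:1],[0:1])$ and $([1:0],[1:0])$: every $\sigma_j=S_{\alpha_j}$ with $j\in B$ passes through both, while for general $p_l$ no $\sigma_l=(p_l,\text{id})$ with $l\notin B$ does, so $B(p)=B$; they are base points with $m(p)=k$ when $k\geq 1$, and for $k=0$ (which forces $|B|\geq 2$) they are intersection points with $m(p)=0$. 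Either way each contributes $1$ to $(C_{B,k},D_{B,k})$. (3) For each $j\in B$ and $l\notin B$, the sections $S_{\alpha_j}$ and $(p_l,\text{id})$ meet at the single point $([p_l:1],[p_l:\alpha_j])$, which for general parameters is not a base point and meets no third section, so $B(p)=\{j,l\}$, $m(p)=0$, contributing $1$ to $(C_{B,k},D_{\{l,j\},0})$. A bookkeeping check then shows these exhaust the special points; that their $\pi_2$-coordinates are pairwise distinct, with none equal to $[0:1]$ or $[1:0]$ except the two points of family (2) (this is automatic since the $a_i,b_j,c_i,d_j,\alpha_j,p_l$ lie in $\C^*$, and generic otherwise); and that the sections through each of $([0:1],[0:1])$, $([1:0],[1:0])$ have tangent directions governed by the pairwise distinct $\alpha_j$, hence distinct. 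Thus the hypotheses of Lemma~\ref{Lem:testcurves} hold, the divisors $D_{\{l,j\},0}$, $D_{B,k}$, $D_{\emptyset,1}$ occurring are pairwise distinct except when $B=\emptyset$ and $k=1$, and summing the transverse local contributions gives exactly the four stated numbers; in the exceptional case $D_{B,k}=D_{\emptyset,1}$ and the contributions $2$ from family (2) and $2(d-1)$ from family (1) add to $2d$.

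The main obstacle is step (3): organizing the special points and pinning down, for each, exactly which sections pass through it and whether it is a base point (and with which multiplicity), and then exhibiting a single generic value of all the parameters for which the various independent genericity conditions hold simultaneously — no $\sigma_i$ through a simple base point, no $S_{\alpha_j}$ meeting a base point (which would happen for $\alpha_j\in\{b_j,d_j\}$), no three sections concurrent away from the two forced points, pairwise distinct $\pi_2$-coordinates of all special points, and distinct tangent directions at $([0:1],[0:1])$ and $([1:0],[1:0])$. Once this is arranged, the transversality already established in Lemma~\ref{Lem:testcurves} converts the enumeration into the intersection numbers with no further computation.
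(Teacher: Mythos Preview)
Your argument is correct and follows the same approach as the paper: compute $(C_{B,k},\mathcal{H})$ via the projection formula and the evaluation formula~(\ref{eqn:evipsi}), then invoke Lemma~\ref{Lem:testcurves} so that the boundary intersections reduce to an enumeration of the special points of the family, organized into the three groups (simple base points, the two distinguished points $(0,0)$ and $(\infty,\infty)$, and the pairwise intersections $S_{\alpha_j}\cap(p_l,\text{id})$). Your write-up is in fact more explicit than the paper's, which states the same bookkeeping more tersely; your additional remark that $(C_{B,k},\mathcal{G})=0$ for $d=0$ is a welcome clarification.
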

\begin{proof}
All the base points in the construction of the test curves of Lemma \ref{Lem:testcurves} lie on the union of the images of the maps $S_{b_j},S_{d_j}$ for $j=1, \ldots, k$. But the images of $S_\alpha, S_\beta$ for $\alpha \neq \beta$ intersect exactly at $(0,0)$, $(\infty, \infty)$ and there they have distinct tangent directions. We choose the points $p_i$ such that the sections $(p_i, \text{id})$ miss all those base points and additionally their intersection points with the sections $\sigma_j$, $j \in B$ do not have the same second coordinate as one of the base points. Then it is ensured that the conditions of Lemma \ref{Lem:testcurves} are satisfied. 

Using the projection formula we obtain
 \begin{align*}(C_{B,k}.\mathcal{H}) &= \text{deg}((\psi_{B,k})_*[\PP^1].(\pi_1\circ \text{ev}_1)^* \mathcal{O}_{\PP^1}(1))\\ &= \text{deg}((\pi_1 \circ \text{ev}_1 \circ \psi_{B,k})^*\mathcal{O}_{\PP^1}(1)).\end{align*}
From equation (\ref{eqn:evipsi}) we see that $\pi_1 \circ \text{ev}_1 \circ \psi$ is an isomorphism for $1 \in B$ and constant for $1 \notin B$.
The sections $\sigma_i$, $i \in A$, do not meet among each other. For $j \in B$, they meet the section $\sigma_j$ once and all those sections meet exactly in the points $(0,0), (\infty, \infty)$, which are base points of degree $k$. This explains the remaining intersection numbers.
\end{proof}
We need one other test curve in case $d=0$.
\begin{Pro}
 For $d=0$ consider the identity map $\text{id}: S=\PP^1 \times \PP^1 \to S$ and constant sections $\sigma_i : \PP^1 \to S, q \mapsto (p_i, q)$. Then $\pi_2 : S \to \PP^1$ defines a family of stable maps over $\PP^1$ where the map over $[x_0:x_1]$ corresponds to the inclusion $\PP^1 \to \PP^1 \times \{[x_0:x_1]\}$ with the horizontal position of the marked points held fixed. This gives a curve $C_\mathcal{G}$ in $Y_{0,n}$ which of all generators in $\Gene{d}{n}$ intersects exactly the divisor $\mathcal{G}$ with multiplicity $1$.
\end{Pro}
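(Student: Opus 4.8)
The plan is to verify that the family exhibited in the statement is Kontsevich stable, so that it genuinely induces a morphism $\psi_\mathcal{G}:\PP^1\to Y_{0,n}$ with image cycle $C_\mathcal{G}$, and then to compute the pairing of $C_\mathcal{G}$ with each of the finitely many generators in $\Gene{0}{n}$ (the boundary divisors $D_{B,0}$, the class $\mathcal{H}$ when $n=1,2$, and $\mathcal{G}$) via the projection formula. Stability will be immediate: over every $q=[x_0:x_1]\in\PP^1$ the fibre of $\pi_2:S=\PP^1\times\PP^1\to\PP^1$ is the smooth curve $\PP^1\times\{q\}$, on which $\mu=\text{id}$ restricts to the inclusion $\PP^1\times\{q\}\hookrightarrow\PP^1\times\PP^1$ of bidegree $(1,0)$, a non-constant map, so no component is ever contracted; and if $p_1,\dots,p_n\in\PP^1$ are chosen pairwise distinct the sections $\sigma_i(q)=(p_i,q)$ are disjoint. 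Hence $\mathcal{C}=(\pi_2:S\to\PP^1;\sigma_1,\dots,\sigma_n;\mu)$ is a stable family of $n$-pointed genus $0$ curves over $\PP^1$ with every fibre irreducible, the induced morphism $\psi_\mathcal{G}$ exists, and it factors through the interior $Y_{0,n}^o=M_{0,n}(\PP^1\times\PP^1,(1,0))$.

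Next I would read off the intersection numbers. Since $\psi_\mathcal{G}$ lands in $Y_{0,n}^o$, the curve $C_\mathcal{G}$ is disjoint from every boundary divisor $D_{B,0}$ (for $d=0$ the index $k$ is forced to be $0$); as $Y_{0,n}$ has at worst finite quotient singularities by \cite{fultonpandha}, a positive multiple of $D_{B,0}$ is Cartier and restricts to the trivial bundle on $Y_{0,n}^o$, so $(C_\mathcal{G},D_{B,0})=0$. For $\mathcal{H}$ (when $n=1,2$), the composite $\pi_1\circ\text{ev}_1\circ\psi_\mathcal{G}$ sends $q$ to $\pi_1(\sigma_1(q))=p_1$ and is therefore constant, giving $(C_\mathcal{G},\mathcal{H})=\deg\big((\pi_1\circ\text{ev}_1\circ\psi_\mathcal{G})^*\mathcal{O}_{\PP^1}(1)\big)=0$. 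Finally, on $Y_{0,n}^o$ the class $\mathcal{G}$ is the pullback of $\mathcal{O}_{\PP^1}(1)$ along the projection of Corollary \ref{Cor:PicYo} recording the value of the constant second coordinate of the stable map (equivalently $\text{ev}_1^*\mathcal{O}_{\PP^1\times\PP^1}(0,1)$ when $n\geq1$); along $\psi_\mathcal{G}$ this projection sends $q$ to $q$, so by the projection formula $(C_\mathcal{G},\mathcal{G})=\deg\big(\mathcal{O}_{\PP^1}(1)\big)=1$, which completes the argument together with the list of generators from Theorem \ref{Theo:Generators}.

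There is no real obstacle here. The only two points requiring a word of care are the standard remark that boundary divisors on the orbifold $Y_{0,n}$ are $\mathbb{Q}$-Cartier, so that a curve avoiding their support has intersection number zero with them, and the bookkeeping identification of the line bundle $\mathcal{G}$ with a projection $p$ such that $p\circ\psi_\mathcal{G}$ is the identity of $\PP^1$.
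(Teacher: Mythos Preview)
Your proof is correct and follows the natural approach: verify stability of the family, note that the image avoids the boundary so all $(C_\mathcal{G},D_{B,0})=0$, and compute the pullbacks of $\mathcal{H}$ and $\mathcal{G}$ along $\psi_\mathcal{G}$ via the projection formula. The paper in fact omits the proof entirely, treating the claim as self-evident from the construction, so your write-up simply makes explicit the details the author left to the reader.
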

For formulating results about the relations among the generators of the rational Picard group of $Y_{d,n}$, we group those generators into convenient subsets. For $0 \leq j \leq n$ we define
\begin{align*}
 \Delta_j = \{D_{B,k} : |B|=j, 0 \leq k \leq d \text{ and } 1 \leq k \text { for }j=0,1\}.
\end{align*}
In terms of our interpretation as pointed graphs of rational maps, these are exactly those graphs where the vertical section carries $j$ marked points. Additionally we let
\[\Delta = \bigcup_{j=0}^n \Delta_j\]
be the set of all boundary divisor generators and for $n \geq 4$ we define
\[\Delta' = \Delta \setminus (\Delta_0 \cup \Delta_1 \cup \Delta_{n-1} \cup \Delta_n),\]
where we set $\Delta'=\emptyset$ for $n\leq 4$.
\begin{Theo} \label{Theo:relations}
 Let $d \geq 0, n \geq 0$, then in $\text{Pic}(Y_{d,n}) \otimes \mathbb{Q}$
 \begin{itemize}
  \item the divisor $\mathcal{G}$ is linearly independent of all other generators for $d=0$,
  \item the divisor $\mathcal{H}$ is not contained in the span of $\Delta$ for $n=1,2$,
  \item the set $\Delta_0$ is linearly independent for $n=0$,
  \item the set $\Delta_0 \cup \Delta_1 \cup \Delta_{n-1} \cup \Delta_n$ is linearly independent modulo the span of $\Delta'$ for $n \geq 1$,
  \item the relations among the divisor classes in $\Delta'$ are exactly the pullback of 
  relations between the boundary divisors in $\overline M_{0,n}$ for $n \geq 4$. These are generated by the relations 
  \[\sum_{\substack{i,j \in A\\ k,l \in B}} D(A;B) = \sum_{\substack{i,k \in A\\ j,l \in B}} D(A;B)\]  
  for distinct $i,j,k,l \in \{1, \ldots, n\}$.
 \end{itemize}
\end{Theo}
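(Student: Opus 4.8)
The plan is to combine two ingredients: restriction to the open locus $Y_{d,n}^o = M_{0,n}(\PP^1\times\PP^1,(1,d))$, whose rational Picard group is computed in Corollary~\ref{Cor:PicYo}, and intersection against the test curves $C_{B,k}$ of Definition~\ref{Def:psiCBk} together with the curve $C_\mathcal{G}$, whose pairings with the generating set $\Gene{d}{n}$ are recorded in Proposition~\ref{Pro:CBk}. Concretely, I would fix an arbitrary relation $\sum_{B,k} a_{B,k} D_{B,k} + a_\mathcal{H}\mathcal{H} + a_\mathcal{G}\mathcal{G} = 0$ in $\text{Pic}(Y_{d,n})\otimes\mathbb{Q}$ and extract successively that the indicated coefficients vanish. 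Every $D_{B,k}$ is supported on the boundary $Y_{d,n}^\partial$, hence restricts to $0$ in $\text{Cl}(Y_{d,n}^o)\otimes\mathbb{Q}$ by the excision sequence at the beginning of Section~\ref{Sect:Picard}, whereas by Corollary~\ref{Cor:PicYo} the restrictions of $\mathcal{H}$ (for $n=1,2$) and of $\mathcal{G}$ (for $d=0$) form a $\mathbb{Q}$-basis of $\text{Pic}(Y_{d,n}^o)\otimes\mathbb{Q}$. Restricting the relation to $Y_{d,n}^o$ therefore forces $a_\mathcal{H}=a_\mathcal{G}=0$; this proves the first two bullets and reduces the problem to relations purely among the boundary divisors.

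Next I would isolate the ``extreme'' strata. Pairing with $C_{\emptyset,1}$ gives $2d\, a_{\emptyset,1}=0$, and then pairing with $C_{\emptyset,k}$ for $k\ge2$ gives $2a_{\emptyset,k}+2k(d-k)a_{\emptyset,1}=0$, so all coefficients of $\Delta_0$ vanish --- this is already the whole statement when $n=0$, the third bullet. Pairing with $C_{\{1,\dots,n\},k}$, which among the generators meets only $D_{\{1,\dots,n\},k}$ and $D_{\emptyset,1}$, kills the coefficients of $\Delta_n$. For $\Delta_1$ and $\Delta_{n-1}$, Proposition~\ref{Pro:CBk} shows that pairing with $C_{\{i\},k}$ (respectively with $C_{\{1,\dots,n\}\setminus\{i\},k}$) gives, after discarding the coefficients already known to vanish, $2a_{\{i\},k}+\beta_i=0$ (respectively $2a_{\{1,\dots,n\}\setminus\{i\},k}+\beta_i=0$), where $\beta_i:=\sum_{a\ne i}a_{\{a,i\},0}$ depends only on the coefficients of $\Delta_2$-divisors and is in particular independent of $k$. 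The key extra input is the \emph{contracted} test curve $C_{\{i\},0}$ --- the $k=0$ instance of Definition~\ref{Def:psiCBk}, for which $\mu$ is already a morphism and the family is a blow-up of $\PP^1\times\PP^1$ only at the collision points of the sections: among the generators it meets exactly the divisors $D_{\{a,i\},0}$ for $a\ne i$, each with multiplicity one, so pairing the relation with it yields $\beta_i=0$. Hence $a_{\{i\},k}=a_{\{1,\dots,n\}\setminus\{i\},k}=0$, completing the fourth bullet; the cases $n=1,2,3$, in which some of the four strata coincide, follow from the same curves once the relevant divisors are identified.

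Finally, for $n\ge4$ it remains to identify the relations $\sum_{2\le|B|\le n-2}\sum_k a_{B,k}D_{B,k}=0$. Pairing with $C_{B,k}$ for such $B$ shows that $a_{B,k}$ does not depend on $k$ and equals $-\tfrac12\sum_{a\in B^c,\, b\in B}a_{\{a,b\},0}$, so for $|B|\ge3$ it is determined by the $\Delta_2$-coefficients, while pairing with $C_{\{i\},0}$ again gives $\sum_{a\ne i}a_{\{a,i\},0}=0$ for each $i$. These $n$ conditions on the $\binom n2$ numbers $a_{\{a,b\},0}$ are linearly independent, so the space of such relations has dimension exactly $\binom n2-n$. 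On the other hand the stabilization map $f\colon Y_{d,n}\to\overline M_{0,n}$ satisfies $f^*D(A;B)=\sum_{k=0}^d\bigl(D_{A,k}+D_{B,k}\bigr)$, so each of Keel's relations between boundary divisors of $\overline M_{0,n}$ pulls back to a relation among $\Delta'$; since distinct partitions $\{A,B\}$ contribute disjoint sets of boundary divisors of $Y_{d,n}$, this pullback is injective on the $\bigl(\binom n2-n\bigr)$-dimensional space of relations of $\overline M_{0,n}$ (by Keel's theorem). Comparing dimensions, the pulled-back Keel relations are \emph{all} the relations among $\Delta'$, which is the last bullet.

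The routine but laborious part --- and the main obstacle --- is the bookkeeping behind these steps: verifying the multiplicity-one identity $f^*D(A;B)=\sum_k(D_{A,k}+D_{B,k})$, computing the precise intersection numbers of the auxiliary curves $C_{\{i\},0}$ (a $k=0$ case of Lemma~\ref{Lem:testcurves}) against the full generating set $\Gene{d}{n}$, checking the linear independence of the $n$ functionals $\beta_i$, and disposing of the degenerate low-$n$ configurations where the strata $\Delta_0,\Delta_1,\Delta_{n-1},\Delta_n$ overlap or where $\mathcal{H}$ and $\mathcal{G}$ are present.
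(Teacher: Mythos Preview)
Your argument is correct and reaches the same conclusion, but it diverges from the paper's proof in three places worth noting.

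For the independence of $\mathcal{H}$ and $\mathcal{G}$ you restrict to the open part $Y_{d,n}^o$ and invoke Corollary~\ref{Cor:PicYo}; the paper instead stays on the level of test curves, using $C_{\{1\},0}$, $C_{\{1\},0}-C_{\{2\},0}$, and $C_\mathcal{G}$. Your route is slightly more conceptual and makes the role of the excision sequence transparent. For $\Delta_1$ and $\Delta_{n-1}$ you extract the key identity $\beta_i=0$ from the contracted curve $C_{\{i\},0}$ (the $k=0$ instance of Definition~\ref{Def:psiCBk}), which is already available; the paper instead builds \emph{new} one-parameter families by deforming one section to $\sigma_i=(0,\mathrm{id})$ so that it passes through a base point, producing curves that hit $D_{\{i\},k}$ or $D_{\{1,\dots,n\}\setminus\{i\},k}$ directly. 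Your choice avoids constructing additional families at the cost of a small linear-algebra step. Finally, for the relations inside $\Delta'$ you run a dimension count: the test curves force any relation to be determined by its $\Delta_2$-coefficients subject to the $n$ independent constraints $\beta_i=0$, giving an upper bound $\binom{n}{2}-n$, which matches the number of independent Keel relations pulled back along $F$. The paper argues more structurally: it first uses the differences $C_{B,k}-C_{B,0}$ and $C_{B,k}-C_{B^c,k}$ to show every relation already has the shape $F^*(\sum c_{\{A,B\}}D(A;B))$, and then concludes by proving that $F^*$ is injective on rational Picard groups (via flatness of $F$, Theorem~\ref{Theo:flatforgetful}, and a torsion-kernel result). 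Your approach sidesteps this injectivity statement entirely, but requires knowing in advance the exact rank $\binom{n}{2}-n=\binom{n-1}{2}-1$ of Keel's relation space; the paper's approach yields the explicit ``pullback'' form of every relation without any counting.
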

\begin{proof}
 For $d=0$, as $(C_\mathcal{G}.\mathcal{G})=1$ and all intersections with other generators are zero, $\mathcal{G}$ is linearly independent of those.
 
 For $n=1$ we see that $C_{\{1\}, 0}$ intersects exactly the divisor $\mathcal{H}$ with multiplicity $1$.  Hence $\mathcal{H}$ is linearly independent from the span of $\Delta$.
 
 For $n=2$ we consider the curves $C_{\{i\}, 0}$ for $i=1,2$. They intersect $D_{\{1,2\},0}$ with multiplicity $1$ and $C_{\{1\},0}$ also intersects $\mathcal{H}$. Hence the $1$-cycle $C_{\{1\}, 0} - C_{\{2\}, 0}$ intersects only $\mathcal{H}$ nontrivially, which is therefore linearly independent.
 
 Now let $n \geq 0$ and consider the curves $C_{\emptyset, k}$, which only intersect $D_{\emptyset,k}$ and $D_{\emptyset,1}$ nontrivially. The case $k=1$ shows that $D_{\emptyset, 1}$ is linearly independent from all other boundary divisors. Using this in the case $2 \leq k \leq d$, we also obtain linear independence of the other divisors $D_{\emptyset,k}$ in $\Delta_0$.
 
 Let now $n \geq 1$. For $1 \leq i \leq n$ and $1 \leq k \leq d$ choose $\sigma_j = (p_j,\text{id})$ for $j \neq i$ missing the base points as in Definition \ref{Def:psiCBk}, but $\sigma_i = (0, \text{id})$. Then the only nonzero intersections with boundary divisors are
 \begin{align*}
  (C.D_{ \emptyset, 1}) = 2k(d-k), (C.D_{\emptyset, k}) = 1, (C.D_{\{i\}, k}) = 1
 \end{align*}
 As the first two types of divisors are in $\Delta_0$, which is already seen to be linearly independent, we obtain the independence of all divisors in $\Delta_1$.
 
 For $n \geq 2$, $0 \leq k \leq d$,  we first consider $C_{\{1, \ldots, n\},k}$, which only intersects $D_{\{1, \ldots, n\},k}$ and $D_{\emptyset, 1}$.
 Again we already know that $D_{\emptyset, 1}$ is independent of the other boundary divisors and thus we obtain the independence of all elements in $\Delta_n$.
 
 Now if for some $i \in \{1, \ldots, n\}$ we modify the family giving us $C_{\{1, \ldots, n\},k}$ by setting $\sigma_i = (0, \text{id})$, this section no longer meets $(\infty, \infty)$ and we obtain a new curve $C$ in $Y_{d,n}$. Note now that
 \[(C.D_{\emptyset, 1}) = 2k(d-k), (C.D_{\{1, \ldots, n\}, k}) = 1, (C.D_{\{1, \ldots, n\} \setminus \{i\}, k}) = 1.\]
 As we already know the independence of $\Delta_n$, this also gives us the independence of $\Delta_{n-1}$.
 
 Now for $n \geq 4$ we come to the relations among the divisors in $\Delta'$. Remember that there is a morphism
 \[F : Y_{d,n} = \overline M_{0,n}(\PP^1 \times \PP^1, (1,d)) \to \overline M_{0,n} \]
 by forgetting the map and only remembering the stabilization of the domain curve. The map $F$ is flat by Theorem \ref{Theo:flatforgetful}. 
 For $A,B \subset \{1, \ldots, n\}$ disjoint with $|A|, |B|\geq 2$ and $A \cup B = \{1, \ldots, n\}$, the boundary divisor $D(A;B)$ in $\overline M_{0,n}$ pulls back under $F$ to the multiplicity free sum
 \[D_{A \cup B} = \sum_{m=0}^d D_{A,m} + \sum_{m=0}^d D_{B,m} = F^*(D(A;B)).\]
 We claim that given a relation 
 \begin{equation}\sum_{D \in \Delta'} c_D D = 0 \label{eqn:bdryrelation} \end{equation}
 in $\text{Pic}(Y_{d,n}) \otimes \mathbb{Q}$, it is the pullback of a relation in $\overline M_{0,n}$. In a first step we will show, that the coefficient $c_{D_{B,k}}$ of $D_{B,k}$ in (\ref{eqn:bdryrelation}) only depends on $B$ and moreover, the coefficients for $B$ and $A=\{1, \ldots, n\} \setminus B$ coincide. We will denote them by $c_{A \cup B}$.
 
 Indeed for $k \geq 1$ we can take the intersection of the relation (\ref{eqn:bdryrelation}) with the $1$-cycle $C_{B,k} - C_{B,0}$. We see that all the intersections with the divisors $D_{\{i,j\}, 0}$ for $i \in A, j \in B$ cancel and what remains is
 \[0=(C_{B,k} - C_{B,0}, \sum_{D \in \Delta'} c_D D) = 2 c_{D_{B, k}} - 2 c_{D_{B, 0}},\]
 that is $c_{D_{B, k}} = c_{D_{B, 0}}$ does not depend on $k$. 
 
 For the claim that the coefficient also remains the same if we switch the roles of $A$ and $B$, we take the intersection of (\ref{eqn:bdryrelation}) with $C_{B,k} - C_{A,k}$. Again we see a cancellation and obtain
 \[0=(C_{B,k} - C_{A,k}, \sum_{D \in \Delta'} c_D D) = 2 c_{D_{B, k}} - 2 c_{D_{A,k}}\]
 which concludes the proof that $c_D$ only depends on the partition of the marked points in $D$. 
 
 Thus we know that the relation (\ref{eqn:bdryrelation}) is of the form 
 \begin{equation*}0=\sum_{A \cup B = \{1, \ldots, n\}} c_{A \cup B} D_{A \cup B} =F^*(\sum_{A \cup B = \{1, \ldots, n\}} c_{A \cup B} D(A;B)) \end{equation*}
 But as $F$ is surjective, proper and flat, by \cite[Corollary 2.3]{kerpictor} the kernel of the map map $F^* : \text{Pic}(\overline M_{0,n}) \to \text{Pic}(Y_{d,n})$ is torsion, so the induced map of rational Picard groups is injective. Hence as claimed, the relation (\ref{eqn:bdryrelation}) is the pullback under $F$ of the relation $\sum_{A \cup B = \{1, \ldots, n\}} c_{A \cup B} D(A;B)=0$ in $\text{Pic}(\overline M_{0,n}) \otimes \mathbb{Q}$.
 
 The form of the relations of boundary divisors in $\overline M_{0,n}$ was proved in \cite[Theorem 1]{keelintersect}.
\end{proof}
\begin{Cor}\label{Cor:PicrankY}
 For $d \geq 0, n \geq 0$, the rank of the Picard group of $Y_{d,n}$ is $2^n(d+1)-{n \choose 2} -1 +\delta_{n,1}+\delta_{n,2} + \delta_{d,0}$.
\end{Cor}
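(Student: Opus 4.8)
The proof is a dimension count that combines the explicit list of generators from Theorem~\ref{Theo:Generators} with the description of their linear relations from Theorem~\ref{Theo:relations}, so the plan is to count both and subtract. First I would count the elements of the generating set $\Gene{d}{n}$: the boundary generators $D_{B,k}$ are indexed by pairs $(B,k)$ with $B\subseteq\{1,\dots,n\}$ and $0\le k\le d$, the sole exclusions being the $n+1$ pairs with $k=0$ and $|B|\le 1$, so there are $2^n(d+1)-(n+1)$ of them; adding $\mathcal{H}$ precisely when $n\in\{1,2\}$ and $\mathcal{G}$ precisely when $d=0$ yields $\#\Gene{d}{n}=2^n(d+1)-(n+1)+\delta_{n,1}+\delta_{n,2}+\delta_{d,0}$.

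Next I would determine the rank of the space of relations among these generators. By Theorem~\ref{Theo:relations} the classes $\mathcal{G}$ (when $d=0$), $\mathcal{H}$ (when $n\le 2$) and all of $\Delta_0\cup\Delta_1\cup\Delta_{n-1}\cup\Delta_n$ are independent modulo $\langle\Delta'\rangle$, so every relation is supported on $\Delta'$. For $n\le 3$ we have $\Delta'=\emptyset$, hence no relations, and the rank of $\text{Pic}(Y_{d,n})\otimes\mathbb{Q}$ equals $\#\Gene{d}{n}$; a short direct comparison with the asserted expression then settles these cases. For $n\ge 4$, the final bullet of Theorem~\ref{Theo:relations} identifies the relations among $\Delta'$ with the pullbacks, under the flat surjection $F\colon Y_{d,n}\to\overline M_{0,n}$, of the boundary relations on $\overline M_{0,n}$, and the proof of that theorem shows $F^{*}$ is injective on rational Picard groups; therefore the relation space has dimension
\[
\#\{\text{boundary divisors of }\overline M_{0,n}\}-\dim_{\mathbb{Q}}\big(\text{Pic}(\overline M_{0,n})\otimes\mathbb{Q}\big)=(2^{n-1}-n-1)-(2^{n-1}-\tbinom{n}{2}-1)=\tbinom{n}{2}-n,
\]
using the classical enumeration of boundary divisors together with Keel's computation $\dim_{\mathbb{Q}}\big(\text{Pic}(\overline M_{0,n})\otimes\mathbb{Q}\big)=2^{n-1}-\binom{n}{2}-1$.

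Finally I would subtract: for $n\ge 4$ (so $\delta_{n,1}=\delta_{n,2}=0$) this gives
\[
\dim_{\mathbb{Q}}\big(\text{Pic}(Y_{d,n})\otimes\mathbb{Q}\big)=\big(2^n(d+1)-(n+1)+\delta_{d,0}\big)-\big(\tbinom{n}{2}-n\big)=2^n(d+1)-\tbinom{n}{2}-1+\delta_{d,0},
\]
the asserted value, while the cases $n\le 3$ were already disposed of. The only real content is the relation count for $n\ge 4$, which is entirely inherited from Theorem~\ref{Theo:relations} (the identification of relations among $\Delta'$ with $F^{*}$ of the Keel relations, plus injectivity of $F^{*}$) together with the standard facts about $\overline M_{0,n}$; the remainder is bookkeeping with binomial coefficients, and the main point to be careful about is simply keeping the low-dimensional cases $n=0,1,2,3$ straight.
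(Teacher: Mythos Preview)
Your proposal is correct and follows essentially the same route as the paper: count the generators of $\Gene{d}{n}$, count the relations, and subtract. The only cosmetic difference is that the paper quotes directly (from \cite{pandhaintersect}) that there are $\binom{n-1}{2}-1$ independent relations among the boundary divisors of $\overline M_{0,n}$, whereas you obtain the equivalent number $\binom{n}{2}-n$ by subtracting Keel's Picard rank $2^{n-1}-\binom{n}{2}-1$ from the number $2^{n-1}-n-1$ of boundary divisors; since $\binom{n-1}{2}-1=\binom{n}{2}-n$, the two computations coincide, and your explicit separate treatment of $n\le 3$ is arguably a bit cleaner than the paper's uniform use of the formula.
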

\begin{proof}
 Using Theorem \ref{Theo:Generators}, we first count generators. For the boundary divisors $D_{B,k}$, we see that there are $2^n$ choices for $B$ and $d+1$ choices for $k$. In the case $k =0$ we have to substract the $n+1$ choices for a set $B$ with at most $1$ element. We also have one additional generator $\mathcal{H}$ for $n=1,2$ and $\mathcal{G}$ for $d=0$.
 
 For the relations, we see as in \cite{pandhaintersect} (c.f. the discussion above Lemma 1.2.3) that there are ${n-1 \choose 2}-1$ independent relations among the boundary components in $\overline M_{0,n}$. 
 
 Thus the total dimension of $\text{Pic}(Y_{d,n}) \otimes \mathbb{Q}$ is
 \begin{align*} &2^n(d+1)-(n+1)+\delta_{n,1}+\delta_{n,2} + \delta_{d,0} - \left({n-1 \choose 2}-1 \right)\\  = &2^n(d+1)-{n \choose 2} -1+\delta_{n,1}+\delta_{n,2} + \delta_{d,0}\end{align*}
\end{proof}
For the sake of completeness, we also want to explicitly name one subset of the generators that forms a basis.
\begin{Cor} \label{Cor:PicYbasis}
 Consider the set $\Gene{d}{n}$ of generators of $W=\text{Pic}(Y_{d,n}) \otimes \mathbb{Q}$ from Theorem \ref{Theo:Generators}. 
 Then the set
 \begin{equation} \label{eqn:Picbasis}
 \Basi{d}{n}=\Gene{d}{n} \setminus \{D_{B,0}; B \subset \{2, \ldots, n\}, |B|=2 \text{ and } B\neq\{n-1,n\}\} 
 \end{equation} 
 forms a basis of $W$. Note that $\Basi{d}{n}=\Gene{d}{n}$ for $n \leq 3$.
\end{Cor}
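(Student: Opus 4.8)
The strategy is to reduce the statement to a cardinality count together with a spanning claim, since a spanning subset of the $\mathbb{Q}$-vector space $W=\text{Pic}(Y_{d,n})\otimes\mathbb{Q}$ whose size equals $\dim_{\mathbb{Q}}W$ is automatically a basis. For $n\le 3$ there is nothing to do: then $\Basi{d}{n}=\Gene{d}{n}$, and since $\Delta'=\emptyset$ Theorem \ref{Theo:relations} exhibits no relations among the generators, so $\Gene{d}{n}$ is itself a basis. Assume $n\ge 4$. Let $R$ be the set of removed generators $D_{B,0}$ with $B\subseteq\{2,\ldots,n\}$, $|B|=2$ and $B\neq\{n-1,n\}$; it has $\binom{n-1}{2}-1$ elements. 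Combining $|\Gene{d}{n}|=2^n(d+1)-(n+1)+\delta_{d,0}$ (read off from Theorem \ref{Theo:Generators}, the terms $\delta_{n,1},\delta_{n,2}$ vanishing for $n\ge4$) with the Pascal identity $\binom{n}{2}-\binom{n-1}{2}=n-1$, one obtains $|\Basi{d}{n}|=2^n(d+1)-\binom{n}{2}-1+\delta_{d,0}$, which is exactly $\dim_{\mathbb{Q}}W$ by Corollary \ref{Cor:PicrankY}. Hence it remains only to show that every $D_{B,0}\in R$ lies in the $\mathbb{Q}$-span of $\Basi{d}{n}$.

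Fix $D_{B,0}\in R$ and put $A=\{1,\ldots,n\}\setminus B$, so that $1\in A$ and $2\le|A|=n-2$. The proof of Theorem \ref{Theo:relations} supplies the pullback formula $F^*D(A;B)=\sum_{m=0}^d D_{A,m}+\sum_{m=0}^d D_{B,m}$ for the flat forgetful map $F:Y_{d,n}\to\overline M_{0,n}$, so
\begin{equation*}
 D_{B,0}=F^*D(A;B)-\sum_{m=1}^d D_{B,m}-\sum_{m=0}^d D_{A,m}.
\end{equation*}
Every generator occurring in the two subtracted sums is retained: a removed generator always has vanishing vertical degree, so the $D_{B,m}$ with $m\ge1$ lie in $\Basi{d}{n}$; and $A$ contains $1$, so it is never a $2$-element subset of $\{2,\ldots,n\}$, and the $D_{A,m}$ lie in $\Basi{d}{n}$ as well. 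Thus $D_{B,0}$ is congruent to $F^*D(A;B)$ modulo $\text{span}(\Basi{d}{n})$, and it suffices to prove $F^*D(A;B)\in\text{span}(\Basi{d}{n})$.

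For this I would invoke the standard description of $\text{Pic}(\overline M_{0,n})\otimes\mathbb{Q}$ via Keel's relations (see \cite{keelintersect} and the discussion around \cite[Lemma 1.2.3]{pandhaintersect}): a basis is given by those boundary divisors $D(A';B')$ for which neither $A'$ nor $B'$ is a $2$-element subset of $\{2,\ldots,n\}$ different from $\{n-1,n\}$. In particular the divisor $D(A;B)$ above --- whose part $B$ is precisely such a "forbidden'' $2$-subset --- is a $\mathbb{Q}$-linear combination of such retained boundary divisors $D(A';B')$. Applying $F^*$ and again using $F^*D(A';B')=\sum_m(D_{A',m}+D_{B',m})$, in which every summand is now a retained generator (the same two observations as before, applied to $A'$ and $B'$, which by the choice of basis are not forbidden $2$-subsets), we obtain $F^*D(A;B)\in\text{span}(\Basi{d}{n})$, hence $D_{B,0}\in\text{span}(\Basi{d}{n})$. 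This completes the reduction and hence the proof.

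The main obstacle is the last ingredient, namely that these particular retained boundary divisors form a basis of $\text{Pic}(\overline M_{0,n})\otimes\mathbb{Q}$: this is the inductive elimination of the "excess'' boundary divisors using Keel's relations, which I would cite rather than reprove. Everything else --- the binomial cardinality count, the bookkeeping of which generators survive, and the transport through $F^*$ --- is routine given Theorem \ref{Theo:relations}.
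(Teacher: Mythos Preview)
Your approach is correct and shares the paper's overall structure: match $|\Basi{d}{n}|$ with $\dim_{\mathbb{Q}}W$ via Corollary~\ref{Cor:PicrankY}, dispose of $n\le 3$ trivially, and for $n\ge 4$ establish spanning. The paper argues the dual statement: writing $V$ for the free $\mathbb{Q}$-space on $\Gene{d}{n}$ and $U=\ker(V\to W)$, it shows that the projection $U\to V''$ onto the span of the removed generators is surjective (hence an isomorphism by the dimension count), which is linearly equivalent to your spanning claim.

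The substantive difference is where the combinatorial work is done. You reduce to the assertion that the boundary divisors $D(A';B')$ with neither side a ``forbidden'' $2$-subset form a basis of $\text{Pic}(\overline M_{0,n})\otimes\mathbb{Q}$, and propose to cite it. The paper instead proves exactly this content in situ: it takes the Keel relations (\ref{eqn:RelationsM0n3}) with $i=1$, $k=n$, $l=n-1$ (and then $i,j$ varying in $\{2,\ldots,n-2\}$) to exhibit each removed $D_{\{i,j\},0}$ in the image $U''$. Your route is cleaner conceptually, but be aware that the precise basis you invoke is not stated verbatim in \cite{keelintersect} or around \cite[Lemma~1.2.3]{pandhaintersect}; it follows from Keel's relations by the same short elimination the paper performs, so your ``citation'' is really deferring the one nontrivial paragraph of the argument.
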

\begin{proof}
 One sees easily from Corollary \ref{Cor:PicrankY} that for all $d,n$, the set $\Basi{d}{n}$ has $\text{dim}(W)$ elements. For $n \leq 3$ this finishes the proof, so we may assume that $n \geq 4$. Let 
 \[V= \bigoplus_{D \in \Gene{d}{n}} \mathbb{Q} D\]
 be the vector space with formal basis $\Gene{d}{n}$ together with the natural surjective map $\text{cl}: V \to W$ by taking the class in the rational Picard group. Let $U \subset V$ denote the kernel of this map. We have seen in the proof of Corollary \ref{Cor:PicrankY} that $\text{dim}(U) = {n-1 \choose 2}-1$. Moreover by Theorem \ref{Theo:relations}, it is spanned by relations among the generators in $\Delta'$ obtained as pullback from the relations 
  \begin{equation} \label{eqn:RelationsM0n}
   \sum_{\substack{i,j \in A\\ k,l \in B}} D(A;B) = \sum_{\substack{i,k \in A\\ j,l \in B}} D(A;B) 
  \end{equation}
 among boundary divisors $D(A;B)$ in $\overline M_{0,n}$.
 Set $D_B = \sum_{k} D_{B,k}$ where the sum is over $k=0, \ldots, d$ if $|B| \geq 2$ and $k=1, \ldots, d$ otherwise. Then the relation (\ref{eqn:RelationsM0n}) pulls back to
 \begin{equation} \label{eqn:RelationsM0n2}
  \sum_{\substack{i,j \in A\\ k,l \in B}} D_B + \sum_{\substack{i,j \in B\\ k,l \in A}} D_B = \sum_{\substack{i,k \in A\\ j,l \in B}} D_B + \sum_{\substack{i,k \in B\\ j,l \in A}} D_B. 
 \end{equation}
 Now consider the canonical projection $V \to V'$ on the subspace $V' \subset V$ generated by the divisors $D_{B,0}$ with $|B|=2$. From (\ref{eqn:RelationsM0n2}) we see that under this projection, the space $U$ maps to the space $U'$ generated by elements of the form
 \begin{align} \label{eqn:RelationsM0n3}
 D_{\{k,l\},0} + D_{\{i,j\},0}  - D_{\{j,l\},0} - D_{\{i,k\},0}.
 \end{align}
 We have another projection $V' \to V''$ where $V''$ is the span of 
 \[\{D_{B,0}; B \subset \{2, \ldots, n\}, |B|=2, \text{ and } B\neq\{n-1,n\}\}.\]
 One sees immediately that $\text{dim}(V'') = {n-1 \choose 2}-1 = \text{dim}(U)$ and we claim that the image $U''$ of the induced projection $U' \to V''$ is all of $V''$. This in turn would imply, that the map $U \to V''$ is an isomorphism and from this one immediately concludes that $\Basi{d}{n}$ forms a basis of $W$. 
 
 First note that under $V' \to V''$ the elements \[D_{\{1,2\},0}, D_{\{1,3\},0}, \ldots, D_{\{1,n\},0}, D_{\{n-1,n\},0}\] map to $0$ by definition. Taking $i=1, k=n, l=n-1$ in (\ref{eqn:RelationsM0n3}) we see that $- D_{\{j,n-1\},0} \in U''$. Switching $k$ and $l$ we also have $- D_{\{j,n\},0} \in U''$. But now take $2 \leq i,j \leq n-2$ arbitrary distinct and $k=n, l=n-1$ then we have $D_{\{i,j\},0} - D_{\{i,n\},0} - D_{\{j,n-1\},0} \in U''$, so also $D_{\{i,j\},0} \in U''$. But this finishes the proof.  
\end{proof}

\subsubsection{Identification of divisors} 
Now that we have a basis of the rational Picard group of $Y_{d,n}$, we can find an algorithm to explicitly represent a given divisor $D$ as a linear combination in this basis. We will see that all the information that is needed are the intersection numbers of $D$ with the test curves $C_{B,k}$ from Proposition \ref{Pro:CBk} together with the intersection $(C_\mathcal{G}.D)$ for $d=0$. We now give explicit formulas for the coefficients of the basis elements.
\begin{Pro} \label{Pro:iddivisors}
 Let $D$ be a rational divisor class on $Y_{d,n}$ and let $N_{B,k}=(C_{B,k}.D)$. Then $D$ has a unique representation
 \begin{equation} \label{eqn:Drep}
  D= \sum_{D_{B,k} \in \Basi{d}{n}} c_{D_{B,k}} D_{B,k} \underbrace{+ c_{\mathcal{H}} \mathcal{H}}_{\text{for }n=1,2} \underbrace{+ c_{\mathcal{G}} \mathcal{G}}_{\text{for }d=0}. 
 \end{equation}
 The coefficients are determined as follows
\begin{itemize}
 \item $c_\mathcal{G} = (C_\mathcal{G}.D)$ for $d=0$,
 \item $c_\mathcal{H} = N_{\{1\},0}$ for $n=1$, $c_\mathcal{H} = N_{\{1\},0} - N_{\{2\},0}$ for $n=2$ and $0$ otherwise,
 \item $c_{\{1,2\},0}=N_{\{2\},0}$ for $n=2$,
 \item $c_{\{1,j\},0}=N_{\{j\},0}$ for $2 \leq j \leq n-2$, $n\geq 3$,
 \item $c_{\{1,n-1\},0}=\frac{1}{2} (N_{\{1\},0} - N_{\{2\},0} - \ldots - N_{\{n-2\},0} + N_{\{n-1\},0} - N_{\{n\},0})$ for $n \geq 3$,
 \item $c_{\{1,n\},0}=\frac{1}{2} (N_{\{1\},0} - N_{\{2\},0} - \ldots - N_{\{n-2\},0} - N_{\{n-1\},0} + N_{\{n\},0})$ for  $n \geq 3$,
 \item $c_{\{n-1,n\},0}=\frac{1}{2} (-N_{\{1\},0} + N_{\{2\},0} + \ldots + N_{\{n-2\},0} + N_{\{n-1\},0} + N_{\{n\},0})$ for $n \geq 3$,
 \item $c_{\{k,l\},0}=0$ for $k,l>1$ if $\{k,l\} \neq \{n-1,n\}$ for $n \geq 3$,
 \item $c_{B,k} = \frac{1}{2}(N_{B,k} - \frac{k(d-k)}{d} N_{\emptyset,1} - \chi_B(1)c_\mathcal{H} - \sum_{\substack{a \notin B, b \in B}} c_{\{a,b\},0})$ for $B \subset \{1, \ldots, n\}$, $k \geq 0$ and $(|B|,k) \neq (2,0)$.
\end{itemize}
Here, $\chi_B$ is the characteristic function of the set $B$, so $\chi_B(m)=1$ if $m \in B$ and $\chi_B(m)=0$ otherwise.
\end{Pro}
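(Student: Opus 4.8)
The plan is as follows. Existence and uniqueness of the representation (\ref{eqn:Drep}) is nothing but the assertion that $\Basi{d}{n}$ is a $\mathbb{Q}$-basis of $\text{Pic}(Y_{d,n})\otimes\mathbb{Q}$, which is Corollary \ref{Cor:PicYbasis}; so the actual content is to invert the linear map sending a divisor class to its vector of intersection numbers against the test curves. Since the intersection pairing is additive and factors through rational equivalence, expanding $D$ as in (\ref{eqn:Drep}) and pairing with $C_{B,k}$ gives $N_{B,k}=\sum_\bullet c_\bullet (C_{B,k}.D_\bullet)$, the sum running over the generators $D_\bullet$ occurring in (\ref{eqn:Drep}); likewise $(C_\mathcal{G}.D)=\sum_\bullet c_\bullet(C_\mathcal{G}.D_\bullet)$ for $d=0$. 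Substituting the intersection numbers computed in Proposition \ref{Pro:CBk} (and, for $C_\mathcal{G}$, the preceding proposition, which says that $C_\mathcal{G}$ meets only $\mathcal{G}$, with multiplicity $1$, among the generators in $\Gene{d}{n}$) turns these equalities into an explicit linear system for the $c_\bullet$, which I will solve triangularly.

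First, $c_\mathcal{G}=(C_\mathcal{G}.D)$ is immediate. Next I would pin down $c_\mathcal{H}$ and the coefficients $c_{\{a,b\},0}$ of the two-point boundary divisors by pairing $D$ with the curves $C_{\{j\},0}$ for $j=1,\dots,n$. By Proposition \ref{Pro:CBk} such a curve meets, among the elements of $\Basi{d}{n}$, only $\mathcal{H}$ (with multiplicity $\chi_{\{j\}}(1)=\delta_{j,1}$) and those divisors $D_{\{a,j\},0}$ with $a\ne j$ whose index lies in the basis, each with multiplicity $1$; recall that the two-point indices in $\Basi{d}{n}$ are exactly the $\{1,m\}$ and $\{n-1,n\}$. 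For $n\ge 3$ (where $c_\mathcal{H}=0$) this yields the small system
\begin{align*}
 N_{\{1\},0}&=c_{\{1,2\},0}+\cdots+c_{\{1,n\},0},\\
 N_{\{j\},0}&=c_{\{1,j\},0}\qquad(2\le j\le n-2),\\
 N_{\{n-1\},0}&=c_{\{1,n-1\},0}+c_{\{n-1,n\},0},\\
 N_{\{n\},0}&=c_{\{1,n\},0}+c_{\{n-1,n\},0},
\end{align*}
whose unique solution is the list of formulas in the statement (the remaining two-point coefficients being $0$ because those divisors are not in the basis). For $n=1,2$, where $\mathcal{H}$ is itself a generator, the same computation with $C_{\{1\},0}$ alone, resp.\ with $C_{\{1\},0}$ and $C_{\{2\},0}$, gives $c_\mathcal{H}=N_{\{1\},0}$, resp.\ $c_\mathcal{H}=N_{\{1\},0}-N_{\{2\},0}$ and $c_{\{1,2\},0}=N_{\{2\},0}$.

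It then remains to read off $c_{B,k}$ for every generator $D_{B,k}$ with $(|B|,k)\ne(2,0)$. For such $(B,k)$, Proposition \ref{Pro:CBk} gives
\[N_{B,k}=\chi_B(1)\,c_\mathcal{H}+\sum_{a\notin B,\,b\in B}c_{\{a,b\},0}+2\,c_{B,k}+2k(d-k)\,c_{D_{\emptyset,1}},\]
in which every term but $2c_{B,k}$ is already known: $c_\mathcal{H}$ and the $c_{\{a,b\},0}$ from the previous step (those with non-basis index being $0$), and $c_{D_{\emptyset,1}}$ from the case $B=\emptyset,\,k=1$ of this very equation, which reads $N_{\emptyset,1}=2d\,c_{D_{\emptyset,1}}$ and hence gives $2k(d-k)c_{D_{\emptyset,1}}=\tfrac{k(d-k)}{d}N_{\emptyset,1}$. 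Solving for $c_{B,k}$ produces the last displayed formula; for $d=0$ there is no divisor $D_{\emptyset,1}$ and this term simply drops out. The only parts of the argument that demand attention — rather than genuine difficulty — are the bookkeeping of which boundary divisors $D_{B',k}$ actually lie in $\Basi{d}{n}$, so that exactly those contribute to the $N_{B,k}$, and the separate handling of the small cases $n\le 2$ and $d=0$; everything else is linear algebra over $\mathbb{Q}$, the real work having already been carried out in Proposition \ref{Pro:CBk} and Corollary \ref{Cor:PicYbasis}.
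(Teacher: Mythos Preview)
Your proof is correct and follows exactly the approach sketched in the paper: cite Corollary~\ref{Cor:PicYbasis} for existence and uniqueness, then pair the expansion (\ref{eqn:Drep}) with the test curves $C_\mathcal{G}$ and $C_{B,k}$ using the intersection numbers from Proposition~\ref{Pro:CBk}, and solve the resulting linear system triangularly (determining $c_{\emptyset,1}$ before the remaining $c_{B,k}$). The paper's own proof is in fact shorter than yours, merely indicating that this procedure works and in which order to proceed; your version supplies the explicit linear system and its solution, which is a welcome elaboration rather than a different method.
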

\begin{proof}
 From Corollary \ref{Cor:PicYbasis} it is clear that a unique representation of $D$ in the form (\ref{eqn:Drep}) must exist. To arrive at the formulas above one takes the intersection of equation (\ref{eqn:Drep}) with the test curves $C_\mathcal{G}$, $C_{B,k}$ and checks that the resulting linear system uniquely determines the coefficients to be the numbers above. Here one should proceed in the order suggested above, except that one needs to determine $c_{\emptyset, 1}$ using $N_{\emptyset,1}$ before calculating the other numbers $c_{B,k}$. 
\end{proof}

\subsubsection{Geometric divisors} 
\label{Sect:Geodiv}
We now want to define several divisors on $Y_{d,n}$ and use Proposition \ref{Pro:iddivisors} to identify them in terms of our basis. 

First of all, for every $i \in \{1, \ldots, n\}$ we have the evaluation map $\text{ev}_i : Y_{d,n} \to \PP^1 \times \PP^1$. This gives us divisors
\begin{align*}
 \mathcal{H}_{i,1} &= (\pi_1 \circ \text{ev}_i)^* \mathcal{O}_{\PP^1}(1),\\
\mathcal{H}_{i,2} &= (\pi_2 \circ \text{ev}_i)^* \mathcal{O}_{\PP^1}(1).
\end{align*}
For $n=1,2$, the divisor $\mathcal{H}_{1,1} = \mathcal{H}$ is an element of or basis $\Basi{d}{n}$ of the rational Picard group. While it is possible to describe the elements $\mathcal{H}_{i,1}$ in other cases as well using the formulas in Proposition \ref{Pro:iddivisors}, the resulting representation is not very illuminating. However we will see that by substracting suitable multiples of the divisors $\mathcal{H}_{i,j}$ from other geometric divisors below, the representation of these divisors becomes much nicer. As the divisors $\mathcal{H}_{i,1}$ are easy to handle in any case, we will use them in our expression of other geometric divisors. 

As a first step we apply this to the divisors $\mathcal{H}_{i,2}$. 
\begin{Pro} \label{Pro:Hprime}
 The divisor class of $\mathcal{H}'_{i,2}= \mathcal{H}_{i,2}-d \mathcal{H}_{i,1}$ has the form
 \[\mathcal{H}'_{i,2}= \sum_{k=1}^d \left(\sum_{B \not\ni i} \frac{k^2}{2d} D_{B,k} + \sum_{B \ni i} (\frac{k^2}{2d} -k) D_{B,k} \right) \underbrace{+ \mathcal{G}}_{\text{for }d=0}. \]
\end{Pro}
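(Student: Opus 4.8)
The plan is to reduce everything to the identification algorithm of Proposition \ref{Pro:iddivisors}: once we know the intersection numbers $N_{B,k} = (C_{B,k}.\mathcal{H}'_{i,2})$ (and, when $d=0$, the number $(C_\mathcal{G}.\mathcal{H}'_{i,2})$), the coefficients of $\mathcal{H}'_{i,2}$ in the basis $\Basi{d}{n}$ are given by explicit formulas. So the only real task is to compute these intersection numbers.

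First I would compute $(C_{B,k}.\mathcal{H}_{i,j})$ for $j=1,2$ by the projection formula. Since $\mathcal{H}_{i,j} = (\pi_j \circ \text{ev}_i)^*\mathcal{O}_{\PP^1}(1)$, this number equals the degree of the morphism $\pi_j \circ \text{ev}_i \circ \psi_{B,k} : \PP^1 \to \PP^1$, interpreted as $0$ when that morphism is constant. Both degrees can be read off the explicit formula $(\ref{eqn:evipsi})$ for $\text{ev}_i \circ \psi_{B,k}$: when $i \notin B$ the first coordinate is constant and the second coordinate is a quotient of two products of $k$ linear forms in $[x_0:x_1]$, while when $i \in B$ the first coordinate is the automorphism $[x_0:x_1]\mapsto[\alpha_i x_0:x_1]$ and the second coordinate is a quotient of two products of $d-k$ linear forms. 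For a sufficiently general choice of the parameters $a_l,b_l,c_l,d_l,\alpha_i,p_j$ these numerators and denominators share no common factor, so
\begin{equation*}
 (C_{B,k}.\mathcal{H}_{i,1}) = \chi_B(i), \qquad (C_{B,k}.\mathcal{H}_{i,2}) = \begin{cases} d-k & i \in B,\\ k & i \notin B.\end{cases}
\end{equation*}
Hence $N_{B,k} = (C_{B,k}.\mathcal{H}_{i,2}) - d\,(C_{B,k}.\mathcal{H}_{i,1})$ equals $-k$ if $i\in B$ and $k$ if $i\notin B$; in particular $N_{\{j\},0}=0$ for every $j$ and $N_{\emptyset,1}=1$. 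For $d=0$ the analogous computation for the test curve $C_\mathcal{G}$, whose $i$-th evaluation parametrizes $\{p_i\}\times\PP^1$, gives $(C_\mathcal{G}.\mathcal{H}_{i,2})=1$ and $(C_\mathcal{G}.\mathcal{H}_{i,1})=0$, so $(C_\mathcal{G}.\mathcal{H}'_{i,2})=1$.

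Then I would substitute these numbers into the formulas of Proposition \ref{Pro:iddivisors}. Because all $N_{\{j\},0}$ vanish, each of the coefficients $c_\mathcal{H}$ and $c_{\{k,l\},0}$ produced there is $0$, while for $d=0$ we get $c_\mathcal{G} = (C_\mathcal{G}.\mathcal{H}'_{i,2}) = 1$, which accounts for the extra summand $\mathcal{G}$. With $c_\mathcal{H}=0$ and all $c_{\{a,b\},0}=0$, the last formula in Proposition \ref{Pro:iddivisors} simplifies to $c_{B,k} = \frac{1}{2}\left(N_{B,k} - \frac{k(d-k)}{d}N_{\emptyset,1}\right) = \frac{1}{2}\left(N_{B,k} - \frac{k(d-k)}{d}\right)$, and inserting $N_{B,k}=k$ for $i\notin B$ gives $\frac{k^2}{2d}$, while $N_{B,k}=-k$ for $i\in B$ gives $\frac{k^2}{2d}-k$. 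Re-assembling the basis representation — noting that for $d=0$ the sum over $k=1,\dots,d$ is empty (the term $\frac{k(d-k)}{d}$ being read as $0$ since only $k=0$ occurs, whence all $c_{B,0}=0$ and only $c_\mathcal{G}=1$ survives) — yields exactly the claimed expression.

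The main obstacle is the non-cancellation input used in the middle step: one has to check that, for generic parameters, the second coordinate of $\text{ev}_i\circ\psi_{B,k}$ in $(\ref{eqn:evipsi})$ has no common linear factor between numerator and denominator, so that its degree is genuinely $k$ (resp.\ $d-k$); this reduces to inspecting where the individual factors $(x_1 p_i - b_j x_0)$ and $(x_1 p_i - d_j x_0)$ (resp.\ $(\alpha_i x_0 - a_l x_1)$ and $(\alpha_i x_0 - c_l x_1)$) vanish, plus the observation that the scalar prefactors are nonzero for general $p_i$ or $\alpha_i$, and a separate bookkeeping of the degenerate case $d=0$ where several of these rational functions collapse to constants. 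Everything else is routine substitution into Proposition \ref{Pro:iddivisors}.
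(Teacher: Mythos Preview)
Your proof is correct and follows essentially the same approach as the paper: compute the intersection numbers $(C_{B,k}.\mathcal{H}_{i,j})$ from the explicit formula \eqref{eqn:evipsi}, form $N_{B,k}=(C_{B,k}.\mathcal{H}'_{i,2})=k(1-2\chi_B(i))$, and feed these into Proposition~\ref{Pro:iddivisors}. You are in fact more explicit than the paper, which simply states the intersection numbers and says ``going through the recipe of Proposition~\ref{Pro:iddivisors} we find the desired formula''; your added verification of the $d=0$ case via $C_\mathcal{G}$ and the genericity check for non-cancellation are welcome details the paper leaves implicit.
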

\begin{proof}
 To compute the intersection numbers of the divisors $\mathcal{H}_{i,j}$ with $C_{B,k}$ consider again equation (\ref{eqn:evipsi}). Then it is clear that \[(C_{B,k}.\mathcal{H}_{i,j}) = \text{deg}(\pi_j \circ \text{ev}_i \circ \psi_{B,k}).\]
 Hence we conclude
 \begin{align*}
  (C_{B,k}.\mathcal{H}_{i,1})&=\chi_B(i),\\
  (C_{B,k}.\mathcal{H}_{i,2})&=d \chi_B(i) + k(1-2 \chi_B(i)).
 \end{align*}
 Thus
 \[(C_{B,k}.\mathcal{H}'_{i,2})=k(1-2 \chi_B(i)).\]
 Going through the recipe of Proposition \ref{Pro:iddivisors} we find the desired formula. 
\end{proof}
The reason for substracting $d \mathcal{H}_{i,2}$ was that it eliminates intersections with all the test curves $C_{B,0}$, which would make the formulas much more complicated.

For the next definitions, we will use that the evaluation maps, which were already considered above, are flat. 
\begin{Lem} \label{Lem:evflat}
 For $i=1, \ldots, n$, the evaluation maps $\text{ev}_i : Y_{d,n} \to \PP^1 \times \PP^1$ are flat and surjective.
\end{Lem}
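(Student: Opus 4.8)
The plan is to derive flatness from the ``miracle flatness'' criterion: a finite-type morphism $f\colon X\to Y$ with $X$ Cohen--Macaulay and $Y$ regular is flat provided all its fibres have the expected dimension $\dim X-\dim Y$ (at every point). In our situation $X=Y_{d,n}$ is Cohen--Macaulay: by \cite[Theorem 2]{fultonpandha} it is locally a quotient of a nonsingular variety by a finite group, and such finite quotient singularities are Cohen--Macaulay in characteristic $0$ (alternatively, they are rational, and normal varieties with rational singularities are Cohen--Macaulay). The target $\PP^1\times\PP^1$ is smooth, hence regular. Since $Y_{d,n}$ is normal, connected and of pure dimension $2d+1+n$, it is irreducible of that dimension, and $\dim(\PP^1\times\PP^1)=2$, so the expected fibre dimension is $2d+n-1$. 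Thus it suffices to prove that $\text{ev}_i$ is surjective and that each of its fibres is pure of dimension $2d+n-1$.

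Both statements follow from a homogeneity argument. Postcomposing a stable map $f\colon C\to\PP^1\times\PP^1$ of degree $(1,d)$ with an element $(g,h)$ of $\text{PGL}_2\times\text{PGL}_2$ yields the stable map $(g\times h)\circ f$, again of degree $(1,d)$ with unchanged markings and automorphisms (the factorization of the degree is preserved precisely because $\text{PGL}_2\times\text{PGL}_2$ acts on the two factors separately). As in the construction of the diagonal action, this defines an action of $\text{PGL}_2\times\text{PGL}_2$ on $Y_{d,n}$ --- apply Lemma \ref{Lem:stackyaction} to the standard action of $\text{PGL}_2\times\text{PGL}_2$ on $\PP^1\times\PP^1$ --- and by construction every $\text{ev}_i$ is equivariant for this action and the tautological action on $\PP^1\times\PP^1$. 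The latter action is transitive. Hence the image of the proper morphism $\text{ev}_i$ is a nonempty closed invariant subset of $\PP^1\times\PP^1$ and therefore equals $\PP^1\times\PP^1$, giving surjectivity; and since $\PP^1\times\PP^1$ is a single orbit, all fibres of $\text{ev}_i$ are isomorphic as schemes.

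It remains to compute the dimension of one fibre. Since $\text{ev}_i$ is a dominant morphism of irreducible varieties, every component of every nonempty fibre has dimension at least $\dim Y_{d,n}-\dim(\PP^1\times\PP^1)=2d+n-1$ by the theorem on fibre dimensions, while over a dense open of $\PP^1\times\PP^1$ the fibre dimension equals exactly $2d+n-1$; hence a general fibre is pure of dimension $2d+n-1$. By the isomorphism of all fibres established above, every fibre of $\text{ev}_i$ is pure of dimension $2d+n-1$. Since $\dim_x Y_{d,n}=2d+1+n$ and $\dim_{\text{ev}_i(x)}(\PP^1\times\PP^1)=2$ for all $x$, the miracle-flatness criterion applies at every point of $Y_{d,n}$ and shows that $\text{ev}_i$ is flat; surjectivity was already established.

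I do not expect a serious obstacle. The one point deserving care is the Cohen--Macaulay property of $Y_{d,n}$, which cannot be read off from smoothness (the space is genuinely singular in general) but must be extracted from its orbifold structure recorded in \cite{fultonpandha}. A more hands-on alternative to the homogeneity argument would be to identify $\text{ev}_i^{-1}(p,q)$ with a moduli space of stable maps carrying a marked point constrained to $(p,q)$ and to compute its dimension directly, but exploiting the homogeneity of $\PP^1\times\PP^1$ makes this detour unnecessary.
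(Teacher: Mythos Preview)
Your proof is correct and shares with the paper the key observation that $\text{PGL}_2\times\text{PGL}_2$ acts on $Y_{d,n}$ making $\text{ev}_i$ equivariant for the transitive action on the target. Where you diverge is in how you extract flatness from this homogeneity: you invoke miracle flatness, which requires establishing that $Y_{d,n}$ is Cohen--Macaulay and that all fibres have the expected pure dimension. The paper instead uses generic flatness: since $\text{ev}_i$ is of finite type, it is flat over some nonempty open $U\subset\PP^1\times\PP^1$; equivariance then makes it flat over every translate $g\cdot U$, and transitivity covers the whole target. This avoids the Cohen--Macaulay verification and the fibre-dimension count entirely. Your argument is perfectly valid (and indeed the Cohen--Macaulay step appears elsewhere in the paper, in the proof of Theorem~\ref{Theo:flatforgetful}), but the paper's route is shorter and uses nothing about $Y_{d,n}$ beyond its being finite type.
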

\begin{proof}
 All that we will use, is that there exists a transitive action of an algebraic group $H$ on the target $X=\PP^1 \times \PP^1$, which leaves the curve class $\beta=(1,d)$ invariant. In our case, we can take the natural action of $H=\text{PGL}_2 \times \text{PGL}_2$. Using Lemma \ref{Lem:stackyaction}, we obtain an induced action of $H$ on $Y_{d,n}$ (by postcomposition) making $\text{ev}_i$ equivariant. Then surjectivity is immediate, as $H$ acted transitively on $X$. Moreover, by generic flatness, the map $\text{ev}_i$ is flat over some open subset $U\subset X$. But then it is flat over $g.U$ for all $g \in H$ and using again the transitivity of the action, it is flat everywhere.
\end{proof}
One divisor that will be important later is the subset of $Y_{d,n}$ where the $i$th marked point is a fixed point of the self-map. When looking at the graph $\Gamma$ of a map from $\PP^1$ to itself, the set of fixed points is exactly (the projection of) the intersection of $\Gamma$ with the diagonal $\Delta \subset \PP^1\times \PP^1$. Thus we make the following definition.
\begin{Def} \label{Def:fixdiv}
 For $d,n \geq 0$ we call
 \[D_{i=\text{fix}} = \text{ev}_i^{-1}(\Delta) \subset \overline M_{0,n}(\PP^1 \times \PP^1, (1,d))
 \]
 the $i$th fixed point divisor.
\end{Def}
This is an effective Cartier divisor. As $\mathcal{O}(\Delta) = \pi_1^*(\mathcal{O}(1)) \otimes \pi_2^*(\mathcal{O}(1))$ we have 
\[D_{i=\text{fix}} = \mathcal{H}_{i,1} + \mathcal{H}_{i,2}\]
as divisor classes in $\text{Pic}(Y_{d,n})$.

Next, for a point $p \in \PP^1 \times \PP^1$ we want to consider the locus $D_p \subset Y_{d,n}$ of stable maps $f : C \to \PP^1 \times \PP^1$, where $p$ lies on the graph $f(C)$. For this, consider the forgetful map $F: Y_{d,n+1} \to Y_{d,n}$ of the last marking $n+1$, which we interpret as the universal curve over $Y_{d,n}$. Here, we have the evaluation map $\text{ev}_{n+1} : Y_{d,n+1} \to \PP^1 \times \PP^1$, which is flat. Hence, we can pull back the cycle $\{p\} \subset \PP^1 \times \PP^1$ and then push it forward to $Y_{d,n}$ via $F$. Indeed, we define
\[D_p = F_* \text{ev}_{n+1}^* [\{p\}].\]
As $\{p\}$ is codimension $2$ and as $F$ has fibres of dimension $1$, this should indeed be a divisor in $Y_{d,n}$. We now compute its class in the rational Picard group.
\begin{Pro} \label{Pro:Dp}
 The divisor class of $D_p$ has the form
 \[D_p= \sum_{k=1}^d \frac{k^2}{2d} \sum_{B\subset \{1, \ldots, n\}} D_{B,k} \underbrace{+ \mathcal{G}}_{\text{for }d=0}. \]
\end{Pro}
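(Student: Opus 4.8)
The strategy mirrors the proof of Proposition \ref{Pro:Hprime}: I would compute the intersection numbers $(C_{B,k}.D_p)$ for all the test curves and then feed these numbers into the recipe of Proposition \ref{Pro:iddivisors}. The key observation is that $D_p$ is defined via a pullback-pushforward through the universal curve $F:Y_{d,n+1}\to Y_{d,n}$, so by the projection formula, for a test curve $\psi_{B,k}:\PP^1\to Y_{d,n}$ coming from the family $\mathcal{C}=(\pi_2:\overline S\to\PP^1;\sigma_i;\mu:\overline S\to\PP^1\times\PP^1)$ of Lemma \ref{Lem:testcurves}, the pullback of the universal curve along $\psi_{B,k}$ is precisely this family $\overline S\to\PP^1$, with $\mu=\text{ev}_{n+1}$ in the appropriate identification. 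Hence
\[
(C_{B,k}.D_p) = \deg\big((\psi_{B,k})^* F_* \text{ev}_{n+1}^*[\{p\}]\big) = \deg\big(\mu^*[\{p\}]\big) = \big(\mu^*\mathcal{O}_{\PP^1\times\PP^1}(1,1)\big)\cdot[\text{fibre structure}],
\]
so really this boils down to computing the self-intersection-type number $\deg(\mu^*[\{p\}])$ on the smooth surface $\overline S$ for a general point $p$. Since $[\{p\}]$ is rationally equivalent to $\pi_1^*[\text{pt}]\cdot\pi_2^*[\text{pt}]$ on $\PP^1\times\PP^1$, this number is the intersection product of the two pullback divisor classes $\mu^*(\text{horizontal})$ and $\mu^*(\text{vertical})$ on $\overline S$, which can be read off from the bidegree data of the map $\mu$ on each component of the fibres.

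Concretely, I expect the following: the general fibre of $\overline S\to\PP^1$ maps with bidegree $(1,d)$, and a quick computation of $\mu^*\mathcal{O}(1,0)\cdot\mu^*\mathcal{O}(0,1)$ on the original surface $S=\PP^1\times\PP^1$ (before blowing up) using $\mathcal{N}=\mathcal{O}_S(d,k)$ and the sections $s_1,s_2$ gives an intersection number, from which the contributions of the exceptional divisors over the base points must be subtracted. The outcome should be that $(C_{B,k}.D_p) = \frac{k(d-k)}{d}N_{\emptyset,1}\cdot(\text{something})$-free — more precisely I anticipate, comparing with the shape of the claimed answer $D_p = \sum_k \frac{k^2}{2d}\sum_B D_{B,k}$ and running Proposition \ref{Pro:iddivisors} backwards, that we must have $(C_{B,k}.D_p) = k$ for all $B$ and all $1\le k\le d$ (and $(C_\mathcal{G}.D_p)=1$ when $d=0$, giving the $\mathcal{G}$ summand), with $(C_{B,0}.D_p)=0$. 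Indeed with $N_{B,k}=k$ independent of $B$, all the $c_{\{k,l\},0}$ and $c_\mathcal{H}$ vanish, and the last formula in Proposition \ref{Pro:iddivisors} yields $c_{B,k} = \tfrac12(k - \tfrac{k(d-k)}{d}\cdot 1) = \tfrac12\cdot\tfrac{k^2}{d} = \tfrac{k^2}{2d}$, matching the claim; the case $d=0$ is handled separately by noting $D_p$ restricted to $Y_{0,n}^o \cong M_{0,n}(\PP^1,1)\times\PP^1$ is exactly $\{p\}\times\PP^1$-type, i.e. equals $\mathcal{G}$ on the open part.

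So the real content is establishing the intersection number $(C_{B,k}.D_p)=k$. I would argue as follows. The evaluation $\text{ev}_{n+1}\circ\tilde\psi$ along the universal curve, pulled back over the test curve, is essentially the map $\mu:\overline S\to\PP^1\times\PP^1$; then $\text{ev}_{n+1}^*[\{p\}]$ is a finite set of points on $\overline S$ (the preimages $\mu^{-1}(p)$ for general $p$), and $F_*$ pushes this forward by recording, for each such point, the value of $\pi_2$. Counting with multiplicity, $\deg\mu^{-1}(p)$ equals $\mu^*\mathcal{O}(1,0)\cdot\mu^*\mathcal{O}(0,1)$ on $\overline S$. On $S$ before blow-up, $\mu^*\mathcal{O}(0,1) = V(s_1)\sim\mathcal{O}_S(d,k)$ while $\mu^*\mathcal{O}(1,0) = \pi_1^*\mathcal{O}(1)\sim\mathcal{O}_S(1,0)$, so the product on $S$ is $\mathcal{O}_S(1,0)\cdot\mathcal{O}_S(d,k) = k$. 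The blow-up at the base points of $\mu$ modifies $\mu^*\mathcal{O}(0,1)$ by subtracting the exceptional divisors with their multiplicities, but $\mu^*\mathcal{O}(1,0)$ on $\overline S$ is the strict transform of $\mathcal{O}_S(1,0)$, which is disjoint from the exceptional locus for general $p$ (as $p$ is general its preimages avoid the finitely many exceptional fibres). Hence $(C_{B,k}.D_p) = \mathcal{O}_S(1,0)\cdot\mathcal{O}_S(d,k) = k$ on $\overline S$ as well. The main obstacle — and the point requiring care — is the bookkeeping in this last step: verifying that, after resolving indeterminacies, the two divisor classes on $\overline S$ whose product we want really do have product $k$, i.e. that the corrections from the exceptional divisors cancel correctly and that $F_*$ does not introduce extra multiplicities; this is precisely the kind of surface-geometry computation done implicitly in Lemma \ref{Lem:testcurves} and Proposition \ref{Pro:CBk}, and I would phrase it by intersecting on $\overline S$ using that $\mu$ has bidegree $(1,d)$ on the general fibre together with the explicit local descriptions of $\mu$ near the base points given there.
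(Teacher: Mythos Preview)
Your approach is correct and the computation $(C_{B,k}.D_p)=k$ goes through exactly as you describe: on $\overline S$ one has $\mu^*\mathcal{O}(1,0)=b^*\mathcal{O}_S(1,0)$ and $\mu^*\mathcal{O}(0,1)=b^*\mathcal{O}_S(d,k)-\sum m_iE_i$, and since $b^*\mathcal{O}_S(1,0)\cdot E_i=0$ the intersection is $\mathcal{O}_S(1,0)\cdot\mathcal{O}_S(d,k)=k$. Feeding $N_{B,k}=k$ into Proposition~\ref{Pro:iddivisors} then yields the stated formula, as you verified.

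However, this is \emph{not} the route the paper takes. The paper never computes $(C_{B,k}.D_p)$ directly. Instead it works entirely on $Y_{d,n+1}$: it writes
\[
\text{ev}_{n+1}^*[\{p\}]=c_1(\mathcal{H}_{n+1,1})\cap c_1(\mathcal{H}_{n+1,2})\cap[Y_{d,n+1}]=c_1(\mathcal{H}_{n+1,1})\cap c_1(\mathcal{H}'_{n+1,2})\cap[Y_{d,n+1}]
\]
(using $c_1(\mathcal{H}_{n+1,1})^2=0$), substitutes the already-proved formula for $\mathcal{H}'_{n+1,2}$ from Proposition~\ref{Pro:Hprime}, observes that the terms with $n+1\in B$ are killed under $F_*(c_1(\mathcal{H}_{n+1,1})\cap-)$ for a geometric reason, rewrites what remains as $c_1(\mathcal{H}_{n+1,1})\cap F^*(\ldots)$, and finishes by the projection formula together with $F_*\big(c_1(\mathcal{H}_{n+1,1})\cap[Y_{d,n+1}]\big)=[Y_{d,n}]$. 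This is shorter because it recycles Proposition~\ref{Pro:Hprime} rather than re-running the test-curve machinery; your approach has the compensating advantage of being self-contained and making the geometric content (counting how often the moving graph in the family $\overline S$ passes through a fixed point $p$) completely transparent. The one point you should make explicit is that the pullback of the universal curve along $\psi_{B,k}$ really is $\overline S$, which needs the test curve to land in the automorphism-free locus of $Y_{d,n}$; this holds for generic parameters.
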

\begin{proof}
 As the class of $\{p\}$ in the Chow group of $\PP^1 \times \PP^1$ is exactly 
 \[[\{p\}] = c_1(\mathcal{O}(1,0)) \cap c_1(\mathcal{O}(0,1)) \cap [\PP^1 \times \PP^1],\]
 we know that
 \begin{align*}\text{ev}_{n+1}^* [\{p\}] &= c_1(\mathcal{H}_{n+1,2}) \cap c_1(\mathcal{H}_{n+1,1}) \cap [Y_{d,n+1}]\\
  &= c_1(\mathcal{H}'_{n+1,2}) \cap c_1(\mathcal{H}_{n+1,1}) \cap [Y_{d,n+1}].
 \end{align*}
 Here we use that $c_1(\mathcal{H}_{n+1,1})^2 = \text{ev}_{n+1}^* c_1(\mathcal{O}(1,0))^2 = 0$. But now we can apply the formula for $\mathcal{H}'_{n+1,2}$ from Proposition \ref{Pro:Hprime}. Note in the following, that for $n+1 \in B$, $1 \leq k \leq n$  we have
 \[F_* \left(c_1(\mathcal{H}_{n+1,1}) \cap  D_{B,k} \right) = 0, \]
 because a general point of the cycle on the left has a vertical section with a fixed horizontal position, and this is already a codimension $2$ condition. Using this, we compute
 \begin{align*}
  &F_* \text{ev}_{n+1}^* [\{p\}]\\ = &F_* c_1(\mathcal{H}_{n+1,1}) \cap \left( \sum_{k, B \not \ni n+1} \frac{k^2}{2d} D_{B,k} + \sum_{k, B  \ni i} (\frac{k^2}{2d} -k) D_{B,k} \underbrace{+ \mathcal{G}}_{\text{for }d=0}\right)\\
  = &F_* c_1(\mathcal{H}_{n+1,1}) \cap \left( \sum_{k, B } \frac{k^2}{2d} D_{B,k} \underbrace{+ \mathcal{G}}_{\text{for }d=0} \right)\\
  = &F_* c_1(\mathcal{H}_{n+1,1}) \cap F^* \left( \sum_{k, B } \frac{k^2}{2d} D_{B,k}\underbrace{+ \mathcal{G}}_{\text{for }d=0}  \right),\\
 \end{align*}
 where in the last line, the sum of boundary divisors is on $Y_{d,n}$. But then, for using the projection formula to obtain the desired result, we only need that 
 \[F_* c_1(\mathcal{H}_{n+1,1}) \cap [Y_{d,n+1}] = [Y_{d,n}].\]
 But this is clear, since over the locus $Y_{d,n}^o$ with smooth domain curve, any subvariety $\text{ev}_{n+1}^{-1} ( \{p_1\} \times \PP^1)$ (for $p_1 \in \PP^1$) maps birationally onto its image via $F$ (for a formal proof see Lemma \ref{Lem:birationalsection}). 
\end{proof}
\todoOld{Here could come definition of $\mathcal{T}'$, but this will be obsolete after the Iteration chapter; check again for consistency but probably leave out}

\subsection{The Picard group of \texorpdfstring{$M(d|d_1, \ldots, d_n)$}{M(d|d1, ..., dn)} finished}
\begin{Cor} \label{Cor:preBasis}
 Let $\textbf{d}=(d|d_1, \ldots, d_n)$ be admissible and set 
 \[d_T = d+1+\sum_{i=1}^n d_i.\] 
 Then the rational Picard group of $Y_{d,n}^{ss,\textbf{d}}$ is the quotient of $\text{Pic}(Y_{d,n})\otimes_{\mathbb{Z}} \mathbb{Q}$ by the linear span of the divisors $D_{B,k}$ with $k + \sum_{i \in B} d_i > \frac{d_T}{2}$ and the divisors $D_{i=\text{fix}}$ for all $i$ with $d_i>d_T/2-1$.
\end{Cor}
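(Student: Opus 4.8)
The plan is to reduce the statement to an excision computation combined with the semistability analysis of Lemma~\ref{Lem:semistableadvanced}. First I would observe that $Y_{d,n}^{ss,\textbf{d}} \subset Y_{d,n}$ is an open subset whose complement is a union of irreducible divisors: indeed, by the remark following Lemma~\ref{Lem:semistableadvanced} we have $Y_{d,n}^{ss,\textbf{d}} = Y_{d,n}^{s,\textbf{d}}$, and a point of $Y_{d,n}$ fails to be stable precisely when it maps under $J$ to an unstable point of $Z_d \times (\PP^1)^n$ with respect to $\mathcal{L}_{\textbf{d}^\sim}$. Translating the stability inequality (\ref{eqn:advancedsemistable}) (multiplied by $k$, and noting $d_i = \tilde d_i/k$), a point is unstable iff there exists $p \in \PP^1$ with $\nu_p([s]) + \delta_{p=\mathrm{fix}}([s]) + \sum_{i: p_i = p} d_i > d_T/2$, where $d_T = d+1+\sum_i d_i$. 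So the unstable locus decomposes according to which combination of a vertical section, a fixed point, and coincident markings sits over $p$; I would argue each resulting irreducible piece is (the closure of a locus) contained in one of the divisors $D_{B,k}$ with $k + \sum_{i\in B} d_i > d_T/2$, or in $D_{i=\mathrm{fix}}$ for some $i$ with $d_i > d_T/2 - 1$ (the latter being the degenerate case of the inequality with no vertical section and only the $i$-th marking over $p$). Conversely, the generic point of each such $D_{B,k}$ or $D_{i=\mathrm{fix}}$ is unstable, so these divisors are exactly the irreducible components of $Y_{d,n}\setminus Y_{d,n}^{ss,\textbf{d}}$.

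Next I would invoke the excision exact sequence for class groups (as used already in the proof of Corollary~\ref{Cor:PicYo} and the discussion preceding it): for a normal variety $X$ with a closed subset $Z$ of pure codimension one whose irreducible components are $Z_1,\ldots,Z_r$, one has
\[
\bigoplus_{j=1}^r \mathbb{Q}\,[Z_j] \to \mathrm{Cl}(X)\otimes\mathbb{Q} \to \mathrm{Cl}(X\setminus Z)\otimes\mathbb{Q}\to 0.
\]
Applying this with $X = Y_{d,n}$, which has only finite quotient singularities so that $\mathrm{Cl}(Y_{d,n})\otimes\mathbb{Q} \cong \mathrm{Pic}(Y_{d,n})\otimes\mathbb{Q}$ (cf. the computation in Corollary~\ref{Cor:Picidentification}), and $Z$ the unstable locus identified above, gives that $\mathrm{Pic}(Y_{d,n}^{ss,\textbf{d}})\otimes\mathbb{Q}$ is the quotient of $\mathrm{Pic}(Y_{d,n})\otimes\mathbb{Q}$ by the span of the classes of those $D_{B,k}$ and those $D_{i=\mathrm{fix}}$, which is the claim.

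The main obstacle is the bookkeeping in the first step: making precise that the unstable locus is \emph{exactly} the union of the listed divisors, with no extra components and no missing ones. Concretely one must check (i) that imposing $\nu_p([s]) + \delta_{p=\mathrm{fix}} + \sum_{i:p_i=p} d_i > d_T/2$ for \emph{some} varying $p$ carves out a closed set whose components are as described — the subtlety being that for fixed $B,k$ the condition "there is a degree-$k$ vertical section carrying exactly the markings $B$" already defines the divisor $D_{B,k}$, and adding $\delta_{p=\mathrm{fix}}$ only matters when $k + \sum_{i\in B}d_i = \lceil d_T/2\rceil - 1$ lands exactly on the boundary, in which case the extra constraint " $p$ is a fixed point" would cut down to higher codimension and hence contribute nothing new as a divisor (here one uses that $d_T$ is odd, so the inequality is never an equality and this borderline case in fact does not produce a divisorial component at all); and (ii) that the purely marking-theoretic instability with no vertical section, i.e. $\sum_{i:p_i=p}d_i + \delta_{p=\mathrm{fix}} > d_T/2$ with $\nu_p = 0$, forces either $|B|\ge 2$ (landing in some $D_{B,0}$, already on the list) or $|B|=1$ together with $\delta_{p=\mathrm{fix}}=1$, i.e. a single marking $i$ at a fixed point with $d_i > d_T/2 - 1$, which is exactly $D_{i=\mathrm{fix}}$. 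Once this case analysis is laid out cleanly the rest is the formal excision argument.
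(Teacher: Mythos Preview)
Your proposal is correct and follows essentially the same route as the paper: apply excision to the open inclusion $Y_{d,n}^{ss,\textbf{d}} \subset Y_{d,n}$ and use Lemma~\ref{Lem:semistableadvanced} to identify the codimension-one components of the unstable locus as exactly the listed $D_{B,k}$ and $D_{i=\mathrm{fix}}$. Your bookkeeping discussion (in particular the observation that away from the boundary the only divisorial instability comes from a single marking landing on a fixed point, and that the $\delta_{p=\mathrm{fix}}$ borderline case is ruled out by parity of $d_T$) is more explicit than the paper's, but the argument is the same.
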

\begin{proof}
 By restricting to the open set $Y_{d,n}^{ss,\textbf{d}}$ we divide out the divisor classes of all codimension $1$ components of $Y_{d,n} \setminus Y_{d,n}^{ss,\textbf{d}}$. We will use Lemma \ref{Lem:semistableadvanced} to identify this locus.
 
 The first case that can make a closed point of $Y_{d,n}$ unstable is when a marked point $i$ with weight $d_i$ becomes a fixed point and $d_i + 1 > d_T/2$. This is exactly accounted for by dividing by $D_{i=\text{fix}}$. We note that this is the only cause of instability away from the boundary.
 
 Now a general point of the boundary divisor $D_{B,k}$ corresponds to a parametrized graph with exactly one vertical section  of multiplicity $k$ over a non-fixed point and with the points in $B$ on the vertical section. As we have already taken care of instabilities from marked points being fixed points, this is unstable iff $k + \sum_{i \in B} d_i > \frac{d_T}{2}$. 
\end{proof}

\begin{Cor} \label{Cor:finalBasis}
 For $\textbf{d}=(d|d_1, \ldots, d_n)$ admissible, the map 
 \[\phi^* : \text{Pic}(M(d|d_1, \ldots, d_n)) \otimes_{\mathbb{Z}} \mathbb{Q} \to \text{Pic}(Y_{d,n}^{ss,\textbf{d}}) \otimes_{\mathbb{Z}} \mathbb{Q}\] is an isomorphism. 
\end{Cor}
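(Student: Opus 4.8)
The plan is to bootstrap from Corollary~\ref{Cor:Picidentification}, which already proves the claim for every admissible $\textbf{d}$ with $(d,n)\neq(2,0),(1,1)$, and to handle the two remaining cases by a direct rank count. In both of these cases the quotient is a projective space: $M(2,0)\cong\PP^2$ by Lemma~\ref{Lem:dn20}, and $M(1|1)\cong\PP^1$ by Lemma~\ref{Lem:M(1,1)}; since moreover all nonempty $M(1|d_1)$ are isomorphic to $M(1|1)$, it suffices to treat $\textbf{d}=(1|1)$. Thus $\text{Pic}(M(d|d_1,\ldots,d_n))\otimes\mathbb{Q}$ is one-dimensional, and I would reduce the statement to two sub-claims: (i) $\text{Pic}(Y_{d,n}^{ss,\textbf{d}})\otimes\mathbb{Q}$ is also one-dimensional; (ii) $\phi^*$ is nonzero. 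For (ii) I would use that $M$ is the GIT quotient of $Y_{d,n}^{ss,\textbf{d}}=Y_{d,n}^{s,\textbf{d}}$ for the ample $G$-linearized bundle $\mathcal{M}'$, so by the construction of the quotient (see \cite{git}) it carries an ample line bundle $\mathcal{A}$ with $\phi^*\mathcal{A}\cong(\mathcal{M}')^{\otimes\ell}|_{Y_{d,n}^{ss,\textbf{d}}}$ for some $\ell\geq1$; as $\mathcal{M}'$ is ample on the projective variety $Y_{d,n}$, its restriction to the positive-dimensional open set $Y_{d,n}^{ss,\textbf{d}}$ is ample and hence has nonzero class in the rational Picard group, so $\phi^*(\mathcal{A})\neq0$. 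Combining (i) and (ii), $\phi^*$ is a nonzero $\mathbb{Q}$-linear map between one-dimensional spaces, hence an isomorphism.

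It then remains to prove (i) in the two cases. For $(d,n)=(2,0)$, Lemma~\ref{Lem:dn20} shows that $j$ restricts to an isomorphism from $Y_{2,0}^{ss}=j^{-1}(Z_2^s)$ onto the open subvariety $Z_2^s$ of $Z_2\cong\PP^5$. Using the description of $Z_2^s$ in Lemma~\ref{Lem:numstability} (for $d=2$, a point is stable iff all its vertical sections are simple and sit over non-fixed points), I would verify that $Z_2\setminus Z_2^s$ has codimension at least $2$ in $\PP^5$: it is the union of the locus of sections $\gamma=\ell^2\otimes(aS+bT)$ carrying a double vertical section (a two-dimensional locus) and the locus of sections carrying a simple vertical section over a fixed point of the residual degree-$\leq 1$ map (a three-dimensional incidence variety over $\text{Rat}_1=\text{PGL}_2$), both of codimension at least $2$ in $\PP^5$. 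Since removing a closed subset of codimension $\geq2$ from the smooth variety $\PP^5$ does not change the Picard group, $\text{Pic}(Y_{2,0}^{ss})\cong\text{Pic}(\PP^5)=\mathbb{Z}$, which gives (i).

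For $(d,n)=(1,1)$ the free locus of the $\text{PGL}_2$-action has complement of codimension only $1$ (the $\mathbb{Z}/2$-isotropy point of Lemma~\ref{Lem:M(1,1)}), so Lemma~\ref{Lem:Picisomorph} is unavailable and I would instead use Corollary~\ref{Cor:preBasis} directly. By Theorems~\ref{Theo:Generators} and~\ref{Theo:relations}, $\text{Pic}(Y_{1,1})\otimes\mathbb{Q}$ is three-dimensional with basis $\mathcal{H},D_{\emptyset,1},D_{\{1\},1}$. For $\textbf{d}=(1|1)$ one has $d_T=3$, so Corollary~\ref{Cor:preBasis} identifies $\text{Pic}(Y_{1,1}^{ss,(1|1)})\otimes\mathbb{Q}$ with the quotient of this space by the span of $D_{\{1\},1}$ (as $k+d_1=1+1>\tfrac{3}{2}=d_T/2$) together with $D_{1=\text{fix}}$ (as $d_1=1>\tfrac{1}{2}=d_T/2-1$). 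Writing $D_{1=\text{fix}}=\mathcal{H}_{1,1}+\mathcal{H}_{1,2}$ with $\mathcal{H}_{1,1}=\mathcal{H}$ and using Proposition~\ref{Pro:Hprime}, which gives $\mathcal{H}_{1,2}-\mathcal{H}=\mathcal{H}'_{1,2}=\tfrac{1}{2}D_{\emptyset,1}-\tfrac{1}{2}D_{\{1\},1}$, one obtains $D_{1=\text{fix}}=2\mathcal{H}+\tfrac{1}{2}D_{\emptyset,1}-\tfrac{1}{2}D_{\{1\},1}$; together with $D_{\{1\},1}$ this spans a two-dimensional subspace, so the quotient is one-dimensional, giving (i).

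The step I expect to be the main obstacle is the codimension estimate for $Z_2\setminus Z_2^s$ in the $(2,0)$ case: one must make sure that \emph{every} stratum of the unstable locus of $Z_2\cong\PP^5$ — including the degenerate configurations with a double vertical section or with two simple vertical sections — has codimension at least $2$, since a codimension-$1$ component would change the Picard group. The $(1,1)$ case is more routine, the only subtlety being that Lemma~\ref{Lem:Picisomorph} no longer applies; this is circumvented by the explicit quotient computation via Corollary~\ref{Cor:preBasis}.
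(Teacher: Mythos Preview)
Your overall strategy---bootstrap from Corollary~\ref{Cor:Picidentification} and handle $(d,n)=(2,0),(1,1)$ by a rank-one count plus nontriviality of $\phi^*$---is exactly what the paper does. Your rank computations in step~(i) are correct; for $(1,1)$ your argument is essentially identical to the paper's (the paper also invokes Corollary~\ref{Cor:preBasis} and writes $D_{1=\text{fix}}=2\mathcal{H}+\tfrac12 D_{\emptyset,1}$ after killing $D_{\{1\},1}$). For $(2,0)$ you take a slightly different route---the codimension estimate for $Z_2\setminus Z_2^s$ in $\PP^5$---whereas the paper simply reads off from Corollary~\ref{Cor:preBasis} that the only unstable boundary divisor is $D_{\emptyset,2}$, leaving $D_{\emptyset,1}$ as a generator. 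Both are valid, and your codimension check is correct.

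The genuine gap is in step~(ii). The assertion ``$\mathcal{M}'$ is ample on the projective variety $Y_{d,n}$, so its restriction to the positive-dimensional open set $Y_{d,n}^{ss,\textbf{d}}$ is ample and hence has nonzero class in the rational Picard group'' is false as a general principle: on a non-proper quasi-projective variety the trivial bundle can be very ample (e.g.\ $\mathcal{O}_{\mathbb{A}^n}=\mathcal{O}_{\PP^n}(1)|_{\mathbb{A}^n}$), so ampleness of a restriction says nothing about nonvanishing in $\text{Pic}\otimes\mathbb{Q}$. You therefore have not shown $\phi^*\mathcal{A}\neq 0$.

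A clean uniform fix is to observe that $\phi^*$ is \emph{always} injective on rational Picard groups, without any freeness hypothesis. Factor it as
\[
\text{Pic}(M)\otimes\mathbb{Q}\;\longrightarrow\;\text{Pic}^{\tilde G}(Y_{d,n}^{ss,\textbf{d}})\otimes\mathbb{Q}\;\longrightarrow\;\text{Pic}(Y_{d,n}^{ss,\textbf{d}})\otimes\mathbb{Q}
\]
exactly as in the proof of Lemma~\ref{Lem:Picisomorph}. The first arrow is injective for any geometric quotient (if $\phi^*L$ is $\tilde G$-equivariantly trivial, a $\tilde G$-invariant nowhere-vanishing section descends to trivialize $L$), and the second has kernel the character group of $\tilde G=\text{PGL}_2$, which is trivial. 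Only the \emph{surjectivity} step in Lemma~\ref{Lem:Picisomorph} used freeness of the action; injectivity holds regardless, and combined with your rank count you are done. The paper instead verifies nontriviality by hand: it notes that the $G$-invariant divisor $D_{\emptyset,1}\subset Y_{2,0}^{ss}$ descends to a line in $M(2,0)\cong\PP^2$, hence $\phi^*$ carries a generator to a generator, and for $(1,1)$ it simply says ``one checks'' the induced map $\mathbb{Q}\to\mathbb{Q}$ is an isomorphism.
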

\begin{proof}
 For $(d,n) \neq (2,0), (1,1)$, this is exactly Corollary \ref{Cor:Picidentification}. For $d=2, n=0$ we have the isomorphism $j:Y_{d,n}^s \to Z_d^s$ from Lemma \ref{Lem:dn20} inducing an isomorphism $M(2,0) \cong \PP^2$. Then one checks that the generator $D_{\emptyset,1}$ of the rational Picard group corresponds exactly to the line at infinity $\PP^2 \setminus \mathbb{A}^2$ and hence forms a basis of the rational Picard group. On the other hand in the case $d=1,n=1$, we have seen that the only nonempty moduli spaces are all isomorphic to $M(1|1)$. 
 But we saw $M(1|1) \cong \PP^1$ in Lemma \ref{Lem:M(1,1)}. 
 On the other hand Corollary \ref{Cor:preBasis} gives generators $\mathcal{H}, D_{\emptyset, 1}$, but we must divide by the class
 \[D_{1=\text{fix}} = 2 \mathcal{H} + \frac{1}{2} D_{\emptyset, 1}.\]
 Hence the Picard group of $Y_{1,1}^{ss,(1|1)}$ is of rank $1$ and one checks that the pullback map $\varphi^*$ induces an isomorphism with $\text{Pic}(\PP^1)\otimes_\mathbb{Z} \mathbb{Q} = \mathbb{Q}$.
\end{proof}

\section{Iteration of rational functions} \label{Sect:Iterate}
One feature of self-maps $f:X \to X$ of a set $X$ is the possibility to iterate them. We define the $m$-fold self composition
\begin{align*}
 f^{\circ m} = \underbrace{f \circ f \circ \cdots \circ f}_{m \text{ times}},
\end{align*}
of $f$, where $f^{\circ 0} = \text{id}_X$. Now if $\psi: X \to X$ is an automorphism of $X$, we can look at the iteration of the conjugated map $f_\psi= \psi^{-1} \circ f \circ \psi$ and see
\[f_\psi^{\circ m} = \psi^{-1} \circ f \circ \psi \circ \psi^{-1} \cdots f \circ \psi = \psi^{-1} \circ f^{\circ m} \circ \psi.\]
Hence, $f_\psi^{\circ m}$ is conjugated to $f^{\circ m}$, again by the map $\psi$.

Returning to our setting of self-maps of $\mathbb{P}^1$ this allows us to define iteration on conjugacy classes $[f]$ of self-maps $f : \PP^1 \to \PP^1$. The map $f^{\circ m}$ has degree $d^m$. By the argument above, one sees that the induced map 
\[\mathfrak{sc}_m: \text{Rat}_d \to \text{Rat}_{d^m}, f \mapsto f^{\circ m}\]
descends to a map of the quotients
\[\mathfrak{sc}_m: M_d \to M_{d^m}, [f] \mapsto [f^{\circ m}].\]
In the following we want to see how to extend these maps to (parts of) the boundary of our compactifications $M(d,0)$ and also how to handle marked points. As above it will be more convenient to first work on the original varieties $Y_{d,n}=\overline M_{0,n}(\PP^1 \times \PP^1, (1,d))$, construct $\text{PGL}_2$-equivariant maps there and then descend them to the quotients.

\subsection{Composition maps}
Recall that the spaces $Y_{d,0}$ were natural compactifications of the spaces $\text{Rat}_d$ of degree $d$ maps from $\PP^1$ to $\PP^1$. But as long as we do not ``forget the coordinates'' on $\PP^1$ by taking the quotient under the action of $\text{PGL}_2$, we are even able to compose two different maps $f,g : \PP^1 \to \PP^1$ of possibly different degrees. One sees easily that the map 
\begin{equation} \label{eqn:circnaive}
 \mathfrak{c} : \text{Rat}_{d_1} \times \text{Rat}_{d_2} \to \text{Rat}_{d_1 d_2}, (f,g) \mapsto g \circ f
\end{equation}
is an algebraic morphism
The self-composition morphism $\mathfrak{sc}_2: \text{Rat}_d \to \text{Rat}_{d^2}$ is obtained from the map above (in case $d_1=d_2=d$) by precomposing with the diagonal $\text{Rat}_d \to \text{Rat}_d \times \text{Rat}_d$. Thus we can study the more general question of how to extend $\mathfrak{c}$ to parts of the compactification $Y_{d_1,0}\times Y_{d_2,0}$ of $\text{Rat}_{d_1} \times \text{Rat}_{d_2}$ and also how to handle marked points. We will see that this more general setup will allow us to recursively construct an extension of the map $\mathfrak{sc}_m$ using the fact that 
\[\mathfrak{sc}_m(f) = \underbrace{f \circ f \circ \cdots \circ f}_{m-1\text{ times}} \circ f = \mathfrak{sc}_{m-1}(f) \circ f.\]
We begin with an easy result that will allow us to handle marked points later.
\begin{Def}
 For $d,n \geq 0$ and $f=(f : C \to \PP^1 \times \PP^1;p_1, \ldots, p_n) \in Y_{d,n}$ let
 \[\text{Vert}(f) = \{q \in \PP^1: \text{some component of }C\text{ is contracted to }q\text{ by }\pi_1 \circ f\}\]
 be the set of positions of vertical sections for $f$.
\end{Def}
Obviously, $\text{Vert}(f)$ is finite and for $f \in Y_{d,n}^o$ it is empty.
\begin{Lem} \label{Lem:birationalsection}
 Let $F:Y_{d,n+1} \to Y_{d,n}$ be the map forgetting the $(n+1)$st marking and let $\text{ev}_{n+1} : Y_{d,n+1} \to \PP^1 \times \PP^1$ be the evaluation map for this marking. Then the map
 \[E=F \times (\pi_1 \circ \text{ev}_{n+1}) : Y_{d,n+1} \to Y_{d,n} \times \PP^1\]
 is birational. More precisely, the restriction of $E$ to
 \[V=Y_{d,n+1} \setminus \bigcup\left( D_{B,k}: n+1 \in B \text{ and }(|B|>2 \text{ or } k>0) \right) \]
 is an open embedding $V \xrightarrow{\sim} U \subset Y_{d,n} \times \PP^1$ with complement 
 \[\text{Vert}_{d,n} = Y_{d,n} \times \PP^1 \setminus U \]
 of codimension at least $2$. We have the description 
 \[U(\mathbb{C}) = \left\{ \left( f , q\right) : q \notin \text{Vert}(f) \right\}\]
 of the closed points of $U$.
 Over $U$ we have a section $s : U \to Y_{d,n+1}$ of $E$. 
\end{Lem}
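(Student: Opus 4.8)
The plan is to exploit that $F\colon Y_{d,n+1}\to Y_{d,n}$ is the universal curve over $Y_{d,n}$ (by \cite{fultonpandha}): over a geometric point $[f'\colon C'\to\PP^1\times\PP^1;p_1,\dots,p_n]$ the fibre of $F$ is the curve $C'$, and under this identification $\text{ev}_{n+1}$ restricts on the fibre to the map $f'$. Hence over such a point $E=F\times(\pi_1\circ\text{ev}_{n+1})$ is nothing but $\pi_1\circ f'\colon C'\to\PP^1$: a degree-one map which is an isomorphism on the unique component $C_0\subset C'$ with $(\pi_1\circ f')|_{C_0}$ an isomorphism, and which contracts every other component to the point of $C_0$ at which its vertical tree is attached. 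First I would record the immediate consequences: $\pi_1\circ f'$ is birational, it is injective exactly over $\PP^1\setminus\text{Vert}(f')$, and it has a positive-dimensional fibre over each point of $\text{Vert}(f')$. Using the isomorphism $J$ of Lemma~\ref{Lem:Jconstruction}, $E$ restricts to an open immersion on the locus $Y_{d,n+1}^o$ of maps with smooth domain, so $E$ is birational; note $Y_{d,n+1}^o\subseteq V$.

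Next I would set $W=\{(f',q)\in Y_{d,n}\times\PP^1\colon q\notin\text{Vert}(f')\}$ and show that $E|_V$ restricts to an isomorphism $V\xrightarrow{\sim}W$; this forces $U=W$ and $\text{Vert}_{d,n}=(Y_{d,n}\times\PP^1)\setminus W$. Let $Z\subset Y_{d,n+1}$ be the locus of $[f\colon C\to\PP^1\times\PP^1;p_1,\dots,p_{n+1}]$ such that $p_{n+1}$ lies on a component of $C$ contracted by $\pi_1\circ f$; one checks $Z=\bigcup_{n+1\in B}D_{B,k}$, and in particular $Z$ is closed, while $V$ is the complement of those $D_{B,k}$ with $|B|>2$ or $k>0$. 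The core of the argument is the following bookkeeping for a point $x$ as above: write $f'=F(x)$ with domain $C'$, put $q=\pi_1(f(p_{n+1}))$, and let $T\subseteq C$ be the vertical tree through $p_{n+1}$ (a single point if $p_{n+1}\in C_0$), attached to $C_0$ at a point $v$. Then $\pi_1\circ f$ carries $T$ to $q$, the preimage of $q$ in $C_0$ is $v$, and $q\in\text{Vert}(f')$ if and only if $T$ does not disappear when $p_{n+1}$ is forgotten and the curve is stabilised. Writing $I\ni n+1$ for the markings lying on $T$ and $k_T$ for the total degree it carries, $T$ disappears precisely when $k_T=0$ and $|I|\le 2$; stability excludes $|I|=1$, so this happens exactly when $x$ lies in $D_{\{i,n+1\},0}$ with $v\in C_0$, which is $Z\cap V$. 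If $T$ does not disappear, then at the node $v$ the curve $x$ lies on $D_{I,k_T}$ with $|I|>2$ or $k_T>0$, hence $x\notin V$ and $E(x)\notin W$; conversely any point of an excised ``bad'' divisor $D_{B,k}$ has $I\supseteq B$ and $k_T\ge k$, so $T$ survives and $E(x)\notin W$. This gives $E(V)\subseteq W$ and $E(Y_{d,n+1}\setminus V)\subseteq W^c$; since one also checks $W^c\subseteq E(Y_{d,n+1}\setminus V)$ by marking $p_{n+1}$ at a general point of a vertical tree of $C'$ over $\widetilde q$, we conclude $W^c=E(Y_{d,n+1}\setminus V)$, which is closed ($E$ being proper), and $E^{-1}(W)=V$.

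Finally I would promote this to an isomorphism. By the previous step $E|_V\colon V\to W$ has singleton fibres: over $(f',q)\in W$ the scheme $(\pi_1\circ f')^{-1}(q)$ is one reduced point $\widetilde q\in C_0$, and the unique stable map over $(f',q)$ is obtained by marking $\widetilde q$, bubbling off a contracted component when $\widetilde q$ equals one of the $p_i$; this map lies in $V$ because $E^{-1}(W)=V$. Thus $E|_V$ is a bijective morphism of varieties onto the open subset $W$ of the normal variety $Y_{d,n}\times\PP^1$, hence an isomorphism by Proposition~\ref{Pro:bijectiso}, and $s$ is $(E|_V)^{-1}$ followed by the inclusion $V\hookrightarrow Y_{d,n+1}$, which is a section of $E$ over $U=W$. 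For the codimension statement I would observe that $\text{Vert}_{d,n}=W^c=\{(f',q)\colon q\in\text{Vert}(f')\}$ maps to $Y_{d,n}$ with finite fibres and with image inside the boundary $Y_{d,n}^\partial$ (a nonempty $\text{Vert}(f')$ forces a vertical section of $f'$), whence $\dim\text{Vert}_{d,n}\le\dim Y_{d,n}^\partial=2d+n$ and its codimension in $Y_{d,n}\times\PP^1$ is at least $2$.

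The step I expect to be the main obstacle is the stratum bookkeeping in the middle paragraph: identifying which boundary divisors of $Y_{d,n+1}$ collapse under forgetting the last marking and matching this with the divisors excised to form $V$. The delicate point there is to check that a point of $D_{\{i,n+1\},0}$ which survives in $V$ cannot secretly sit on a larger vertical tree; this relies on the fact that a contracted genus-$0$ tree carrying only two markings must be a single $\PP^1$. The remaining verifications are routine.
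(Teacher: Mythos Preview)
Your proposal is correct and follows essentially the same approach as the paper's proof: both arguments establish a bijection on closed points between $V$ and the set $U(\mathbb{C}) = \{(f',q) : q \notin \text{Vert}(f')\}$, invoke Zariski's main theorem (via Proposition~\ref{Pro:bijectiso} in your case) using normality of $Y_{d,n}\times\PP^1$, and use the identical dimension count for $\text{Vert}_{d,n}$. Your version is more explicit about the stratum bookkeeping and adds the separate observation that $W^c$ is closed as the proper image of the excised divisors, but the logical skeleton is the same as the paper's.
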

\begin{proof}
 We are going to show that $E$ induces a bijection from the closed points of $V$ to the points in the set $U(\mathbb{C})$ above. Then by Zariski's main theorem, as $Y_{d,n} \times \PP^1$ is normal, $E|_V$ is an isomorphism of $V$ to an open set $U \subset Y_{d,n} \times \PP^1 $ with the set $U(\mathbb{C})$ of closed points given above. The section $s$ is then given by $(E|_V)^{-1}$.
 
 To prove the bijection between $V(\mathbb{C})$ and $U(\mathbb{C})$, note first that by construction the closed points $(f: C \to \PP^1 \times \PP^1;p_1, \ldots, p_{n+1}) \in V$ are exactly those elements of $Y_{d,n+1}$ such that the marking $p_{n+1}$ does not lie on a vertical component of $C$ remaining stable under forgetting $p_{n+1}$. Under the map $E$ we forget this marking and stabilize to obtain a map $\tilde f : \tilde C \to \PP^1 \times \PP^1$, but also remember the horizontal position $q$ of $f(p_{n+1})$. We have now verified that indeed $q \notin \text{Vert}(\tilde f)$, so there is exactly one point $\widehat q \in \tilde C$ with $f(\widehat q)$ above $q$ and it lies on the horizontal component of $\tilde C$. Hence we can recover the original curve $C$ by making $\widehat q$ the $(n+1)$st marking (possibly introducing a contracted component if $\widehat q$ is already a marked point of $\tilde C$). One checks that this is exactly a description of the inverse $s$ of $E|_V$ over $U$. 
 
 The only remaining point is to check that the complement $\text{Vert}_{d,n}$ of $U$ has codimension at least $2$. But looking at the map 
 \[\text{Vert}_{d,n} \to Y_{d,n}\]
 we see it only has nonempty fibres over the boundary of $Y_{d,n}$ and these fibres are finite, hence
 \[\dim (\text{Vert}_{d,n} ) \leq \dim(Y_{d,n})-1 = \dim(Y_{d,n} \times \PP^1)-2.\]
\end{proof}
The basic idea for extending the composition map $\mathfrak{c}$ from (\ref{eqn:circnaive}) to the boundary in $Y_{d_1,0} \times Y_{d_2,0}$ is to use the universal families 
\[\begin{CD}
 Y_{d_i,1}     @>\text{ev}_1>>  \PP^1\times \PP^1\\
@VVV        \\
Y_{d_i,0}   
\end{CD}\]
to construct a family in $Y_{d_1d_2,0}(W)$ for an open set $W \subset Y_{d_1,0} \times Y_{d_2,0}$, inducing a map $W \to Y_{d_1d_2,0}$. Let us illustrate the construction over points \[f_i = ((\pi_i, \phi_i) : C_i \to \PP^1 \times \PP^1) \in \text{Rat}_{d_i},\]
where we already know the desired result $f_2 \circ f_1$. Here the maps $\pi_i$ induce isomorphisms $C_i \cong \PP^1$ and the composition $f_2 \circ f_1 \in \text{Rat}_{d_1d_2}$ should be given by
\[f_2 \circ f_1 = (\text{id}_{\PP^1},\phi_2 \circ  \pi_2^{-1} \circ \phi_1 \circ \pi_1^{-1}) : \PP^1 \to \PP^1 \times \PP^1.\]
One way to obtain this map without having to explicitly invert the maps $\pi_i$ is to use as the domain of $f_2 \circ f_1$ the curve 
\[C = C_1 \times_{\phi_1, \PP^1,\pi_2} C_2 = \{(c_1, c_2) \in C_1 \times C_2: \phi_1(c_1) = \pi_2(c_2)\}. \]
As $\pi_2 : C_2 \to \PP^1$ is an isomorphism, the projection $ C \to C_1$ on the first factor is an isomorphism, so $C$ is isomorphic to $\PP^1$. However, it is now easy to write down $f_2 \circ f_1$ on C as
\begin{align*}
 f_2 \circ f_1 : C &\to \PP^1 \times \PP^1\\
  (c_1, c_2) &\mapsto  (\pi_1(c_1), \phi_2(c_2)).
\end{align*}
One sees that these two definitions of $f_2 \circ f_1$ are compatible via the isomorphism $C \to C_1 \xrightarrow{\pi_1} \PP^1$ of the source curves:
\begin{align*}
 (\phi_2 \circ  \pi_2^{-1} \circ \phi_1 \circ \pi_1^{-1})(\pi_1(c_1)) &= (\phi_2 \circ  \pi_2^{-1} )(\phi_1(c_1))\\
 &=(\phi_2 \circ  \pi_2^{-1} )(\pi_2(c_2)) = \phi_2(c_2).
\end{align*}
The advantage of the second construction is that it does not require the $\pi_i: C_i \to \PP^1$ to be isomorphisms, and this is exactly what fails on the boundary of $Y_{d_i,0}$. In the following Lemma, we will try to understand in which situations the fibre product $C_1 \times_{\phi_1, \PP^1,\pi_2} C_2$ is again a nice curve for $C_1, C_2$ not necessarily smooth. The reader might benefit from matching the descriptions in the Lemma below to the illustration in Figure \ref{Fig:composition} and the corresponding Remark \ref{Rmk:figcomposition}.
\begin{Lem} \label{Lem:compgeopoint}
 Let $f_i=((\pi_i, \phi_i) : C_i \to \PP^1 \times \PP^1;p_{i,1} , \ldots, p_{i,n_i}) \in Y_{d_i,n_i}$ be geometric points, $i=1,2$. Assume that they satisfy the following condition:
 \begin{equation} \label{eqn:condP1}
  \parbox{0.8\linewidth}{For all $q \in \text{Vert}(f_2)$, the preimage $\phi_1^{-1}(\{q\})$ is a union of smooth, reduced points of $C_1$.}
   \tag{P1}
 \end{equation}
 Then the fibre product 
 \[C = C_1 \times_{\phi_1, \PP^1,\pi_2} C_2 = \{(c_1, c_2) \in C_1 \times C_2: \phi_1(c_1) = \pi_2(c_2)\} \]
 is a projective, connected, reduced genus $0$ curve with at worst nodal singularities. It maps to $\PP^1\times \PP^1 \times \PP^1$ via
 \begin{align*} 
   \widehat{f_2 \circ f_1} : C &\to \PP^1 \times \PP^1 \times \PP^1\\
   (c_1, c_2) &\mapsto (\pi_1(c_1),\underbrace{\phi_1(c_1)}_{=\pi_2(c_2)}, \phi_2(c_2)).
 \end{align*}
 Let $G_i = ((V_{i,j})_{j=0}^{r_i}, E_i)$ be the dual graphs of the curves $C_i$. That is, the graphs with vertices $V_{i,0}, \ldots, V_{i,r_i}$ corresponding to the irreducible components of $C_i$ and edges $e \in E_i$ corresponding to nodes shared by the components connected by the edge. Let the component $V_{i,0}$ map with degree $(1,d_{i,0})$ and the components $V_{i,j}$ with degree $(0,d_{i,j})$ for $0<j\leq r_i$.
 
 Then the dual graph $G$ of $C$ is obtained from $G_1$ by glueing to all vertices $V_{1,j}$ in $G_1$ a number of $d_{1,j}$ copies of the graph $G_2$ along its root vertex $V_{2,0}$. The new degree (with respect to $(\PP^1)^3$) of this glueing vertex is $(\delta_{j,0} ,d_{j,0},d_{j,0}d_{2,0})$, the new degrees of the copies of $V_{2,j}$ for $0<j\leq r_2$ are $(0,0,d_{2,j})$. 
 
 Assume that in addition to condition (P1), the points $f_1, f_2$ satisfy
 \begin{equation} \label{eqn:condP2}
  \parbox{0.8\linewidth}{For all marked points $p_{1,j} \in C_1$ we have $\phi_1(p_{1,j}) \notin \text{Vert}(f_2) \cup \pi_2\left(  \{p_{2,j}: j=1, \ldots, n_2\}\right)$}
   \tag{P2}
 \end{equation}
 and
 \begin{equation} \label{eqn:condP3}
  \parbox{0.8\linewidth}{For all marked points $q \in C_2$ on the horizontal component (corresponding to $V_{2,0}$), the preimage $\phi_1^{-1}(\pi_2(q))$ is a union of smooth, reduced points of $C_1$.}
   \tag{P3}
 \end{equation}
 Then there exist $n_1 + d_1 n_2$ pairwise distinct markings 
 \[\widehat p_{1,1}, \ldots, \widehat p_{1,n_1}, \widehat p_{2,1}^1, \ldots, \widehat p_{2,1}^{d_1}, \widehat p_{2,2}^1, \ldots, \widehat p_{2,2}^{d_1}, \ldots,\widehat p_{2,n_1}^{d_1} \in C\]
 of smooth points in $C$ such that with the projections $\pi_{C_i} : C \to C_i$ on the two factors, we have
 \begin{itemize}
  \item $\pi_{C_1}(\widehat p_{1,j}) = p_{1,j}$ for $j=1, \ldots, n_1$,
  \item $\pi_{C_2}(\widehat p_{2,k}^l) = p_{2,k}$ for $k=1, \ldots, n_2, l=1, \ldots, d_1$.
 \end{itemize}
 Moreover, the data 
 \[\left( \widehat{f_2 \circ f_1}: C \to \PP^1 \times \PP^1 \times \PP^1; \widehat p_{1,1}, \ldots, \widehat p_{2,n_1}^{d_1} \right)\]
 gives a stable map of a genus $0$ curve with $n_1 + d_1 n_2$ markings of degree $(1,d_1, d_1d_2)$. It is uniquely defined up to the ordering of the markings $\widehat p_{2,k}^l$ for every $k=1, \ldots, n_2$. Thus by forgetting the second component of the map and stabilizing if necessary, we obtain a well-defined closed point 
 \[[f_2 \circ f_1] \in Y_{d_1 d_2,n_1 + d_1 n_2} / (S_{d_1})^{n_2},\] 
 where the $j$-th copy of $S_{d_1}$ acts by permuting the markings $\widehat p_{2,j}^1, \ldots, \widehat p_{2,j}^{d_1}$.
\end{Lem}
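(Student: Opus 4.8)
The plan is to verify, step by step, that the curve $C = C_1 \times_{\phi_1, \PP^1, \pi_2} C_2$ together with the map $\widehat{f_2 \circ f_1}$ and the marked points behaves as claimed, by working component-by-component on the dual graph $G_1$ of $C_1$.

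First I would analyze the fibre product $C$ locally. Over a point $c_1 \in C_1$ at which $\phi_1$ is \'etale (in particular smooth and reduced in the fibre), the projection $\pi_{C_1} : C \to C_1$ is \'etale of degree $d_2$ in a neighbourhood, so $C$ is locally $d_2$ disjoint copies of a neighbourhood of $c_1$, one for each of the $d_2$ preimages in $C_2$ under $\pi_2$ counted with multiplicity — but since $\pi_2$ restricted to the horizontal component $V_{2,0}$ of $C_2$ is an isomorphism onto $\PP^1$ and the vertical components $V_{2,j}$, $j>0$, are contracted by $\pi_2$, each fibre $\pi_2^{-1}(q)$ for $q = \phi_1(c_1)$ is either a single reduced point (if $q \notin \text{Vert}(f_2)$) or the subcurve of $C_2$ obtained by attaching, at the unique point of $V_{2,0}$ over $q$, the trees of contracted components sitting there. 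Thus away from $\phi_1^{-1}(\text{Vert}(f_2))$, $C$ is a nodal curve that \'etale-locally looks like $C_1$; at a point $c_1 \in \phi_1^{-1}(q)$ with $q \in \text{Vert}(f_2)$, condition (\ref{eqn:condP1}) guarantees $c_1$ is a smooth reduced point of $C_1$ with $\phi_1$ \'etale there, so $C$ near $(c_1, \cdot)$ is obtained from a smooth branch of $C_1$ by gluing on a copy of the (nodal, genus $0$) tree of $C_2$ lying over $q$ at its root. In all cases this is a nodal singularity or a smooth point. This local picture simultaneously establishes that $C$ is reduced, at worst nodal, and yields the stated recipe for the dual graph $G$ of $C$: over each component $V_{1,j}$ mapping with second-degree $d_{1,j}$ under $\phi_1$, there are $d_{1,j}$ points (the preimages, which by (P1) lie over $\text{Vert}(f_2)$ only if reduced and smooth, and generically elsewhere) where copies of $G_2$ get attached at $V_{2,0}$; the degree bookkeeping $(\delta_{j,0}, d_{j,0}, d_{j,0} d_{2,0})$ on the glue vertex and $(0,0,d_{2,j})$ on the attached copies of $V_{2,j}$ follows since $\pi_1$ has degree $1$ only on $V_{1,0}$, the middle $\PP^1$-coordinate is $\phi_1 = \pi_2$, and the third coordinate is $\phi_2$ whose degree on $V_{2,0}$ is $d_{2,0}$ (so pulled back along the degree-$d_{j,0}$ map $V_{2,0}$-copy $\to V_{2,0}$... more precisely the component over $V_{1,j}$ maps to the target $\PP^1$-factor via $\phi_2 \circ \pi_{C_2}$, and $\pi_{C_2}$ restricted to it has degree $d_{2,0}$ onto $V_{2,0}$, but we actually glue to each of the $d_{1,j}$ preimage points so the total over $V_{1,j}$ has third-degree $d_{1,j} d_{2,0}$, consistent with total degree $d_1 d_2$ after summing). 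Connectedness and genus $0$ of $C$ follow from the dual-graph description: $G_1$ is a tree, $G_2$ is a tree, and we are grafting trees onto a tree along single vertices, so $G$ is a tree, hence $C$ is connected of arithmetic genus $0$; projectivity is clear as $C$ is closed in $C_1 \times C_2$.

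Next I would handle the marked points. The $\widehat p_{1,j}$ are defined as the unique point of $C$ over $p_{1,j} \in C_1$: by (P2), $\phi_1(p_{1,j}) \notin \text{Vert}(f_2)$ so the fibre of $\pi_{C_1}$ over $p_{1,j}$ is a single reduced point, lying on the horizontal-type component, hence smooth in $C$; (P2) also ensures $\phi_1(p_{1,j})$ avoids the images of the $p_{2,k}$, so $\widehat p_{1,j}$ is distinct from all $\widehat p_{2,k}^l$. For a marked point $p_{2,k}$ of $C_2$ lying on the horizontal component $V_{2,0}$, condition (\ref{eqn:condP3}) says $\phi_1^{-1}(\pi_2(p_{2,k}))$ is $d_1$ distinct smooth reduced points of $C_1$ (the number is $d_1$ since $\phi_1$ has second-degree $d_1$ total and is \'etale there); each contributes one point of $C$ projecting to $p_{2,k}$, giving the $d_1$ markings $\widehat p_{2,k}^1, \ldots, \widehat p_{2,k}^{d_1}$, which are smooth in $C$ and distinct from each other (different first coordinates) and from the others. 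For a $p_{2,k}$ on a \emph{vertical} component of $C_2$ (contracted by $\pi_2$ to some $q \in \text{Vert}(f_2)$), it sits in one of the $d_{1,j}$ copies of the corresponding tree of $C_2$ grafted over a preimage in $\phi_1^{-1}(q) \cap V_{1,j}$; there are $\sum_j d_{1,j} = d_1$ such grafted copies in total (one per preimage, counted over all components, using (P1) so all preimages are distinct and reduced), again yielding $d_1$ markings $\widehat p_{2,k}^l$, smooth since $p_{2,k}$ was a smooth point of $C_2$ and the grafting only touches the root. Summing, we get $n_1 + d_1 n_2$ pairwise distinct smooth marked points satisfying the required compatibility with $\pi_{C_1}, \pi_{C_2}$, and the only indeterminacy is the ordering within each block of $d_1$, i.e. the action of $(S_{d_1})^{n_2}$.

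Finally, stability. The curve $C$ with map $\widehat{f_2\circ f_1}$ to $(\PP^1)^3$ has one component ($V_{1,0}$-type) mapping with first-degree $1$, hence no automorphisms on that component; every other component of $C$ is contracted by the first projection. A contracted component of $C$ that is \emph{also} contracted by both remaining projections is either a copy of a component $V_{1,j}$, $j>0$ — which had $\geq 3$ special points in $C_1$ by stability of $f_1$, and these special points are preserved (nodes stay nodes, markings $\widehat p_{1,\cdot}$ stay, and each grafting point over it is a new node), so it still has $\geq 3$ — or a copy of a vertical component $V_{2,j}$, $j>0$, with $d_{2,j}=0$, which had $\geq 3$ special points in $C_2$ by stability of $f_2$, again preserved under grafting (the root node, if relevant, or interior nodes/markings). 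Hence the stable-maps condition holds for the $(1,d_1,d_1d_2)$-map with $n_1+d_1 n_2$ markings, and it is stable. The total second-degree is $\sum_j d_{1,j} = d_1$ (middle coordinate) and total third-degree is $\sum_j d_{1,j}d_{2,0} + \sum_{j,l,j'>0} d_{2,j'} = d_1 d_2$. Therefore the datum defines a point of $\overline M_{0,\,n_1+d_1 n_2}(\PP^1\times\PP^1\times\PP^1,(1,d_1,d_1 d_2))$, well-defined up to the $(S_{d_1})^{n_2}$-ambiguity; forgetting the middle $\PP^1$-factor and stabilizing produces the asserted class $[f_2\circ f_1] \in Y_{d_1 d_2,\,n_1+d_1 n_2}/(S_{d_1})^{n_2}$.

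\textbf{Main obstacle.} The genuinely delicate point is the local structure of the fibre product $C$ over the points of $\phi_1^{-1}(\text{Vert}(f_2))$: one must check that the nontrivial scheme structure of $C_2$ along its contracted vertical trees (which carry the positive multiplicities $d_{2,j}$) does not create non-reduced structure or worse-than-nodal singularities in $C$, and that (P1) — asserting $\phi_1$ is unramified with reduced fibre there — is exactly what is needed to conclude that $C$ is, \'etale-locally over such a point, the trivial graft (branch of $C_1$) $\cup_{\text{pt}}$ (tree of $C_2$). The degree-bookkeeping on the dual graph and the stability verification are then essentially combinatorial once this local picture is in place.
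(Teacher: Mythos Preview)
Your proof is correct and follows essentially the same approach as the paper's: a local analysis of the fibre product $C$ (the paper phrases this via the explicit open cover $W = C_1 \times_{\PP^1} C_{2,0}^{sm}$ and $U_j = C_{1,j}^{sm} \times_{\PP^1} C_2$, but the content is identical to your pointwise analysis over $c_1 \in C_1$), followed by the same dual-graph bookkeeping, marked-point construction via (P2)--(P3), and stability check by inheritance from $f_1$ and $f_2$. One slip to fix: the projection $\pi_{C_1}: C \to C_1$ is the base change of $\pi_2$, which has degree $1$, not $d_2$ --- you effectively correct this in the very next clause (``each fibre $\pi_2^{-1}(q)$ \ldots\ is either a single reduced point \ldots''), so it does not affect the argument, but the phrase ``\'etale of degree $d_2$'' should be deleted.
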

\newsavebox\picCone
\sbox{\picCone}{%
\begin{tikzpicture}[remember picture]
 \draw[domain=-3:3,smooth,variable=\x,blue] plot ({\x},{-0.09*\x * \x}) node[black, below]{$(1,2)$};
 \draw[domain=-2.9:-1.6,smooth,variable=\x,blue]  plot ({\x},{-0.5*(\x+5)*(\x+5)+4.5});
 \draw (-3,2.5) node[right]{$(0,1)$};
 \draw[domain=1.5:2.8,smooth,variable=\x,blue] plot ({\x},{-0.5*(\x-5)*(\x-5)+5}) node[black, right]{$(0,2)$};
 \filldraw[blue] (1,-0.09) circle (1pt) node[below]{$p_{1,1}$};
 \draw (0,-1.7) node{$(C_1, f_1)$};
\end{tikzpicture}}
\newsavebox\picCtwo
\sbox{\picCtwo}{%
\begin{tikzpicture}[remember picture]
 \draw[domain=-2:-0.2,smooth,variable=\x,red] plot ({\x},{-0.3*\x*(\x-5)}) node[black, right]{$(0,k)$};
 \draw[domain=-4:-1,smooth,variable=\x,red] plot ({\x},{-0.1*(\x+1.5)*(\x+7)-3.5}) node[black, right]{$(1,d-k)$};
 \filldraw[red] (-2.5,{-0.1*(-2.5+1.5)*(-2.5+7)-3.5}) circle (1pt) node[below]{$p_{2,1}$};
 \filldraw[red] (-0.7,{-0.3*(-0.7)*((-0.7)-5)}) circle (1pt) node[right]{$p_{2,2}$};
 \draw (-2,-4.7) node{$(C_2, f_2)$};
\end{tikzpicture}
}
\newsavebox\picConetwo
\sbox{\picConetwo}{%
\begin{tikzpicture}[scale=1.5,remember picture]
 \draw[domain=-3:3,smooth,variable=\x,blue] plot ({\x},{-0.09*\x * \x}) node[black, below]{$(1,2(d-k))$};
 \draw[domain=-2.9:-1.6,smooth,variable=\x,blue]  plot ({\x},{-0.5*(\x+5)*(\x+5)+4.5});
 \draw (-3,2.5) node[right]{$(0,d-k)$};
 \draw[domain=1.5:2.8,smooth,variable=\x,blue] plot ({\x},{-0.5*(\x-5)*(\x-5)+5}) node[black, right]{$(0,2(d-k))$};
 
 \filldraw[red] (-2.7,{-0.5*(-2.7+5)*(-2.7+5)+4.5}) circle (1pt) node[right]{$\widehat p_{2,1}^1$};
 \filldraw[red] (-1.7,{-0.09*(-1.7)*(-1.7)}) circle (1pt) node[below right]{$\widehat p_{2,1}^2$};
 \filldraw[red] (-0.2,{-0.09*(-0.2)*(-0.2)}) circle (1pt) node[below]{$\widehat p_{2,1}^3$};
 \filldraw[red] (1.9,{-0.5*(1.9-5)*(1.9-5)+5}) circle (1pt) node[right]{$\widehat p_{2,1}^4$};
 \filldraw[red] (2.16,{-0.5*(2.16-5)*(2.16-5)+5}) circle (1pt) node[right]{$\widehat p_{2,1}^5$};
 
 \draw[domain=0.1:0.4,smooth,variable=\x,red] plot ({\x},{-(\x-5)*(\x-5)+23}) node[black, above]{$(0,k)$};
 \filldraw[red] (0.3,{-(0.3-5)*(0.3-5)+23}) circle (1pt) node[right]{$\widehat p_{2,2}^3$};
 \draw[domain=-0.85:-0.7,smooth,variable=\x,red] plot ({\x},{-2*(\x-4)*(\x-4)+46}) node[black, above]{$(0,k)$};
 \filldraw[red] (-0.75,{-2*(-0.75-4)*(-0.75-4)+46}) circle (1pt) node[right]{$\widehat p_{2,2}^2$};
 
 \draw[domain=-2.8:-1.9,smooth,variable=\x,red] plot ({\x},{0.5*(\x+4)*(\x+4)-0.2}) node[black, above]{$(0,k)$};
 \filldraw[red] (-2.2,{0.5*(-2.2+4)*(-2.2+4)-0.2}) circle (1pt) node[right]{$\widehat p_{2,2}^1$};
 
 \draw[domain=1:2.5,smooth,variable=\x,red] plot ({\x},{0.25*(\x-4)*(\x-4)-0.2}) node[black, right]{$(0,k)$};
 \filldraw[red] (1.5,{0.25*(1.5-4)*(1.5-4)-0.2}) circle (1pt) node[right]{$\widehat p_{2,2}^4$};
 
 \draw[domain=1.3:2.6,smooth,variable=\x,red] plot ({\x},{0.26*(\x-4)*(\x-4)+0.6}) node[black, right]{$(0,k)$};
 \filldraw[red] (1.7,{0.26*(1.7-4)*(1.7-4)+0.6}) circle (1pt) node[right]{$\widehat p_{2,2}^5$};
 
 \filldraw[blue] (1,-0.09) circle (1pt) node[below]{$\widehat p_{1,1}$};
 
 \draw (0,-1.5) node{$(C_1 \times_{\PP^1} C_2, f_2 \circ f_1)$};
\end{tikzpicture}
}

\begin{figure}[htb]
\begin{tikzpicture}[remember picture]
 \draw (-3,3) node[inner sep=0pt] (picCone){\usebox{\picCone}};
\draw (3,3) node[inner sep=0pt] (picCtwo){\usebox{\picCtwo}};
\draw[->] (-1,0) -- (-1,-1) ;
 \draw (0,-4.5) node[inner sep=0pt] (picConetwo){\usebox{\picConetwo}};
\end{tikzpicture}
\caption{Illustration of composition of $(C_1,f_1)$ and $(C_2,f_2)$}
\label{Fig:composition}
\end{figure}
\begin{Rmk} \label{Rmk:figcomposition}
 In Figure \ref{Fig:composition} we illustrate the composition of $(C_1,f_1) \in Y_{5,1}$ and $(C_2,f_2) \in Y_{d,2}$, both of them contained in the boundary of their respective spaces. The components are labelled by the bidegree (with respect to $\PP^1 \times \PP^1$) of the maps $f_1, f_2, f_2 \circ f_1$ restricted to them, respectively. Note that for $j=1,2$, the ordering of the points $\widehat p_{2,j}^l$ only represents one of the possible choices and is not canonical. 
\end{Rmk}
\begin{proof}[Proof of Lemma \ref{Lem:compgeopoint}]
 As $C_1, C_2$ are projective, so is their product $C_1 \times C_2$ and $C$ is then also projective as a closed subscheme of $C_1 \times C_2$. 
 By property (P1), we have
 \[C \subset \left(C_1 \times_{\PP^1} C_{2,0}^{sm}\right) \cup \bigcup_{j=0}^{n_1} \left(C_{1,j}^{sm} \times_{\PP^1} C_2\right)\subset C_1 \times_{\PP^1} C_2,\]
 where $C_{1,j}^{sm}$ are the smooth points of irreducible components of $C_1$ and $C_{2,0}^{sm}$ are the smooth points of the horizontal component of $C_2$ (mapping with degree $1$ on the first component of $\PP^1 \times \PP^1$).
 Indeed, if $(c_1,c_2) \in C$ with $c_2 \notin C_{2,0}^{sm}$, then $q=\pi_2(c_2) \in \text{Vert}(f_2)$, but then by property (P1) we know $c_1 \in \phi_1^{-1}(\{q\})$ is a smooth point of $C_1$. 
 
 Let $U_j=C_{1,j}^{sm} \times_{\PP^1} C_2$ and $W=C_1 \times_{\PP^1} C_{2,0}^{sm}$ be the open sets covering $C$. We will see that the spaces $U_j, W$ are reduced, at worst nodal, connected genus $0$ curves. This shows the local properties of being a reduced and at worst nodal curve for $C$. For the global property of being connected of genus $0$ we will simultaneously keep track of which components of $C$ are connected by nodes and will verify the construction of the dual graph $G$ of $C$. As $G$ is obviously a tree, this will finish the first part of the proof.
 
 First consider the set $W$. Via the map $\pi_2$, the set $C_{2,0}^{sm} \subset C_2$ is embedded in $\PP^1$ as an open subset and $W$ is just the preimage of this subset in $C_1$ via $\phi_1$. This shows all the desired properties of $W$ except the connectedness. But as the preimage of $\PP^1 \setminus C_{2,0}^{sm} = \text{Vert}(f_2)$ consists of isolated, smooth points of $C_1$, the curve $W$ is also connected. In the dual graph $G$ of $C$ the open subcurve $W \subset C$ corresponds to the copy of the dual graph $G_1$ of $C_1$. The degrees with which the components map can easily be seen: the first two components of $\widehat{f_2 \circ f_1}$ agree with the map $f_1$ on $W \subset C_1$. The last component is the composition 
 \[W \xrightarrow{\phi_1} \PP^{1} \setminus \text{Vert}(f_2) \xrightarrow{\pi_2^{-1}} C_{2,0}^{sm} \xrightarrow{\phi_2} \PP^1.\]
 Thus, on each irreducible component of $W$, the degree of this composition is equal to the degree of $\phi_1$ on this component times the degree of $\phi_2$ on $C_{2,0}$, as claimed.
 
 Now consider the subsets $U_j$ coming from cartesian diagrams
 \[\begin{CD}
 U_j     @>>>   C_2\\
@VVV        @VV\pi_2V\\
C_{1,j}^{sm}    @>>\phi_1>  \PP^1 
\end{CD}\]
On the complement $C_{1,j}^{sm,nc} = C_{1,j}^{sm} \setminus \{p: d \phi_1|_p = 0\}$ of its critical points, the map $\phi_1$ is \'etale. Similarly, on the smooth points $C_{2,0}^{sm}$ of the horizontal component of $C_2$ the map $\pi_2$ is an open embedding, so also \'etale. By condition (P1) the preimages of these sets in $U_j$ cover $U_j$, because a critical point in $C_{1,j}^{sm}$ mapping by $\phi_1$ to an element in $q \in \text{Vert}(f_2)$, would give a non-reduced point in $\phi_1^{-1}(\{q\})$. Because the properties of being reduced and of dimension $1$ ascend along \'etale morphisms (see \cite[\href{http://stacks.math.columbia.edu/tag/034E}{Tag 034E},\href{http://stacks.math.columbia.edu/tag/04N4}{Tag 04N4} ]{stacks}), $U_j$ has these two properties. Note also that the property of having at worst nodal singularities can be checked in an analytic neighborhood. As $\phi_1$ is a local isomorphism around all points in $\text{Vert}(f_2)$, the curve $U_j$ inherits this property from $C_2$.

Now we come to the global geometry of $U_j$: over the points in $\PP^1 \setminus \text{Vert}(f_2)$, we obtain a copy of an open subset of $C_{1,j}^{sm}$, sitting inside of the curve $W$ above. Over each point $q \in \text{Vert}(f_2)$ there sits a unique tree of $\PP^1$s as the preimage under $\pi_2$ and a number of $d_{1,j}$ smooth, reduced points as the preimage under $\phi_1$ by condition (P1). Thus $d_{1,j}$ copies of this tree are glued to $C_{1,j}^{sm}$ at these preimages. This exactly corresponds to the operation on the dual graph $G$ described above and also the stated degrees of $\widehat{f_2 \circ f_1}$ are immediate from this description. As this subgraph of the dual graph is connected and as all components of $U_j$ have genus $0$, we are done with the first part of the proof.

Now assume that also (P2) and (P3) hold. By (P2), for every $j \in \{1, \ldots, n_1\}$ there is a unique point $c_{2,j} \in C_2$ lying on the horizontal component of $C_2$ such that $\widehat p_{1,j}=(p_{1,j},c_{2,j}) \in C$, that is $\phi_1(p_{1,j}) = \pi_2(c_{2,j})$. Similarly, (P3) implies that for every marking $p_{2,k}$ of $C_2$, the preimage $\phi_1^{-1}(\pi_2(p_{2,k}))$ consists of $d_1$ distinct smooth points $c_{1,k}^1, \ldots, c_{1,k}^{d_1}$. We set $\widehat p_{2,k}^l = (c_{1,k}^l, p_{2,k})$ for $l=1, \ldots, d_1$. By construction and by (P2), all these $n_1 + d_1 n_2$ markings are pairwise distinct.

Now consider the map $\widehat{f_2 \circ f_1}$. To see that it is stable, note that we can easily see on the dual graph $G$ of $C$ where the markings are placed: for a marking $p_{1,j}$ on the vertex $V_{1,j'}$ of $G_1$, the corresponding mark $\widehat p_{1,j}$ remains on the induced vertex $V_{1,j'}$ of the graph $G$. For any marking $p_{2,k}$ lying on the root vertex $V_{2,0}$ of $C_2$, every vertex $V_{1,j}$ of $G$ coming from $G_1$ receives a number of $d_{1,j}$ of the markings from the set $\{\widehat p_{2,k}^1, \ldots, \widehat p_{2,k}^{d_1}\}$. For all markings $p_{2,k}$ on a vertex $V_{2,j}$ with $j>0$, the induced markings $\widehat p_{2,k}^l$ lie on the $d_1$ copies of $V_{2,j}$ on the trees glued to the vertices coming from $G_1$. 

With this, we can check that $\widehat{f_2 \circ f_1}$ is indeed stable, looking at all the vertices of $G$. For those coming from $G_1$ it is clear that they are stable, because the first two components of their degree as well as any markings they had in $G_1$ are inherited from $G_1$. On the other hand, all remaining vertices come from vertices in $G_2$ and the second two components as well as the markings are inherited from this graph. Hence, all vertices of $G$ are stable.

For the total degree of the graph note that the first two components of the degree are simply inherited from $G_1$ on the corresponding subgraph of $G$ (and $0$ outside of this subgraph), so we get $(1,d_1)$ here. For the last component we have on the one hand the vertices $V_{1,j}$ from $G_1$, which contribute their old degree $d_{1,j}$ times $d_{2,0}$, so a total of $d_1 d_{2,0}$. On the other hand, for every vertex $V_{2,j}$ with $j>0$ we have $d_1$ copies of this vertex contributing $d_{2,j}$ each, so a total of 
\[\sum_{j=1}^{r_2} d_1 d_{2,j} = d_1 (d_2 - d_{2,0}).\]
Together we exactly have a degree of $d_1 d_2$.

The fact that the markings on $C$ only depended on the choices of ordering among the $p_{2,k}^l$ for every $k$ is clear. Hence, the rest of the statement follows.
\end{proof}
By the discussion preceding the Lemma above, we see that for $n_1=n_2 = 0$ and for $C_1, C_2$ irreducible (that is $f_1 \in \text{Rat}_{d_1}, f_2 \in \text{Rat}_{d_2}$), the map $f_2 \circ f_1$ is exactly the composition of $f_2$ with $f_1$ (seen as maps $\PP^1 \to \PP^1$). 

Now that we understand how to compose two elements of $Y_{d_1,n_1}$ and $Y_{d_2,n_2}$, we can define a composition morphism on a suitable open subset of $Y_{d_1,n_1} \times Y_{d_2,n_2}$. However, for simplicity we will restrict ourselves to the case $n_2=0$ below (see also Remark \ref{Rmk:n2zeroapology}).
\begin{Theo} \label{Theo:compomo}
 Let $d_1, d_2, n_1 \geq 0$ and let $U \subset Y_{d_1,n_1} \times Y_{d_2,0}$ be the set of pairs $(f_1, f_2)$ satisfying the conditions (P1), (P2) from Lemma \ref{Lem:compgeopoint}. Then $U$ is open with complement of codimension at least $2$ and the map 
 \begin{align*}
  \compomo: U & \to Y_{d_1 d_2,n_1}\\
  (f_1, f_2) & \mapsto f_2 \circ f_1
 \end{align*}
 is an algebraic morphism. It is an extension of the natural morphism 
 \begin{align*}
 Y_{d_1,n_1} \times Y_{d_2,0} \supset \text{Rat}_{d_1} \times \left( (\PP^1)^n \setminus \Delta \right) \times \text{Rat}_{d_2} &\to \text{Rat}_{d_1 d_2} \times \left( (\PP^1)^n \setminus \Delta \right)\\
 (\phi_1, p_1, \ldots, p_n, \phi_2) &\mapsto (\phi_2 \circ \phi_1, p_1, \ldots, p_n).
 \end{align*}
\end{Theo}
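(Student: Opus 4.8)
The pointwise content of the statement --- the curve $C_1\times_{\PP^1}C_2$, the map $\widehat{f_2\circ f_1}$, the markings $\widehat p_{1,j}$ and the resulting point $f_2\circ f_1$ --- is already supplied by Lemma~\ref{Lem:compgeopoint}, which for $n_2=0$ produces a genuine point of $Y_{d_1d_2,n_1}$ with no ordering ambiguity and no quotient by symmetric groups. So the plan is: (a) prove $U$ is open with complement of codimension at least $2$; (b) globalise the construction of Lemma~\ref{Lem:compgeopoint} to a flat family of stable maps over $U$, thereby producing $\compomo$; and (c) check that on the dense locus of rational maps $\compomo$ agrees with the naive composition. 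For (a), Lemma~\ref{Lem:birationalsection} applied to $Y_{d_2,0}$ shows that $\text{Vert}_{d_2,0}=\{(f_2,q):q\in\text{Vert}(f_2)\}\subset Y_{d_2,0}\times\PP^1$ is closed of codimension at least $2$. I would let $\text{Bad}_1\subset Y_{d_1,n_1}\times\PP^1$ be the locus of $(f_1,q)$ for which $\phi_1^{-1}(\{q\})$ fails to be a union of smooth reduced points of the source of $f_1$; pushing forward the singular locus and the ramification/contraction locus of the universal curve of $Y_{d_1,n_1}$ shows $\text{Bad}_1$ is closed, of dimension at most $\dim Y_{d_1,n_1}$, with all fibres over $\PP^1$ of dimension at most $\dim Y_{d_1,n_1}-1$ (the dominant stratum being ``$\phi_1$ ramified over $q$'', a divisorial condition on $\text{Rat}_{d_1}$; boundary and contracted-component contributions are strictly smaller). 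The (P1)-bad locus is then the image in $Y_{d_1,n_1}\times Y_{d_2,0}$ of $W:=(\text{Bad}_1\times Y_{d_2,0})\cap(Y_{d_1,n_1}\times\text{Vert}_{d_2,0})$, which is closed since $\PP^1$ is proper, and the map $W\to\text{Vert}_{d_2,0}$ gives $\dim W\le\dim\text{Vert}_{d_2,0}+(\dim Y_{d_1,n_1}-1)\le\dim(Y_{d_1,n_1}\times Y_{d_2,0})-2$. For (P2), since $n_2=0$ the condition reads $\phi_1(p_{1,j})\notin\text{Vert}(f_2)$ for all $j$, so the bad locus is $\bigcup_{j=1}^{n_1}G_j^{-1}(\text{Vert}_{d_2,0})$, where $G_j:(f_1,f_2)\mapsto(f_2,\phi_1(p_{1,j}))$ is flat (being built by base change from the flat morphism $\pi_2\circ\text{ev}_j$ of Lemma~\ref{Lem:evflat}); hence this is closed of codimension at least $2$. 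Thus $U$ is open with complement of codimension $\ge 2$.

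For (b), I would pull back the universal curves: let $\mathcal{C}_1\to U$ be the pullback of $Y_{d_1,n_1+1}\to Y_{d_1,n_1}$, a flat family (Theorem~\ref{Theo:flatforgetful}) of $n_1$-pointed prestable genus $0$ curves with markings $\sigma_1,\dots,\sigma_{n_1}$ and tautological maps $\Pi_1,\Phi_1:\mathcal{C}_1\to\PP^1$ (the two components of the degree-$(1,d_1)$ map to $\PP^1\times\PP^1$, with $\Pi_1$ the degree-$1$ one); let $\mathcal{C}_2\to U$ be the pullback of $Y_{d_2,1}\to Y_{d_2,0}$, with $\Pi_2,\Phi_2:\mathcal{C}_2\to\PP^1$. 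Form the fibre product over $U$,
\[\mathcal{C}:=\mathcal{C}_1\times_{\Phi_1,\,\PP^1,\,\Pi_2}\mathcal{C}_2,\]
whose fibre over $(f_1,f_2)\in U$ is exactly $C_1\times_{\PP^1}C_2$ from Lemma~\ref{Lem:compgeopoint}. The crucial point is that $\mathcal{C}\to U$ is flat. Over the open subset of $\mathcal{C}_1$ where $\Phi_1(c_1)\notin\text{Vert}(f_2)$, the projection $\mathcal{C}\to\mathcal{C}_1$ is an isomorphism --- this is the pullback of the locus where the map $E$ of Lemma~\ref{Lem:birationalsection} is an isomorphism --- so $\mathcal{C}\to U$ is flat there. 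Near a point $c_1$ with $\Phi_1(c_1)\in\text{Vert}(f_2)$, condition (P1) forces $c_1$ to be a smooth point of its fibre at which $\phi_1$ is unramified and no component is contracted; hence the $U$-morphism $\mathcal{C}_1\to\PP^1\times U$ induced by $\Phi_1$ and the structure map, being fibrewise \'etale at $c_1$ between schemes smooth over $U$, is \'etale at $c_1$. Consequently $\mathcal{C}\to\mathcal{C}_2$ is \'etale near the fibre over $c_1$, and flatness of $\mathcal{C}\to U$ there follows from flatness of $\mathcal{C}_2\to U$. So $\mathcal{C}\to U$ is a flat family of prestable genus $0$ curves, reduced and at worst nodal on fibres.

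Now I would equip $\mathcal{C}\to U$ with the map $F:=(\Pi_1\circ\pr_{\mathcal{C}_1},\,\Phi_2\circ\pr_{\mathcal{C}_2}):\mathcal{C}\to\PP^1\times\PP^1$, which has relative degree $(1,d_1d_2)$ by Lemma~\ref{Lem:compgeopoint}, and with $n_1$ sections: condition (P2) guarantees $(f_2,\Phi_1(\sigma_j(f_1)))\notin\text{Vert}_{d_2,0}$, so the section $s$ of Lemma~\ref{Lem:birationalsection}, evaluated there, supplies the unique $\mathcal{C}_2$-coordinate, and the resulting $\widehat p_{1,j}:U\to\mathcal{C}$ are disjoint sections landing in the smooth locus. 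Stabilising this family of prestable maps --- the fibrewise operation already performed in Lemma~\ref{Lem:compgeopoint} --- yields a family of stable maps of degree $(1,d_1d_2)$ with $n_1$ markings over $U$, hence, by the coarse moduli property of $Y_{d_1d_2,n_1}=\overline M_{0,n_1}(\PP^1\times\PP^1,(1,d_1d_2))$, a morphism $\compomo:U\to Y_{d_1d_2,n_1}$; on geometric points it is $(f_1,f_2)\mapsto f_2\circ f_1$ by construction. For (c), on the dense open $\text{Rat}_{d_1}\times((\PP^1)^{n_1}\setminus\Delta)\times\text{Rat}_{d_2}\subset U$ the discussion preceding Lemma~\ref{Lem:compgeopoint} shows $C_1\times_{\PP^1}C_2\cong\PP^1$ and that $\widehat{f_2\circ f_1}$ restricts to the ordinary composition $\phi_2\circ\phi_1$ with the original (distinct) marked points; the curves there are already smooth and irreducible, so stabilisation does nothing and the family is the pullback of the universal family under the naive morphism $(\phi_1,p_\bullet,\phi_2)\mapsto(\phi_2\circ\phi_1,p_\bullet)$. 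Hence $\compomo$ restricts to that morphism, i.e.\ it is the asserted extension.

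The main obstacle is the flatness of $\mathcal{C}\to U$: this is exactly where condition (P1) enters, through the fibrewise-\'etaleness of the map $\mathcal{C}_1\to\PP^1\times U$ near fibres of $C_1$ lying over $\text{Vert}(f_2)$, which reduces flatness of $\mathcal{C}\to U$ to that of $\mathcal{C}_1\to U$ and $\mathcal{C}_2\to U$. The dimension bound on the complement of $U$ is routine but needs the boundary strata of $Y_{d_1,n_1}$ to be treated separately from the open stratum of maps with smooth source, and one should also check (using that $Y_{d,n}$ is a quotient of a smooth variety by a finite group, hence Cohen--Macaulay) that the relevant universal curves are flat families as claimed.
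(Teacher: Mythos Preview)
Your proposal is correct and follows the same overall strategy as the paper: form the fibre product $\mathcal{C}=\mathcal{C}_1\times_{\Phi_1,\PP^1,\Pi_2}\mathcal{C}_2$ of the two universal curves, produce sections via Lemma~\ref{Lem:birationalsection}, verify flatness, and obtain the morphism from the moduli property. Two technical steps are handled differently. For flatness, you argue locally via the dichotomy ``$\mathcal{C}\to\mathcal{C}_1$ is an isomorphism where $\Phi_1(c_1)\notin\text{Vert}(f_2)$'' versus ``$\mathcal{C}_1\to\PP^1\times U$ is \'etale at $c_1$ by (P1), so $\mathcal{C}\to\mathcal{C}_2$ is \'etale there''; the paper instead observes that $\mathcal{C}\subset\mathcal{C}_1\times\mathcal{C}_2$ is the pullback of the diagonal $\Delta\subset\PP^1\times\PP^1$, hence a Cartier divisor in a smooth ambient space (after passing to an atlas of the stack), so $\mathcal{C}$ is Cohen--Macaulay and Miracle flatness applies. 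Both work; the paper's is shorter and sidesteps the fibral \'etaleness criterion. For producing the map to $Y_{d_1d_2,n_1}$, you invoke stabilisation of the prestable family $(\mathcal{C},F,\widehat p_{1,j})$ directly; the paper instead records the middle coordinate and maps to $(\PP^1)^3$, where Lemma~\ref{Lem:compgeopoint} already shows the family is stable, and then applies the Behrend--Manin pushforward along the projection $(\PP^1)^3\to(\PP^1)^2$ to perform the stabilisation functorially. This is worth adopting: it replaces an informal ``stabilise in families'' by a citable result. Finally, the paper is explicit (in a footnote) that $Y_{d_i,n_i}$ carries no honest universal curve and that one really works over a smooth atlas of the stack $\mathcal{Y}_{d_i,n_i}$ before descending; your closing caveat anticipates this, but it is needed not only for flatness of $\mathcal{C}_i\to U$ but also for the smoothness of $\mathcal{C}_1\times\mathcal{C}_2$ (respectively for your \'etale-local argument).
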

\begin{proof}
 To show that $U$ is open, we will prove that the points $(f_1,f_2)$ violating conditions (P1), (P2) form closed subsets of $Y_{d_1,n_1} \times Y_{d_2,0}$, respectively. Let 
 \[\begin{CD}
 \mathcal{C}_1    @>(\pi_1,\phi_1)>>  \PP^1 \times \PP^1\\
@VFVV        \\
Y_{d_1,n_1}    
\end{CD}\]
 be the universal curve\footnote{Here we mean $Y_{d_1,n_1+1}$ with $F$ the forgetful map of the $(n+1)$-st marking.} over $Y_{d_1, n_1}$ and consider the map 
 \[e: \mathcal{C}_1 \times Y_{d_2,0} \to Y_{d_2,0} \times \PP^1, (c,f_2) \mapsto (f_2, \phi_1(c)).\]
 Then the preimage $V$ of the closed set $\text{Vert}_{d,n} \subset Y_{d_2,0} \times \PP^1$ from Lemma \ref{Lem:birationalsection} under $e$ is closed and consists of those $(c,f_2)$ with $\phi_1(c) \in \text{Vert}(f_2)$. 
 
 Secondly, the set $N \subset \mathcal{C}_1$ of $c$ such that $c$ is a node of the curve $(\mathcal{C}_1)_{F(c)}$ is closed. Finally, away from $N$, the relative dualizing sheaf $\omega_{\mathcal{C}_1 / Y_{d_1, n_1}}$ is a line bundle and the set $R$ of $c \in \mathcal{C}_1 \setminus N$ such that $d \phi_1|_c =0$ is the vanishing set of a section of $\left( \phi_1^* \Omega_{\PP^1}^1 \right)^\vee \otimes \omega_{\mathcal{C}_1 / Y_{d_1, n_1}}$, so it is closed in $\mathcal{C}_1 \setminus N$.
 
 Now the set of all pairs $(c, f_2)$ such that $c \in \phi_1^{-1}(\text{Vert}(f_2))$ is not a reduced, smooth point of $(\mathcal{C}_1)_{F(c)}$  is exactly $V \cap (N \times Y_{d_2,0} \cup R \times Y_{d_2,0})$, and thus closed. The locus of points $(f_1, f_2) \in Y_{d_1,n_1} \times Y_{d_2,0}$ violating (P1) is the image of this set under the proper map $F \times \text{id}_{Y_{d_2,0}}$ and hence also closed. 
 
 The set of points in $Y_{d_1,n_1} \times Y_{d_2,0}$ violating (P2) is  the union of the preimages of $V$ under the morphisms
 \[\sigma_j \times \text{id} : Y_{d_1,n_1} \times Y_{d_2,0} \to \mathcal{C}_1 \times Y_{d_2,0}\]
 for $j=1, \ldots, n_1$, and hence also closed.
 
 For the codimension of the complement of $U$, note that for $(f_1,f_2) \in Y_{d_1,n_1} \times Y_{d_2,0} \setminus U$ to violate (P1), we need $f_2$ to be in the boundary of $Y_{d_2,0}$, giving codimension $1$ already. For a fixed such $f_2$ a generic $f_1=(\pi_1, \phi_1)$  will satisfy that all preimages of $q \in \text{Vert}(f_2)$ under $\phi_1$ are reduced, smooth points. Similarly, for condition (P2) to be violated, the map $f_2$ must be in the boundary and as $\phi_1^{-1}(\pi_2(\text{Vert}(f_2)))$ is a finite set for a generic $\phi_1 \in \text{Rat}_{d_1}$, a generic choice of the markings $p_{1,j}$ will satisfy (P2).
 
 To finish the proof, we need to show that $\compomo : U \to Y_{d_1 d_2, n_1}$ is an algebraic morphism. To do this, we will construct a family of stable maps in $Y_{d_1 d_2, n_1}(U)$ inducing $\compomo$. Let 
 \[\begin{CD}
 \mathcal{C}_i    @>(\pi_i,\phi_i)>>  \PP^1 \times \PP^1\\
@VF_iVV        \\
Y_{d_i,n_i}    
\end{CD}\]
 be the universal families\footnote{\label{foot:stackfamily}This is a bit unprecise, as of course $Y_{d_i,n_i}$ is not a fine moduli space. Here is what we actually do: over the smooth DM-stacks $\mathcal{Y}_{d_i,n_i}$ we do have a universal family. This we pull back under a smooth, surjective morphism $\tilde Y_{d_i,n_i} \to \mathcal{Y}_{d_i,n_i}$ from a smooth, finite type scheme $\tilde Y_{d_i,n_i}$. Afterwards we proceed as indicated in the proof, constructing a morphism from a suitable open subset of $\tilde Y_{d_1,n_1} \times \tilde Y_{d_0,0}$ to $Y_{d_1 d_2,n_1}$. By descent, we obtain a map defined on an open substack $\mathcal{U}$ of $\mathcal{Y}_{d_1,n_1} \times \mathcal{Y}_{d_0,0}$. This factors through the corresponding open subset $U$ of its coarse moduli space.} over $Y_{d_1,n_1}$, $Y_{d_2,0}$ (with $n_2 =0$) and let 
 \[\sigma_1, \ldots, \sigma_{n_1} : Y_{d_1, n_1 } \to \mathcal{C}_1\]
 be the sections of $F_1$ corresponding to the markings. We define
 \[\mathcal{C} = \mathcal{C}_1 \times_{\phi_1, \PP^1,\pi_2} \mathcal{C}_2 = \{(c_1, c_2) \in \mathcal{C}_1 \times \mathcal{C}_2: \phi_1(c_1) = \pi_2(c_2)\}.\]
 This maps to $(\PP^1)^3$ via
 \begin{align*} 
   \mu : \mathcal{C} &\to \PP^1 \times \PP^1 \times \PP^1\\
   (c_1, c_2) &\mapsto (\pi_1(c_1),\underbrace{\phi_1(c_1)}_{=\pi_2(c_2)}, \phi_2(c_2)).
 \end{align*}
 Moreover, there is the map $F=F_1 \times F_2 : \mathcal{C} \to Y_{d_1,n_1} \times Y_{d_2,0}$. Over $U$, we now want to define sections $\widehat \sigma_1, \ldots, \widehat \sigma_{n_1}$ of $F$ corresponding to markings of our family of stable maps. Let 
 \[s: Y_{d_2,0} \times \PP^1 \setminus \text{Vert}_{d_2,0} \to Y_{d_2,1}=\mathcal{C}_2\]
 be the map from Lemma \ref{Lem:birationalsection}. For $j=1, \ldots, n_1$ consider
 \begin{align*}
  \gamma_j : U & \to Y_{d_2,0} \times \PP^1 \setminus \text{Vert}_{d_2,0} \\
  (f_1,f_2) & \mapsto (f_2, (\phi_1 \circ \sigma_j)(f_1)).
 \end{align*}
 The fact that $\gamma_j$ indeed has image outside of $\text{Vert}_{d_2,0}$ is exactly guaranteed by condition (P2). Thus the composition $s \circ \gamma_j$ has values in $\mathcal{C}_2$ and we define
 \[\widehat \sigma_j = \sigma_j \times (s \circ \gamma_j) : U \to \mathcal{C}_1 \times \mathcal{C}_2.\]
 One checks that the map $\widehat \sigma_j$ is a section of $F_1 \times F_2$ over $U$ by using Lemma \ref{Lem:birationalsection}. It actually has image in $\mathcal{C} \subset \mathcal{C}_1 \times \mathcal{C}_2$ because
 \[\pi_2 \circ s \circ \gamma_j  = \phi_1 \circ \sigma_j.\]
 Here again we use that $s$ is a section of the map 
 \[E=F_2 \times \pi_2 : \mathcal{C}_2 \to Y_{d_2,0} \times \PP^1.\]
 We now show that the data 
 \[\mathcal{F}=\left(F : \mathcal{C}|_U \to U; \widehat \sigma_1, \ldots, \widehat \sigma_{n_1}; \mu \right)\]
 gives a well-defined element of $\overline{\mathcal{M}}_{0,n_1}(\PP^1 \times \PP^1 \times \PP^1, (1,d_1,d_1d_2))(U)$ which agrees with $\widehat{f_2 \circ f_1}$ over a geometric point $(f_1,f_2) \in U$. Then we are done, because by \cite[Theorem 3.6]{behrendmanin}, we can push forward this family via the projection $(\PP^1)^3 \to (\PP^1)^2$ on first and third factor, obtaining an element in $Y_{d_1d_2,n_1}(U)$ as desired. Fibrewise, this corresponds to composing $\mu$ with the projection above and then stabilizing. But this is exactly the procedure by which we obtained $f_2 \circ f_1$ from $\widehat{f_2 \circ f_1}$. Hence we obtain the desired family in $Y_{d_1d_2,n_1}(U)$ and the proof is finished.
 
 The map $F$ is projective, because $F_1, F_2$ were projective and we precompose $F_1 \times F_2$ with the closed embedding $\mathcal{C} \hookrightarrow \mathcal{C}_1 \times \mathcal{C}_2$. By Lemma \ref{Lem:compgeopoint}, over a geometric point $f=(f_1, f_2) \in U$, the fibre $\mathcal{C}_f$ of $F$ is an $n_1$-pointed, genus $0$ quasi-stable curve and the restriction of $\mu$ makes it a stable map of degree $(1,d_1, d_1d_2)$. This shows everything except that the family $\pi$ is flat. But observe that $U$, $\mathcal{C}_1 \times \mathcal{C}_2$ are smooth (see the footnote \textsuperscript{\ref{foot:stackfamily}} above) and $\mathcal{C} \subset \mathcal{C}_1 \times \mathcal{C}_2$ is an effective Cartier divisor, namely the pullback of the diagonal $\Delta \subset \PP^1 \times \PP^1$ under the surjective morphism 
 \[\mathcal{C}_1 \times \mathcal{C}_2 \xrightarrow{(\phi_1, \pi_2)} \PP^1 \times \PP^1.\]
 Hence $\mathcal{C}$ is Cohen-Macaulay and equidimensional. Then by the Miracle flatness theorem (\cite[Theorem 23.1]{miracleflatness}), the map $F$ is flat over $U$, as all fibres are of dimension $1$. 
\end{proof}
\begin{Rmk} \label{Rmk:n2zeroapology}
 In Lemma \ref{Lem:compgeopoint}, we clearly have laid the foundations for also defining a composition morphism on a subset of $Y_{d_1,n_1} \times Y_{d_2,n_2}$, with target $Y_{d_1d_2,n_1 + d_1 n_2}/(S_{d_1})^{n_2}$. However, handling this would force us to deal with the codimension $1$ condition that two preimages of a marked point on $f_2$ under $\phi_1$ come together. Furthermore, we would need to parametrize these preimages with sections, up to ordering. As this introduces more technical complications in an already quite technical proof, and as we are not going to use it in the future, we have omitted this. However, it could be of interest, as these $d_1 n_2$ new markings are essentially the preimage in $C_1$ of the $n_2$ markings on the curve $C_2$. In connection with the self-composition morphisms below, this could be used to study backward orbits of marked points.
\end{Rmk}
We have now an explicit description of the rational map $\compomo : Y_{d_1,n_1} \times Y_{d_2,0} \dashrightarrow Y_{d_1 d_2,n_1}$, defined away from a codimension $2$ locus in the domain. Note that because the spaces $Y_{d_i,n_i}$ have finite quotient singularities, every Weil-divisor is $\mathbb{Q}$-Cartier. Using also that they are rational (as they contain the rational open subvarieties $\text{Rat}_{d_i} \times  \left( (\PP^1)^{n_i}  \setminus \Delta \right)$), we have
\begin{align*}
 \text{Pic}(U)\otimes_{\mathbb{Z}} \mathbb{Q} &= \text{Cl}(U)\otimes_{\mathbb{Z}} \mathbb{Q} = \text{Cl}(Y_{d_1,n_1} \times Y_{d_2,0})\otimes_{\mathbb{Z}} \mathbb{Q}\\
 &= \text{Pic}(Y_{d_1,n_1} \times Y_{d_2,0})\otimes_{\mathbb{Z}} \mathbb{Q}\\
 &= \text{Pic}(Y_{d_1,n_1})\otimes_{\mathbb{Z}} \mathbb{Q} \oplus \text{Pic}(Y_{d_2,0})\otimes_{\mathbb{Z}} \mathbb{Q}.
\end{align*}
Thus it makes sense to study the pullback-morphism
\[\compomo^* : \text{Pic}(Y_{d_1 d_2,n_1})\otimes_{\mathbb{Z}} \mathbb{Q} \to \text{Pic}(Y_{d_1,n_1})\otimes_{\mathbb{Z}} \mathbb{Q} \oplus \text{Pic}(Y_{d_2,0})\otimes_{\mathbb{Z}} \mathbb{Q}\]
and to express it in terms of the generators found in Theorem \ref{Theo:Generators}. For handling the pullbacks of the boundary divisors, we will use the resultant.

Recall from Section \ref{Sect:degdmaps} the space 
\[Z_d = \PP(H^0(\PP^1 \times \PP^1, \mathcal{O}(d,1)))\]
parametrizing pairs $[(F,G)]$ of homogeneous polynomials of degree $d$ up to simultaneous scaling. The space $\text{Rat}_d \subset Z_d$ is the locus where $F,G$ do not have a common root. This condition can be expressed in terms of the resultant $\text{Res}(F,G)$. Going to affine coordinates, it has a very handy description in terms of the roots of $F,G$. Assume these polynomials are given by 
\begin{align*}
 F(x,1) = \lambda (x-a_1) \cdots (x-a_d), G(x,1)= \mu (x-b_1) \cdots (x-b_d).
\end{align*}
Then
\begin{align*}
 \text{Res}(F,G) = (\lambda \mu)^d \prod_{i,j=1}^d (a_i - b_j).
\end{align*}
One checks that this is a homogeneous polynomial of degree $2d$ in the coefficients of $F,G$. Hence it gives a section $\text{Res} \in H^0(Z_d, \mathcal{O}_{Z_d}(2d))$, which vanishes exactly on the locus where one of the $a_i$ equals one of the $b_j$, i.e. where $F,G$ have a common root. 

Now recall the map 
\[j : Y_{d,n} \to Z_d = \PP(H^0(\PP^1 \times \PP^1, \mathcal{O}(d,1)))\]
from Lemma \ref{Lem:jConstruction}. Pulling back $\text{Res}$ via $j$ gives us a section 
\[j^* \text{Res} \in H^0(Y_{d,n}, j^*\mathcal{O}_{Z_d}(2d)), \]
which vanishes exactly on the boundary divisors $D_{B,k}$ for $k \geq 1$. We will now see which multiplicities it has on these divisors.
\begin{Lem} \label{Lem:resultant}
 Let $d,n \geq 0$, then with notation as above, we have
 \begin{equation} \label{eqn:resmultiplicities}
  \text{div}(j^* \text{Res}) = \sum_{(B,k)} k^2 D_{B,k},
 \end{equation}
 where the sum runs over all pairs $(B,k)$ giving boundary divisors $D_{B,k}$ in $Y_{d,n}$.
\end{Lem}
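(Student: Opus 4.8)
The plan is to reduce the statement to a local computation at a generic point of each boundary divisor $D_{B,k}$. First I would fix a boundary divisor $D_{B,k}$ with $k\geq 1$ and note that, since $j^*\mathrm{Res}$ vanishes precisely on the union of the $D_{B,k}$ with $k\geq 1$ (by the description of $\mathrm{Rat}_d$ as the non-vanishing locus of the resultant together with the fact that $j$ sends a stable map to the vanishing section of its pushforward cycle, Lemma \ref{Lem:jConstruction}), it suffices to compute the order of vanishing of $j^*\mathrm{Res}$ along each $D_{B,k}$ separately. Because everything is compatible with the forgetful maps that drop marked points and with $j$ being independent of the markings, I would first treat $n=0$ and then observe that the general case pulls back from it; alternatively one can just work directly with an arbitrary $n$ since the marked points play no role in $j$ or in $\mathrm{Res}$.

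Next I would write down an explicit one-parameter family degenerating to a generic point of $D_{B,k}$. Concretely, take a generic morphism $\varphi_t:\PP^1\to\PP^1$ of degree $d$ depending on a parameter $t$, whose defining sections $F_t,G_t\in H^0(\PP^1,\mathcal{O}(d))$ acquire, as $t\to 0$, exactly $k$ common roots colliding at a single point $p\in\PP^1$ (the generic point of $D_{B,k}$ corresponds to one vertical section of multiplicity $k$). In affine coordinates, after normalizing, one can arrange
\[
F_t(x,1)=\prod_{i=1}^{k}(x-t\alpha_i)\cdot \tilde F(x),\qquad G_t(x,1)=\prod_{i=1}^{k}(x-t\beta_i)\cdot \tilde G(x),
\]
with $\tilde F,\tilde G$ of degree $d-k$ with no common root and no root at $x=0$, and $\alpha_i,\beta_i$ generic. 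Then $j$ applied to the corresponding (stabilized) family of stable maps gives a section whose value at $t$ is, up to an invertible factor, $[F_t:G_t]\in Z_d$, and the corresponding $1$-parameter arc in $Y_{d,n}$ meets $D_{B,k}$ transversally at $t=0$ (this is the local picture underlying the test curves $C_{B,k}$, cf.\ Lemma \ref{Lem:testcurves}). The multiplicity of $D_{B,k}$ in $\mathrm{div}(j^*\mathrm{Res})$ is then exactly $\mathrm{ord}_{t=0}\,\mathrm{Res}(F_t,G_t)$.

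The core computation is then the order of vanishing of the resultant: using the product formula
\[
\mathrm{Res}(F_t,G_t)=(\text{const})\cdot \mathrm{Res}(\tilde F,\tilde G)\cdot \prod_{i=1}^{k}\prod_{j=1}^{k}(t\alpha_i-t\beta_j)\cdot \prod_{i=1}^{k}\tilde G(t\alpha_i)\cdot\prod_{j=1}^{k}\tilde F(t\beta_j)\big/(\cdots),
\]
where the factor $\mathrm{Res}(\tilde F,\tilde G)$ is a nonzero constant, the factors $\tilde F(t\beta_j),\tilde G(t\alpha_i)$ are nonzero at $t=0$ (since $\tilde F,\tilde G$ don't vanish at the collision point), and only the double product $\prod_{i,j}(t\alpha_i-t\beta_j)=t^{k^2}\prod_{i,j}(\alpha_i-\beta_j)$ contributes vanishing. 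Hence $\mathrm{ord}_{t=0}\,\mathrm{Res}(F_t,G_t)=k^2$. I would be a little careful about how one passes from ``homogeneous resultant of degree $2d$ as a section of $\mathcal{O}_{Z_d}(2d)$'' to ``affine resultant as a polynomial in $t$'', making sure the chosen local trivialization of $j^*\mathcal{O}_{Z_d}(2d)$ near the generic point of $D_{B,k}$ is an invertible function there; since the collision happens at a finite point $p\neq\infty$ of $\PP^1$ and the family is generic, the leading coefficients $\lambda,\mu$ stay invertible and this is fine. Combining over all $(B,k)$ with $k\geq 1$ yields the asserted divisor class $\sum_{(B,k)}k^2 D_{B,k}$.

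The main obstacle is the bookkeeping in the last step: one must check that no other factor of the resultant picks up vanishing along $D_{B,k}$ (i.e.\ that a generic arc does not accidentally make $\tilde F,\tilde G$ acquire extra common behaviour), and that the local computation genuinely computes the coefficient of $D_{B,k}$ in the Weil divisor and not some multiple — this is where transversality of the arc to $D_{B,k}$, as in Lemma \ref{Lem:testcurves}, is needed. Once transversality and genericity are in place, the resultant product formula does all the work and the exponent $k^2$ drops out as $k\cdot k$ from the $k$ colliding roots of $F$ meeting the $k$ colliding roots of $G$.
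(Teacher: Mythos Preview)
Your proposal is correct and follows essentially the same approach as the paper: both arguments pick a one-parameter family meeting $D_{B,k}$ transversally (the paper uses its pre-built test curves $\psi_{B,k}$ from Definition~\ref{Def:psiCBk}, with roots $b_j x$ and $d_j x$ colliding at $x=0$, while you write the equivalent family with roots $t\alpha_i$ and $t\beta_j$) and then read off the multiplicity as $\mathrm{ord}_{t=0}\prod_{i,j}(t\alpha_i-t\beta_j)=k^2$ via the product formula for the resultant. The paper's version is slightly more streamlined because transversality of the specific curves $C_{B,k}$ has already been established in Proposition~\ref{Pro:CBk}, so no separate genericity discussion is needed.
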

\begin{proof}
 We know that $\text{div}(j^* \text{Res})$ is supported on the boundary divisors $D_{B,k}$ with $k \geq 1$, so it remains to check the multiplicity is $k^2$. 
 To determine this multiplicity, we can use our test curves $C_{B,k}$ from Proposition \ref{Pro:CBk}. Recall that these were the image cycles of maps $\psi_{B,m} : \PP^1 \to Y_{d,n}$ such that over $[x:1] \in \PP^1 \setminus \{0\}$ the induced rational map was given by
 \[\phi_{x}(z) = \frac{(z-a_1 )\ldots (z-a_{d-k})(z - b_1 x) \ldots (z - b_k x)}{(z-c_1)\ldots (z-c_{d-k})( z - d_1 x) \ldots ( z - d_k x)}  \]
 for $a_i, b_j, c_i, d_j \in \mathbb{C}$, $i=1, \ldots, d-k$, $j=1, \ldots, k$ sufficiently general. 
 We have then seen, that the corresponding map $\psi_{B,k}: \PP^1 \to Y_{d,0}$ intersects $D_{B, k}$ transversally over the point $[0:1]$. Hence the multiplicity of $D_{B, k}$ in $\text{div}(j^* \text{Res})$ is exactly the order of vanishing of $\text{Res} \circ j \circ \psi_{B,k}$ at $[0:1]$. This is
 \begin{align*}
  &\text{ord}_{x=0} \text{Res}(j(\psi_{B,k}(x)))\\
  =&\text{ord}_{x=0} \prod_{\substack{1 \leq i \leq d-k\\ 1 \leq j \leq d-k}} (a_i - c_j) \prod_{\substack{1 \leq i \leq d-k\\ 1 \leq j \leq k}} (a_i - d_jx) \prod_{\substack{1 \leq i \leq k\\ 1 \leq j \leq d-k}} (b_i x - c_j) \prod_{\substack{1 \leq i \leq k\\ 1 \leq j \leq k}} (b_i x - d_j x)\\
  =& k^2. 
 \end{align*}
 This finishes the proof.
\end{proof}
We see above that the resultant cuts out all the boundary divisors $D_{B,k}$ for $k \geq 1$. However, using the evaluation maps, we can also cut out the remaining divisors $D_{B,0}$. 
\begin{Lem} \label{Lem:ijdivisor}
 Let $d \geq 0, n \geq 2$ and let $1 \leq i,j \leq n$ with $i \neq j$. Then for the map 
 \[e_{i,j} = (\pi_1 \circ \text{ev}_i) \times (\pi_1 \circ \text{ev}_j) : Y_{d,n} \to \PP^1 \times \PP^1\]
 we have
 \[e_{i,j}^* \Delta_{\PP^1} = \sum_{k, B \ni i,j} D_{B,k}, \]
 where we mean pullback in the sense of pullback of pseudo-divisors (cf. \cite[Section 2.2]{inttheory}).
\end{Lem}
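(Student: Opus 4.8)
The plan is to identify the support of $e_{i,j}^* \Delta_{\PP^1}$ as a Weil divisor first, and then pin down all of its multiplicities by intersecting with the test curves $C_{B,k}$ from Proposition \ref{Pro:CBk}. Note that $\Delta_{\PP^1} \subset \PP^1 \times \PP^1$ is an effective Cartier divisor with $\mathcal{O}(\Delta_{\PP^1}) = \mathcal{O}(1,1)$, and that $e_{i,j}$ is dominant: on the open set $Y_{d,n}^o \cong \text{Rat}_d \times ((\PP^1)^n \setminus \Delta)$ it sends a point to $(\pi_1(f(p_i)), \pi_1(f(p_j)))$, whose two coordinates are distinct, and letting the marked points vary these pairs sweep out all of $\PP^1 \times \PP^1 \setminus \Delta_{\PP^1}$. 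Hence $e_{i,j}^* \Delta_{\PP^1}$ is a genuine effective Cartier divisor, with associated line bundle $e_{i,j}^* \mathcal{O}(1,1) = \mathcal{H}_{i,1} + \mathcal{H}_{j,1}$.

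For the support I would argue as follows. On $Y_{d,n}^o$ the map $\pi_1 \circ f$ is an isomorphism, so $\pi_1(f(p_i)) \neq \pi_1(f(p_j))$; thus $e_{i,j}^{-1}(\Delta_{\PP^1})$ is disjoint from $Y_{d,n}^o$ and therefore contained in the boundary $\bigcup_{B',k'} D_{B',k'}$. A general point of $D_{B',k'}$ has source curve $C_1 \cup C_2$, with $\pi_1 \circ f$ constant on the vertical component $C_2$ carrying the markings $B'$; consequently such a point lies in $e_{i,j}^{-1}(\Delta_{\PP^1})$ when $i,j \in B'$ (both marks then map under $\pi_1 \circ f$ to the image of the node), and a general configuration keeps the images apart when $i \notin B'$ or $j \notin B'$. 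Since $e_{i,j}^{-1}(\Delta_{\PP^1})$ is closed and each $D_{B',k'}$ is irreducible, this gives $e_{i,j}^* \Delta_{\PP^1} = \sum_{k,\, B \ni i,j} m_{B,k} D_{B,k}$ with integers $m_{B,k} \geq 1$ still to be determined.

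To compute $m_{B,k}$, fix $(B,k)$ with $i,j \in B$ and intersect with $C_{B,k}$. By Proposition \ref{Pro:CBk}, among the boundary divisors $D_{B',k'}$ with $\{i,j\} \subset B'$ the curve $C_{B,k}$ meets only $D_{B,k}$ (it also meets $D_{\{a,b\},0}$ with $a \notin B$ and $D_{\emptyset,1}$, but neither of those index sets contains both $i$ and $j$), and $(C_{B,k}.D_{B,k}) = 2$; hence $(C_{B,k}.\,e_{i,j}^* \Delta_{\PP^1}) = 2\,m_{B,k}$. On the other hand, by the projection formula $(C_{B,k}.\,e_{i,j}^* \Delta_{\PP^1}) = \deg\bigl((e_{i,j} \circ \psi_{B,k})^* \mathcal{O}(1,1)\bigr) = (C_{B,k}.\mathcal{H}_{i,1}) + (C_{B,k}.\mathcal{H}_{j,1})$, which by the intersection computation $(C_{B,k}.\mathcal{H}_{l,1}) = \chi_B(l)$ from the proof of Proposition \ref{Pro:Hprime} equals $\chi_B(i) + \chi_B(j) = 2$. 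Therefore $m_{B,k} = 1$ for every admissible $(B,k)$ with $i,j \in B$, which is exactly the asserted formula.

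I do not anticipate a real obstacle: the only delicate point is excluding multiplicities larger than $1$, and the transversality of $C_{B,k}$ with the boundary (Lemma \ref{Lem:testcurves}) together with the explicit intersection numbers of Proposition \ref{Pro:CBk} reduces this to bookkeeping. As an alternative to test curves one could run a local computation near a general point of $D_{B,k}$: parametrizing the smoothing of the node by a coordinate $t$, the two marked points acquire first coordinates $\pi_1(f(p_i)) \approx t/y_i$ and $\pi_1(f(p_j)) \approx t/y_j$ with $y_i \neq y_j$, so the pullback of a local equation of $\Delta_{\PP^1}$ vanishes to order exactly $1$ in $t$; but the test-curve argument is cleaner given the machinery already developed.
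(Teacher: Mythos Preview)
Your argument is correct and follows the same route as the paper: identify the support set-theoretically as the boundary divisors $D_{B,k}$ with $i,j \in B$, then use the test curves $C_{B,k}$ together with the identification $[e_{i,j}^*\Delta_{\PP^1}] = \mathcal{H}_{i,1}+\mathcal{H}_{j,1}$ to force each multiplicity to be $1$. Your write-up is just slightly more detailed (dominance of $e_{i,j}$, explicit exclusion of the other boundary divisors hit by $C_{B,k}$), but the idea is identical.
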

\begin{proof}
 It is clear that the preimage of the diagonal $\Delta_{\PP^1}$ is exactly the union of the $D_{B,k}$ with $i,j \in B$. To see that all multiplicities are $1$, note that the curves $C_{B,k}$ intersect $D_{B,k}$ transversally in exactly two points. On the other hand, the diagonal $\Delta_{\PP^1}$ is of class $\mathcal{O}(1,1)$ in $\text{Pic}(\PP^1 \times \PP^1)$. Thus its pullback by $e_{i,j}$ is of class $\mathcal{H}_{i,1} + \mathcal{H}_{j,1}$. But the intersection of $C_{B,k}$ with this is also exactly $2$. Hence the multiplicity of $D_{B,k}$ in $e_{i,j}^* \Delta_{\PP^1}$ must be exactly $1$.
\end{proof}

\begin{Pro} \label{Pro:compopullback}
 For $1 \leq i \leq n_1$ we have
 \begin{align*}
  \compomo^* \mathcal{H}_{i,1} &= ( \mathcal{H}_{i,1} , 0),\\
  \compomo^* \mathcal{H}_{i,2} &= ( d_2 \mathcal{H}_{i,2} , D_p),
 \end{align*}
 where $D_p$ is the divisor from Proposition \ref{Pro:Dp}. 
 
 On the other hand for a boundary divisor $D_{B,k}$ on $Y_{d_1 d_2,n_1}$ we have
 \[\compomo^* D_{B,k} = \underbrace{(D_{B,l},0)}_{\text{for }k=d_2 l} + \underbrace{(0,d_1 D_{\emptyset,k})}_{\text{for }0<k\leq d_2, B= \emptyset}.
\]
 Finally, for $d_1=0$ we have
 \[\compomo^* \mathcal{G} = (d_2 \mathcal{G}, D_p).\]
 For $d_2 = 0$ we obtain
 \[\compomo^* \mathcal{G} = (0, \mathcal{G}).\]
\end{Pro}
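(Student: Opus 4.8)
\emph{Proof proposal.} The plan is to reduce everything to computations on two kinds of slices of $U$ and then feed in the preparatory lemmas. Since $U$ is rational with complement of codimension at least $2$ in $Y_{d_1,n_1}\times Y_{d_2,0}$, pulling back along the two projections gives, by \cite{picardproduct}, an identification $\text{Pic}(U)\otimes\mathbb{Q}=\text{Pic}(Y_{d_1,n_1})\otimes\mathbb{Q}\oplus\text{Pic}(Y_{d_2,0})\otimes\mathbb{Q}$, where the first (resp.\ second) summand is recovered by restriction to a slice $Y_{d_1,n_1}\times\{f_2\}$ (resp.\ $\{f_1\}\times Y_{d_2,0}$). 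I would take $f_2\in\text{Rat}_{d_2}$, so that conditions (P1), (P2) are vacuous and the whole slice lies in $U$, and $f_1\in\text{Rat}_{d_1}\times((\PP^1)^{n_1}\setminus\Delta)$ general, so that the slice meets $U$ in an open subset with complement of codimension at least $2$ (harmless for $\text{Pic}\otimes\mathbb{Q}$). Writing $\alpha\colon Y_{d_1,n_1}\to Y_{d_1d_2,n_1}$ and $\beta\colon(\{f_1\}\times Y_{d_2,0})\cap U\to Y_{d_1d_2,n_1}$ for the two restrictions of $\compomo$, one has $\compomo^*D=(\alpha^*D,\beta^*D)$ for every divisor class $D$ on $Y_{d_1d_2,n_1}$.

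For the evaluation classes I would use the explicit description of $\compomo$ from Lemma \ref{Lem:compgeopoint} and Theorem \ref{Theo:compomo}: the $i$-th marking of $f_2\circ f_1$ has first coordinate $\pi_1(p_{1,i})$, which depends only on $f_1$, and second coordinate the value at $\phi_1(p_{1,i})$ of (the graph of) $f_2$. The first observation gives $\compomo^*\mathcal{H}_{i,1}=(\mathcal{H}_{i,1},0)$. Restricting to the $f_2$-slice, $\pi_2\circ\text{ev}_i\circ\alpha$ is the composite $Y_{d_1,n_1}\xrightarrow{\pi_2\circ\text{ev}_i}\PP^1\xrightarrow{\phi_2}\PP^1$ with $\phi_2$ of degree $d_2$, so $\alpha^*\mathcal{H}_{i,2}=d_2\mathcal{H}_{i,2}$; restricting to the $f_1$-slice, $\pi_2\circ\text{ev}_i\circ\beta$ sends $f_2$ to the value of (the graph of) $f_2$ at the fixed point $q_i=\phi_1(p_{1,i})$, so $\beta^*\mathcal{H}_{i,2}$ is the class of $\{f_2 : (q_i,r)\in f_2(C_2)\}$ for general $r$, i.e.\ the incidence divisor $D_{(q_i,r)}$, whose class is $D_p$ by Proposition \ref{Pro:Dp}. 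Hence $\compomo^*\mathcal{H}_{i,2}=(d_2\mathcal{H}_{i,2},D_p)$. Both $\mathcal{G}$-statements follow from the same two slice computations: for $d_1=0$ the class $\mathcal{G}$ on the target is the pullback of $\mathcal{O}_{\PP^1}(1)$ along the (now constant) second coordinate, so the above applies verbatim; for $d_2=0$ the second coordinate of $f_2\circ f_1$ is the same constant as that of $f_2$, giving $(0,\mathcal{G})$.

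The substance of the proposition is the boundary divisors. By Lemma \ref{Lem:compgeopoint}, over a point of the $f_2$-slice the domain of $f_2\circ f_1$ is $C_1$ with every second-factor degree multiplied by $d_2$, and over a point of the $f_1$-slice it is obtained from $C_1$ by gluing on $d_1$ copies of each vertical component of $C_2$; comparing dual graphs one finds $\alpha^{-1}(D_{B,k})=D_{B,k/d_2}$ if $d_2\mid k$ and $\emptyset$ otherwise (for $k\geq1$; for $k=0$ a contracted component stays contracted, so $\alpha^{-1}(D_{B,0})=D_{B,0}$), and $\beta^{-1}(D_{B,k})$ equals $D_{\emptyset,k}$ if $B=\emptyset$ and $1\leq k\leq d_2$ and is empty otherwise (in the $f_1$-slice the markings never leave the main component). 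An empty preimage of an effective divisor forces the pullback to vanish, and otherwise $\alpha^*D_{B,k}=m_{B,k}D_{B,k/d_2}$, $\beta^*D_{\emptyset,k}=m'_kD_{\emptyset,k}$ with $m_{B,k},m'_k$ positive integers. To fix these I would use the classical resultant identity $\text{Res}_{d_1d_2}(F_2(F_1,G_1),G_2(F_1,G_1))=c\cdot\text{Res}_{d_1}(F_1,G_1)^{d_2^2}\cdot\text{Res}_{d_2}(F_2,G_2)^{d_1}$ (a polynomial identity with $c\in\mathbb{C}^*$, obtained by comparing zero loci via irreducibility of the resultant and matching bidegrees). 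Since $j_{d_1d_2}\circ\compomo$ agrees on the dense locus $\text{Rat}_{d_1}\times((\PP^1)^{n_1}\setminus\Delta)\times\text{Rat}_{d_2}$ with the composition-of-graphs map given by this substitution, it does so on all of $U$; pulling back $\text{div}(j^*\text{Res})=\sum_{(B,k)}k^2D_{B,k}$ of Lemma \ref{Lem:resultant} along $\compomo$ then gives $\sum_{(B,k)}k^2\compomo^*D_{B,k}=d_2^2\,\text{pr}_1^*\sum_{(B,k)}k^2D_{B,k}+d_1\,\text{pr}_2^*\sum_{k}k^2D_{\emptyset,k}$. Taking the first component, the difference $\sum_{(B,l)}d_2^2l^2(m_{B,d_2l}-1)D_{B,l}$ is an effective divisor linearly equivalent to zero on the projective variety $Y_{d_1,n_1}$, hence zero, so every $m_{B,d_2l}=1$; taking the second component and using the linear independence of $\Delta_0=\{D_{\emptyset,k}:k\geq1\}$ in $\text{Pic}(Y_{d_2,0})\otimes\mathbb{Q}$ (Theorem \ref{Theo:relations}) gives $m'_k=d_1$. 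This yields the formula for $k\geq1$. Finally, for $D_{B,0}$ with $|B|\geq2$ the second component is zero (empty preimage in the $f_1$-slice) and $\alpha^*D_{B,0}=m_{B,0}D_{B,0}$ with $m_{B,0}$ a positive integer; applying $\alpha^*$ to the linear equivalence $\mathcal{H}_{i,1}+\mathcal{H}_{j,1}=\sum_{k,\,B'\ni i,j}D_{B',k}$ of Lemma \ref{Lem:ijdivisor} on $Y_{d_1d_2,n_1}$, inserting the pullbacks already computed and subtracting the same equivalence on $Y_{d_1,n_1}$, one gets $\sum_{B'\ni i,j}(m_{B',0}-1)D_{B',0}\sim0$ with non-negative coefficients; effectivity again forces all $m_{B',0}=1$, i.e.\ $\compomo^*D_{B,0}=(D_{B,0},0)$.

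I expect the multiplicity bookkeeping for the boundary divisors to be the main obstacle. The resultant route keeps it clean but requires checking (i) that $j_{d_1d_2}\circ\compomo$ is genuinely the polynomial-substitution map — immediate on $\text{Rat}_{d_1}\times(\cdots)\times\text{Rat}_{d_2}$ from the construction of $\compomo$, then by density on $U$ — (ii) the precise shape of the resultant identity, and (iii) transporting the resultant divisor identity on $U$ back across the codimension-$\geq2$ complement. Alternatively one can avoid the resultant: post-composing the test-curve family $\psi_{B,k}$ of Lemma \ref{Lem:testcurves} with a fixed $f_2\in\text{Rat}_{d_2}$ leaves its (smooth) total space, hence its transversality to the boundary, unchanged, so Proposition \ref{Pro:CBk} and Proposition \ref{Pro:iddivisors} also deliver the multiplicities; but the resultant is shorter and is exactly what the preceding lemmas were set up for.
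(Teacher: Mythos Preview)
Your proof is correct and follows the paper's overall architecture: split $\compomo^*$ via the two slices, handle $\mathcal{H}_{i,j}$ and $\mathcal{G}$ by direct inspection of the evaluation maps, identify the set-theoretic preimages of boundary divisors from Lemma~\ref{Lem:compgeopoint}, and fix the multiplicities using the resultant (Lemma~\ref{Lem:resultant}) for $k\geq 1$ and Lemma~\ref{Lem:ijdivisor} for $k=0$. The genuine difference is in how the resultant is deployed. The paper fixes $f_2=(z\mapsto z^{d_2})$ (resp.\ $f_1=(z\mapsto z^{d_1})$), runs the test curves $\psi_{B,l}$ (resp.\ $\psi_{\emptyset,k}$) through the other factor, and reads off each multiplicity from the explicit order of vanishing of the resultant of the composite rational function at $x=0$; this is exactly the ``alternative'' you sketch at the end. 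You instead invoke the global polynomial identity $\text{Res}(F_2(F_1,G_1),G_2(F_1,G_1))=c\cdot\text{Res}(F_1,G_1)^{d_2^2}\cdot\text{Res}(F_2,G_2)^{d_1}$ once, pull back the divisor equation of Lemma~\ref{Lem:resultant} through $\compomo$, and extract all the $m_{B,d_2l}$ and $m'_k$ simultaneously. Your route avoids the root-by-root computations but requires justifying the composition--resultant identity (your degree/irreducibility argument does this); the paper's route is more self-contained and reuses the test-curve machinery already built. One minor point: since the relation you pull back comes from an equality of sections up to a unit, you actually get an equality of Weil divisors on $U$, so comparing coefficients of the irreducible $D_{B,l}$ gives $m_{B,d_2l}=1$ directly; the detour through ``effective and linearly equivalent to zero on a projective variety'' is valid but unnecessary.
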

\begin{proof}
 Our general strategy will be to determine both components of the pullback divisors separately. This is done by restricting $\compomo$ to slices $\{f_1\} \times Y_{d_2,0}$ and $Y_{d_1, n_1} \times \{f_2\}$ for $f_1 \in Y_{d_1, n_1}$ and $f_2 \in Y_{d_2,0}$, and then pulling back.
 
 For the divisors $\mathcal{H}_{i,j} = \text{ev}_{i,j}^* \mathcal{O}(1)$ coming from the evaluation maps
 \[\text{ev}_i = (\text{ev}_{i,1},\text{ev}_{i,2})  : Y_{d_1 d_2,n_1} \to \PP^1 \times \PP^1\]
 we see that for 
 \begin{align*}
  f_1 &= ((\pi_1,\phi_1) : C_1 \to \PP^1 \times \PP^1; p_1, \ldots, p_n) \in Y_{d_1,n_1}\\
  f_2 &= ((\pi_2,\phi_2) : C_2 \to \PP^1 \times \PP^1) \in Y_{d_2,0}
 \end{align*}
 with $\phi_1(p_i) \notin \text{Vert}(f_2)$, we have
 \[\text{ev}_i(\compomo(f_1,f_2)) = (\pi_1(p_i), \phi_2( \pi_2^{-1} ( \phi_1 (p_i)))).\]
 Hence $(\text{ev}_{i,1} \circ \compomo)(f_1,f_2) = \text{ev}_{i,1}(f_1)$, so $\compomo^* \mathcal{H}_{i,1}$ has the claimed form.
 
 For $\compomo^* \mathcal{H}_{i,2}$ we apply the strategy described at the beginning of the proof: for $f_2 \in \text{Rat}_{d_2}$ a fixed degree $d_2$ map, we see $\text{ev}_{i,2}(\compomo(f_1,f_2)) = (f_2 \circ \text{ev}_{i,2})(f_1)$. As $f_2^* \mathcal{O}(1) = \mathcal{O}(d_2)$, we obtain $d_2 \mathcal{H}_{i,2}$ in the first component of $\compomo^* \mathcal{H}_{i,2}$. On the other hand, for $f_1$ fixed, the position $\phi_1(p_i)$ is fixed and for $q \in \PP^1$ fixed, the set 
 \[\left(\text{ev}_{i,2} \circ \compomo |_{\{f_1\} \times Y_{d_2,0}}\right)^{-1} (\{q\})\]
 is exactly the locus of $f_2 \in Y_{d_2,0}$ mapping $\phi_1(p_i)$ to $q$, and thus represents the divisor class $D_p$.
 
 The computation of the pullbacks of $\mathcal{G}$ is similar (after all, for $d_1=0, n_1>0$ we have $\mathcal{G} = \mathcal{H}_{1,2}$) and will be omitted.
 
 Now for the pullbacks of the boundary divisors $D_{B,k}$ note that if both $f_1, f_2$ have smooth source curves $C_1,C_2$, then the source curve of $f_2 \circ f_1$ is smooth too. Thus the pullback of $D_{B,k}$ is supported on the boundaries of $Y_{d_1,n_1}$ and $Y_{d_2,0}$ in $Y_{d_1,n_1} \times Y_{d_2,0}$.
 
 For $f_1 \in D_{B,l}$ and $f_2 \in \text{Rat}_{d_2}$, the composition $f_2 \circ f_1$ has exactly one vertical component of degree $d_2l$ with the markings in $B$ on it. If $f_1$ has smooth source curve and $f_2 \in D_{\emptyset, l}$ then $f_2 \circ f_1$ has $d_1$ vertical sections of degree $l$. It remains to show that in the first case the multiplicity of $D_{B,l}$ in $\compomo^* D_{B,d_2 l}$ is $1$ and in the second case the multiplicity of $D_{\emptyset,k}$ in $\compomo^* D_{\emptyset,k}$ is $d_1$.
 
 To show this, we are going to use our test curves $C_{B,m}$ (see also the proof of Lemma \ref{Lem:resultant}). 
 
 For the first case fix $f_2=(z \mapsto z^{d_2})$. Assume first that $d_2l>0$, then by Lemma \ref{Lem:resultant} the divisor $D_{B,d_2l}$ appears with a coefficient of $(d_2 l)^2$ in $\text{div}(j^* \text{Res})$. Now let $f_1$ vary along the test curve $\psi_{B,l} : \PP^1 \to Y_{d_1,n_1}$. Then the rational map induced by $f_2 \circ f_1$ over $x \in \mathbb{C}$ is
 \[z \mapsto \left( \frac{(z-a_1 )\ldots (z-a_{d-l})(z - b_1 x) \ldots (z - b_l x)}{(z-c_1)\ldots (z-c_{d-l})( z - d_1 x) \ldots ( z - d_l x)}  \right)^{d_2}.\]
 Over $x=0$ the zeroes $b_1x, \ldots, b_l x$ of the numerator converge to the zeroes $d_1x, \ldots, d_l x$ of the denominator, and all zeroes now have multiplicity $d_2$. Thus the resultant vanishes to order 
 \[\text{ord}_{x=0} \prod_{1 \leq i,j \leq l} (b_ix - c_jx)^{d_2^2} = d_2^2 l^2.\]
 As this is the same as the multiplicity of $D_{B,d_2l}$ in $\text{div}(j^* \text{Res})$, the multiplicity of $D_{B,l}$ in $\compomo^* D_{B,d_2l}$ is $1$.
 
 If $d_2 l=0$ we make a similar argument, but now use Lemma \ref{Lem:ijdivisor}. For $i,j \in B$ we have that $D_{B,0}$ appears in $(e_{i,j}^{Y_{0,n_1}})^* \Delta_{\PP^1}$ with multiplicity $1$ and that $D_{B,l}$ also has multiplicity $1$ in $(e_{i,j}^{Y_{d_1,n_1}})^* \Delta_{\PP^1}$. But $e_{i,j}^{Y_{d_1 d_2,n_1}} \circ \compomo = e_{i,j}^{Y_{d_1,n_1}}$, so $D_{B,l}$ has multiplicity $1$ in $\compomo^* D_{B,0}$.
 
 Now fix $f_1 \in Y_{d_1,n_1}$ as the rational map $z \mapsto z^{d_1}$ with markings in general position and vary $f_2$ along $\psi_{\emptyset,k}$, then the rational map induced by $f_2 \circ f_1$ is 
 \[\phi_{x}(z) = \frac{(z^{d_1}-a_1 )\ldots (z^{d_1}-a_{d-k})(z^{d_1} - b_1 x) \ldots (z^{d_1} - b_k x)}{(z^{d_1}-c_1)\ldots (z^{d_1}-c_{d-k})( z^{d_1} - d_1 x) \ldots ( z^{d_1} - d_k x)}.\]
 At $x=0$ the $d_1 l$ zeroes $(b_i x)^{1/d_1} \rho^j$, $i=1, \ldots, l$, $\rho$ a $d_1$-th root of unity, $j=0, \ldots, d_1 -1$, come together with the $d_1 l$ zeroes $(d_k x)^{1/d_1} \rho^m$ of the denominator. Thus over $x=0$ the resultant vanishes to order
 \[\text{ord}_{x=0} \prod_{\substack{1 \leq i,k \leq l\\ 1 \leq j,m \leq d_1}} ((b_i x)^{1/d_1} \rho^j - (d_k x)^{1/d_1} \rho^m) = \frac{d_1^2 l^2}{d_1} = d_1 l^2.\]
 As on $Y_{d_1 d_2,n_1}$ the resultant vanished to order $l^2$ along $D_{\emptyset,l}$, the multiplicity of $D_{\emptyset,k}$ in $\compomo^* D_{\emptyset,k}$ is $d_1$.
\end{proof}
\subsection{Self-composition and multiplier loci} \label{Sect:selfcompo}
We can now apply the results from the last section to our moduli spaces of self-maps. Indeed, the rational map 
\[\mathfrak{sc}_m: Y_{d,n} \xrightarrow{\text{id} \times \prod_{i=0}^{m-1} F} Y_{d,n} \times \prod_{i=1}^{m-1} Y_{d,0} \overset{\mathfrak{c}\times \text{id}}{\dashrightarrow} Y_{d^2,n}\times \prod_{i=1}^{m-2} Y_{d,0} \cdots \overset{\mathfrak{c}\times \text{id}}{\dashrightarrow} Y_{d^m,n}\]
is $G$-equivariant. Here $F$ is the map $Y_{d,n} \to Y_{d,0}$ forgetting all markings. For $\textbf{d}=(d|d_1, \ldots, d_n)$ admissible, it thus induces a rational map of quotients 
\[\mathfrak{sc}_m : M(d|d_1, \ldots, d_n) \dashrightarrow M(d^m|d_1, \ldots, d_n).\]
On the open locus in $M(d|d_1, \ldots, d_n)$ of maps from a smooth curve to itself, this is given by
\[(\phi: C \to C; p_1, \ldots, p_n) \mapsto (\phi^{\circ m} : C \to C; p_1, \ldots, p_n).\]
As a rational map from a normal to a projective variety is defined away from a set of codimension at least $2$, it makes sense to pull back divisor classes on $M(d^m|d_1, \ldots, d_n)$. In the following Proposition, $\mathfrak{sc}_m$ can either denote the rational map $Y_{d,n} \dashrightarrow Y_{d^m,n}$ or the induced map of the quotients. Because of the usual isomorphisms of rational Picard groups, it will suffice to prove it for the former.
\begin{Pro} \label{Pro:selfcompopullback}
 Let $m \geq 1$. For $1 \leq i \leq n_1$ we have
 \begin{align*}
  \mathfrak{sc}_m^* \mathcal{H}_{i,1} &= \mathcal{H}_{i,1},\\
  \mathfrak{sc}_m^* \mathcal{H}_{i,2} &= d^{m-1} \mathcal{H}_{i,2}+ \left(\sum_{j=0}^{m-2}d^j \right)D_p,
 \end{align*}
 where $D_p$ is the divisor from Proposition \ref{Pro:Dp}. 
 
 On the other hand for a boundary divisor $D_{B,k}$ on $Y_{d_1 d_2,n_1}$ we have
 \[\mathfrak{sc}_m^* D_{B,k} = D_{B,l} \text{, for }B \neq \emptyset, k=d^{m-1} l
\]
 and $\mathfrak{sc}_m^* D_{\emptyset,k}$ can inductively be computed by
 \begin{align*}\mathfrak{sc}_m^* D_{\emptyset,k} &= \underbrace{d^{m-1} \sum_B D_{B,k}}_{\text{for }k=1, \ldots, d} + \underbrace{\mathfrak{sc}_{m-1}^* D_{\emptyset,l}}_{\text{for }k=dl, l\geq 1},\\
 \mathfrak{sc}_1^* D_{\emptyset,k}&=D_{\emptyset,k}.\end{align*}
 Finally, for $d=0$ we have
 \[\mathfrak{sc}_m^* \mathcal{G} = \mathcal{G}.\]
\end{Pro}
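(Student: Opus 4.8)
The plan is to establish all the identities at once by induction on $m$, using the recursive description
\[\mathfrak{sc}_m \;=\; \compomo \circ (\mathfrak{sc}_{m-1},\, F)\,:\; Y_{d,n} \dashrightarrow Y_{d^{m-1},n} \times Y_{d,0} \dashrightarrow Y_{d^m,n}\]
that comes from the factorisation $\phi^{\circ m} = \phi \circ \phi^{\circ(m-1)}$ used to define $\mathfrak{sc}_m$; here $F : Y_{d,n}\to Y_{d,0}$ forgets all markings and we apply Proposition \ref{Pro:compopullback} with $d_1 = d^{m-1}$, $d_2 = d$, $n_1 = n$. Since $Y_{d,n}$ is normal and all targets are projective, every rational map in sight has indeterminacy locus of codimension $\geq 2$, so divisor pullbacks are defined and compatible with composition on rational Picard groups; thus $\mathfrak{sc}_m^* = (\mathfrak{sc}_{m-1},F)^{*}\circ\compomo^{*}$, and under the splitting $\text{Pic}(Y_{d^{m-1},n}\times Y_{d,0})\otimes\mathbb{Q} = \text{Pic}(Y_{d^{m-1},n})\otimes\mathbb{Q} \oplus \text{Pic}(Y_{d,0})\otimes\mathbb{Q}$ the second map sends $(\alpha,\beta)$ to $\mathfrak{sc}_{m-1}^{*}\alpha + F^{*}\beta$. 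The base case $m=1$ is immediate, with the convention that empty sums are zero.

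For the inductive step I would feed Proposition \ref{Pro:compopullback} into this identity. The class $\mathcal{H}_{i,1}$ pulls back under $\compomo$ to $(\mathcal{H}_{i,1},0)$, whose only nonzero component lies on $Y_{d^{m-1},n}$, so $\mathfrak{sc}_m^{*}\mathcal{H}_{i,1} = \mathfrak{sc}_{m-1}^{*}\mathcal{H}_{i,1} = \mathcal{H}_{i,1}$ by induction. For $\mathcal{H}_{i,2}$ we have $\compomo^{*}\mathcal{H}_{i,2} = (d\,\mathcal{H}_{i,2},\,D_p)$ with $D_p$ on $Y_{d,0}$; applying the inductive hypothesis to the first component and, on the second, the forgetful pullback identity $F^{*}D_{\emptyset,k} = \sum_B D_{B,k}$ (whence $F^{*}D_p = D_p$ by the formula of Proposition \ref{Pro:Dp}) gives
\[\mathfrak{sc}_m^{*}\mathcal{H}_{i,2} = d\Bigl(d^{m-2}\mathcal{H}_{i,2} + \sum_{j=0}^{m-3}d^{j}\,D_p\Bigr) + D_p = d^{m-1}\mathcal{H}_{i,2} + \sum_{j=0}^{m-2}d^{j}\,D_p.\]
For a boundary divisor $D_{B,k}$ with $B\neq\emptyset$, Proposition \ref{Pro:compopullback} gives $\compomo^{*}D_{B,k} = (D_{B,l},0)$ when $k=dl$ and $0$ otherwise, so $\mathfrak{sc}_m^{*}D_{B,k} = \mathfrak{sc}_{m-1}^{*}D_{B,l}$, which by induction equals $D_{B,l'}$ when $l = d^{m-2}l'$, i.e.\ $\mathfrak{sc}_m^{*}D_{B,k} = D_{B,l'}$ precisely when $k = d^{m-1}l'$. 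Finally, for $D_{\emptyset,k}$ the two summands of Proposition \ref{Pro:compopullback} together with $F^{*}D_{\emptyset,k} = \sum_B D_{B,k}$ yield
\[\mathfrak{sc}_m^{*}D_{\emptyset,k} = \underbrace{d^{m-1}\sum_B D_{B,k}}_{1\le k\le d} \;+\; \underbrace{\mathfrak{sc}_{m-1}^{*}D_{\emptyset,l}}_{k = dl,\ l\ge 1},\]
which is the asserted recursion; $\compomo^{*}\mathcal{G}$ is treated the same way.

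The degenerate case $d=0$ should be dispatched separately: there $\phi^{\circ m}=\phi$ already on the dense open $\text{Rat}_0\times((\PP^1)^n\setminus\Delta)$, since a constant self-map is idempotent, so $\mathfrak{sc}_m = \mathrm{id}$ as a rational map and every asserted formula collapses to the identity (using $D_p = \mathcal{G} = \mathcal{H}_{i,2}$ on $Y_{0,n}$, which follows from Propositions \ref{Pro:Hprime} and \ref{Pro:Dp}). I expect the only genuine work to be the bookkeeping in the inductive step — matching the divisibility conditions $k = d^{m-1}l$ against those produced by one extra application of Proposition \ref{Pro:compopullback}, and checking the forgetful pullbacks $F^{*}D_p$ and $F^{*}D_{\emptyset,k}$ — all of which is routine once the explicit divisor expressions from the previous sections are in hand.
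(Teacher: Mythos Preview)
Your proof is correct and follows essentially the same approach as the paper: both use the factorisation $\mathfrak{sc}_m = \compomo \circ (\mathfrak{sc}_{m-1} \times F)$ and induct on $m$, feeding in the pullback formulas from Proposition~\ref{Pro:compopullback} together with $F^* D_p = D_p$. Your version is somewhat more explicit in writing out each case and in separating off $d=0$, but the underlying argument is the same.
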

\begin{proof}
 Note that the map $\mathfrak{sc}_m$ factors as
 \[Y_{d,n} \xrightarrow{\mathfrak{sc}_{m-1} \times F}  Y_{d^{m-1},n} \times Y_{d,0} \xrightarrow{\mathfrak{c}} Y_{d^m,n}.\]
 We can thus show the formulas above in a simultaneous induction on $m$. We first recall the pullbacks of all divisors above under $\mathfrak{c}$:
 \begin{align*}
  Y_{d^{m-1},n} \times Y_{d,0} &\xrightarrow{\mathfrak{c}} Y_{d^m,n}\\
  (\mathcal{H}_{i,1},0) &\mapsfrom \mathcal{H}_{i,1}\\
  (d \mathcal{H}_{i,2},D_p) &\mapsfrom \mathcal{H}_{i,2}\\
  \underbrace{(D_{B,l},0)}_{\text{for }k=dl}+\underbrace{(0, d^{m-1} D_{\emptyset,k})}_{\text{for }B=\emptyset,k \leq d} &\mapsfrom D_{B,k}\\
  (0,\mathcal{G}) &\mapsfrom \mathcal{G}.
 \end{align*}
 Note also $F^*(D_p)=D_p$. Then the formulas for $m=1$ are immediate as $\mathfrak{sc}_1 = \text{id}$ and the induction step follows from the factorization above.
\end{proof}
We are going to apply these results by defining another divisorial locus in $M(d|d_1, \ldots, d_n)$ and computing its class in the rational Picard group. This locus will be induced from a $G$-invariant locus on $Y_{d,n}$, so we will work on this space instead.

For $\varphi: \PP^1 \to \PP^1$, a point $p \in \PP^1$ is called of \emph{period $m$} if $\varphi^{\circ m}(p)=p$. We say that $p$ is of \emph{strict period $m$} if $\varphi^{\circ k}(p) \neq p$ for all $1 \leq k <m$. For a point $p$ of period $m$, it makes sense to interpret the differential of $\varphi^{\circ m}$ at $p$ as a complex number $\lambda$  (because $d \varphi^{\circ m} : T_p \PP^1 \to T_p \PP^1$ is given by multiplication with some $\lambda \in \mathbb{C}$). We call $\lambda$  the \emph{multiplier} of $\varphi$ at $p$.

Given a fixed $\lambda \in \mathbb{C}$, we denote by
\[\text{Per}_m^\sim(\lambda) = \{\varphi \in \text{Rat}_d : \varphi \text{ has a point }p\text{ of period }m\text{ with multiplier }\lambda \}.\]
It is easy to see that this locus is a closed subset of codimension $1$. In the following, we are going to decompose it into several components, corresponding to different orbit types, and associate natural multiplicities to these components.

Given a degree $d$ map $\varphi \in \text{Rat}_d$, we associate to it the normed polynomial
\begin{equation}
 \Psi_d(\varphi,\mu) = \prod_{j=1}^{d+1} (\mu - \xi_j) = \mu^{d+1} + \sigma_1(\varphi) \mu^{d} + \ldots + \sigma_{d+1}(\varphi),
\end{equation}
whose zeroes $\xi_1, \ldots, \xi_{d+1}$ are exactly the multipliers at the fixed points of $\varphi$ (fixed points counted with multiplicities). Note that the $\xi_j$ are only defined up to reordering, but the $\sigma_i$ are (up to a sign) the elementary symmetric polynomials in the $\xi_j$ and thus are well-defined. Even more, in \cite[Proposition 4.2]{silverman} it is shown that the $\sigma_i$ are algebraic on $\text{Rat}_d$. This is done by interpreting the function $\Psi_d(-,\mu)$ as the determinant of an endomorphism on a vector bundle on $\text{Rat}_d$, a construction we will recall in the proof of Proposition \ref{Pro:Permcompatibility}.

From the definition, it is immediate that 
\[\text{Per}_1^\sim(\lambda) = V(\Psi_d(-,\lambda)),\]
which shows that $\text{Per}_1(\lambda)$ is in a natural way a closed subscheme of $\text{Rat}_d$. Using the self-composition morphism $\mathfrak{sc}_m : \text{Rat}_d \to \text{Rat}_{d^m}$ one sees that
\[\text{Per}_m^\sim(\lambda) = V( \mathfrak{sc}_m^* \Psi_{d^m}(-,\lambda)) = \mathfrak{sc}_m^{-1}(\text{Per}_1^\sim(\lambda)).\]
This induces a scheme structure on $\text{Per}_m^\sim(\lambda)$, which however will not be reduced in general. For $d=2$, Milnor defines in the paper \cite{milnorquaddyn} closely related subvarieties of $M_2 = \text{Rat}_2 \sslash \text{PGL}_2$. His construction immediately generalizes to the case of general $d$ and can be carried out on $\text{Rat}_d$. In the following, we want to relate the two constructions. We will denote by $P_m(\lambda) \subset \text{Rat}_d$ the loci of points of strict period $m$ defined by Milnor and now recall their construction.

First note that a fixed point of $\varphi^{\circ m}$ is exactly a periodic point for $\varphi$ with a strict period $n$ dividing $m$. Hence the function $\mathfrak{sc}_m^* \Psi_{d^m}(-,\lambda)$ on $\text{Rat}_d$ naturally decomposes into a product 
\[\Psi_{d^m}(\mathfrak{sc}_m(\varphi),\lambda) = \prod_{n|m} G_{n}(\varphi,\lambda),\]
where in $G_n$ we collect all the factors $\lambda - \xi$ for $\xi$ a multiplier at a point of strict period $n$. One sees immediately, that on an orbit $p_0=p, p_1=\varphi(p), \ldots, p_{m-1}=\varphi^{\circ m-1}(p)$ of a point of strict period $m$, all the $p_i$ are also of period $m$ and have the same multiplier as $p$. Hence for $\varphi$ fixed, all roots of $G_{m}(\varphi,\lambda)$ occur with multiplicity divisible by $m$ and we can write $G_m(\varphi,\lambda) = \tilde G_m(\varphi,\lambda)^m$. One can see that for fixed $\lambda$, the function $\tilde G_m(\varphi,\lambda)$ depends algebraically on $\varphi \in \text{Rat}_d$ and Milnor defines
\[P_m(\lambda) = V(\tilde G_m(- , \lambda)).\]
In the following Proposition, denote by $[V]$ the algebraic cycle associated to a subscheme $V \subset X$ of a scheme $X$.
\begin{Pro}
 For $d,m$ fixed integers and $\lambda \in \mathbb{C} \setminus \{0\}$ we have
 \[[\text{Per}_m^\sim(\lambda)] = \sum_{n | m} \sum_{\mu^{m/n}=\lambda} n [P_n(\mu)] \in Z_{\codim=1}(\text{Rat}_d).\]
\end{Pro}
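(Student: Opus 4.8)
The plan is to unwind the definitions of both sides and match factors of the relevant polynomials. Fix $\varphi \in \text{Rat}_d$ and consider the polynomial $\Psi_{d^m}(\mathfrak{sc}_m(\varphi),\lambda)$ in $\lambda$. Its roots (with multiplicity) are the multipliers of $\varphi^{\circ m}$ at its fixed points, and a fixed point of $\varphi^{\circ m}$ is exactly a periodic point of $\varphi$ of some strict period $n \mid m$. If $p$ has strict period $n$ for $\varphi$ and multiplier $\mu$ (that is $d\varphi^{\circ n}|_p = \mu$), then as a fixed point of $\varphi^{\circ m}$ its multiplier is $d\varphi^{\circ m}|_p = \mu^{m/n}$ by the chain rule, since $\varphi^{\circ m} = (\varphi^{\circ n})^{\circ (m/n)}$ and all points in the orbit of $p$ have the same multiplier. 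This gives the factorization over strict periods $\Psi_{d^m}(\mathfrak{sc}_m(\varphi),\lambda) = \prod_{n \mid m} G_n(\varphi,\lambda)$ used in the text, together with the refinement $G_n(\varphi,\lambda) = \tilde G_n(\varphi,\lambda)^n$ coming from the fact that orbit points of strict period $n$ come in groups of size $n$ sharing a multiplier.

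Next I would relate the root $\lambda = \lambda_0 \in \mathbb{C}\setminus\{0\}$ of $\text{Per}_m^\sim(\lambda_0) = V(\Psi_{d^m}(\mathfrak{sc}_m(-),\lambda_0))$ to the $\tilde G_n$. Substituting $\lambda = \lambda_0$ and using the factorization,
\[
\Psi_{d^m}(\mathfrak{sc}_m(\varphi),\lambda_0) = \prod_{n \mid m} \tilde G_n(\varphi,\lambda_0)^n.
\]
Now the key point is that a strict-period-$n$ point with multiplier $\mu$ contributes to $\Psi_{d^m}(\mathfrak{sc}_m(-),\lambda_0)$ precisely when $\mu^{m/n} = \lambda_0$. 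Reindexing $\tilde G_n(-,\lambda_0)$ as a polynomial identity: since $\tilde G_n(\varphi,\mu)$ cuts out (the reduced structure on, up to the algebraic cycle) the locus of strict-period-$n$ points with multiplier $\mu$, one gets a relation of the form $\tilde G_n(\varphi,\lambda_0) \doteq \prod_{\mu^{m/n}=\lambda_0} \tilde G_n(\varphi,\mu)$ up to a nonvanishing factor (here $\lambda_0 \neq 0$ is used so that the map $\mu \mapsto \mu^{m/n}$ is unramified over $\lambda_0$ and the $m/n$ preimages are distinct). Taking associated cycles and using that $V(fg) = [V(f)] + [V(g)]$ for the divisor of a product, we obtain
\[
[\text{Per}_m^\sim(\lambda_0)] = \sum_{n \mid m} n\,[V(\tilde G_n(-,\lambda_0))] = \sum_{n \mid m}\, \sum_{\mu^{m/n} = \lambda_0} n\,[V(\tilde G_n(-,\mu))] = \sum_{n \mid m}\, \sum_{\mu^{m/n}=\lambda_0} n\,[P_n(\mu)],
\]
which is the claim.

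The step I expect to be the main obstacle is making rigorous the passage from the set-theoretic statement about multipliers of orbits to the equality of multiplicities in the algebraic cycle. One must check that the exponent $n$ appearing in $G_n = \tilde G_n^{\,n}$ is genuinely the orbit-counting multiplicity and interacts correctly with the (possibly nonreduced) scheme structure $\text{Per}_m^\sim(\lambda_0)$ inherits as $\mathfrak{sc}_m^{-1}(\text{Per}_1^\sim(\lambda_0))$; and that passing to the cycle $[\text{Per}_m^\sim(\lambda_0)]$ exactly divides out the generic length contributions so that no extra factors survive. The cleanest route is to argue generically: on a dense open subset of each component of $P_n(\mu)$, the map $\varphi$ has a single orbit of strict period $n$ with that multiplier, with reduced periodic points, so the local structure of $\Psi_{d^m}(\mathfrak{sc}_m(-),\lambda_0)$ is transparent and the multiplicity computation $n \cdot (\text{multiplicity of } \mu \text{ as a root})$ can be done by hand. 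Since $\lambda_0 \neq 0$ guarantees the $m/n$-th roots $\mu$ are simple, each such $\mu$ contributes independently and we sum over them, yielding the stated formula. The algebraicity of all the functions $\tilde G_n(-,\mu)$ on $\text{Rat}_d$ — which is needed to even speak of $P_n(\mu)$ as a subscheme — is taken from Milnor's construction as recalled in the text and from \cite[Proposition 4.2]{silverman}.
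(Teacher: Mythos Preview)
Your overall strategy—factor $\Psi_{d^m}(\mathfrak{sc}_m(-),\lambda)$ completely and read off the cycle—is sound and more direct than the paper's, but two intermediate steps are wrong as written. The identity $G_n(\varphi,\lambda)=\tilde G_n(\varphi,\lambda)^n$ holds only for $n=m$. For $n<m$, the factor $G_n$ in the decomposition of $\Psi_{d^m}(\mathfrak{sc}_m(\varphi),\lambda)$ collects terms $(\lambda-\xi)$ where $\xi$ is the multiplier \emph{of $\varphi^{\circ m}$} at a strict-period-$n$ point, and this is $\mu_p^{m/n}$, not $\mu_p$; hence $G_n(\varphi,\lambda)=\prod_O(\lambda-\mu_O^{m/n})^n$ over period-$n$ orbits $O$, not $\tilde G_n(\varphi,\lambda)^n$. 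Your next claim $\tilde G_n(\varphi,\lambda_0)\doteq\prod_{\mu^{m/n}=\lambda_0}\tilde G_n(\varphi,\mu)$ is likewise false (already for $n=1$ it would assert $\Psi_d(\varphi,\lambda_0)$ equals a product of $m$ such factors). The two errors happen to cancel in your final chain of equalities, but the middle term $\sum_{n\mid m} n[P_n(\lambda_0)]$ is equal to neither side.

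The fix is to bypass both steps. Since for $\lambda_0\neq 0$ the $(m/n)$-th roots $\nu$ of $\lambda_0$ are distinct and $\prod_{\nu}(x-\nu)=x^{m/n}-\lambda_0$, one has $\lambda_0-\mu_O^{m/n}=\pm\prod_{\nu^{m/n}=\lambda_0}(\nu-\mu_O)$, whence directly
\[
G_n(\varphi,\lambda_0)=(\text{unit})\cdot\prod_{\nu^{m/n}=\lambda_0}\tilde G_n(\varphi,\nu)^n.
\]
Taking divisors of $\Psi_{d^m}(\mathfrak{sc}_m(-),\lambda_0)=\prod_{n\mid m}G_n(-,\lambda_0)$ then yields the statement in one stroke; your generic-orbit discussion at the end is not needed.

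For comparison, the paper does not attempt this uniform factorization. It verifies only the extreme cases $n=m$ (where $G_m=\tilde G_m^m$ genuinely holds) and $n=1$ (where it computes $G_1(\varphi,\lambda)=\prod_j\Psi_d(\varphi,\mu\rho^j)$ by hand), and then handles every intermediate $n$ by factoring $\mathfrak{sc}_m=\mathfrak{sc}_{m/n}\circ\mathfrak{sc}_n$ and invoking the two extreme cases. Your corrected argument is more uniform and avoids that reduction.
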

\begin{proof}
 By the discussion above, it is clear that $[\text{Per}_m^\sim(\lambda)]$ is an algebraic cycle supported on the union of codimension $1$ subvarieties of $\text{Rat}_d$ on the right side. It remains to compare the multiplicities. Note that the case $m=1$ is trivial, as here the definition of $\text{Per}_1^\sim(\lambda)$ and $P_1(\lambda)$ agree. For general $m$, we will first argue that it suffices to check the multiplicity at the terms $n=1$ and $n=m$.
 
 Indeed, assume we know that in $[\text{Per}_m^\sim(\lambda)]$ the coefficient of $[P_1(\mu)]$ is $1$ (for $\mu^m=\lambda$) and the coefficient of $[P_m(\lambda)]$ is $m$. Then for any $n|m$, the map $\mathfrak{sc}_m$ factors as $\mathfrak{sc}_{m/n} \circ \mathfrak{sc}_n$. The elements of $P_n(\mu)$ for $\mu^{m/n}=\lambda$ map to $P_1(\mu) = \text{Per}_1^\sim(\mu)$ under $\mathfrak{sc}_n$ and then by $\mathfrak{sc}_{m/n}$ to $\text{Per}_1(\lambda)$. But by assumption, $[P_1(\mu)]$ appears with a coefficient $1$ in $\mathfrak{sc}_{m/n}^{-1}(\text{Per}_1^\sim(\lambda))$ and in turn $P_n(\mu)$ appears with a coefficient of $n$ in $\mathfrak{sc}_{n}^{-1}(\text{Per}_1^\sim(\mu))$, as desired.
 
 Now for the $P_m(\lambda)$ we note that they were defined by the equation $\tilde G_m(-,\lambda)$, whereas in the defining equation $\Psi_{d^m}(\mathfrak{sc}_m(-),\lambda)$ of $\text{Per}_m^\sim(\lambda)$, the factor vanishing on the support of $P_m(\lambda)$ is $G_m(-,\lambda) = \tilde G_m(-,\lambda)^m$. This explains the coefficient $m$.
 
 As for the coefficient of $P_1(\mu)$ with $\mu^m=\lambda$, the corresponding factor of the function $\Psi_{d^m}(\mathfrak{sc}_m(-),\lambda)$ vanishing on it is $G_1(-,\lambda)$. Given $\varphi \in \text{Rat}_d$ having fixed points $p_1, \ldots, p_{d+1}$ with multipliers $\xi_1, \ldots, \xi_{d+1}$, the multipliers of $\varphi^{\circ m}$ at the $p_i$ are $\xi_1^m, \ldots, \xi_{d+1}^m$ and those are exactly the multipliers contributing to $G_1(\varphi,\lambda)$. For some primitive $m$-th root of unity $\rho$ we thus have
 \begin{align*}
  G_1(\varphi,\lambda) &= \prod_{j=1}^{d+1} \lambda - \xi_j^m = \prod_{j=1}^{d+1} \mu^m - \xi_j^m\\
  &= \prod_{i=0}^{m-1} \prod_{j=1}^{d+1} \mu \rho^i - \xi_j\\
  &= \prod_{i=0}^{m-1} \Psi_d(\varphi,\mu \rho^i).
 \end{align*}
 Hence, all the loci $\text{Per}_1^\sim(\mu \rho^i) = V(\Psi_d(-,\mu \rho^i))$ appear with multiplicity $1$ in $[\text{Per}_m^\sim(\lambda)]$ as claimed.
 \end{proof}
Now we will extend the definition of $\text{Per}_m^\sim(\lambda)$ to all of $Y_{d,0}$ (and $Y_{d,n}$). Consider the diagram
\[\begin{CD}
   Y_{d,n+1} @>\text{ev}_{n+1}=(\mu, \varphi) >> \PP^1 \times \PP^1\\
   @V\pi VV  @.\\
   Y_{d,n} @.
  \end{CD}
\]
where we interpret $Y_{d,n+1}$ as the universal curve over $Y_{d,n}$. Note that on the locus $Y_{d,n}^o$ this actually is the universal curve, as the elements $(f: \PP^1 \to \PP^1 \times \PP^1; p_1, \ldots, p_n) \in Y_{d,n}^o$ do not have automorphisms. The differential of $\text{ev}_{n+1}$ induces two sections
\begin{align*}
 d \mu &\in \text{Hom}(\mu^* \Omega_{\PP^1},\omega_\pi),\\
 d \varphi &\in \text{Hom}(\varphi^* \Omega_{\PP^1},\omega_\pi).
\end{align*}
On the preimage $\text{Fix} = \text{ev}_{n+1}^{-1}(\Delta_{\PP^1})$ of the diagonal $\Delta_{\PP^1} \subset \PP^1 \times \PP^1$, the bundles $\mu^* \Omega_{\PP^1}, \varphi^* \Omega_{\PP^1}$ coincide. Hence it makes sense to take the vanishing set $V=V((d \varphi - \lambda d \mu)|_{\text{Fix}}) \subset \text{Fix}$. Denote by
\[\text{Per}_1(\lambda) = \pi_* [V] \in Z_{\codim=1}(Y_{d,n})\]
the pushforward under $\pi$ of the algebraic cycle associated to $V$. By construction, the support $|\text{Per}_1(\lambda)|$ of this effective divisor is the locus of $((f_1, f_2) : C \to \PP^1 \times \PP^1; p_1, \ldots, p_n) \in Y_{d,n}$ such that there exists $p \in C$ with $f_1(p)=f_2(p)$ and $d f_2|_p = \lambda d f_1|_p$. Thus clearly in the case $n=0$ we have
\[|\text{Per}_1(\lambda)| \cap \text{Rat}_d = \text{Per}_1^\sim(\lambda)\]
as sets.

We can generalize this construction for $m$-periodic points. Consider the pullback $\mathcal{C}$ of the forgetful map $Y_{d^m,n+1} \to Y_{d^m,n}$ via the rational map $\mathfrak{sc}_m$.
\begin{center}
\begin{tikzcd}
\mathcal{C} \arrow{r} \arrow{d}{\pi'} & Y_{d^m,n+1} \arrow{d}{\pi} \arrow{r}{\text{ev}_{n+1}} &\PP^1 \times \PP^1\\
Y_{d,n} \arrow{r}[dashed]{\mathfrak{sc}_m} & Y_{d,n} &
\end{tikzcd}  
\end{center}
As above, denote by $\mu, \varphi$ the components of the map $\mathcal{C} \to \PP^1 \times \PP^1$, then on $\text{Per}_m = (\mu, \varphi)^{-1} (\Delta_{\PP^1})$ we can consider the vanishing set $V=V((d \varphi - \lambda d \mu)|_{\text{Per}_m})$ and push it forward via $\pi'$ to obtain
\[\text{Per}_m(\lambda) = \pi'_* [V] \in Z_{\codim=1}(Y_{d,n}).\]
Note that here we use that $\mathfrak{sc}_m$ is defined away from a codimension $2$ locus in $Y_{d,n}$. Again for $n=0$ we have
\[|\text{Per}_m(\lambda)| \cap \text{Rat}_d = \text{Per}_m^\sim(\lambda).\]
In the following Proposition, we check various compatibility conditions between these cycles and compute their class in the rational Picard group of $Y_{d,n}$. Note that as the spaces $Y_{d,n}$ have finite quotient singularities, all Weyl-divisors $D$ are $\mathbb{Q}$-Cartier and thus it makes sense to speak of pullback of$D$ under maps $X \to Y_{d,n}$ not having image inside $D$.
\begin{Pro} \label{Pro:Permcompatibility}
 Let $\lambda \in \mathbb{C} \setminus \{0\}$. For the forgetful map $F : Y_{d,n} \to Y_{d,0}$ we have
 \begin{equation} \label{eqn:Per1} F^* \text{Per}_m(\lambda)^{Y_{d,0}} = \text{Per}_m(\lambda)^{Y_{d,n}}.\end{equation}
 For the rational self-composition map $\mathfrak{sc}_m : Y_{d,n} \dashrightarrow Y_{d^m,n}$ we have 
 \begin{equation} \label{eqn:Per2} \mathfrak{sc}_m^* \text{Per}_1(\lambda)^{Y_{d^m,n}} = \text{Per}_m(\lambda)^{Y_{d,n}}.\end{equation}
 In the case $n=0$ we have
 \begin{equation} \label{eqn:Per3} \text{Per}_m(\lambda) |_{\text{Rat}_d} = [\text{Per}_m^\sim(\lambda)] \in Z_{\codim=1}(\text{Rat}_d).\end{equation}
 For the rational divisor class of $\text{Per}_m(\lambda)$ we have
 \[\text{Per}_m(\lambda) = m d^{m-1} \sum_{k,B} k D_{B,k} \in A_{\codim=1}(Y_{d,n}) \otimes_\mathbb{Z} \mathbb{Q} = \text{Pic}(Y_{d,n}) \otimes_\mathbb{Z} \mathbb{Q},\]
 where the sum runs over all boundary divisors $D_{B,k}$ of $Y_{d,n}$.
\end{Pro}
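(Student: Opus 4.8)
The plan is to establish the four assertions in turn, using the already-proven compatibility of $\mathfrak{sc}_m$ and the forgetful maps with the various loci, and reducing the hardest computation (the divisor class) to the case $n=0$, $m=1$, which was essentially settled by Silverman's analysis of the $\sigma_i$.

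First I would prove \eqref{eqn:Per1}. The point is that $\text{Per}_m(\lambda)^{Y_{d,n}}$ was defined as $\pi'_*[V]$ where $V \subset \text{Per}_m = (\mu,\varphi)^{-1}(\Delta_{\PP^1})$ is the vanishing of $(d\varphi - \lambda\, d\mu)|_{\text{Per}_m}$ on the pullback $\mathcal{C}$ of the universal curve over $Y_{d^m,n}$ along $\mathfrak{sc}_m$. The forgetful map $F : Y_{d,n} \to Y_{d,0}$ fits into a cartesian diagram with the corresponding forgetful maps of the $(n+1)$-st (universal) marking and with $\mathfrak{sc}_m$, because $\mathfrak{sc}_m$ is compatible with forgetting markings (it was literally defined by composing with $F$). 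Hence the curve $\mathcal{C}$, the map $(\mu,\varphi)$, the diagonal-preimage $\text{Per}_m$, the bundles $\mu^*\Omega, \varphi^*\Omega, \omega_{\pi'}$, the section $d\varphi - \lambda\, d\mu$ and its zero scheme $V$ all pull back from $Y_{d,0}$ to $Y_{d,n}$ along $F$ (away from the codimension-$2$ indeterminacy locus of $\mathfrak{sc}_m$, which does not affect divisor classes). Flatness of $F$ (Remark~\ref{Rem:coarsefamily}, or \cite{fultonpandha}) lets me commute $F^*$ past the pushforward $\pi'_*$, giving \eqref{eqn:Per1}. For \eqref{eqn:Per2} I would observe that $\mathfrak{sc}_m$ factors the universal curve datum over $Y_{d^m,n}$ exactly so that the scheme $\text{Per}_m$ over $Y_{d,n}$ is by construction the fibre product of $\text{Fix} = \text{ev}_{n+1}^{-1}(\Delta_{\PP^1}) \subset Y_{d^m,n+1}$ with $Y_{d,n}$ over $Y_{d^m,n}$, and likewise for the vanishing scheme $V$ of $d\varphi - \lambda\, d\mu$; since $\mathfrak{sc}_m$ pulls back the cycle $\text{Per}_1(\lambda)^{Y_{d^m,n}} = \pi_*[V^{Y_{d^m,n}}]$ to $\pi'_*$ of the pulled-back vanishing scheme (again using that $\mathfrak{sc}_m$ is defined off codimension $2$ and that pushforward commutes with flat base change along $\pi$), this is precisely $\text{Per}_m(\lambda)^{Y_{d,n}}$. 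Statement \eqref{eqn:Per3} is immediate from the set-theoretic identification $|\text{Per}_m(\lambda)| \cap \text{Rat}_d = \text{Per}_m^\sim(\lambda)$ already noted, together with the fact that over $\text{Rat}_d$ the scheme structures coincide: there $\mathfrak{sc}_m$ is an honest morphism, the universal curve is the trivial $\PP^1$-bundle, and the defining equation $(d\varphi - \lambda\, d\mu)|_{\text{Fix}}$ of $\text{Per}_1$ pulls back under $\mathfrak{sc}_m$ exactly to $\mathfrak{sc}_m^*\Psi_{d^m}(-,\lambda)$ up to the reduced structure bookkeeping carried out before the Proposition.

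The divisor class computation is the main obstacle and I would do it last. By \eqref{eqn:Per1} it suffices to treat $n=0$, and then by \eqref{eqn:Per2} together with Proposition~\ref{Pro:selfcompopullback} it suffices to treat $m=1$, i.e.\ to show
\[\text{Per}_1(\lambda) = \sum_{k,B} k\, D_{B,k} \in \text{Pic}(Y_{d,0})\otimes\mathbb{Q};\]
the general formula then follows by applying $\mathfrak{sc}_m^*$, using $\mathfrak{sc}_m^* D_{B,l} = D_{B, d^{m-1}l}$ for the top vertical section and the recursion for $\mathfrak{sc}_m^* D_{\emptyset,k}$, and checking that the resulting coefficients telescope to $m d^{m-1} k$ (this is a finite bookkeeping induction on $m$, parallel to the proof of Proposition~\ref{Pro:selfcompopullback}, where the factor $d^{m-1}$ comes from the $d^{m-1}$ vertical copies created at each composition step and the factor $m$ from the fact that an $m$-cycle of periodic points contributes $m$ fixed points of $\varphi^{\circ m}$). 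For the base case $m=1$, $n=0$ I would use the interpretation of $\text{Per}_1(\lambda)$ via $\Psi_d(-,\lambda)$: on $\text{Rat}_d$ this is $V(\Psi_d(-,\lambda))$, and Silverman realises $\Psi_d(-,\mu)$ as the characteristic polynomial (determinant of $\mu\cdot\text{id} - \Theta$) of an endomorphism $\Theta$ of a rank-$(d+1)$ vector bundle on $\text{Rat}_d$ built from the differential of the universal map along its fixed locus. This construction extends over all of $Y_{d,0}$ exactly as in the definition of $\text{Per}_1(\lambda)$ above (the fixed locus $\text{Fix}$ is finite flat of degree $d+1$ over $Y_{d,0}$, push forward the restricted tangent bundle), so $\text{Per}_1(\lambda) = c_1$ of the line bundle $\det(\text{coker})$-type object, equivalently $\text{Per}_1(\lambda) = \pi_*\big(c_1(\omega_\pi^{\vee}|_{\text{Fix}})\big)$ twisted by $\lambda$-independent data; since the class is manifestly $\lambda$-independent (for $\lambda\neq 0$) it suffices to compute it on any one test curve.

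Concretely, the cleanest finish is to intersect $\text{Per}_1(\lambda)$ with the test curves $C_{B,k}$ of Proposition~\ref{Pro:CBk} and invoke Proposition~\ref{Pro:iddivisors}. Along $\psi_{B,k}:\PP^1 \to Y_{d,0}$ (take $B=\emptyset$ for $n=0$) the induced family of rational maps $\phi_x$ is explicit, so I can write down $\Psi_d(\phi_x,\lambda)$ and count its order of vanishing at the boundary points $x=0$: over a point where a vertical section of multiplicity $k$ forms, $k$ of the fixed-point multipliers blow up (the fixed points collide with the base point), each contributing a pole of the appropriate order to $\Psi_d(\cdot,\lambda)$, and a short residue/Newton-polygon count gives $(C_{\emptyset,k}.\text{Per}_1(\lambda)) = k$ for each $D_{\emptyset,k}$ plus the contribution from $D_{\emptyset,1}$; comparison with the intersection numbers $(C_{\emptyset,k}.D_{B,m})$ from Proposition~\ref{Pro:CBk} then forces the coefficient of $D_{B,k}$ in $\text{Per}_1(\lambda)$ to be $k$ and the coefficient of $\mathcal{H}$ (resp.\ $\mathcal{G}$) to vanish. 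The hard part will be this local order-of-vanishing computation at the colliding base points — making rigorous that each of the $k$ coalescing fixed points contributes exactly $1$ to the multiplicity of $D_{B,k}$ — but it is the exact analogue of the resultant computation in Lemma~\ref{Lem:resultant}, whose order-$k^2$ answer is consistent with each of the $k$ affected fixed points carrying multiplier-valuation contributing a linear-in-$k$ term here, and I expect the same explicit factorisation used there to settle it.
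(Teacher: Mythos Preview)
Your overall architecture matches the paper's: both reduce via \eqref{eqn:Per1} to $n=0$, then via \eqref{eqn:Per2} to $m=1$, and run the same induction on $m$ using the pullback formulas of Proposition~\ref{Pro:selfcompopullback}. For \eqref{eqn:Per1}--\eqref{eqn:Per3} the paper argues slightly differently: rather than invoking base-change for the pushforward, it observes that none of the cycles involved have components in the boundary, so all multiplicities can be checked on the open locus $Y_{d,n}^o \cong \text{Rat}_d \times ((\PP^1)^n\setminus\Delta)$, where the extra $(\PP^1)^n$-factor visibly plays no role (giving \eqref{eqn:Per1}) and where Lemma~\ref{Lem:detpushforward} identifies $\pi'_*[V(d\varphi^{\circ m}\circ(d\mu)^{-1}-\lambda)]$ with $[V(\Psi_{d^m}(\varphi^{\circ m},\lambda))]$ (giving \eqref{eqn:Per3}); then \eqref{eqn:Per2} is deduced from these two. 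Your base-change phrasing is fine in spirit but needs care since $\mathfrak{sc}_m$ is not flat; the paper's ``check on the smooth open part'' sidesteps this.

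The real divergence is the base case $m=1$, $n=0$. You propose test curves plus a local order-of-vanishing analysis of $\Psi_d(\phi_x,\lambda)$ at colliding fixed points, which you correctly flag as the hard step. The paper avoids this entirely with a short Chern-class argument: writing
\[
\text{Per}_1(\lambda)=\pi_*\big(c_1(\omega_\pi\otimes\mu^*\Omega_{\PP^1}^\vee)\cap[\text{Fix}]\big),
\]
it uses $c_1(\omega_\pi)=\psi_1=-2\mathcal{H}_{1,1}+\sum_k D_{\{1\},k}$ (Proposition~\ref{Pro:psiformula}) and $c_1(\mu^*\Omega_{\PP^1}^\vee)=2\mathcal{H}_{1,1}$ to get $\text{Per}_1(\lambda)=\pi_*\big(\sum_k c_1(D_{\{1\},k})\cap[\text{Fix}]\big)$. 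Since $D_{\{1\},k}\subset Y_{d,1}$ is the union of vertical sections over $D_{\emptyset,k}$ and a general vertical section of degree $k$ carries exactly $k$ fixed points, the coefficient of $D_{\emptyset,k}$ is $k$. This replaces your anticipated analytic multiplicity count with a one-line geometric observation; your route would work too, but the $\psi$-class trick is substantially cleaner.
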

\begin{proof}
 In each of the equations (\ref{eqn:Per1} - \ref{eqn:Per3}), the divisors on the left and right side are supported on the same closed sets. Therefore, it suffices to check that for each irreducible component of these sets, the multiplicities of the divisors agree. As also none of the divisors have components supported in the boundary of $Y_{d,n}$, we can compute these multiplicities on the complement $Y_{d,n}^o$ of the boundary. Note that once we show equations (\ref{eqn:Per1}) and (\ref{eqn:Per3}), the equation (\ref{eqn:Per2}) follows as by definition $\text{Per}_m^\sim(\lambda) = \mathfrak{sc}_m^{-1}(\text{Per}_1(\lambda))$. 
 
 Let $P=((\PP^1)^n \setminus \Delta)$, then $Y_{d,n}^o \cong \text{Rat}_d \times P$. Then restricted to $Y_{d^m,n}^o$, the diagram for defining $\text{Per}_m(\lambda)$ looks like
 \[\begin{CD}
   \text{Rat}_d \times \PP^1 \times P @>>> \text{Rat}_{d^m} \times \PP^1 \times P @>\text{ev} >> \PP^1 \times \PP^1\\
   @V \pi' VV @V\pi VV  @.\\
   \text{Rat}_d \times P @> \mathfrak{sc}_m >> \text{Rat}_{d^m} \times P @.
  \end{CD}\]
 We then take the preimage $\text{Per}_m$ of $\Delta_{\PP^1}$ under the upper map 
 \[(\mu, \varphi^{\circ m}) : (\varphi,p,p_1, \ldots, p_n) \mapsto (p, \varphi^{\circ m}(p)),\] 
 take the vanishing set of the restricted section $(d \varphi^{\circ m} - \lambda d \mu)|_{\text{Per}_m}$ and push forward via $\pi'$. First we note that the factor $P$ does not play any role in these operations, so we may as well leave it out. This shows equation (\ref{eqn:Per1}).
 
 Note that $\text{Per}_m$ is a relative Cartier divisor for the projection map $p: \text{Rat}_d \times \PP^1 \to \text{Rat}_d$. Hence the map $\pi'$ is flat and clearly also finite. Instead of considering $s=d \varphi^{\circ m} - \lambda d \mu|_{\text{Per}_m}$ as a section of $\text{Hom}(\mu^* \Omega_{\PP^1}, \omega_p)$ we precompose with the inverse of the isomorphism $d \mu : \mu^* \Omega_{\PP^1} \to \omega_p$ and obtain a section 
 \[s \circ (d \mu)^{-1} = d \varphi^{\circ m} \circ(d \mu)^{-1} - \lambda : \omega_p \to \omega_p.\] Then by Lemma \ref{Lem:detpushforward}, we have $\pi'_*([V(d \varphi^{\circ m} \circ(d \mu)^{-1} - \lambda)]) = [V(\text{det}(d \varphi^{\circ m} \circ(d \mu)^{-1} - \lambda))]$. But 
 \[\text{det}(d \varphi^{\circ m} \circ(d \mu)^{-1} - \lambda) = \Psi_{d^m}(\varphi^{\circ m}, \lambda),\]
 so the vanishing scheme of this section is exactly $\text{Per}_m^\sim(\lambda)$ (see also \cite[Theorem 4.5]{silverman}). This shows (\ref{eqn:Per3}).
 
 To compute the class of $\text{Per}_m(\lambda)$ , by (\ref{eqn:Per1}) it suffices to show the formula above for $n=0$, as it pulls back correctly under forgetful maps (all boundary divisors $D_{B,k}$ with nonzero coefficient have $k \geq 1$). Assume we have showed the formula for $m=1$. Then we can use (\ref{eqn:Per2}) to derive it for general $m$. Indeed, using Proposition \ref{Pro:compopullback}, we find by induction
 \begin{align*}
  \text{Per}_m(\lambda) &= \mathfrak{sc}_m^* \text{Per}_1(\lambda) = \mathfrak{sc}_m^* \sum_{k=1}^{d^m} k D_{\emptyset,k}\\
  &=\sum_{k=1}^d k d^{m-1} D_{\emptyset,k} + \mathfrak{sc}_{m-1}^* \sum_{l=1}^{d^{m-1}} dl  D_{\emptyset,l}\\
  &=\sum_{k=1}^d k d^{m-1} D_{\emptyset,k} + d \cdot \mathfrak{sc}_{m-1}^* \text{Per}_1(\lambda)\\
  &=\left(\sum_{k=1}^d k  D_{\emptyset,k}\right) \cdot(d^{m-1}  + d (m-1) d^{m-2}) \\
  &=\left(\sum_{k=1}^d k  D_{\emptyset,k}\right) \cdot m d^{m-1}.
 \end{align*}
 It remains to show the case $m=1, n=0$. Here recall that for the diagram
 \[\begin{CD}
   Y_{d,1} @>\text{ev}=(\mu, \varphi) >> \PP^1 \times \PP^1\\
   @V\pi VV  @.\\
   Y_{d,0} @.
  \end{CD}
\]
 we defined 
 \[\text{Per}_1(\lambda) = \pi_* [V(d \varphi - \lambda d \mu|_{\text{Fix}})].\]
 Here $d \mu \in \text{Hom}(\mu^* \Omega_{\PP^1},\omega_\pi) = \omega_\pi \otimes \mu^* \Omega_{\PP^1}^\vee$, so clearly the class of $\text{Per}_1(\lambda)$ is given by
 \[\text{Per}_1(\lambda) = \pi_* \left( c_1(\omega_\pi \otimes \mu^* \Omega_{\PP^1}^\vee) \cap  [\text{Fix}] \right).\]
 Now it is well-known that $c_1(\omega_\pi) = \psi_1$ and in Proposition \ref{Pro:psiformula}, we are going to show that 
 \[\psi_1 =  -2 \mathcal{H}_{1,1} + \sum_{k=1}^d D_{\{1\},k}.\]
 But by definition $c_1(\mu^* \Omega_{\PP^1}^\vee) = c_1(\mu^* \mathcal{O}_{\PP^1}(2)) = 2 \mathcal{H}_{1,1}$. Thus
 \[\text{Per}_1(\lambda) = \pi_* \left(\sum_{k=1}^d c_1(D_{\{1\},k} ) \cap  [\text{Fix}] \right).\]
 Note that in the interpretation as a universal curve, the divisor $D_{\{1\},k}$ is exactly the union of vertical sections over the divisor $D_{\emptyset,k} \subset Y_{d,0}$. For $k=1, \ldots, d$, a general point $f \in D_{\emptyset,k}$ has exactly $k$ fixed points on its vertical component. Hence, as we have claimed, the coefficient of $D_{\emptyset,k}$ in $\pi_* \left( c_1(D_{\{1\},k} ) \cap  [\text{Fix}] \right)$ is $k$, which shows the desired formula.
\end{proof}
\todoOld{Do we need to take the reduced subscheme structure on $\text{Per}_m$? $\text{Per}_m(1)$ may have issues not being reduced, formulas still ok? }
Finally, as the sets $\text{Per}_m(\lambda)$ are invariant under $\text{PGL}_2$, the Weyl divisors defined above induce Weyl-divisors on the quotient spaces $M(d|d_1, \ldots, d_n)$ and their classes in the rational Picard group are given by the formulas above (where unstable divisors $D_{B,k}$ are omitted). 

\section{Intersection theory on \texorpdfstring{$M(d|d_1, \ldots, d_n)$}{M(d|d1, ..., dn)}} \label{Sect:IntTh}
In this section, we treat the intersection theory of $M(d|d_1, \ldots, d_n)$. We first describe $\psi$-classes and show that their intersection numbers satisfy the usual recursions. Note that this discussion is largely independent of the following results. Then we start dealing with general top-intersections of divisor classes. Here we first show how to compute an intersection number involving a boundary divisor to an equivariant intersection on a product of lower-dimensional moduli spaces. As an excursion, we treat the equivariant intersection theory of such products in a separate subsection. Finally, we show how to use these results to obtain a recursive algorithm, computing all intersection numbers of divisors.
\subsection{Definitions and properties of \texorpdfstring{$\psi$}{psi}-classes} \label{Sect:psi}
Let $\textbf{d}=(d|d_1, \ldots, d_n)$ be admissible. We  construct $\psi$-classes on the spaces $\mathcal{M}(\textbf{d}) = M(d|d_1, \ldots, d_n)$ and prove analogues of the String, Dilaton and Divisor equation. In order to have a universal family, we first work with the smooth DM-stacks $\mathcal{M}(d|d_1, \ldots, d_n)$ and then descend to $M(d|d_1, \ldots, d_n)$. Indeed, for a smooth DM-stack $\mathcal{X}$ with coarse moduli space $X$, using \cite[Proposition 6.1]{vistoliintersection} we have natural isomorphisms
\[\text{Pic}(\mathcal{X}) \otimes_{\mathbb{Z}} \mathbb{Q} = A_{\codim = 1}(\mathcal{X})_\mathbb{Q}  = A_{\codim = 1}(X)_\mathbb{Q}=  \text{Pic}(X) \otimes_{\mathbb{Z}} \mathbb{Q}.\]
We apply these identifications for $\mathcal{X}=\mathcal{M}(d|d_1, \ldots, d_n)$ and $\mathcal{X}=\overline{\mathcal{M}}_{0,n}(\PP^1 \times \PP^1, (1,d))$. 
\begin{Def}
 Let $\textbf{d}=(d|d_1, \ldots, d_n)$ be admissible and denote by $(\textbf{d},0)=(d|d_1, \ldots, d_n,0)$, which is also admissible. Then consider the forgetful map $\pi: \mathcal{M}(\textbf{d},0) \to \mathcal{M}(\textbf{d})$ of the additional marking, with sections $\sigma_i: \mathcal{M}(\textbf{d}) \to \mathcal{M}(\textbf{d},0)$ corresponding to the marked points. We define
 \[\mathbb{L}_i = \sigma_i^*(\omega_\pi), \psi_i = c_1(\mathbb{L}_i),\]
 where $\omega_\pi$ is the relative dualizing sheaf of $\pi$.
 
 This induces a well-defined classes $\mathbb{L}_i \in \text{Pic}(M(d|d_1, \ldots, d_n)) \otimes_{\mathbb{Z}}\mathbb{Q}$ and  $\psi_i \in A^1(M(d|d_1, \ldots, d_n))_{\mathbb{Q}}$.
\end{Def}
In the entire section, the additional mark of $\mathcal{M}(\textbf{d},0)$ will be $0$ instead of $n+1$ to make the formulas more pleasant. In the following we want to relate these classes to the corresponding (and well-studied) $\psi$-classes on the prequotient $\overline{\mathcal{M}}_{0,n}(\PP^1 \times \PP^1, (1,d))$. Our main tool is the following result.
\begin{Pro} \label{Pro:uniquotdiag}
 The following diagram is cartesian (in the obvious sense that $\tilde \pi, \tilde \sigma_i$ are the pullbacks of $\pi, \sigma$).
 \begin{equation} \label{eqn:comdiagunifam}
  \begin{tikzpicture}
   \draw (0,0) node {$\overline{\mathcal{M}}_{0,n}(\PP^1 \times \PP^1, (1,d))^{ss,\textbf{d}}$};
   \draw (0,1.5) node {$\overline{\mathcal{M}}_{0,n+1}(\PP^1 \times \PP^1, (1,d))^{ss,(\textbf{d},0)}$};
   \draw (5,0) node {$\mathcal{M}(\textbf{d})$};
   \draw (5,1.5) node {$\mathcal{M}(\textbf{d},0)$};
   \draw[->] (0,1.2) -- (0,0.75) node[left] {$\tilde \pi$} -- (0,0.3);
   \draw[->] (4.9,1.2) -- (4.9,0.75) node[left] {$\pi$} -- (4.9,0.3);
   \draw[<-] (0.3,1.2) -- (0.3,0.75) node[right] {$\tilde \sigma_i$} -- (0.3,0.3);
   \draw[<-] (5.2,1.2) -- (5.2,0.75) node[right] {$\sigma_i$} -- (5.2,0.3);
   \draw[->] (2.4,0) -- (3.5,0) node[above] {$\phi$} -- (4.3,0);
   \draw[->] (2.4,1.5) -- (3.5,1.5) node[above] {$\tilde \phi$} -- (4.3,1.5);
  \end{tikzpicture}
 \end{equation} 
\end{Pro}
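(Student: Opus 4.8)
\textbf{Proof strategy for Proposition \ref{Pro:uniquotdiag}.}

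The plan is to reduce the claim to the analogous statement at the level of quotient stacks of the \emph{full} stable maps spaces, and then intersect everything with the semistable loci. Recall that $\mathcal{M}(\textbf{d}) = \mathcal{Y}_{d,n}^{ss,\textbf{d}}/\text{PGL}_2$ and $\mathcal{M}(\textbf{d},0) = \mathcal{Y}_{d,n+1}^{ss,(\textbf{d},0)}/\text{PGL}_2$ by Lemma \ref{Lem:Mfinitequotsing}, and that the forgetful map $F: \overline{\mathcal{M}}_{0,n+1}(\PP^1 \times \PP^1, (1,d)) \to \overline{\mathcal{M}}_{0,n}(\PP^1 \times \PP^1, (1,d))$, together with its sections $\tilde\sigma_i$, realizes the source stack as the universal curve over the target (this is the standard property of stable maps spaces, as used already in Remark \ref{Rem:coarsefamily}). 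In particular the diagram
\begin{equation*}
  \begin{tikzpicture}
   \draw (0,0) node {$\overline{\mathcal{M}}_{0,n}(\PP^1 \times \PP^1, (1,d))$};
   \draw (0,1.5) node {$\overline{\mathcal{M}}_{0,n+1}(\PP^1 \times \PP^1, (1,d))$};
   \draw (5.2,0) node {$\overline{\mathcal{M}}_{0,n}(\PP^1 \times \PP^1, (1,d))/\text{PGL}_2$};
   \draw (5.2,1.5) node {$\overline{\mathcal{M}}_{0,n+1}(\PP^1 \times \PP^1, (1,d))/\text{PGL}_2$};
   \draw[->] (0,1.2) -- (0,0.3);
   \draw[->] (5.1,1.2) -- (5.1,0.3);
  \end{tikzpicture}
\end{equation*}
is cartesian: this is a general fact about forming quotient stacks by a group action from a cartesian square of schemes (or stacks) on which the group acts compatibly — pulling back a $G$-equivariant morphism along a $G$-equivariant morphism commutes with passing to quotient stacks, which one can see directly from the groupoid presentations, or invoke from Appendix \ref{App:stackaction}. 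The key input here is that $F$ is $\text{PGL}_2$-equivariant (the action is by postcomposition on the target $\PP^1 \times \PP^1$ and does not touch the marked points, cf. the proof of Lemma \ref{Lem:jConstruction}) and that the sections $\tilde\sigma_i$ are equivariant as well.

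Next I would check that the semistable loci are compatible, i.e.\ that $F^{-1}(\mathcal{Y}_{d,n}^{ss,\textbf{d}}) = \mathcal{Y}_{d,n+1}^{ss,(\textbf{d},0)}$. This is exactly the observation recorded in Remark \ref{Rem:coarsefamily}: adding a marked point of weight $0$ does not change the GIT stability condition of Lemma \ref{Lem:semistableadvanced}, since the extra term $\tilde d_{n+1}/k = 0$ contributes nothing to inequality (\ref{eqn:advancedsemistable}) and the balance parameter $d_T$ is unchanged. Restricting the cartesian square above to these open substacks, and using that the quotient stack of an open $G$-invariant substack is the corresponding open substack of the quotient (so that $\mathcal{Y}_{d,n}^{ss,\textbf{d}}/\text{PGL}_2 = \mathcal{M}(\textbf{d})$ and likewise for $(\textbf{d},0)$), yields precisely the cartesian diagram (\ref{eqn:comdiagunifam}), with $\pi$ the descent of $F$ and $\sigma_i$ the descent of $\tilde\sigma_i$.

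The only genuine point requiring care — and the step I expect to be the main obstacle — is the very first one: verifying that forming the quotient stack by $\text{PGL}_2$ preserves cartesian squares, and more precisely that the top horizontal map $\tilde\pi$ in (\ref{eqn:comdiagunifam}) really is the base change of $\pi$ along $\phi$ (not merely that both sit in a commuting square). For this one can argue via the explicit description of morphisms to a quotient stack in terms of torsors and equivariant maps (Theorem \ref{Theo:modprop}–style, or directly \cite[Theorem 4.1]{romagny2005}): an object of $\mathcal{M}(\textbf{d})$ over $S$ is a $\text{PGL}_2$-torsor $T \to S$ with an equivariant map $T \to \mathcal{Y}_{d,n}^{ss,\textbf{d}}$, and pulling this back to $\mathcal{M}(\textbf{d},0)$ amounts to replacing $\mathcal{Y}_{d,n}^{ss,\textbf{d}}$ by $\mathcal{Y}_{d,n+1}^{ss,(\textbf{d},0)}$ in the equivariant map while keeping the same torsor — which is exactly base change of the universal curve $F$ along the equivariant classifying map, because $F$ was already the universal curve upstairs. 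Everything else (flatness, projectivity, the section property of the $\sigma_i$) is inherited from the corresponding statements for $F$ and $\tilde\sigma_i$ via Remark \ref{Rmk:quotinherit}, since all the $\text{PGL}_2$-actions involved are free on the semistable loci by Corollary \ref{Cor:codimfree}.
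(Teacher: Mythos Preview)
Your approach is essentially the same as the paper's: the paper simply invokes Lemma~\ref{Lem:quotfibreprod} (the general fact that for a $G$-equivariant morphism $f:\mathcal{M}\to\mathcal{N}$ the square with $\mathcal{M}/G\to\mathcal{N}/G$ is cartesian), applied to $\tilde\pi$, and is done in one line. You unpack the same argument and add the check $F^{-1}(\mathcal{Y}_{d,n}^{ss,\textbf{d}})=\mathcal{Y}_{d,n+1}^{ss,(\textbf{d},0)}$, which the paper takes as already recorded in Remark~\ref{Rem:coarsefamily}.

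One small correction: in your last sentence you assert that the $\text{PGL}_2$-actions are \emph{free} on the semistable loci by Corollary~\ref{Cor:codimfree}. That corollary only gives finite stabilizers (freeness holds on a codimension~$\geq 2$ open, not everywhere). Fortunately you do not need freeness anywhere: Lemma~\ref{Lem:quotfibreprod} and Remark~\ref{Rmk:quotinherit} require only that the quotient map be fppf, which holds for any quotient stack $[\mathcal{X}/G]$ by a flat group scheme. Moreover, the statements about flatness, projectivity, and the section property of $\sigma_i$ are not part of the Proposition and need not be argued here; they come for free once the square is cartesian, since these properties are stable under base change.
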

\begin{proof}
 The diagram is cartesian by Lemma \ref{Lem:quotfibreprod} as the maps $\phi, \tilde \phi$ are quotients under the actions of $G=\text{PGL}_2$ and the map $\pi$ is induced by the $G$-equivariant map $\tilde \pi$. 
\end{proof}
Let
\[\mathbb{\tilde L}_i = \tilde \sigma_i^*(\omega_{\tilde \pi}), \tilde \psi_i = c_1(\mathbb{\tilde L}_i)\]
be the $i$-th cotangent line bundle and $\psi$-class on $\mathcal{Y}_{d,n}$.
\begin{Cor}
 In the situation of Proposition \ref{Pro:uniquotdiag} we have
 \begin{align*}\mathbb{\tilde  L}_i &= \phi^* \mathbb{L}_i\text{ in }\text{Pic}(\mathcal{Y}_{d,n}^{ss,\textbf{d}}) \otimes_{\mathbb{Z}} \mathbb{Q},\\ \tilde \psi_i &= \phi^* \psi_i \text{ in }A^1(\mathcal{Y}_{d,n}^{ss,\textbf{d}})_{\mathbb{Q}}.\end{align*}
\end{Cor}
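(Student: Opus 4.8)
The plan is to deduce everything from the cartesian diagram in Proposition \ref{Pro:uniquotdiag} together with the compatibility of the relative dualizing sheaf with base change. Since the square
\[
\begin{CD}
\mathcal{Y}_{d,n+1}^{ss,(\textbf{d},0)} @>\tilde\phi>> \mathcal{M}(\textbf{d},0)\\
@V\tilde\pi VV @V\pi VV\\
\mathcal{Y}_{d,n}^{ss,\textbf{d}} @>\phi>> \mathcal{M}(\textbf{d})
\end{CD}
\]
is cartesian, with $\tilde\sigma_i$ the pullback of $\sigma_i$ along $\phi$, we have $\tilde\pi$ obtained from $\pi$ by base change along $\phi$. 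Now $\phi$ is flat (it is a geometric quotient by a smooth group, in fact locally it is a quotient by a free action on smooth DM-stacks, hence smooth), so the formation of the relative dualizing sheaf commutes with this base change: $\omega_{\tilde\pi} \cong (\tilde\phi)^*\omega_\pi$ canonically. This is the one technical input; it holds because $\pi$ is a flat, projective, Gorenstein morphism (its fibres are nodal genus $0$ curves) and $\omega$ for such morphisms commutes with arbitrary base change.

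From there the computation is formal. First I would pull back the defining relation $\mathbb{L}_i = \sigma_i^*\omega_\pi$ along $\phi$ and use the compatibility of pullback of line bundles with base change in the cartesian square, namely $\sigma_i \circ \phi = \tilde\phi \circ \tilde\sigma_i$ as morphisms $\mathcal{Y}_{d,n}^{ss,\textbf{d}} \to \mathcal{M}(\textbf{d},0)$. Then
\[
\phi^*\mathbb{L}_i = \phi^*\sigma_i^*\omega_\pi = \tilde\sigma_i^*\tilde\phi^*\omega_\pi = \tilde\sigma_i^*\omega_{\tilde\pi} = \mathbb{\tilde L}_i
\]
in $\text{Pic}(\mathcal{Y}_{d,n}^{ss,\textbf{d}})\otimes_\mathbb{Z}\mathbb{Q}$, where the third equality is the base-change isomorphism for $\omega$ and the middle equality is functoriality of pullback. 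Taking first Chern classes and using compatibility of $c_1$ with the flat pullback $\phi^*$ on rational Chow groups (or simply that $\phi^*$ is a ring homomorphism $A^*(\mathcal{M}(\textbf{d}))_\mathbb{Q} \to A^*(\mathcal{Y}_{d,n}^{ss,\textbf{d}})_\mathbb{Q}$) gives $\tilde\psi_i = c_1(\mathbb{\tilde L}_i) = c_1(\phi^*\mathbb{L}_i) = \phi^*c_1(\mathbb{L}_i) = \phi^*\psi_i$ in $A^1(\mathcal{Y}_{d,n}^{ss,\textbf{d}})_\mathbb{Q}$.

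The only step that requires any care, and the one I would spell out, is the base-change statement $\omega_{\tilde\pi} \cong \tilde\phi^*\omega_\pi$; everything else is pure diagram-chasing. One should note that the identifications of $\text{Pic}\otimes\mathbb{Q}$ and $A^1_\mathbb{Q}$ between a smooth DM-stack and its coarse space (cited from \cite{vistoliintersection} in the section preamble) are what let us phrase the equalities simultaneously for line bundles and for divisor classes, and that we are free to work on the stack level where the universal curve genuinely exists. So the proof is a short paragraph: invoke Proposition \ref{Pro:uniquotdiag}, invoke base change for the relative dualizing sheaf along the flat map $\phi$, then chase the square.
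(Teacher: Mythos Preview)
Your argument is correct and follows essentially the same route as the paper: invoke the cartesian square of Proposition~\ref{Pro:uniquotdiag}, use base change for the relative dualizing sheaf to get $\tilde\phi^*\omega_\pi \cong \omega_{\tilde\pi}$, and then pull back along $\tilde\sigma_i$. The paper's proof is just the two-line version of what you wrote, citing \cite[Theorem~2.11]{nironidualizing} for the base-change step (which, as you note, holds because $\pi$ is Gorenstein, so flatness of $\phi$ is not actually needed).
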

\begin{proof}
 As this diagram is cartesian, we have that $(\tilde \phi)^* \omega_{\pi} = \omega_{\tilde \pi}$ by \cite[Theorem 2.11]{nironidualizing}. Pulling back along $\tilde \sigma_i$ gives the desired result.
\end{proof}
As mentioned before, cotangent line bundles and $\psi$-classes have been extensively studied (for a good survey see \cite{kockpsi}).
As the pullback of line bundles (with rational coefficients) under the map $Y_{d,n}^{ss,\textbf{d}} \to M(d|d_1, \ldots, d_n)$ induced by $\phi$ gives an isomorphism of rational Picard groups by Corollary \ref{Cor:Picidentification}, we see that \[\phi^*: A^1(\mathcal{M}(\textbf{d}))_{\mathbb{Q}} \to A^1(\mathcal{Y}_{d,n}^{ss,\textbf{d}})_{\mathbb{Q}}\] is an isomorphism. As a first step we explicitly identify the line bundles $\mathbb{\tilde L}_i$ in terms of our generators from Theorem \ref{Theo:Generators}.
\begin{Pro} \label{Pro:psiformula}
 On $\overline M_{0,n}(\PP^1 \times \PP^1, (1,d))$ we have
 \begin{equation*}
 \mathbb{\tilde L}_i  = \sum_{\substack{(B,k)\\i \in B, a,b \notin B\text{ or}\\i \notin B, a,b \in B}}D_{B,k}
 \end{equation*}
 for $n\geq 3, a,b \in \{1, \ldots, n\} \setminus \{i\}, a \neq b$. On the other hand, for $n=1,2$ we have
 \begin{equation*}
  \mathbb{\tilde L}_i = -2 \mathcal{H}_{i,1} + \sum_{\substack{(B,k)\\i \in B}}D_{B,k}.
 \end{equation*}
 In all cases the sum only includes pairs $B \subset \{1, \ldots, n\}$, $0 \leq k \leq d$, such that $|B| \geq 2$ for $k=0$.
\end{Pro}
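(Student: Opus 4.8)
The plan is to compute the class of $\mathbb{\tilde L}_i = \tilde\sigma_i^*(\omega_{\tilde\pi})$ by using the test curves $C_{B,k}$ from Proposition \ref{Pro:CBk} together with the identification algorithm of Proposition \ref{Pro:iddivisors}. Since we already know how each $C_{B,k}$ meets the generators $\Gene{d}{n}$, it suffices to compute the intersection numbers $(C_{B,k}.\mathbb{\tilde L}_i)$ and then feed the resulting vector of numbers $N_{B,k}$ into the formulas of Proposition \ref{Pro:iddivisors}.

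The key geometric input is the following: for the family of stable maps $\mathcal C$ over $\PP^1$ constructed in Lemma \ref{Lem:testcurves} and used to define $\psi_{B,k}:\PP^1 \to Y_{d,n}$, one has $\psi_{B,k}^*\mathbb{\tilde L}_i = \sigma_i^*(\omega_{\pi_2:\overline S \to \PP^1})$, where $\sigma_i:\PP^1 \to \overline S$ is the corresponding section. So I would compute the degree of the normal-bundle-type line bundle $\sigma_i^*\omega_{\pi_2}$ on $\PP^1$. On the smooth locus, $\overline S \to S = \PP^1 \times \PP^1$ followed by $\pi_2$ gives $\omega_{\pi_2}|_S = \pi_1^*\mathcal O_{\PP^1}(-2)$; pulling back under $\sigma_i$ (which in both the cases $i \in B$ and $i \notin B$ has $\pi_1\circ\sigma_i$ equal to an isomorphism resp.\ constant, by Definition \ref{Def:psiCBk}) gives degree $-2$ resp.\ $0$ from the interior, and then the blow-ups at the special points through which $\sigma_i$ passes each contribute $+1$. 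By the analysis in Lemma \ref{Lem:testcurves}, $\sigma_i$ passes through a special point lying over the base-point loci exactly when a vertical section of the appropriate type forms there; counting these, I expect to obtain
\[
(C_{B,k}.\mathbb{\tilde L}_i) = -2\chi_B(i) + (\text{number of special points through which }\sigma_i\text{ passes}),
\]
and the latter count should match the combinatorial condition ``$i\in B$ and the point is a degree-$m$ vertical section not carrying $i$'s partners'' etc. Concretely, this means reading off $N_{B,k} = 1$ precisely for those $D_{B,k}$ appearing in the claimed sum (with a $-2$ on $\mathcal H_{i,1}$ when $n \leq 2$), and $N_{B,k}=0$ otherwise; the cases $D_{\emptyset,1}$ and the $\mathcal H$-contribution need separate bookkeeping exactly as in Proposition \ref{Pro:CBk}. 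Then plugging into Proposition \ref{Pro:iddivisors} and simplifying yields the stated formulas; the case distinction $n\geq 3$ versus $n=1,2$ is precisely the case distinction in $\Basi{d}{n}$ and in the presence of the generator $\mathcal H$.

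An alternative, perhaps cleaner route I would keep in reserve: use the comparison of $\psi$-classes on $\overline M_{0,n}(\PP^1\times\PP^1,(1,d))$ with those on $\overline M_{0,n}$ pulled back along the flat forgetful map $F$, together with the known formula $\tilde\psi_i = \sum_{i\in B, a,b\notin B} D(A;B)$ on $\overline M_{0,n}$ and the comparison of relative dualizing sheaves under a forgetful map between stable maps spaces (an excess/correction term coming from the contracted components and the difference between $\omega_{\tilde\pi}$ on $\overline M_{0,n+1}(X,\beta)$ versus the pullback of $\omega$ from $\overline M_{0,n+1}$). Reconciling the two descriptions in the $n=1,2$ case forces in the $-2\mathcal H_{i,1}$ term, since there $\overline M_{0,n}$ is a point (for $n=1,2$ after forgetting the map, $\overline M_{0,n}$ is trivial) and all the information is carried by the evaluation class.

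The main obstacle will be the careful bookkeeping of multiplicities in the blow-up computation of $\sigma_i^*\omega_{\pi_2}$: one must correctly account for which exceptional divisors $\sigma_i$ meets (distinguishing the $2k(d-k)$ simple base points, the two special base points over $[0:1],[1:0]$, and the intersection points of sections), verify that each such meeting is transverse so it contributes exactly $+1$, and confirm that the interior contribution is $-2$ exactly when $1\in B$ is replaced by $i\in B$ — i.e.\ that $\pi_1\circ\text{ev}_i\circ\psi_{B,k}$ is an isomorphism iff $i\in B$, which follows from equation (\ref{eqn:evipsi}). Once these intersection numbers are pinned down, the rest is a mechanical application of Proposition \ref{Pro:iddivisors}, and the identity $c_1(\omega_\pi)=\psi_1$ together with $c_1(\mu^*\mathcal O_{\PP^1}(2)) = 2\mathcal H_{1,1}$ already used in the proof of Proposition \ref{Pro:Permcompatibility} confirms the normalization in the $n=1$ case.
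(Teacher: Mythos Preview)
Your overall strategy --- compute $(C_{B,k}\cdot\tilde{\mathbb{L}}_i)$ from the explicit family $\overline S \to \PP^1$ via $\sigma_i^*\omega_{\overline S/\PP^1}$ and then invoke Proposition~\ref{Pro:iddivisors} --- is sound in principle, but the paper proceeds quite differently, and your write-up contains a concrete error.

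The paper's proof is much shorter. For $n\geq 3$ it simply quotes the standard identity $\psi_i = (i\,|\,a,b)$ from Kock's notes, which immediately gives the first formula. For $n=1,2$ it uses the comparison $\tilde\psi_i = \tilde\pi^*(\psi_i) + D_{\{i,0\},0}$ under the map $\tilde\pi$ forgetting an \emph{extra} marking $0$; since $\tilde\pi^*$ is injective (it has the section $\tilde\sigma_i^*$), it suffices to check that the candidate formula for $n=2$ pulls back to the correct class on $Y_{d,3}$, and likewise from $n=1$ to $n=2$. Only the $n=2$ verification actually uses test curves, and there the paper intersects both the already-proved $n=3$ formula and the candidate expression with the $C_{B',k'}$, reducing to a one-line combinatorial identity.

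Your error: you assert that one should ``read off $N_{B,k}=1$ precisely for those $D_{B,k}$ appearing in the claimed sum, and $N_{B,k}=0$ otherwise.'' This is false. The intersection numbers $N_{B,k}=(C_{B,k}\cdot\tilde{\mathbb{L}}_i)$ are \emph{not} the coefficients $c_{D_{B,k}}$ in the basis expansion; they are what you feed into Proposition~\ref{Pro:iddivisors}, and they are typically $\neq 0,1$. For instance with $n=3$, $i=1$, $B'=\{1\}$, $k'\geq 1$: the section $\sigma_1=S_{\alpha_1}$ passes through the two special base points $(0,0),(\infty,\infty)$ and through the two intersection points with $\sigma_2,\sigma_3$, so your own blow-up recipe gives $N_{\{1\},k'}=-2+4=2$, not $1$. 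You have conflated the raw intersection numbers with the final coefficients. If you carry the computation through correctly you will get the right answer, but for $n\geq 4$ you will land in the basis $\Basi{d}{n}$ rather than the $(i\,|\,a,b)$ form, and converting between the two requires exactly the Keel relations the paper bypasses by citing Kock.

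Finally, the appeal to ``the identity \ldots\ already used in the proof of Proposition~\ref{Pro:Permcompatibility}'' is circular: that proof explicitly invokes Proposition~\ref{Pro:psiformula}.
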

\begin{proof}
 As explained in \cite[Proposition 5.1.8]{kockpsi}, for  $n \geq 3$ the class $\psi_i$ is the sum $(i|a,b)$ of boundary classes where marks $i$ and $a,b$ are on different components (for two other markings $a,b$ such that $i,a,b$ are pairwise distinct). This readily translates to the expression above.
 
 In the cases $n=1,2$ we need a different argument. Note here that for the map $\tilde \pi$ as in Proposition \ref{Pro:uniquotdiag} we have 
 \[\tilde \psi_i = \tilde \pi^* (\psi_i) + D_{\{i,0\},0}\]
 by \cite[(5.1.7.2)]{kockpsi}. Now the map $\tilde \pi^*$ is injective on Picard groups as $\tilde \sigma_i^* \circ \tilde \pi^* = (\tilde \pi \circ \tilde \sigma_i)^* = \text{id}$. Hence all we need to show is that the divisor given by the formula above for $n=2,1$ pulls back to the correct class $\psi_i'=\tilde \psi_i - D_{\{i,0\},0}$ on the space with $3$ and $2$ marks, respectively.
 
 For $n=2$ we have
 \[\tilde \pi^* \left(-2 \mathcal{H}_{i,1} + \sum_{\substack{k, B \subset \{1,2\}\\i \in B}}D_{B,k}\right) = -2 \mathcal{H}_{i,1} + \sum_{\substack{k, B \subset \{0,1,2\}\\i \in B}}D_{B,k} - D_{\{i,0\},0}=:\psi_i''.\]
 By adding $D_{\{i,0\},0}$ to both sides of the equation $\psi_i''=\psi_i'$ we are left to show
 \[\psi_i = -2 \mathcal{H}_{i,1} + \sum_{\substack{k, B \subset \{0,1,2\}\\i \in B}}D_{B,k} \text{ on }Y_{d,3}.\]
 For this we intersect both sides with our test curves $C_{B',k'}$ (and $C_\mathcal{G}$ in case $d=0$) and show they give the same numbers (which suffices by Proposition \ref{Pro:iddivisors}). For calculating the intersection with the left side we use the explicit formula from the case $n=3$ that was already shown. The intersection with $C_\mathcal{G}$ is always zero.  It turns out that both intersections only depend on $B'$. Let the symbol $(r|s)$ be $1$ for $r \in B', s \notin B'$ or $r \notin B', s \in B'$ and $0$ otherwise. Similarly let $(r|s,t)$ be $1$ for $B'=\{r\}$ or $B'=\{s,t\}$ and zero otherwise. Then the desired equation of intersection numbers)(for $i=1$ for simplicity) is exactly
 \[(1|2) + (1|0) = (2|0) + 2(1|20).\]
 Substracting $(2|0)$ this is a combinatorial inclusion-exclusion-type identity (or it can simply be explicitly checked on all $8$ possibilities for $B' \subset \{0,1,2\}$).\\
 For $n=1$ the equality
  \[\psi_1=-2 \mathcal{H}_{1,1} + \sum_{\substack{(B,k)\\1 \in B}}D_{B,k} = \tilde \pi^* \left(-2 \mathcal{H}_{1,1} + \sum_{\substack{(B,k)\\1 \in B}}D_{B,k}\right) + D_{\{1,0\},0}\]
 is obvious.
\end{proof}
Now we define descendant Gromov-Witten invariants on the space $M(\textbf{d})$. Because the markings can possibly carry different weights $d_i$, they are not necessarily interchangeable and it will be necessary to record them in the notation. For $i\in \{1, \ldots, n\}$, $k \geq 0$ and 
\[\gamma = r \cdot 1 + c_1(\mathcal{O}(s,t)) + u \cdot [pt] \in A^*(\PP^1 \times \PP^1)\]
let 
\begin{align*}
e_i(\gamma) &= r + s c_1(\mathcal{H}_{i,1}) + t c_1(\mathcal{H}_{i,2}) + u c_1(\mathcal{H}_{i,1}) c_1(\mathcal{H}_{i,2})\in A^*(M(\textbf{d})),\\
\taumo{i}{k}(\gamma) &= \psi_i^ke_i(\gamma) \in A^*(M(\textbf{d})). 
\end{align*}
This definition is of course constructed in such a way that the pullback of $e_i(\gamma)$ under the quotient map $\phi: Y_{d,n}^{ss,\textbf{d}} \to M(\textbf{d})$ is exactly $\text{ev}_i^*(\gamma)$ and the pullback of $\taumo{i}{k}(\gamma)$ is $\tilde \psi_i^k \text{ev}_i^*(\gamma)$. Then for $k_1, \ldots, k_n \geq 0$ and homogeneous classes $\gamma_1, \ldots, \gamma_n \in A^*(\PP^1 \times \PP^1)$ with
\begin{equation} \label{eqn:degreecond}
\sum_{i=1}^n k_i + \text{deg}(\gamma_i) = 2(d-1)+n = \text{dim}(M(\textbf{d})) 
\end{equation}
we define
\[\langle \taumo{1}{k_1}(\gamma_1) \cdots \taumo{n}{k_n}(\gamma_n)\rangle_{\textbf{d}} = \int_{[M(\textbf{d})]} \taumo{1}{k_1}(\gamma_1) \cdots \taumo{n}{k_n}(\gamma_n).\]
We will now verify that the familiar String, Dilaton and Divisor equation hold, establishing recursions that allow to compute some of the invariants above. For this, the following Lemma summarises some key properties of $\psi$-classes and related divisors that carry over from $\overline M_{0,n}(\PP^1 \times \PP^1, (1,d))$ to $M(\textbf{d})$ via Proposition \ref{Pro:uniquotdiag}.
\begin{Lem} \label{Lem:auxipushpullpsi}
 In $\text{Pic}(\mathcal{M}(\textbf{d},0)) \otimes_{\mathbb{Z}} \mathbb{Q}$ we have
 \[\mathbb{L}_i = \pi^*(\mathbb{L}_i) + D_{\{i,0\},0}.\]
 Using that $\psi_i D_{\{i,0\},0}=0$ one shows inductively
 \[\psi_i^k = \pi^*(\psi_i)^k + \pi^*(\psi_i)^{k-1} D_{\{i,0\},0}.\]
 As the divisors $D_{\{i,0\},0}$ are exactly the image of the (disjoint) sections $\sigma_i$, we have
 \begin{align*}
  D_{\{i,0\},0} D_{\{j,0\},0} = 0 \text{, for }i \neq j.
 \end{align*}
 The following formulas hold for pushforwards of codimension $1$ cycles on $\mathcal{M}(\textbf{d},0)$ under $\pi$:
 \begin{align*}
  \pi_*(D_{\{i,0\},0}) &= [\mathcal{M}(\textbf{d})],\\
  \pi_*(\psi_i) &= (n-2)[\mathcal{M}(\textbf{d})],\\
  \pi_*(\mathcal{H}_{i,j}) &= \begin{cases}
                              [\mathcal{M}(\textbf{d})] &,\text{ for } i=0, j=1\\
                              d [\mathcal{M}(\textbf{d})] &,\text{ for } i=0, j=2\\
                              0 &,\text{ otherwise}.
                             \end{cases}
 \end{align*}
 Finally we have
 \[\mathcal{H}_{i,j} D_{\{i,0\},0} =  \mathcal{H}_{0,j} D_{\{i,0\},0}\]
\end{Lem}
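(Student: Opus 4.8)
\textbf{Proof proposal for Lemma \ref{Lem:auxipushpullpsi}.}

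The plan is to transport each identity from $\overline{\mathcal{M}}_{0,n+1}(\PP^1 \times \PP^1, (1,d))$ (where analogous statements are classical, see \cite{kockpsi}) to $\mathcal{M}(\textbf{d},0)$ via the cartesian diagram of Proposition \ref{Pro:uniquotdiag}. Since $\phi : \mathcal{Y}_{d,n}^{ss,(\textbf{d},0)} \to \mathcal{M}(\textbf{d},0)$ is a $\text{PGL}_2$-quotient that induces an isomorphism $\phi^* : A^1(\mathcal{M}(\textbf{d},0))_{\mathbb{Q}} \to A^1(\mathcal{Y}_{d,n}^{ss,(\textbf{d},0)})_{\mathbb{Q}}$ (Corollary \ref{Cor:Picidentification}), it suffices to check each equation after pulling back along $\phi$, i.e.\ to work with the $\tilde\psi_i$, $\tilde{\mathbb{L}}_i$ and boundary divisors on $\mathcal{Y}_{d,n}$. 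First I would record that $\phi^* \mathbb{L}_i = \tilde{\mathbb{L}}_i$ and $\phi^*\psi_i = \tilde\psi_i$ (already noted in the Corollary following Proposition \ref{Pro:uniquotdiag}), and that $\phi$ pulls the boundary divisor $D_{\{i,0\},0}$ on $\mathcal{M}(\textbf{d},0)$ back to the corresponding boundary divisor on $\mathcal{Y}_{d,n+1}^{ss,(\textbf{d},0)}$, and likewise for $\mathcal{H}_{i,j}$ versus $\text{ev}_i^* \mathcal{O}(1,0)$, $\text{ev}_i^*\mathcal{O}(0,1)$.

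The comparison formula $\tilde{\mathbb{L}}_i = \tilde\pi^*(\tilde{\mathbb{L}}_i) + D_{\{i,0\},0}$ on the stable maps spaces is \cite[(5.1.7.2)]{kockpsi} (it was already quoted in the proof of Proposition \ref{Pro:psiformula}); pulling back the whole setup along $\phi$ and using that the diagram in Proposition \ref{Pro:uniquotdiag} is cartesian gives the first identity. The relation $\psi_i D_{\{i,0\},0} = 0$ follows because on the universal curve the section $\sigma_i$ meets the fibre transversally at a point lying on a component that, after adding the extra marking, becomes a $3$-pointed rational bridge; the cotangent line at $i$ is trivial there. Then $\psi_i^k = \pi^*(\psi_i)^k + \pi^*(\psi_i)^{k-1} D_{\{i,0\},0}$ follows by raising $\mathbb{L}_i = \pi^*(\mathbb{L}_i) + D_{\{i,0\},0}$ to the $k$-th power and cancelling all higher powers of $D_{\{i,0\},0}$ using $\psi_i D_{\{i,0\},0}=0$ (note $D_{\{i,0\},0}^2 = -\psi_i D_{\{i,0\},0}$ up to the usual self-intersection formula for a section, hence also kills against $\pi^*(\psi_i)$). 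The disjointness $D_{\{i,0\},0} D_{\{j,0\},0} = 0$ for $i\neq j$ is immediate since the sections $\sigma_i$ and $\sigma_j$ of $\pi$ have disjoint images (the markings are distinct points of each fibre).

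For the pushforward formulas: $\pi_*(D_{\{i,0\},0}) = [\mathcal{M}(\textbf{d})]$ since $\sigma_i$ is a section, so $\pi$ restricts to an isomorphism on $D_{\{i,0\},0}$. The identity $\pi_*(\psi_i) = (n-2)[\mathcal{M}(\textbf{d})]$ is the dilaton-type computation: over $\mathcal{Y}_{d,n}$ it reads $\tilde\pi_*(\tilde\psi_i) = (n-2)[\mathcal{Y}_{d,n}]$, which follows from $\tilde\psi_i = \tilde\pi^*(\tilde\psi_i) + D_{\{i,0\},0}$, the projection formula, $\tilde\pi_*[\mathcal{Y}_{d,n+1}]=0$, and $\tilde\pi_*(D_{\{i,0\},0})=[\mathcal{Y}_{d,n}]$ — wait, that gives $1$, not $n-2$; the correct route is to use the standard $\tilde\pi_*(\tilde\psi_0^{1}) $ vanishing together with the comparison of $\tilde\psi_i$ on the $n$- and $(n+1)$-pointed space, exactly as in the proof of the dilaton equation in \cite{kockpsi}, so I would simply cite that. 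The evaluation pushforwards $\pi_*(\mathcal{H}_{i,j})$: for $i\neq 0$ the class $\mathcal{H}_{i,j}$ is pulled back from $\mathcal{M}(\textbf{d})$ (the $i$-th evaluation factors through $\pi$ up to the section), so its pushforward vanishes by the projection formula since $\pi_*[\mathcal{M}(\textbf{d},0)]=0$; for $i=0$ one has $\mathcal{H}_{0,1} = c_1(\text{ev}_0^*\mathcal{O}(1,0))$ and $\pi_*$ of it is the class of a fibre of $\text{ev}_{0,1}$ pushed forward, which is $[\mathcal{M}(\textbf{d})]$ because a generic point $q\in\PP^1$ in the first factor has a unique preimage on the horizontal component of each fibre; for $j=2$ the horizontal section meets $\{x\}$ in the second factor in $d$ points, giving $d[\mathcal{M}(\textbf{d})]$. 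Finally $\mathcal{H}_{i,j} D_{\{i,0\},0} = \mathcal{H}_{0,j} D_{\{i,0\},0}$ holds because on the image of $\sigma_i$ the evaluations $\text{ev}_i$ and $\text{ev}_0$ agree (the $0$-th marking sits at the node attaching the bubble, whose position equals $\sigma_i$), so restricting to $D_{\{i,0\},0}$ identifies $\mathcal{H}_{i,j}$ with $\mathcal{H}_{0,j}$. I expect the only mildly delicate point to be getting the dilaton-type coefficient $n-2$ right and making sure the section self-intersection bookkeeping in $\psi_i^k = \pi^*(\psi_i)^k + \pi^*(\psi_i)^{k-1}D_{\{i,0\},0}$ is done cleanly; everything else is a direct transport along the cartesian square plus the projection formula.
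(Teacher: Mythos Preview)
Your proposal is correct and follows essentially the same route as the paper: transport every identity along the cartesian square of Proposition~\ref{Pro:uniquotdiag} to the stable-maps side, where the statements are classical (mostly from \cite{kockpsi}), and pull back via $\phi^*$.

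Two small points where the paper's write-up is tighter than yours. First, for the product identities $\psi_i D_{\{i,0\},0}=0$ and $\mathcal{H}_{i,j}D_{\{i,0\},0}=\mathcal{H}_{0,j}D_{\{i,0\},0}$, the paper does not argue geometrically on the boundary divisor but instead observes that $D_{\{i,0\},0}$ is the image of the section $\sigma_i$, so both identities reduce to showing that $\sigma_i^*\mathbb{L}_i$ and $\sigma_i^*(\mathcal{H}_{0,j}-\mathcal{H}_{i,j})$ are trivial line bundles on $\mathcal{M}(\textbf{d})$; this is a codimension-one statement again and can be checked after applying $\phi^*$, where it is standard. This avoids your slightly tangled aside about $D_{\{i,0\},0}^2$; the induction for $\psi_i^k$ goes through cleanly by multiplying $\psi_i^{k-1}=\pi^*(\psi_i)^{k-1}+\pi^*(\psi_i)^{k-2}D_{\{i,0\},0}$ by $\psi_i$ and using only $\psi_i D_{\{i,0\},0}=0$. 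Second, for the pushforward formulas the paper does not argue fibrewise but invokes the compatibility $\phi^*\pi_*=\tilde\pi_*\tilde\phi^*$ from \cite[Lemma 3.9]{vistoliintersection}, reducing everything at once to $\tilde\pi_*$ on $\mathcal{Y}_{d,n+1}\to\mathcal{Y}_{d,n}$. Your direct arguments are fine, but this single base-change identity is what makes the transfer of pushforwards rigorous without extra words; in particular it handles your hesitation about $\pi_*(\psi_i)=(n-2)[\mathcal{M}(\textbf{d})]$, which on the stable-maps side is exactly the computation underlying the dilaton equation in \cite{kockpsi}.
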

\begin{proof}
 The main strategy of our proof will be to transfer the claimed results, which live on the right hand side of the commutative diagram in Proposition \ref{Pro:uniquotdiag}, to the left side. There, all of them are either proved in \cite{kockpsi} or are immediate from the definitions.
 
 The first formula can be seen using this strategy, as the pullback on rational Picard groups under $\phi$ is injective.
 To prove $\psi_i D_{\{i,0\},0}=0$ and also $\mathcal{H}_{i,j} D_{\{i,0\},0} =  \mathcal{H}_{0,j} D_{\{i,0\},0}$, one uses that $D_{\{i,0\},0}$ is the isomorphic image of $\mathcal{M}(\textbf{d})$ under $\sigma_i$. Thus we only need to pull back $\mathbb{L}_i$ and $\mathcal{H}_{0,j} - \mathcal{H}_{i,j}$ via $\sigma_i$ and check that they are trivial bundles on $\mathcal{M}(\textbf{d})$. But again this can be checked after pulling back via $\phi$. On the left side of the diagram in Proposition \ref{Pro:uniquotdiag}, these are standard identities. The formula for $\psi_i^k$ follows by induction.
 
 It remains to show the formulas for the pushforwards of codimension $1$ cycles. Here, we can use the formula $\phi^*\pi_*=\tilde \pi_* \tilde \phi^*$ (see \cite[Lemma 3.9]{vistoliintersection}) to transfer these identities to the morphism $\mathcal{Y}_{d,n+1} \to \mathcal{Y}_{d,n}$. There, all of the formulas follow very easily from known results. 
\end{proof}

In the following, the nonnegative integers $k_1, \ldots, k_n$ and homogeneous classes $\gamma_1, \ldots, \gamma_n \in A^*(\PP^1 \times \PP^1)$ are chosen such that (\ref{eqn:degreecond}) is satisfied.
\begin{Pro}[String equation] \label{Pro:string}
 We have
 \begin{equation} \label{eqn:string}
  \langle \taumo{0}{0}(1) \prod_{i=1}^n \taumo{i}{k_i}(\gamma_i) \rangle_{(\textbf{d},0)} = \sum_{j=1}^n \langle \taumo{j}{k_j-1}(\gamma_j) \prod_{i\neq j} \taumo{i}{k_i}(\gamma_i) \rangle_{\textbf{d}}.
 \end{equation}
\end{Pro}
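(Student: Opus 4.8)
The strategy is the standard one: pull everything back to the prequotient $\mathcal{Y}_{d,n}$ via the cartesian diagram of Proposition \ref{Pro:uniquotdiag}, use the relations from Lemma \ref{Lem:auxipushpullpsi}, and push forward along $\pi$. Concretely, I would start from the left-hand side and use that $\tau_0^0(1) = e_0(1) = 1$, so that the integrand is $\prod_{i=1}^n \psi_i^{k_i} e_i(\gamma_i)$ on $\mathcal{M}(\textbf{d},0)$. The key point is that on $\mathcal{M}(\textbf{d},0)$ the class $\psi_i$ (for $i \in \{1, \ldots, n\}$) is \emph{not} the pullback of the class $\psi_i$ on $\mathcal{M}(\textbf{d})$; by Lemma \ref{Lem:auxipushpullpsi} we have $\psi_i^{k_i} = \pi^*(\psi_i)^{k_i} + \pi^*(\psi_i)^{k_i-1} D_{\{i,0\},0}$. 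Similarly $e_i(\gamma_i)$ involves $\mathcal{H}_{i,1}, \mathcal{H}_{i,2}$ which are pullbacks of the corresponding classes on $\mathcal{M}(\textbf{d})$ (the evaluation maps factor through $\pi$), so $e_i(\gamma_i) = \pi^* e_i(\gamma_i)$ for $1 \leq i \leq n$.

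Next I would expand the product $\prod_{i=1}^n \psi_i^{k_i} = \prod_{i=1}^n \big(\pi^*(\psi_i)^{k_i} + \pi^*(\psi_i)^{k_i-1} D_{\{i,0\},0}\big)$. Using $D_{\{i,0\},0} D_{\{j,0\},0} = 0$ for $i \neq j$ from Lemma \ref{Lem:auxipushpullpsi}, all cross terms with two or more distinct $D_{\{i,0\},0}$ factors vanish, so the product collapses to
\[
\prod_{i=1}^n \pi^*(\psi_i)^{k_i} + \sum_{j=1}^n \Big(\pi^*(\psi_j)^{k_j-1} D_{\{j,0\},0} \prod_{i \neq j} \pi^*(\psi_i)^{k_i}\Big).
\]
Multiplying by $\prod_i \pi^* e_i(\gamma_i)$ and integrating over $[\mathcal{M}(\textbf{d},0)]$, the first summand is a pullback of a codimension $(2(d-1)+n)$ class on the $(2(d-1)+n)$-dimensional space $\mathcal{M}(\textbf{d})$ pushed up one dimension, so $\pi_*$ of it is $0$ (or one sees directly it integrates to zero by the projection formula since $\pi_*(1) = 0$). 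For each term in the sum, I use the projection formula and $\pi_*(D_{\{j,0\},0}) = [\mathcal{M}(\textbf{d})]$ from Lemma \ref{Lem:auxipushpullpsi} to get $\pi_*\big(\pi^*(\alpha_j) D_{\{j,0\},0}\big) = \alpha_j$ where $\alpha_j = \psi_j^{k_j-1} e_j(\gamma_j) \prod_{i\neq j} \psi_i^{k_i} e_i(\gamma_i) = \tau_j^{k_j-1}(\gamma_j)\prod_{i\neq j}\tau_i^{k_i}(\gamma_i)$ on $\mathcal{M}(\textbf{d})$. Integrating over $[\mathcal{M}(\textbf{d})]$ then yields exactly the right-hand side of (\ref{eqn:string}).

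The one genuine subtlety — the main obstacle — is handling terms where some $k_j = 0$: then $\pi^*(\psi_j)^{k_j-1}$ is formally $\pi^*(\psi_j)^{-1}$, which is nonsense. The standard fix is to interpret such a term as absent on the nose: when $k_j = 0$ there is simply no $D_{\{j,0\},0}$ appearing (since $\psi_j^0 = 1 = \pi^*(\psi_j)^0$ has no correction term), so that index $j$ contributes nothing to the sum, matching the convention on the right-hand side that $\tau_j^{-1}(\gamma_j)$ is zero. I would state this carefully, noting that the expansion $\psi_j^{k_j} = \pi^*(\psi_j)^{k_j} + \pi^*(\psi_j)^{k_j-1}D_{\{j,0\},0}$ is only used for $k_j \geq 1$, and for $k_j = 0$ one has $\psi_j^0 = \pi^*(1)$ with no correction. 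A secondary minor point is checking that the dimension/degree bookkeeping is consistent: the degree condition (\ref{eqn:degreecond}) for the $(\textbf{d},0)$-invariant on the left, namely $\sum k_i + \sum \deg\gamma_i = 2(d-1)+(n+1)$ with the extra insertion $\tau_0^0(1)$ contributing degree $0$, matches $\dim \mathcal{M}(\textbf{d},0) = 2(d-1) + n + 1$, while each term on the right lives on $\mathcal{M}(\textbf{d})$ of dimension $2(d-1)+n$ with total degree $\sum k_i - 1 + \sum \deg \gamma_i$, so the identity is homogeneous of the correct degree. All the cohomological identities used are pulled back from $\overline{\mathcal{M}}_{0,n}(\PP^1\times\PP^1,(1,d))$ through the cartesian square, where they are classical (see \cite{kockpsi}), so no new geometric input beyond Lemma \ref{Lem:auxipushpullpsi} is needed.
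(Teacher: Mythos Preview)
Your proposal is correct and follows exactly the approach the paper indicates: the paper does not give a detailed proof but simply remarks that the argument is identical to the classical one for descendant invariants on $\overline M_{g,n}(X,\beta)$, with Lemma \ref{Lem:auxipushpullpsi} supplying the needed identities. Your write-up spells out precisely that standard computation, including the handling of the $k_j=0$ case, and requires no further input.
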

\begin{Pro}[Dilaton equation] \label{Pro:dilaton}
 We have
 \begin{equation} \label{eqn:dilaton}
  \langle \taumo{0}{1}(1) \prod_{i=1}^n \taumo{i}{k_i}(\gamma_i) \rangle_{(\textbf{d},0)} = (n-2) \langle \prod_{i=1}^n \taumo{i}{k_i}(\gamma_i) \rangle_{\textbf{d}}.
 \end{equation}
\end{Pro}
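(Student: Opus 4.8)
The plan is to reduce the Dilaton equation on $M(\textbf{d})$ to the corresponding statement for the forgetful morphism $\pi : \mathcal{M}(\textbf{d},0) \to \mathcal{M}(\textbf{d})$, exactly as the String equation should be handled. First I would rewrite the left-hand side of (\ref{eqn:dilaton}) as an integral over $M(\textbf{d},0)$ and use the projection formula for $\pi$: since $\taumo{0}{1}(1) = \psi_0$ and each factor $\taumo{i}{k_i}(\gamma_i)$ for $i \geq 1$ is a product of powers of $\psi_i$ and of the classes $\mathcal{H}_{i,j}$, I would push everything down to $M(\textbf{d})$ and reduce to computing $\pi_*$ of the product $\psi_0 \cdot \prod_{i=1}^n \taumo{i}{k_i}(\gamma_i)$.

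The key computational input is Lemma \ref{Lem:auxipushpullpsi}. I would use the comparison formulas $\psi_i^{k} = \pi^*(\psi_i)^k + \pi^*(\psi_i)^{k-1} D_{\{i,0\},0}$ and $\mathcal{H}_{i,j} D_{\{i,0\},0} = \mathcal{H}_{0,j} D_{\{i,0\},0}$ to rewrite each $\taumo{i}{k_i}(\gamma_i)$ as $\pi^*(\taumo{i}{k_i}(\gamma_i)) + \pi^*(\alpha_i) D_{\{i,0\},0}$ for suitable classes $\alpha_i$ on $M(\textbf{d})$. Multiplying by $\psi_0$ and using that $\psi_0 D_{\{i,0\},0} = 0$ (which is the $\psi_i D_{\{i,0\},0} = 0$ identity applied with the roles of $0$ and $i$ interchanged — here both markings are distinct), all the correction terms involving $D_{\{i,0\},0}$ are killed by $\psi_0$. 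So the product collapses to $\psi_0 \cdot \pi^*\!\left(\prod_{i=1}^n \taumo{i}{k_i}(\gamma_i)\right)$, and by the projection formula its pushforward is $\pi_*(\psi_0) \cdot \prod_{i=1}^n \taumo{i}{k_i}(\gamma_i)$. Finally $\pi_*(\psi_0) = (n-2)[\mathcal{M}(\textbf{d})]$ by the pushforward formula in Lemma \ref{Lem:auxipushpullpsi} (the space $\mathcal{M}(\textbf{d},0)$ has $n+1$ markings, so the relevant coefficient is $(n+1)-2 = n-1$; more carefully, one should use $\pi_*(\psi_0)$, and since $\psi_0$ is the $\psi$-class of the \emph{forgotten} marking this is exactly $(n-2)[\mathcal{M}(\textbf{d})]$ by the standard dilaton computation). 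Integrating over $[M(\textbf{d})]$ yields $(n-2)\langle \prod_{i=1}^n \taumo{i}{k_i}(\gamma_i) \rangle_{\textbf{d}}$, which is the right-hand side.

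The only real subtlety — and the place where I would be most careful — is the bookkeeping of the degree/dimension condition and making sure the correction terms genuinely vanish. One must check that multiplying the $D_{\{i,0\},0}$ corrections by $\psi_0$ gives zero; this follows because $D_{\{i,0\},0}$ is the image of the section $\sigma_i$ and the restriction of $\mathbb{L}_0$ to that section is trivial (equivalently $\psi_0|_{D_{\{i,0\},0}} = 0$), which is one of the identities established in Lemma \ref{Lem:auxipushpullpsi} after relabelling. A second point is that the pushforward formulas in that lemma were stated for $M(\textbf{d},0)$ with the extra marking labelled $0$; since the roles of markings $1,\dots,n$ and $0$ are symmetric on $\mathcal{Y}_{d,n+1}$ (the weight of the extra point is $0$, as are potentially others), $\pi_*(\psi_0)$ can be computed exactly as in the classical case, giving the coefficient $n-2$ matching the number of the \emph{remaining} markings minus two. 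I expect this relabelling/symmetry argument to be the main obstacle to write cleanly, but it is entirely formal. All the other steps are immediate applications of the projection formula and Lemma \ref{Lem:auxipushpullpsi}.
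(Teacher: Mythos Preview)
Your proposal is correct and follows exactly the approach the paper indicates: the standard proof of the Dilaton equation for Gromov--Witten descendants, with the necessary identities supplied by Lemma \ref{Lem:auxipushpullpsi}. The two points you flag as subtle are indeed harmless --- $\psi_0 D_{\{i,0\},0}=0$ holds by the same argument as $\psi_i D_{\{i,0\},0}=0$ (pull back $\mathbb{L}_0$ along $\sigma_i$), and the pushforward formula $\pi_*(\psi_0)=(n-2)[\mathcal{M}(\textbf{d})]$ is precisely the case of the lemma needed here, so your self-correction to $n-2$ is the right reading.
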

\begin{Pro}[Divisor equation] \label{Pro:divisor}
 For a divisor $D \in A^1(\PP^1 \times \PP^1)$ we have
 \begin{align*} \label{eqn:divisor}
  &\langle \taumo{0}{0}(D) \prod_{i=1}^n \taumo{i}{k_i}(\gamma_i) \rangle_{(\textbf{d},0)} \\= &\langle D,(1,d)\rangle \langle \prod_{i=1}^n \taumo{i}{k_i}(\gamma_i) \rangle_{\textbf{d}} +  \sum_{j=1}^n \langle \taumo{j}{k_j-1}(\gamma_j \cup D) \prod_{i\neq j} \taumo{i}{k_i}(\gamma_i) \rangle_{\textbf{d}}.
 \end{align*}
\end{Pro}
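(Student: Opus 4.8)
The three statements are all of the same flavour: an intersection number on $M(\textbf{d},0)$ with a distinguished insertion $\taumo{0}{k}(\gamma)$ at the extra marking $0$ is rewritten as a combination of intersection numbers on $M(\textbf{d})$. The plan is to reduce every such identity to an ordinary projection-formula computation along the forgetful map $\pi:\mathcal{M}(\textbf{d},0) \to \mathcal{M}(\textbf{d})$, using exactly the formulas collected in Lemma \ref{Lem:auxipushpullpsi}. The point is that all the needed relations between $\psi_i$, $\mathcal{H}_{i,j}$, $D_{\{i,0\},0}$ and their pullbacks under $\pi$ are already packaged there, having been transported from $\overline{\mathcal{M}}_{0,n}(\PP^1\times\PP^1,(1,d))$ via the cartesian diagram of Proposition \ref{Pro:uniquotdiag}. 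So the proof will be essentially formal once those are in hand, and there is no genuine ``hard part'' — the main care needed is purely bookkeeping with $\psi$-powers and the vanishing $\psi_i D_{\{i,0\},0}=0$.

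\emph{String equation.} Here $\taumo{0}{0}(1)=1$, so $\langle \taumo{0}{0}(1) \prod_i \taumo{i}{k_i}(\gamma_i)\rangle_{(\textbf{d},0)} = \int_{[M(\textbf{d},0)]} \prod_i \taumo{i}{k_i}(\gamma_i)$. I would push forward under $\pi$ and use the projection formula: $\int_{[M(\textbf{d},0)]}\prod_i \psi_i^{k_i} e_i(\gamma_i) = \int_{[M(\textbf{d})]} \pi_*\!\big(\prod_i \psi_i^{k_i} e_i(\gamma_i)\big)$. Now substitute $\psi_i^{k_i} = \pi^*(\psi_i)^{k_i} + \pi^*(\psi_i)^{k_i-1}D_{\{i,0\},0}$ (Lemma \ref{Lem:auxipushpullpsi}) and $e_i(\gamma_i) = \pi^*(e_i(\gamma_i)) + (\text{correction supported on }D_{\{i,0\},0})$; more precisely one uses $\mathcal{H}_{i,j}D_{\{i,0\},0}=\mathcal{H}_{0,j}D_{\{i,0\},0}$ to see that, after expansion, each term in $\prod_i \psi_i^{k_i}e_i(\gamma_i)$ is either a full pullback $\pi^*(\cdots)$ — which pushes forward to $0$ for dimension reasons, since it has the top degree of $M(\textbf{d},0)$ which is one more than that of $M(\textbf{d})$ — or contains exactly one factor $D_{\{i,0\},0}$ (terms with two distinct such factors vanish since $D_{\{i,0\},0}D_{\{j,0\},0}=0$), in which case $\pi_*(D_{\{i,0\},0}\cdot \pi^*(\alpha)) = \alpha$ and the surviving power of $\psi_i$ is dropped by one. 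Summing over which $i$ carries the $D_{\{i,0\},0}$ gives exactly the right-hand side of (\ref{eqn:string}).

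\emph{Dilaton and Divisor equations.} The Dilaton case is identical, with the extra insertion $\taumo{0}{1}(1)=\psi_0$; one uses $\pi_*(\psi_0)=(n-2)[\mathcal{M}(\textbf{d})]$ together with $\psi_0 D_{\{i,0\},0}=0$ (so no lowering of the other $\psi$-powers occurs) and $\psi_0\cdot\pi^*(\text{top class})=0$ by dimension, which collapses the whole expression to $(n-2)\langle\prod_i\taumo{i}{k_i}(\gamma_i)\rangle_{\textbf{d}}$. For the Divisor equation, write $D = s\,c_1(\mathcal{O}(1,0))+t\,c_1(\mathcal{O}(0,1))$, so $e_0(D) = s\,\mathcal{H}_{0,1}+t\,\mathcal{H}_{0,2}$, and use the pushforward formulas $\pi_*(\mathcal{H}_{0,1})=[\mathcal{M}(\textbf{d})]$, $\pi_*(\mathcal{H}_{0,2})=d[\mathcal{M}(\textbf{d})]$, $\pi_*(\mathcal{H}_{i,j})=0$ for $i\geq 1$, giving the leading term $\langle D,(1,d)\rangle\langle\prod_i\taumo{i}{k_i}(\gamma_i)\rangle_{\textbf{d}}$; the correction terms from the factors $D_{\{i,0\},0}$ in the $\psi_i$-expansions, combined with $\mathcal{H}_{0,j}D_{\{i,0\},0}=\mathcal{H}_{i,j}D_{\{i,0\},0}$ (which turns $e_0(D)$ into $e_i(D)$ against $D_{\{i,0\},0}$) and $\psi_iD_{\{i,0\},0}=0$, produce the sum $\sum_j\langle\taumo{j}{k_j-1}(\gamma_j\cup D)\prod_{i\neq j}\taumo{i}{k_i}(\gamma_i)\rangle_{\textbf{d}}$. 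The only subtlety to watch is that $e_i(\gamma_i\cup D)$ matches $e_i(\gamma_i)\cdot e_i(D)$ up to the relation $c_1(\mathcal{H}_{i,1})^2=0$ (the evaluation map factors through $\PP^1$ in the first coordinate), which is exactly why the product $\gamma_i\cup D$ in $A^*(\PP^1\times\PP^1)$, not a naive sum, appears; I would simply check this one identity on the three types of classes $1$, $c_1(\mathcal{O}(1,0))$, $c_1(\mathcal{O}(0,1))$, $[pt]$. Beyond that, the whole argument is a routine unwinding, so I do not anticipate any real obstacle — the work was already done in setting up Lemma \ref{Lem:auxipushpullpsi}.
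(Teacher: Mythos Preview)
Your approach is exactly the paper's: its entire proof reads ``The proofs of all three identities are exactly as for the corresponding descendant invariants on $\overline M_{g,n}(X,\beta)$. The analogous identities necessary in these proofs are exactly the statements in Lemma \ref{Lem:auxipushpullpsi},'' and you have written out that standard projection-formula argument in detail. One small caveat on your handling of the subtlety you flag: the relation $c_1(\mathcal{H}_{i,1})^2=0$ holds on $Y_{d,n}$ (where there is a genuine evaluation map to $\PP^1$) but not on the quotient $M(\textbf{d})$ --- indeed later in the paper one has $c_1(\mathcal{H}_{i,j})^2=t^2$ as an equivariant class --- so the identification of $e_i(\gamma_i)\cdot e_i(D)$ with $e_i(\gamma_i\cup D)$ is more delicate than you indicate, though the paper does not address this point either.
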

The proofs of all three identities are exactly as for the corresponding descendant invariants on $\overline M_{g,n}(X,\beta)$. The analogous identities necessary in these proofs are exactly the statements in Lemma \ref{Lem:auxipushpullpsi}.
\subsection{Intersection on boundary divisors}
The idea of our algorithm for the top-intersections of divisors $D_1, \ldots, D_{2d-2+n}$ on $M(d|d_1, \ldots, d_n)$, following a similar algorithm presented in \cite{pandhaintersect}, will be to first reduce to the case $D_1=D_{B,k}$, a boundary divisor.  Then we restrict all divisors $D_2, \ldots, D_{2d-2+n}$ to $D_1$ and perform the intersection there. In the following, we look at this last step.

Let us outline again the strategy used in \cite{pandhaintersect}, but already adapted to our setting. There, the product structure of the boundary divisor $D_1$ was used to reduce the intersection of the remaining divisors to an intersection on a moduli space of lower degree or with fewer marked points. Indeed, consider a boundary divisor $D= D_{B,k}$ on $Y_{d,n}=\overline M_{0,n}(\PP^1 \times \PP^1, (1,d))$. Let $A=\{1, \ldots, n\} \setminus B$.
Then we have a gluing map 
\begin{align*}
 \tau :  \overline M_{0,A \cup \{p\}}(\PP^1 \times \PP^1, (1,d-k)) \times_{\PP^1 \times \PP^1} \overline M_{0,B \cup \{p'\}}(\PP^1 \times \PP^1, (0,k)) \to D_{B,k}.
\end{align*}
Informally a pair $(f_A: C_A \to \PP^1 \times \PP^1, f_B: C_B \to \PP^1 \times \PP^1)$ of stable maps is sent to the induced map $f_A \coprod f_B : C_A \coprod_p C_B \to \PP^1 \times \PP^1$, where the curves are glued along the points $p,p'$, which by definition map to the same point $f_A(p)=f_B(p')$. For a formal definition of $\tau$ see \cite[Section 6.2]{fultonpandha}. 
As we have 
\[M_{0,B \cup \{p'\}}(\PP^1 \times \PP^1, (0,k)) = \PP^1 \times M_{0,B \cup \{p'\}}(\PP^1, k),\]
we can perform a partial fibre product above. Let 
\begin{align*}
 \overline M_A &= \overline M_{0,A \cup \{p\}}(\PP^1 \times \PP^1, (1,d-k)),\\
 \overline M_B &= \overline M_{0,B \cup \{p'\}}(\PP^1, k),
\end{align*}
then the map $\tau$ above has the form
\begin{align*}
 \tau :  \overline M_A \times_{\PP^1} \overline M_B \to D_{B,k}.
\end{align*}
By \cite[Lemma 12]{fultonpandha}, the map $\tau$ is birational, as the degrees $(1,d-k)$ and $(0,k)$ are always distinct. Thus any top-dimensional intersection number on $D$ can be computed on the product $\overline M_A \times_{\PP^1 } \overline M_B$. We can further simplify this problem by observing that the diagram 
\[ \begin{CD}
 \overline M_A \times_{\PP^1 } \overline M_B     @>>>  \overline M_A \times \overline M_B\\
@VVV        @VV\text{ev}=(\pi_2 \circ \text{ev}_p) \times \text{ev}_p'V\\
\Delta_{\PP^1 }    @>>>  \PP^1 \times \PP^1
\end{CD}\]
is cartesian. As we will see immediately, all the restrictions of divisor classes from $Y_{d,n}$ via $\tau$ come from classes on $M_A \times M_B$. Hence we can perform the intersection there if we add to the intersection the class $\text{ev}^*([\Delta_{\PP^1} ])$. But (the Poincare dual of) the class $[\Delta_{\PP^1}]$ is given by $c_1(\mathcal{O}(1,1))$, hence after the pullback via $\text{ev}$ we have
\[\text{ev}^* \mathcal{O}(1,1) = \mathcal{H}_{p,2} \boxtimes \text{ev}_{p'}^* \mathcal{O}_{\PP^1}(1).\]
Here, for $\mathcal{L}$ a line bundle on $\overline M_A$ and $\mathcal{M}$ a line bundle on $\overline M_B$, we denote by $\mathcal{L} \boxtimes \mathcal{M}$ the bundle $\pi_{\overline M_A}^*(\mathcal{L}) \otimes \pi_{\overline M_B}^* (\mathcal{M})$ on $\overline M_A \times \overline M_B$. Below, we denote in the same way also the restriction of $\mathcal{L} \boxtimes \mathcal{M}$ to $\overline M_A \times_{\PP^1 } \overline M_B$.
Let us compute the pullbacks of divisors on $Y_{d,n}$ via $\tau$. 
\begin{Pro} \label{Pro:DivRestr}
 Let $D$ be a rational divisor class on $Y_{d,n}$. Then
 \begin{itemize}
  \item for $D= \mathcal{H}_{i,j}$ with $i \in A$ we have $\tau^* D = \mathcal{H}_{i,j} \boxtimes \mathcal{O}$,
  \item for $D= \mathcal{H}_{i,1}$ with $i \in B$ we have $\tau^* D = \mathcal{H}_{p,1} \boxtimes \mathcal{O}$,
  \item for $D= \mathcal{H}_{i,2}$ with $i \in B$ we have $\tau^* D = \mathcal{O} \boxtimes \text{ev}_{i}^* \mathcal{O}_{\PP^1}(1)$,
  \item for $D=\mathcal{G}$ in the case $d=0$ we have $\tau^* D = \mathcal{G} \boxtimes \mathcal{O}$,  
  \item for $D=D_{B',k'}$ with $B' \neq B$ or $k' \neq k$ we have that $D,D_{B,k}$ intersect transversally in the loci
  \begin{itemize}
   \item $D_{B',k'} \times_{\PP^1} \overline M_{B}$ for $B' \subset A$, $d-k \geq k'$,
   \item $D_{(B' \setminus B) \cup \{p\},k'-k}\times_{\PP^1} \overline M_{B}$ for $B \subset B'$ and $k \leq k'$,
   \item $\overline M_A \times_{\PP^1} D((B \setminus B') \cup \{p'\}, k-k'; B', k')$ for $B' \subset B$ and $k' \leq k$ and ($k'<k$ or $B \setminus B' \neq \emptyset$),
  \end{itemize}
  and thus $\tau^* D$ is given by the sum of the corresponding divisors (with multiplicity $1$),
  \item for $D=D_{B,k}$ with $B \neq \emptyset$ or $2k > d$ we have $\tau^*D = \psi_{p}^\vee \boxtimes \psi_{p'}^\vee$. If $B = \emptyset$ and $2k \leq d$ we have an additional summand $D_{\emptyset,k} \boxtimes \mathcal{O}$. Here $\psi_{q}$ denotes the cotangent line bundle corresponding to the marking $q$.
 \end{itemize}
\end{Pro}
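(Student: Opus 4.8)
The plan is to split the six bullet points into three groups and argue each in a uniform way: the evaluation classes $\mathcal{H}_{i,j}$ and $\mathcal{G}$ by writing down explicitly the family of stable maps that induces $\tau$, the ``off-diagonal'' boundary classes $D_{B',k'}$ with $(B',k')\neq(B,k)$ by the transversality theory of boundary strata in $\overline M_{0,n}(\PP^1\times\PP^1,(1,d))$, and the ``diagonal'' class $D_{B,k}$ by a normal-bundle computation together with a correction accounting for the non-injectivity of $\tau$.

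First I would recall, following \cite[Section 6.2]{fultonpandha}, that over $\overline M_A\times_{\PP^1}\overline M_B$ the gluing map $\tau$ is induced by the nodal curve obtained from the two universal curves carrying the maps $f_A$ of degree $(1,d-k)$ and $f_B$ of degree $(0,k)$ (whose constant first coordinate is the one forced by the fibre product), glued along the sections $\sigma_p,\sigma_{p'}$. From this description the $i$-th marked point of the glued curve lies on the $f_A$-component when $i\in A$ and on the $f_B$-component when $i\in B$. Hence for $i\in A$ we get $\text{ev}_i\circ\tau=(\text{ev}_i\ \text{on}\ \overline M_A)\circ\pi_{\overline M_A}$, so $\tau^*\mathcal{H}_{i,j}=\mathcal{H}_{i,j}\boxtimes\mathcal{O}$; for $i\in B$ the first coordinate of $f_B$ equals the first coordinate of $f_A$ at the node, giving $\tau^*\mathcal{H}_{i,1}=\mathcal{H}_{p,1}\boxtimes\mathcal{O}$, while its second coordinate is the degree-$k$ map recorded on $\overline M_B$, giving $\tau^*\mathcal{H}_{i,2}=\mathcal{O}\boxtimes\text{ev}_i^*\mathcal{O}_{\PP^1}(1)$; and for $d=0$ (so $k=0$) the second coordinate is globally constant and agrees with the one on the $f_A$-component, so $\tau^*\mathcal{G}=\mathcal{G}\boxtimes\mathcal{O}$. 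All of these are immediate once the family is written out.

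For a boundary divisor $D_{B',k'}$ with $(B',k')\neq(B,k)$, I would use that $D_{B,k}$ and $D_{B',k'}$ either are disjoint or meet transversally (both $\overline M_{0,n}(\PP^1\times\PP^1,(1,d))$ and its boundary strata being those of a smooth Deligne-Mumford stack, cf.\ \cite{fultonpandha},\cite{pandhaconnected}) along a codimension-$2$ two-node stratum: one node is the node $p\sim p'$ of $D_{B,k}$, and the second is inserted either (i) on the degree-$(1,d-k)$ component, (ii) between the degree-$(0,k)$ component and a further vertical tail carrying $B'\setminus B$ attached on the $A$-side, or (iii) inside the degree-$(0,k)$ component. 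Pulling back along $\tau$, these are precisely the images of the boundary divisors $D_{B',k'}$ of $\overline M_A$, $D_{(B'\setminus B)\cup\{p\},k'-k}$ of $\overline M_A$, and $D((B\setminus B')\cup\{p'\},k-k';B',k')$ of $\overline M_B$ (crossed with the other factor over $\PP^1$), and the stated constraints ($B'\subset A$, $d-k\geq k'$; $B\subset B'$, $k\leq k'$; $B'\subset B$, $k'\leq k$ with $k'<k$ or $B\setminus B'\neq\emptyset$) are exactly the conditions that the two components created by the second node be Kontsevich-stable. Transversality gives multiplicity one, so $\tau^*D_{B',k'}$ is the sum of whichever of (i)--(iii) occur, and $0$ if none do.

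The last and most delicate point is $\tau^*D_{B,k}$. On the open set $U\subset\overline M_A\times_{\PP^1}\overline M_B$ over which $\tau$ is an isomorphism onto an open subset of $D_{B,k}$, the pullback of $\mathcal{O}_{Y_{d,n}}(D_{B,k})$ is the normal bundle of $D_{B,k}$, which by the standard smoothing-the-node computation is $\mathbb{L}_p^\vee\otimes\mathbb{L}_{p'}^\vee$, i.e.\ the class $\psi_p^\vee\boxtimes\psi_{p'}^\vee$. The remaining work is to pin down the divisorial part of the complement of $U$, i.e.\ where $\tau$ fails to be an isomorphism; I expect this to be nonempty only when $B=\emptyset$ and $2k\leq d$, where it is the divisor $D_{\emptyset,k}$ of $\overline M_A$ crossed with $\overline M_B$: there the glued curve carries two degree-$k$ vertical tails, so $\tau$ maps this locus into the self-overlap of $D_{\emptyset,k}$ and $\tau^*\mathcal{O}_{Y_{d,n}}(D_{\emptyset,k})$ acquires an extra copy of it with multiplicity one. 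Making this rigorous --- controlling the codimension-one locus where $\tau$ is not an isomorphism and checking the multiplicity of the correction --- is the main obstacle; I would handle it either by an excess-intersection argument along $D_{B,k}$, or more concretely by computing both sides of the claimed identity against the test curves $C_{B',k'}$ of Proposition \ref{Pro:CBk} lifted to $\overline M_A\times_{\PP^1}\overline M_B$, which by Proposition \ref{Pro:iddivisors} determines the class uniquely.
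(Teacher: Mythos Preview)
Your proposal is correct and follows essentially the same route as the paper's proof: evaluation classes via the explicit description of $\text{ev}_i\circ\tau$, off-diagonal boundary divisors via transversality (the paper cites \cite[Theorem 3]{fultonpandha}) together with the combinatorial enumeration of three-vertex specializations, and the self-intersection via the normal bundle $\mathbb{L}_p^\vee\otimes\mathbb{L}_{p'}^\vee$ plus a correction from the generically $2{:}1$ locus when $B=\emptyset$ and $2k\le d$. Your description of case (ii) is slightly garbled---the extra node sits on the $\overline M_A$ side, inserting a new vertical component between the horizontal component and the old degree-$(0,k)$ tail---but you land on the correct divisor $D_{(B'\setminus B)\cup\{p\},k'-k}$, and your caution about the multiplicity-one correction is well placed: the paper itself asserts this step rather tersely, so either of your suggested verifications (excess intersection or test curves) would in fact strengthen the argument.
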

\begin{proof}
 The formulas for the pullbacks of divisors $\mathcal{H}_{i,j}$ are immediate from the fact that, depending on whether $i \in A$ or $i \in B$ , the composition of the $i$th evaluation map on $Y_{d,n}$ with $\tau$ only depends on one of the factors $\overline M_A$, $\overline M_B$. Moreover, the horizontal position of the points in $B$ agrees with the horizontal position of the gluing point $p$ coming from $\overline M_A$. The pullback of $\mathcal{G}$ in case $d=0$ is also clear.
 
 It remains to consider the boundary divisors. Two different boundary divisors intersect transversally by \cite[Theorem 3]{fultonpandha} and above we have just distinguished all cases that are combinatorially possible. They correspond to decorated dual graphs of curves with three vertices that are specializations of the dual graphs coming from $D_{B,k}$ and $D_{B',k'}$.
 
 We are left to consider the self-intersection of the boundary divisor $D_{B,k}$. It is known that at a generic point $(f: C \coprod_{p,p'} C' \to \PP^1 \times \PP^1) \in D_{B,k}$, the normal space of $D_{B,k}$ in $Y_{d,n}$ is exactly given by $T_p C \otimes T_{p'} C'$, the fibre of the bundle $\psi_{p}^\vee \boxtimes \psi_{p'}^\vee$. If $B \neq \emptyset$ or $2k>d$, the map $\tau$ is an embedding and thus indeed the restriction of $D_{B,k}$ via $\tau$ is the normal bundle $\psi_{p}^\vee \boxtimes \psi_{p'}^\vee$.
 
 However, if $B= \emptyset$ and $2k \leq d$, then $\tau$ is still birational, but it is not injective exactly on the divisor $D_{\emptyset,k}\times_{\PP^1} \overline M_B$, where it is generically $2:1$. Thus we have to add this cycle with multiplicity $1$. \todo{Please proofread} \todoOld{Rigorous argument? ... use that two critical components of $C_W B \times B$ intersect in codimension $2$ set, which can be ignored. Still missing: deformation theoretic argument ...}
\end{proof}
For $\textbf{d}=(d|d_1, \ldots, d_n)$ admissible, we use a similar strategy as above on our quotient spaces $M(\textbf{d})$. Let $A \cup B = \{1, \ldots, n\}$ be a partition of our markings and $0 \leq k \leq d$ such that $D=D_{B,k}$ is a divisor on $M(\textbf{d})$, i.e. $D^{ss,\textbf{d}}= D_{B,k} \cap  Y_{d,n}^{ss,\textbf{d}} \neq \emptyset$. We now want to restrict the map $\tau$ above to the preimage of $D^{ss,\textbf{d}}$. From the criterion for semistability in Lemma \ref{Lem:semistableadvanced}, one sees that the question whether $\tau(p,q)$ is semistable for $p \in \overline M_A, q \in \overline M_B$, depends solely on $p$. Let $A=\{a_1, \ldots, a_m\}$ and $\textbf{d}_A = (d-k|d_{a_1}, \ldots, d_{a_m}, k+ \sum_{b \in B} d_b)$. Then we define 
\[\overline M_A^{ss} = \overline M_{0,A \cup \{p\}}(\PP^1 \times \PP^1, (1,d-k))^{ss,\textbf{d}_A}.\]
We have 
\[\tau^{-1}(Y_{d,n}^{ss,\textbf{d}}) = \overline M_A^{ss} \times_{\PP^1} \overline M_B\]
and $\tau$ restricted to this set will still be birational. 
Now we can apply Lemma \ref{Lem:preimstable} to the morphism $\tau$ and see that for a suitable $G$-linearized ample line bundle on $\overline M_A \times_{\PP^1} \overline M_B$, the (semi)stable points are exactly given by $\tau^{-1}(Y_{d,n}^{ss,\textbf{d}})$. Thus we obtain a geometric quotient $\overline M_A^{ss} \times_{\PP^1} \overline M_B / G$ of $\overline M_A^{ss} \times_{\PP^1} \overline M_B$ and an induced map
\[\tilde \tau : \overline M_A^{ss} \times_{\PP^1 } \overline M_B / G \to D^{ss,\textbf{d}}/G = D_{B,k}.\]
As $\tau$ was an isomorphism over a $G$-invariant subset of $D^{ss, \textbf{d}}$, the map $\tilde \tau$ is still proper and birational. Hence we can reduce top-dimensional intersections on $D_{B,k}$ to intersections on $\overline M_A^{ss} \times_{\PP^1 } \overline M_B / G$. As before we have that
\[\overline M_A^{ss} \times_{\PP^1} \overline M_B / G \subset \overline M_A^{ss} \times \overline M_B / G\]
is a divisor and it is cut out exactly by the Chern class of the line bundle corresponding to \[\text{ev}^*\mathcal{O}(1,1) = \mathcal{H}_{p,2} \boxtimes \text{ev}_{p'}^* \mathcal{O}_{\PP^1}(1).\] 
As the quotient maps induce isomorphisms of the rational Picard groups, Proposition \ref{Pro:DivRestr} tells us how to restrict divisor classes from $M(\textbf{d})$ via the map $\tilde \tau$. Hence we have reduced the original question to the intersection theory of $\overline M_A^{ss} \times \overline M_B / G$. In the following subsection we are going to see how to reduce this to (equivariant) intersections on the factors $\overline M_A^{ss}$ and $\overline M_B$.

\subsection{Equivariant intersection theory of products} \label{Sect:EquivarProduct}
In the following let $X,Y$ be varieties over $\mathbb{C}$ and let $G$ be a reductive group acting on $X,Y$ with finite stabilizers. The example to keep in mind is $X=\overline M_A^{ss}$, $Y=\overline M_B$. Assume that for the induced diagonal action of $G$ on $X \times Y$ we have a geometric quotient $Z=X \times Y / G$. We want to study the intersection theory of $Z$. To compute the full intersection ring is, in general, a very hard problem. Even for $G=\{e\}$ trivial, $Z=X \times Y$, this ring can be quite complicated, as only very special schemes $X,Y$ satisfy a K\"unneth-type formula $A^*(X \times Y) = A^*(X) \otimes A^*(Y)$. However, if we restrict ourselves to intersections of classes pulled back from the two factors $X,Y$, we can compute the intersections on $X,Y$ and then take the exterior product of the resulting cycles. We will try a similar approach in the presence of a nontrivial $G$.

Here, it is natural to use equivariant intersection theory. This was introduced by Edidin and Graham in \cite{EqIntTh}. Let $V$ be a  representation of $G$ of dimension $l$ containing an open set $U$, with complement of sufficiently large codimension, on which $G$ acts freely. Denote by $X_G= (X \times U)/G$ the quotient in the category of algebraic spaces. Then one defines the equivariant Chow-group $A_i^G(X)$ of $i$-cycles to be $A_{i+l-g}(X_G)$. For $X' \subset X$ an invariant closed subscheme, there is a natural fundamental class $[X']_G \in A_i^G(X)$, induced from the class of the subscheme $X' \times U$ in $(X \times U)/G$. These groups share many functorialities (proper pushforward, flat pullback, exterior product, ...) with ordinary Chow groups. This allows to define equivariant operational Chow groups $A_G^i(X)$ as operations $c(W \to X) : A_*^G(W) \to A_{*-i}^G(W)$ for every morphism $W \to X$, satisfying the usual compatibility relations. Below we are going to collect several key properties of these groups from \cite{EqIntTh}, that we will use in the following:
\begin{itemize}
 \item If $X$ is smooth of dimension $n$ then $A^{i}_G(X) \xrightarrow{ \cap [X]_G} A_{n-i}^G(X)$ is an isomorphism. (\cite[Proposition 4]{EqIntTh})
 \item As $X$ is assumed separated and as $\text{char}(\mathbb{C})=0$, we have a canonical isomorphism $A_G^k(X) = A^k(X_G)$, assuming that $V \setminus U$ has codimension more than $k$. (\cite[Corollary 2]{EqIntTh})
 \item For $G$ a connected reductive group with split maximal torus $T$ and Weyl group $W$, we have $A^G_*(X) \otimes \mathbb{Q} = A^T_*(X)^W \otimes \mathbb{Q}$. (\cite[Proposition 6]{EqIntTh})
 \item For $H \subset G$ a subgroup, there is a pull-back map $A^G_*(X) \to A^H_*(X)$ induced by the flat map $X_H = (X \times U)/H \to X_G = (X \times U)/G$. (Section after \cite[Proposition 6]{EqIntTh})
 \item If the action of $G$ on $X$ is (locally) proper and we have a quotient $\pi: X \to Y$, then there is an isomorphism 
 \[\pi^* : A^*(Y)_\mathbb{Q} \to A_G^*(X)_\mathbb{Q}\]
 of operational Chow rings. (\cite[Theorem 3]{EqIntTh})
\end{itemize}
We will need the following variant of the fourth point for operational Chow rings.
\begin{Lem}
 Let $H \subset G$ be a subgroup, then there is a pull-back map $\varphi_{G,H}: A_G^*(X) \to A_H^*(X)$ induced by the flat map $X_H = (X \times U)/H \to X_G = (X \times U)/G$.
\end{Lem}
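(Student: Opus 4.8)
The statement is essentially a refinement of the fourth bullet point quoted from \cite{EqIntTh}, passing from ordinary equivariant Chow groups to equivariant \emph{operational} Chow groups. The plan is to mimic the construction of pullback maps for operational Chow rings along flat morphisms. First I would recall that for a fixed degree $k$ one has, by the second bullet above (\cite[Corollary 2]{EqIntTh}), canonical identifications $A_G^k(X) = A^k(X_G)$ and $A_H^k(X) = A^k(X_H)$, provided the representation $V$ is chosen with $V \setminus U$ of codimension larger than $k$; since $k$ is arbitrary, one works with a system of such representations and checks that the resulting maps are compatible with enlarging $V$ (this compatibility is already built into the definition of equivariant operational classes).

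Next, the key input is that the natural morphism $f \colon X_H = (X \times U)/H \to X_G = (X \times U)/G$ is flat: indeed it is an \'etale-locally trivial fibre bundle with fibre $G/H$, hence flat (and even smooth when $H$ is smooth, which suffices here since we work in characteristic $0$). For any flat morphism $f \colon X_H \to X_G$ of schemes (or algebraic spaces) there is a pullback $f^* \colon A^*(X_G) \to A^*(X_H)$ of operational Chow rings, defined by $(f^* c)(W \to X_H) = c(W \to X_H \to X_G)$ for every $W \to X_H$; see \cite[Section 17]{inttheory} for the construction in the scheme case, which carries over to algebraic spaces. Transporting this through the identifications $A_G^k(X) = A^k(X_G)$, $A_H^k(X) = A^k(X_H)$ gives the desired map $\varphi_{G,H} \colon A_G^*(X) \to A_H^*(X)$.

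Finally I would check that $\varphi_{G,H}$ is a ring homomorphism (immediate, since $f^*$ of operational classes preserves composition/cup product by construction) and that its effect on equivariant Chow groups of cycles is exactly the flat pullback map from the fourth bullet of \cite{EqIntTh}, i.e.\ that the diagram relating $\varphi_{G,H}$ on $A_G^*(X)$ with the map $A_*^G(X) \to A_*^H(X)$ via the cap-product action commutes. This is again formal: both are induced by the same flat morphism $f$, and flat pullback of operational classes is compatible with flat pullback of cycles and with refined Gysin morphisms. The only mild subtlety — the main thing to be careful about rather than a genuine obstacle — is the usual one in equivariant intersection theory: making sure all statements are read for a fixed codimension $k$ with the open set $U$ chosen so that $V \setminus U$ has codimension $> k$, and that the construction is independent of that choice, so that the maps glue into a map of graded rings. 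With that bookkeeping in place the lemma follows.
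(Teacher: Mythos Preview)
The paper states this lemma without proof, treating it as an immediate variant of the fourth bullet point from \cite{EqIntTh} (the flat pullback $A_*^G(X)\to A_*^H(X)$). Your proposal correctly supplies the missing details and is exactly the natural argument: identify $A_G^k(X)\cong A^k(X_G)$ and $A_H^k(X)\cong A^k(X_H)$ via \cite[Corollary~2]{EqIntTh}, then use the ordinary operational pullback along $X_H\to X_G$. One minor remark: operational Chow classes admit pullback along \emph{any} morphism of schemes, not just flat ones, so the flatness hypothesis is not strictly needed to define $\varphi_{G,H}$ on operational groups; it is invoked here mainly for consistency with the cycle-level map in \cite{EqIntTh} and for the compatibility check you mention at the end.
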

By the last property above, if the action of $G$ on $X \times Y$ is proper, we have an isomorphism $A^*((X  \times Y) /G)_{\mathbb{Q}} \cong A^*_G(X \times Y)_{\mathbb{Q}}$. As we will heavily rely on this result, we make the following convention:
\begin{center}
 \textbf{From now on, all Chow groups and operational Chow groups are taken with $\mathbb{Q}$-coefficients.}
\end{center}
By pullback and product we can define a map 
\begin{equation}\Phi: A_G^*(X) \otimes A_G^*(Y) \to A^*_G(X \times Y). \label{eqn:EqInt1}\end{equation}
Our goal in this section will be to understand the map above in top-degree, that is in degree $\dim(X)+\dim(Y)-\dim(G)$. Note that in contrast to the usual intersection theory, the equivariant operational Chow groups $A_G^*(pt) = A_G^*(\text{Spec}\ \mathbb{C})$ can be highly nontrivial. Both $A_G^*(X), A_G^*(Y)$ are modules over this ring and by functoriality we see that the map in (\ref{eqn:EqInt1}) factors through their tensor product over this ring:
\begin{equation}\Phi: A_G^*(X) \otimes_{A_G^*(pt)} A_G^*(Y) \to A^*_G(X \times Y). \label{eqn:EqInt2}\end{equation}
Given several elements in $A_G^*(X)$ and $A_G^*(Y)$, the basic idea will be to first compute their product in their respective rings, then use the functoriality above to shift factors coming from $A_G^*(pt)$ between $A_G^*(X)$ and $A_G^*(Y)$ and only then compute their image in $A^*_G(X \times Y)$. Let us first make this last step precise, to see why we may want to shift classes between the factors.
\begin{Pro} \label{Pro:EqProdRecipe}
 Assume that for the action of $G$ on $X$  and on $X \times Y$ there exist $G$-linearized line bundles with $X=X^{ss}=X^{s}$ and $X \times Y = (X \times Y)^{ss}=(X \times Y)^s$. Let $\alpha \in A^*(X \sslash G)=A_G^*(X)$ such that $\alpha \cap [X \sslash G] = [\{[x]\}]$ for a point $x \in X$ (which has finite stabilizer $G_x$ by our global assumptions). Then the preimage of $[x]$ under the projection $\pi_X: (X \times Y) \sslash G \to X \sslash G$ is the image of $Y$ under the map 
 \[i : Y \to (X \times Y) \sslash G, y \mapsto [(x,y)].\]
 We have $i_* [Y] = |G_x| \cdot [\pi_X^{-1}( [x])]$.
 For any class $\beta \in A_G^*(Y)$ we then have
 \[\Phi(\alpha \otimes \beta) \cap [(X \times Y) \sslash G] = \frac{1}{|G_x|} i_* \left( \varphi_{G,\{e\}}(\beta) \cap [Y]\right).\]
 Here $\varphi_{G,\{e\}} : A_G^*(Y) \to A^*(Y)$ is the map coming from the trivial subgroup $\{e\}$ of $G$.
\end{Pro}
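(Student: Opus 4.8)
The plan is to verify the identity by reducing it to ordinary intersection theory on $Y$ via the two displayed claims about $i$ and then unwinding the definitions of $\Phi$ and $\varphi_{G,\{e\}}$. First I would establish the set-theoretic description of $\pi_X^{-1}([x])$: a point of $(X\times Y)\sslash G$ lying over $[x]$ is a $G$-orbit of a pair $(x',y)$ with $x'$ in the orbit of $x$; using that the orbit of $x$ is exactly $Gx$ and that the stabilizer $G_x$ is finite, one sees that every such orbit has a representative of the form $(x,y)$, and two representatives $(x,y)$, $(x,y')$ are identified iff $y' \in G_x y$. Hence $\pi_X^{-1}([x])$ is the image of the finite quotient $Y/G_x$ under $y \mapsto [(x,y)]$, and the map $i: Y \to (X\times Y)\sslash G$ factors as $Y \to Y/G_x \xrightarrow{\sim} \pi_X^{-1}([x])$. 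Since $Y \to Y/G_x$ is finite of degree $|G_x|$ onto its image (here one uses that the $G$-action, hence the $G_x$-action, has finite stabilizers, so the generic fibre has $|G_x|$ points), pushing forward fundamental classes gives $i_*[Y] = |G_x| \cdot [\pi_X^{-1}([x])]$ as a cycle on $(X\times Y)\sslash G$.

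Next I would compute $\Phi(\alpha\otimes\beta)\cap[(X\times Y)\sslash G]$. Under the identifications $A_G^*(X) = A^*(X\sslash G)$ and $A_G^*(X\times Y) = A^*((X\times Y)\sslash G)$ (valid by the properness assumptions and the last bullet point quoted from \cite{EqIntTh}), the pullback part of $\Phi$ applied to $\alpha$ is just $\pi_X^* \alpha$. So $\Phi(\alpha\otimes\beta) = \pi_X^*\alpha \cdot \pi_Y^*\beta'$ where $\beta'$ is the operational class on $Y\sslash G$ corresponding to $\beta$ — but here I must be slightly careful, since $\beta \in A_G^*(Y)$ need not be the pullback of a class from a quotient $Y\sslash G$ (we have no stability hypothesis on $Y$ alone). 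The cleanest route is: work entirely on the mixed space $(X\times Y)_G = (X\times Y\times U)/G$, where $A_G^*(X\times Y) = A^*((X\times Y)_G)$, realize $\pi_X^*\alpha$ there, cap with the fundamental class, and use the projection formula for the proper map $(X\times Y)\sslash G \to X\sslash G$ together with $\alpha\cap[X\sslash G] = [\{[x]\}]$ to get $\Phi(\alpha\otimes\beta)\cap[(X\times Y)\sslash G] = (\pi_Y^*\beta)\cap[\pi_X^{-1}([x])]$. Then substitute $[\pi_X^{-1}([x])] = \tfrac{1}{|G_x|} i_*[Y]$ and apply the projection formula once more for $i$: $(\pi_Y^*\beta)\cap i_*[Y] = i_*\big( i^*\pi_Y^*\beta \cap [Y]\big)$. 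Finally $i^*\pi_Y^*\beta$ is the restriction of $\beta$ along $Y \xrightarrow{i} (X\times Y)\sslash G \xrightarrow{\pi_Y} Y\sslash G$; chasing through the definition of the mixed spaces, $\pi_Y\circ i$ is (up to the choice of $U$) the composite $Y \to Y_{G_x} \to Y_G$, which is exactly the map inducing $\varphi_{G,\{e\}}$ after further pulling back to the trivial group — so $i^*\pi_Y^*\beta = \varphi_{G,\{e\}}(\beta) \in A^*(Y)$. Assembling the pieces yields
\[
\Phi(\alpha\otimes\beta)\cap[(X\times Y)\sslash G] = \frac{1}{|G_x|}\, i_*\big(\varphi_{G,\{e\}}(\beta)\cap[Y]\big),
\]
as claimed.

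I expect the main obstacle to be the bookkeeping in the last step: carefully identifying the restriction $i^*\pi_Y^*\beta$ with $\varphi_{G,\{e\}}(\beta)$. The subtlety is that $\beta$ lives in $A_G^*(Y)$ and $\varphi_{G,\{e\}}(\beta)$ is defined via the flat map $Y_{\{e\}} = (Y\times U)/\{e\} \to Y_G$, whereas $i$ lands in $(X\times Y)\sslash G$ and carries no obvious free $U$-coordinate; one has to check that restricting to the slice $\{x\}\times Y$ inside $X\times Y$ and then passing to the quotient has the effect of trivializing the $G$-bundle down to the residual $G_x$-action, and then further to the trivial action after an appropriate base change, so that the operational class $\beta$ gets evaluated exactly as $\varphi_{G,\{e\}}$ prescribes. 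A clean way to sidestep delicate diagram chases is to note that both sides of the desired formula are $A_G^*(pt)$-linear in $\beta$ and additive, so it suffices to check it on a generating set of $A_G^*(Y)$ as an $A_G^*(pt)$-module, or even just to check the two sides agree after applying the (injective, in top degree) degree map $\int_{(X\times Y)\sslash G}$ — reducing everything to the numerical statement $\int_{(X\times Y)\sslash G}\Phi(\alpha\otimes\beta) = \tfrac{1}{|G_x|}\int_Y \varphi_{G,\{e\}}(\beta)$, which follows from the fibre-degree computation above. I would present the argument in this numerical form to keep the proof short.
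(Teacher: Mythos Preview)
Your outline is essentially the paper's argument: identify $\pi_X^{-1}([x])$ with $Y/G_x$, deduce $i_*[Y]=|G_x|\cdot[\pi_X^{-1}([x])]$, reduce $\Phi(\alpha\otimes\beta)\cap[(X\times Y)\sslash G]$ to a class supported on the fibre, apply the projection formula for $i$, and finally identify $i^*$ of the $\beta$-part with $\varphi_{G,\{e\}}(\beta)$ by lifting $i$ through the mixed space $(X\times Y\times U)/G$ and projecting to $(Y\times U)/G$. The paper carries out exactly this last identification via an explicit lift $i':y\mapsto[(x,y,u)]$ for a fixed $u\in U$, noting that $\eta\circ i'$ further lifts to $y\mapsto(y,u)$ in $Y\times U$, so the pullback is precisely $\varphi_{G,\{e\}}$; your sketch of this step is correct in spirit and you rightly flag it as the delicate point.

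There is one technical issue you gloss over that the paper does address. When you write ``use the projection formula for the proper map $(X\times Y)\sslash G\to X\sslash G$ together with $\alpha\cap[X\sslash G]=[\{[x]\}]$ to get $(\pi_Y^*\beta)\cap[\pi_X^{-1}([x])]$'', what you actually need is $\pi_X^*\alpha\cap[(X\times Y)\sslash G]=[\pi_X^{-1}([x])]$, i.e.\ compatibility of operational pullback with flat pullback of cycles. But $\pi_X$ need not be flat on the level of coarse quotients. The paper resolves this by passing to the quotient \emph{stacks} $[(X\times Y)/G]\to[X/G]$, where the map is genuinely flat, and invoking Vistoli's identification of stacky Chow groups with those of the coarse moduli spaces (with $\mathbb{Q}$-coefficients). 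Your computation then goes through verbatim at the stack level. Also, your notation $\pi_Y^*\beta$ is a bit dangerous since there is no quotient $Y\sslash G$ assumed; the paper avoids this by writing $\Phi(1\otimes\beta)$ throughout and only unpacking it at the restriction step. Your proposed numerical shortcut at the end is not needed and is not what the paper does.
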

\begin{proof}
 One sees easily that $\pi_X^{-1}([x]) = Y  \sslash  G_x$. Hence it follows immediately that $i_* [Y] = |G_x|  \cdot [\pi_X^{-1}( [x])]$. 
 Now in the calculation below, we will want to use flat pullback under the map $\pi_X$. Unfortunately, this map does not need to be flat. However, the map $[(X \times Y) / G] \to [X/G]$ of quotient stacks is flat. As these quotient stacks are also Deligne-Mumford (because $G$ acted with finite stabilizers), we may apply the results about intersection theory from Vistoli's paper \cite{vistoliintersection}. They tell us that on the stack level, we have our accustomed functorialities (proper pushforward, flat pullback, etc.) and that the Chow groups of the stacks are isomorphic to the Chow groups of their moduli spaces (assuming we work with $\mathbb{Q}$-coefficients, see \cite[Proposition 6.1]{vistoliintersection}). And indeed, the quotients $(X \times Y) \sslash G$ and $X \sslash G$ are the coarse moduli spaces of the quotient stacks $[(X \times Y) / G]$, $[X/G]$ by Lemma \ref{Lem:quotcoarsemod} and Remark \ref{Rmk:quotcoarsemod}. Using these isomorphisms, we can indeed apply the formal rules for flat pullbacks under $\pi_X$ in the following. We compute
 \begin{align*}
 &\Phi(\alpha \otimes \beta) \cap [(X \times Y) \sslash G] \\
 = &\Phi(1 \otimes \beta) \cap \pi_X^*(\alpha) \cap \pi_X^* [X \sslash G]\\
  = &\Phi(1 \otimes \beta) \cap \pi_X^*(\alpha \cap [X \sslash G]) = \Phi(1 \otimes \beta) \cap \pi_X^*([\{[x]\}])\\
  = &\Phi(1 \otimes \beta) \cap [\pi_X^{-1}([x])]\\
  = & \frac{1}{|G_x|} \Phi(1 \otimes \beta) \cap i_* [Y].
 \end{align*}
 It remains to see how the class $\Phi(1 \otimes \beta)$ restricts under the map $i$. For this let $V$ be a representation of $G$ such that $G$ acts freely on an open set $U \subset V$ with complement of sufficiently high codimension. Then we have the following diagram of maps
 \[\begin{CD}
Y @>i>> (X \times Y) \sslash G @<\psi << (X \times Y \times U) /G\\
	    @.     @.  @V\eta VV\\
  @.   @. (Y \times U)/G\\
	    @.     @.  @AAA\\
	    @.   @.  Y \times U
\end{CD}\]
Now we want to compute $i^* \Phi(1 \otimes \beta)$. By the properties above, the map $\psi^*$ induces an isomorphism of operational Chow groups in sufficiently small degree. Hence, the class $\Phi(1 \otimes \beta)$ is the unique class such that $\psi^* \Phi(1 \otimes \beta) = \eta^* \beta$, where we regard $\beta$ as an element of $A^*(Y_G) = A^*((Y \times U)/G)$. But $i$ lifts along $\psi$ to a map $i' : Y \to (X \times Y \times U) /G$ by $i'(y) = [(x,y,u)]$ for a fixed $u \in U$. Hence $\Phi(1 \otimes \beta) = (\eta \circ i')^* \beta$. But $(\eta \circ i')(y)=[(y,u)]$, so this map lifts to a map $Y \to Y \times U, y \mapsto (y,u)$. But by definition, the pullback under the map $Y \times U \to (Y \times U)/G$ is exactly $\varphi_{G,\{e\}}$ and the pullback under $y \mapsto (y,u)$ gives the identification $A^*(Y \times U) \cong A^*(Y)$. Together with the projection formula for $i$, this finishes the proof. 
\end{proof}
Now we have to look more carefully at the group $A^*_G(Y)$. For our application, we can assume that $Y$ is the coarse moduli space of a smooth, projective DM-stack. We will include these assumptions when necessary.

We will apply torus localization to compute intersections on $Y$. As we have seen above, we can express $A^G_*(X)$ as the Weyl-invariant part of $A^T_*(X)$. To go from operational Chow groups to Chow groups of cycles, we use the following Lemma, extending the first property of equivariant cohomology mentioned above.
\begin{Lem} \label{Lem:stackeqPoincareduality}
 Assume $Y$ is the coarse moduli space of a smooth DM-stack $\mathcal{Y}$ of finite type over $\mathbb{C}$ with generically trivial stabilizer and of dimension $n$. Assume also that the action of $G$ comes from an action of $G$ on the stack $\mathcal{Y}$. Then $A^{i}_G(Y) \xrightarrow{ \cap [Y]_G} A_{n-i}^G(Y)$ is an isomorphism (still with $\mathbb{Q}$-coefficients). 
\end{Lem}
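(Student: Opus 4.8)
\textbf{Proof plan for Lemma \ref{Lem:stackeqPoincareduality}.}

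The plan is to reduce the statement to the known isomorphism $A^i(\mathcal{Y}_G)_{\mathbb{Q}} \cong A_{n+\ell-g-i}(\mathcal{Y}_G)_{\mathbb{Q}}$ for a smooth DM-stack, applied to the Borel-type mixing construction. First I would recall the setup from \cite{EqIntTh}: fix a representation $V$ of $G$ of dimension $\ell$ with an open $G$-invariant $U \subset V$ on which $G$ acts freely and with $\codim(V \setminus U)$ larger than the degree we care about. By assumption the action of $G$ on $Y$ is induced from an action on the smooth DM-stack $\mathcal{Y}$, so we may form the quotient stack $\mathcal{Y}_G = (\mathcal{Y} \times U)/G$, which is again a smooth Deligne-Mumford stack (the action on $U$ is free, so no new stabilizers appear, and smoothness is preserved since $\mathcal{Y} \times U \to \mathcal{Y}_G$ is smooth surjective). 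Moreover $\mathcal{Y}_G$ has coarse moduli space $Y_G = (Y \times U)/G$, because forming coarse moduli spaces commutes with the quotient by a free action (cf. Lemma \ref{Lem:quotcoarsemod} and Remark \ref{Rmk:quotcoarsemod}).

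Next I would invoke Vistoli's intersection theory (\cite{vistoliintersection}, Proposition 6.1, as already used in the proof of Proposition \ref{Pro:EqProdRecipe}): for a smooth DM-stack the operational Chow ring is canonically isomorphic (with $\mathbb{Q}$-coefficients) to the Chow ring of cycles, and both agree with the corresponding groups of the coarse moduli space. Concretely, cap product with $[\mathcal{Y}_G]$ gives an isomorphism $A^i(\mathcal{Y}_G)_{\mathbb{Q}} \xrightarrow{\sim} A_{\dim \mathcal{Y}_G - i}(\mathcal{Y}_G)_{\mathbb{Q}}$, and $\dim \mathcal{Y}_G = n + \ell - g$ where $g = \dim G$. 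By the definition of equivariant (operational) Chow groups, $A^i_G(Y) = A^i(Y_G) = A^i(\mathcal{Y}_G)_{\mathbb{Q}}$ (for $\codim(V\setminus U)$ large, using \cite[Corollary 2]{EqIntTh} plus Vistoli for the stack-vs-space comparison) and $A_{n-i}^G(Y) = A_{n-i+\ell - g}(Y_G) = A_{n-i+\ell-g}(\mathcal{Y}_G)_{\mathbb{Q}}$. Under these identifications the map $\cap [Y]_G$ is exactly $\cap [\mathcal{Y}_G]$, since the equivariant fundamental class $[Y]_G$ is by construction the class of $Y \times U$ in $Y_G$, which lifts to $[\mathcal{Y}_G]$. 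Hence it is an isomorphism.

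The main thing to be careful about — and what I expect to be the only real obstacle — is making the identifications stable under the choice of $V, U$, i.e.\ checking that the isomorphism is independent of the approximation and hence well-defined on the (finite-degree) equivariant groups; this is standard double-fibration argument from \cite{EqIntTh}, but one must verify it passes through the stack-level statements of \cite{vistoliintersection} without trouble. The hypothesis that $\mathcal{Y}$ has generically trivial stabilizer ensures $Y_G$ really is the coarse space with the expected dimension and that the rational Chow groups of stack and space coincide; one should note explicitly that we do \emph{not} need the $G$-action on $Y$ to be free, only that it is induced from a stack action with finite stabilizers, which is exactly our situation with $Y = \overline M_B$ (or $\overline M_A^{ss}$) and $G = \text{PGL}_2$ acting through the moduli stack. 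Everything else is bookkeeping with the functorialities listed before the Lemma.
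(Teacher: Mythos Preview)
Your proposal is correct and follows essentially the same approach as the paper: both pass to the Borel approximation $(\mathcal{Y}\times U)/G$, observe that it is a smooth DM-stack with coarse moduli space $(Y\times U)/G$ (the paper invokes Proposition~\ref{Pro:smoothDMquotient} and Lemma~\ref{Lem:quotcoarsemod} for this), and then apply Vistoli's \cite[Proposition~6.1]{vistoliintersection} to conclude. Your extra remarks about independence of the choice of $V,U$ and the role of the generically-trivial-stabilizer hypothesis are reasonable caution but not treated explicitly in the paper's proof.
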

\begin{proof}
 For a representation $V$ of $G$ of dimension $l$ with an open set $U$ on which $G$ acts freely and whose complement has sufficient codimension, we can relate the equivariant Chow groups of $Y$ to the usual Chow groups for $(Y \times U)/G$. Then we have the diagram
 \[\begin{CD}
A_G^i(Y)     @=  A^i((Y \times U)/G)\\
@VV\cap [Y]_GV        @VV\cap [(Y \times U)/G]V\\
A_{n-i}^G(Y)     @=  A_{n+l-g-i}((Y \times U)/G)
\end{CD}\]
 The space $(Y \times U)/G$ (which exists as an algebraic space as the action of $G$ on $U$ is free) is the coarse moduli space of the quotient $(\mathcal{Y} \times U)/G$. This follows from the proof of Lemma \ref{Lem:quotcoarsemod}. As $\mathcal{Y} \times U$ is a smooth DM-stack, Proposition \ref{Pro:smoothDMquotient} implies that $(\mathcal{Y} \times U)/G$ is still a smooth DM stack. Hence by \cite[Proposition 6.1]{vistoliintersection}, the map on the right side of the diagram above is an isomorphism after tensoring with $\mathbb{Q}$, which finishes the proof. \todo{Smooth stacks satisfy this by Proposition 5.6, but does this carry over to our definition of coarse moduli space?}
\end{proof}
We conclude that for $G$ reductive and connected with maximal torus $T$ and Weyl group $W$ and $Y$ as in the Lemma, we have the diagram
\[\begin{CD}
A_G^i(Y)_\mathbb{Q}     @> \phi_{G,T} >>  A_T^i(Y)^W_\mathbb{Q}\\
@VV\cap [Y]_GV        @VV\cap [Y]_TV\\
A_{n-i}^G(Y)_\mathbb{Q}     @=  A_{n-i}^T(Y)^W_\mathbb{Q}
\end{CD}\]
whose vertical maps are isomorphisms by Lemma \ref{Lem:stackeqPoincareduality}. Thus $\phi_{G,T}$ is an isomorphism (to the Weyl-invariant part of $A_T^*(Y)_{\mathbb{Q}}$ and we can carry out computations in the torus equivariant operational Chow groups.

We now want to argue that for applying Proposition \ref{Pro:EqProdRecipe} to compute top-dimensional intersections on $(X \times Y)/G$, it will suffice to intersect $\beta \in A^*_T(Y)$ with the fundamental class $[Y]_T$ and find the image of $\beta \cap [Y]_T$ in $A_*(Y)$ under $\phi_{T,\{e\}}$. 

For this we will use results of Vistoli from \cite{ChowQuotientVariety} for the Chow group of quotient varieties. We still have that all (operational) Chow groups are taken with $\mathbb{Q}$-coefficients. Let us summarize the results from the above paper in our current language.
\begin{Pro}
 Let $G=\text{GL}_n$ or $G=\text{SL}_n$ act on a scheme $Y$ such that $Y$ is covered by open, invariant, affine subsets. Then the map $\phi_{G,\{e\}} : A^G_*(Y) \to A_*(Y)$ is surjective and its kernel are those elements $\beta \in A^G_*(Y)$ that can be written as a product $\beta = \sigma \tilde \beta$ of an element $\sigma \in A_G^*(pt)$ and an element $\tilde \beta \in A^G_*(Y)$. So we have an exact sequence 
 \begin{equation} \label{eqn:exseqequivgps}
  A_G^*(pt) \otimes A^G_*(Y) \to A^G_*(Y) \xrightarrow{\phi_{G,\{e\}}} A_*(Y) \to 0.
 \end{equation}
\end{Pro}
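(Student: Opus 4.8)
The statement is a citation-style packaging of results of Vistoli from \cite{ChowQuotientVariety}, adapted to the language of equivariant Chow groups via the Edidin--Graham identification $A^G_*(Y) = A_{*+l-g}((Y\times U)/G)$ recalled above. The plan is therefore to translate Vistoli's results and to check that the relevant hypotheses hold in our setting. First I would recall Vistoli's theorem: for a group $G$ acting on $Y$ so that $Y$ is covered by invariant affine opens, there is an exact sequence relating the Chow group of the quotient stack (equivalently $A^G_*(Y)$) to the Chow group of the quotient space. The surjectivity of $\phi_{G,\{e\}}$ is the statement that every cycle on $Y$ comes from an equivariant cycle, which for $G$ special (i.e.\ $G=\mathrm{GL}_n$ or $G=\mathrm{SL}_n$, where all $G$-torsors are Zariski-locally trivial) follows because the map $Y\times U \to (Y\times U)/G$ is a Zariski-locally trivial $G$-bundle and flat pullback along an affine bundle is surjective on Chow groups; then one identifies $A_*(Y\times U)$ with $A_*(Y)$ up to the codimension bound on $V\setminus U$.

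For the description of the kernel, the key input is that $A_G^*(\mathrm{pt}) = A^*(BG)$ acts on $A^G_*(Y)$, and that an equivariant cycle maps to zero in $A_*(Y)$ precisely when it lies in the image of the ``multiplication by a positive-degree class'' map. Concretely, for $G$ special one has the projective bundle / affine bundle computation of $A^*(BG)$ as a polynomial ring, and the localization sequence for the complement of $U$ in $V$ expresses $A^G_*(Y)$ modulo the span of products with the Chern classes coming from $V$. I would reproduce Vistoli's argument that these Chern classes generate the augmentation ideal $A_G^{>0}(\mathrm{pt})$ after inverting the order of the generic stabilizer (which is why we pass to $\mathbb{Q}$-coefficients), and that the resulting relation is exactly $\ker(\phi_{G,\{e\}}) = A_G^{>0}(\mathrm{pt})\cdot A^G_*(Y)$. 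The exact sequence \eqref{eqn:exseqequivgps} then follows formally. The hypothesis that $Y$ be covered by invariant affine opens is automatically satisfied in our application, where $Y = \overline M_B = \overline M_{0,B\cup\{p'\}}(\PP^1,k)$ carries a linearized ample bundle and $G = \mathrm{SL}_2$ acts with the semistable locus equal to all of $Y$, so that $Y$ itself is covered by $G$-invariant affine opens of the form $Y_s$ for $s$ an invariant section; I would remark on this so that the cited proposition applies verbatim.

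The main obstacle, and the only genuinely non-bookkeeping point, is verifying that Vistoli's hypotheses in \cite{ChowQuotientVariety} match the ones stated here and that nothing is lost in the translation between the ``Chow groups of quotient stacks'' formalism of \cite{vistoliintersection} and the ``equivariant Chow groups'' formalism of \cite{EqIntTh}; the bridge is the isomorphism $A^G_*(Y)_\mathbb{Q} \cong A_*([Y/G])_\mathbb{Q}$, valid because $G$ acts with finite stabilizers so $[Y/G]$ is a Deligne--Mumford stack, combined with \cite[Proposition 6.1]{vistoliintersection} identifying the rational Chow groups of a DM stack with those of its coarse space. Once that dictionary is in place, the surjectivity and the identification of the kernel are precisely Vistoli's statements, and the proof amounts to citing them; I would write the proof as a short paragraph assembling these ingredients rather than reproving the localization-sequence computation from scratch.
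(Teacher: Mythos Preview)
Your approach is essentially the paper's: apply Vistoli's results from \cite{ChowQuotientVariety} in the Edidin--Graham model $A^G_*(Y)=A_{*+l-g}((Y\times U)/G)$. Two points of divergence are worth noting.

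First, the paper applies Vistoli's Theorems~1 and~2 directly to the quotient map $Y\times U \to (Y\times U)/G$, where the $G$-action is \emph{free}. This sidesteps any need for the comparison $A^G_*(Y)_\mathbb{Q}\cong A_*([Y/G])_\mathbb{Q}$ via Deligne--Mumford stacks that you invoke in your final paragraph. That comparison would require $G$ to act on $Y$ with finite stabilizers, a hypothesis not present in the statement of the Proposition (it is ambient in the section, but the Proposition is stated and proved in greater generality). Your detour is not wrong in the intended application, but it is unnecessary and slightly obscures why the argument works.

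Second, and more substantively, you do not verify that $Y\times U$ inherits the hypothesis of being covered by open, invariant, affine subsets. Vistoli's theorems need this for the space on which they are applied, and here that space is $Y\times U$, not $Y$. The paper handles this by choosing the representation $V=\mathrm{Mat}(n,p)$ for $p$ large, so that the free locus $U$ is itself covered by $G$-invariant affines; then $Y\times U$ is covered by products of invariant affines in $Y$ with invariant affines in $U$. You should include this check. Your remark that the application $Y=\overline M_B$ satisfies the hypothesis is a separate point (and belongs where the Proposition is invoked, not in its proof).
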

\begin{proof}
 This follows by applying Theorem 1 and Theorem 2 from \cite{ChowQuotientVariety} to the quotient map $Y \times U \to (Y \times U)/G$ for $U \subset V$ an open subset of a representation $V$ of $G$, on which $G$ acts freely and whose complement has sufficient codimension. Indeed, we still have that $Y \times U$ is covered by open, invariant affine subsets (as we can take $V=\text{Mat}(n,p)$ for $p$ large and the corresponding $U$ is covered by invariant affines). The action of $G$ on $Y \times U$ is even free, so the assumptions of the Theorems above are satisfied. The classes $c_i$ operating on $A_*((Y \times U)/G)$ are exactly the generators of $A_*^G(pt)$ from \cite{EqIntTh} with their operation on $A_*^G(Y)$. 
\end{proof}
The following Corollary summarizes the conclusions from this Proposition, that we will use later. Here we identify the ring $A_{\text{SL}_2 }^*(pt) \subset A_{\mathbb{G}_m}^*(pt)$ as  $Z[t^2] \subset Z[t]$.
\begin{Cor} \label{Cor:betafactorize}
 Let $G=\text{SL}_2$ act on a scheme $Y$. Assume that the map $A^*_G(Y) \xrightarrow{\cap [Y]_G} A_*^G(Y)$ is an isomorphism (for instance for $Y$ smooth or satisfying the conditions of \ref{Lem:stackeqPoincareduality}).
 
 Then for $i \geq 0$, $j=\lceil i/2\rceil$ any element $\beta \in A_T^{\dim(Y)+i}(Y)$ can be written as $\beta = t^{2j} \tilde \beta$ for $\tilde \beta \in A_T^{\dim(Y)+i-2j}(Y)$.
\end{Cor}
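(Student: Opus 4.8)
The plan is to deduce the statement as a direct corollary of the exact sequence (\ref{eqn:exseqequivgps}) together with the isomorphism $A^*_G(Y) \xrightarrow{\cap [Y]_G} A_*^G(Y)$ and the description $A^G_*(Y)_{\mathbb{Q}} = A^T_*(Y)^W_{\mathbb{Q}}$. First I would translate the claim into a statement about $A^G_*(Y)$: via the Poincar\'e duality isomorphism $\cap [Y]_G$ (available by hypothesis) and its torus-equivariant counterpart from Lemma \ref{Lem:stackeqPoincareduality}, an element $\beta \in A_T^{\dim(Y)+i}(Y)$ corresponds to a class $\beta \cap [Y]_T \in A^T_{-i}(Y)$. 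Since $Y$ has dimension $\dim(Y)$, the group $A_*(Y)$ vanishes in negative degrees, so by the exact sequence (\ref{eqn:exseqequivgps}) applied to $G=\mathrm{SL}_2$ (after passing to Weyl-invariants, which is harmless with $\mathbb{Q}$-coefficients since $W=\mathbb{Z}/2$ acts and $A_G^*(pt)_{\mathbb{Q}} = A_T^*(pt)^W_{\mathbb{Q}} = \mathbb{Q}[t^2]$) the class $\beta\cap[Y]_T$ lies in the image of $A^*_G(pt)\otimes A^G_*(Y) \to A^G_*(Y)$, i.e. it can be written as a product of an element of positive degree in $A_G^*(pt)$ and another equivariant cycle.

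The second step is to iterate this. Write $A_{\mathrm{SL}_2}^*(pt) = \mathbb{Q}[t^2]$ inside $A_{\mathbb{G}_m}^*(pt) = \mathbb{Q}[t]$, as indicated just before the statement. A class $\beta \cap [Y]_T$ of degree $-i$ (for $i \geq 1$) factors as $t^2 \gamma_1$ with $\gamma_1$ of degree $-(i-2)$; if $i - 2 \geq 1$ we repeat, peeling off one factor of $t^2$ at a time, until the remaining cycle sits in degree $0$ or $-1$. After $j = \lceil i/2 \rceil$ steps we have $\beta \cap [Y]_T = t^{2j} \gamma$ with $\gamma \in A^T_{\dim(Y) - (\dim(Y)+i-2j)}(Y) = A^T_{2j-i}(Y)$, and $2j - i \in \{0,1\}$ (it is $0$ if $i$ even, $1$ if $i$ odd) so in particular $2j-i \geq 0$ and no further peeling is forced. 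Translating back through the inverse of $\cap [Y]_T$, which is again available by Lemma \ref{Lem:stackeqPoincareduality} applied to $Y$ (or directly for $Y$ smooth), gives $\beta = t^{2j}\tilde\beta$ with $\tilde\beta \in A_T^{\dim(Y) + i - 2j}(Y)$, as claimed. One should check along the way that multiplication by $t^2 \in A_G^*(pt)$ commutes with $\cap [Y]_T$ in the expected way, so that the factorization on the level of cycles transports to a factorization on the level of operational classes; this is just the projection/compatibility formula for the module structure and is routine.

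The main obstacle, such as it is, is bookkeeping rather than conceptual: one must be careful that the hypotheses needed to invoke (\ref{eqn:exseqequivgps}) are in force, namely that $Y$ (or rather $Y \times U$ for the approximating open $U$) is covered by open invariant affine subsets. For $Y = \overline M_B$ a moduli space of stable maps to $\PP^1$ and $U$ an open subset of a matrix representation of $\mathrm{SL}_2$, this holds because $Y$ is projective (hence admits an ample $G$-linearized bundle after passing to a power, by \cite[Corollary 1.6]{git}) and $\mathrm{SL}_2$ acts with all points semistable equal stable in the cases we care about, so $Y$ is covered by the affine opens $Y_s$ for $s$ an invariant section; alternatively one cites the standard fact for quasi-projective $G$-schemes. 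The other point requiring attention is the passage between $G$- and $T$-equivariant groups: since $W = \mathbb{Z}/2$ and all groups are tensored with $\mathbb{Q}$, taking $W$-invariants is exact and commutes with everything in sight, so the factorization statement for $A_T^*$ follows from the one for $A_G^*$ by restriction. Once these points are in place the argument is a short induction on $i$.
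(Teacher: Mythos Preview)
Your approach is essentially the paper's: transport the exact sequence (\ref{eqn:exseqequivgps}) via the Poincar\'e-duality hypothesis to $t^2 A_G^*(Y) \to A_G^*(Y) \to A^*(Y) \to 0$, then induct on $i$, peeling off one factor of $t^2$ at each step because $A^*(Y)$ vanishes above degree $\dim(Y)$. The paper's proof does exactly this in three lines, working entirely in $A_G^*$ and omitting your $G$-versus-$T$ bookkeeping; in fact the ``$T$'' in the statement should be read as ``$G$'' (equivalently, the $W$-invariant part of $A_T^*$), since that is what the proof shows and how the corollary is applied later --- the literal $A_T^*$ claim already fails for $Y=\mathrm{pt}$, $i=1$, so your closing remark about deducing the full $A_T^*$ version ``by restriction'' cannot work and is not needed.
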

\begin{proof}
 Using the isomorphism $A^*_G(Y) \cong A_*^G(Y)$, the exact sequence (\ref{eqn:exseqequivgps}) becomes
 \[t^2 A_G^*(Y) \to A_G^*(Y) \xrightarrow{\phi_{G,\{e\}}} A^*(Y) \to 0.\]
 We do an induction on $i$, reducing it in every step by $2$. The case $i=0$ is trivial and the case $i=1$ will be obvious from the following induction step. For $i\geq 1$ we have for degree reasons that $\phi_{G,\{e\}}(\beta)=0$, as $A^*(Y)$ vanishes above degree $\dim(Y)$. Hence $\beta$ has the form $t^2 \beta_1$ with $\beta_1$ of degree $\dim(Y)+i-2$. We can then apply induction on $\beta_1$ and obtain the desired result.
\end{proof}

Now we will describe how to explicitly compute the map $\Phi$ from (\ref{eqn:EqInt2}) in our situation $X=\overline M_A, Y=\overline M_B$ with the action of $G=\text{SL}_2$. All Chow groups below will be with $\mathbb{Q}$-coefficients. Let $\alpha \in A_G^i(X), \beta \in A_G^j(Y)$ with $i+j=\dim(X \times Y/G)$. If $i > \dim(X /G)$, then $\alpha$ vanishes as $A_G^i(X) = A_G^{i}(X/G)$. Hence we know $j \geq \dim(Y)$. But then we can use Corollary \ref{Cor:betafactorize} to decompose $\beta = t^{2\lceil (j-\dim(Y))/2 \rceil} \tilde \beta$ and we can put the power of $t$ to the factor $A_G^*(X)$ in the tensor product $A_G^*(X) \otimes_{A_G^*(pt)} A_G^*(Y)$. If $j-\dim(Y)$ is odd, we see that in this case the product of $\alpha$ with this power of $t$ vanishes as it lies in degree $\dim(X/G)+1$ of $A_G^*(X) = A^*(X/G)$. 

%
%
%
In the remaining case that $j-\dim(Y)$ is even, we have arranged $\alpha \otimes \beta = (\alpha t^{j-\dim(Y)}) \otimes \tilde \beta$ and $(\alpha t^{j-\dim(Y)}) \cap [X/G]$ is a zero-cycle. We can represent this cycle as a sum of points, all with finite stabilizers. For each of these points, we are in the situation of Proposition \ref{Pro:EqProdRecipe} and we are reduced to computing the intersection of $\tilde \beta$ with $[Y]$. As we have explained above, we can also perform this intersection in $A_T^*(Y)$. Here we can apply standard localization techniques. Even more is true: we do not need to perform the decomposition $\beta = t^{j-\dim(Y)} \tilde \beta$ explicitly. We can just compute the intersection $\beta \cap [Y]_T$ in $A_*^T(Y)$ and then push forward to $A_*^T(pt)=\mathbb{Q}[t^2]$. Then $\tilde \beta \cap [Y]$ is the coefficient of $t^{j-\dim(Y)}$. This finishes the description of the algorithm.
\subsection{A recursive algorithm for intersection numbers of divisors} \label{Sect:algo}
We now have all the necessary ingredients to compute intersection numbers of divisors on our moduli spaces $M(d|d_1, \ldots, d_n)$. As input we have a collection $D_1, \ldots, D_{2d-2+n}$ of divisors on $M(d|d_1, \ldots, d_n)$. By using Corollary \ref{Cor:finalBasis} about the structure of the Picard group, we may assume that all divisors are either boundary divisors or one of $\mathcal{H} =  \mathcal{H}_{1,1}$ for $n=1,2$ or $\mathcal{G}$ for $d=0$. 

If one of the divisors (say $D_1$) is a boundary divisor $D_{B,k}$, we can use Proposition \ref{Pro:DivRestr} to restrict the other divisors to it and reduce to an intersection on $\overline{M}_A^{ss} \times \overline{M}_B / G$. As we have seen in Section \ref{Sect:EquivarProduct}, we can reduce this to
first performing an equivariant intersection on $\overline M_B$ and then computing an intersection on $\overline{M}_A/G$, which is a moduli space $M(d-k| k, (d_a)_{a \in A})$ with lower degree $d-k$ or fewer markings. 
Recall that $\overline M_B = \overline M_{0,B \cup \{p'\}}(\PP^1, k)$
so the intersection can be computed using Atiyah-Bott localization (see for instance \cite[Chapter 27]{mirrorpandha}).

Note, however, that the intersection on $M(d-k| k, (d_a)_{a \in A})$ may also involve a purely equivariant factor $t^{2r} \in A_G^*(pt)$, which arose from the intersection on $\overline M_B$. We need a way to identify this class in terms of the known divisor classes in order to conclude the recursion. This can be done using the following Lemma.
\begin{Lem}
 In $A^*(M(d|d_1, \ldots, d_n))_{\mathbb{Q}}$, the class coming from $t^2 \in A_G^*(pt)_{\mathbb{Q}}$ agrees with $c_1(\mathcal{H}_{i,j})^2$ for $i=1, \ldots, n$, $j=1,2$ and also with $c_1(\mathcal{G})^2$ for $d=0$. 
\end{Lem}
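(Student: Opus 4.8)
The plan is to identify the class $t^2 \in A_G^*(pt)_{\mathbb{Q}}$ with $c_1(\mathcal{H}_{i,j})^2$ by pulling everything back to the prequotient $Y_{d,n}^{ss,\textbf{d}}$, where the relevant equivariant computation is carried out on $\PP^1 \times \PP^1$. Recall from the discussion at the end of Section \ref{Sect:EquivarProduct} that $A_G^*(pt)_{\mathbb{Q}} = \mathbb{Q}[t^2]$, with $A_{\mathbb{G}_m}^*(pt)_{\mathbb{Q}} = \mathbb{Q}[t]$ and $t = c_1^{\mathbb{G}_m}(\mathcal{O}(1))$ under the standard action of $\mathbb{G}_m$ on $\PP^1$ fixing $[0:1], [1:0]$. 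The key point is that the divisor $\mathcal{H}_{i,j}$ is by definition the pullback $\text{ev}_i^* \mathcal{O}_{\PP^1}(1)$ (composed with the appropriate projection $\pi_j$), and the evaluation map $\text{ev}_i : \mathcal{Y}_{d,n}^{ss,\textbf{d}} \to \PP^1 \times \PP^1$ is $G$-equivariant. Under the identification $A^*(M(d|d_1,\ldots,d_n))_{\mathbb{Q}} \cong A_G^*(\mathcal{Y}_{d,n}^{ss,\textbf{d}})_{\mathbb{Q}}$ from the last property of equivariant Chow groups listed in Section \ref{Sect:EquivarProduct}, the class $t^2$ corresponds to the image of $t^2 \in A_G^*(pt)_{\mathbb{Q}}$ under the pullback $A_G^*(pt) \to A_G^*(\mathcal{Y}_{d,n}^{ss,\textbf{d}})$ along the structure map.

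First I would reduce to a statement about $G$-equivariant classes on $\PP^1 \times \PP^1$: since $\text{ev}_i$ is $G$-equivariant, it induces $\text{ev}_i^* : A_G^*(\PP^1 \times \PP^1) \to A_G^*(\mathcal{Y}_{d,n}^{ss,\textbf{d}})$ compatible with the maps from $A_G^*(pt)$, and $c_1(\mathcal{H}_{i,j}) = \text{ev}_i^* (\pi_j^* c_1^G(\mathcal{O}_{\PP^1}(1)))$. So it suffices to show that in $A_G^*(\PP^1)_{\mathbb{Q}}$ the square of the equivariant first Chern class $h := c_1^G(\mathcal{O}_{\PP^1}(1))$ equals the image of $t^2$ from $A_G^*(pt)_{\mathbb{Q}}$. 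Then I would compute $A_G^*(\PP^1)_{\mathbb{Q}}$ by passing to the maximal torus $T = \mathbb{G}_m$, using $A_G^*(\PP^1)_{\mathbb{Q}} = A_T^*(\PP^1)^W_{\mathbb{Q}}$. The torus-equivariant Chow ring is the standard one: $A_T^*(\PP^1) = \mathbb{Z}[t,h]/(h(h-t))$ (with the two $T$-fixed points contributing the classes $h$ and $h-t$ respectively), whence $h^2 = t\, h$ as elements of $A_T^*(\PP^1)$, but this is not yet Weyl-invariant. The Weyl group $W = \mathbb{Z}/2$ acts by $t \mapsto -t$ and (after recentering) $h \mapsto t - h$; averaging or simply observing that $h^2 - t h = 0$ must be rewritten in $W$-invariant terms, one finds that in $A_T^*(\PP^1)^W_{\mathbb{Q}}$ the relation becomes $h^2 = t^2/4$ up to the correct normalization — more precisely, choosing the symmetric generator $\eta = h - t/2$ one has $\eta^2 = t^2/4$, and $\eta$ is $W$-anti-invariant so $\eta^2$ is $W$-invariant. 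I would then track constants carefully to conclude $c_1(\mathcal{H}_{i,j})^2 = t^2$ on the nose in $A^*(M(d|d_1,\ldots,d_n))_{\mathbb{Q}}$ (or, if a factor appears, absorb it into the definition of $t$ used in the algorithm; the key structural fact is that $c_1(\mathcal{H}_{i,j})^2$ is proportional to the image of $t^2$ and nonzero).

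For the case $d = 0$, the same argument applies verbatim to the class $\mathcal{G} = \pi^* \mathcal{O}_{\PP^1}(1)$ for the second projection $\pi : \overline{M}_{0,n}(\PP^1 \times \PP^1, (0,1)) = \overline{M}_{0,n}(\PP^1,1) \times \PP^1 \to \PP^1$, which is again $G$-equivariant for the postcomposition action, so $c_1(\mathcal{G})^2$ equals the image of $t^2$ by the identical computation on $\PP^1$.

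The main obstacle will be pinning down the exact normalization constant — i.e. verifying that the proportionality factor between $c_1(\mathcal{H}_{i,j})^2$ and $t^2$ is precisely $1$ under the conventions fixed in Section \ref{Sect:EquivarProduct} (the identification $A_{\text{SL}_2}^*(pt) = \mathbb{Z}[t^2] \subset \mathbb{Z}[t] = A_{\mathbb{G}_m}^*(pt)$ and the generator $t = c_1^{\mathbb{G}_m}(\mathcal{O}(1))$). This is a careful but routine bookkeeping of how the $\mathbb{G}_m$-action on the fibre of $\mathcal{O}_{\PP^1}(1)$ over a fixed point is normalized; one checks it by evaluating both sides against a test curve, e.g. a line in $\PP^1$ or the image in $M(d|d_1,\ldots,d_n)$ of an explicit $G$-invariant family, matching degrees. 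Everything else is a direct application of results already available in the excerpt.
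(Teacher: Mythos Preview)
Your overall strategy is identical to the paper's: reduce via the $G$-equivariant evaluation map to a computation in $A_G^*(\PP^1)_{\mathbb{Q}}$, and compute the latter as the Weyl-invariant part of $A_T^*(\PP^1)_{\mathbb{Q}}$. The difference is that you compute $A_T^*(\PP^1)$ incorrectly, and this is what produces your normalization problem.

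You write $A_T^*(\PP^1) = \mathbb{Z}[t,h]/(h(h-t))$, which is the presentation for a torus acting with weights $0,1$ on the homogeneous coordinates. But here $T$ is the diagonal torus of $\text{SL}_2$, acting by $t\cdot[X:Y]=[tX:t^{-1}Y]$, so the weights are $+1$ and $-1$. The correct presentation is
\[
A_T^*(\PP^1)_{\mathbb{Q}}=\mathbb{Q}[h,t]/\bigl((h-t)(h+t)\bigr)=\mathbb{Q}[h,t]/(h^2-t^2),
\]
and the Weyl involution acts by $t\mapsto -t$, $h\mapsto h$ (not $h\mapsto t-h$). Thus $h^2=t^2$ holds already in $A_T^*(\PP^1)$, is visibly $W$-invariant, and there is nothing to recenter and no stray factor of $1/4$. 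With the right weights the ``careful bookkeeping'' you anticipate evaporates: $c_1(\mathcal{H}_{i,j})^2 = \text{ev}_{i,j}^*(h^2)=\text{ev}_{i,j}^*(t^2)=t^2$ on the nose.

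For the $d=0$ case, the paper shortcuts your argument by observing that a nonempty moduli space forces $n\ge 1$, and then $\mathcal{G}=\mathcal{H}_{1,2}$, so the first part already covers it. Your direct argument via the projection to $\PP^1$ also works, but is unnecessary.
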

\begin{proof}
 Note that from \cite[Section 3.3]{EqIntTh}, it follows that for the usual action of $G=\text{SL}_2$ on $\PP^1$ we have an equivariant operational Chow ring \[A_G^*(\PP^1_{\mathbb{Q}}=\mathbb{Q}[h,t^2]/(h^2-t^2),\] where $t^2$ is the generator of the equivariant Chow ring of a point and $h$ comes from the canonically $G$-linearized line bundle $\mathcal{O}(1)$ on $\PP^1$. To see this, use that $A_G^*(\PP^1)_{\mathbb{Q}}$ is the Weyl-invariant part of $A_T^*(\PP^1)_{\mathbb{Q}}=\mathbb{Q}[h,t]/(h^2-t^2)$. Here $T \subset G$ is the diagonal torus acting with weights $1,-1$ on $\PP^1$ and the generator $\sigma$ of the Weyl-group $W=\mathbb{Z}/2\mathbb{Z}$ acts by $\sigma h = h, \sigma t = -t$.
 
 Now for $i=1, \ldots, n$, $j=1,2$, consider the $G$-equivariant evaluation map 
 \[\text{ev}_{i,j} = \pi_j \circ \text{ev}_i : Y_{d,n}^{ss,\textbf{d}} \to \PP^1\]
 giving the $j$th component of the evaluation of the $i$th marking in $\PP^1 \times \PP^1$. The line bundle $\mathcal{H}_{i,j}$ is exactly $\text{ev}_{i,j}^* \mathcal{O}(1)$. But then the class \[c_1(\mathcal{H}_{i,j}) \in A^*(M(d|d_1, \ldots, d_n))_{\mathbb{Q}} = A_G^*(Y_{d,n}^{ss,\textbf{d}})_{\mathbb{Q}}\] of the induced line bundle is nothing but the pullback of the class $h \in A_G^*(\PP^1)_{\mathbb{Q}}$ by
 \[\text{ev}_{i,j}^* : A_G^*(\PP^1)_{\mathbb{Q}} \to A_G^*(Y_{d,n}^{ss,\textbf{d}})_{\mathbb{Q}}.\]
 But as $h^2 = t^2$ in $A_G^*(\PP^1)_{\mathbb{Q}}$, we have $ c_1(\mathcal{H}_{i,j})^2 = t^2$.
 
 For $d=0$, we must have at least one marking to have a nonempty moduli space, so $n \geq 1$. Then the claim follows because $\mathcal{G} = \mathcal{H}_{1,2}$. 
\end{proof}
By the above result, we can replace factors $t^2$ by $\mathcal{H}^2$ in the presence of markings. But note that the space $\overline M_A$ always has at least the marking coming from the gluing point $p$.

The only thing missing to finish the recursion is a strategy to deal with intersections which only feature the divisors $\mathcal{H}, \mathcal{G}$ (in cases $n=1,2$ or $d=0$, respectively), so we intersect $c_1(\mathcal{H})^a c_1(\mathcal{G})^b$. Here, we have one tool at our disposal in case $a>1$, because we know $c_1(\mathcal{H})^2 = c_1(\mathcal{H}_{1,1})^2 = c_1(\mathcal{H}_{1,2})^2$ by the Lemma above. However, using Proposition \ref{Pro:Hprime}, we know that 
\[\mathcal{H}_{1,2} = d \mathcal{H}_{1,1} + R,\]
where $R$ is a combination of boundary divisors and possibly $\mathcal{G}$. 
Thus we know
\[c_1(\mathcal{H}_{1,1})^2 = c_1(\mathcal{H}_{1,2})^2=d^2 c_1(\mathcal{H}_{1,1})^2 + c_1(R) \left(2d c_1(\mathcal{H}_{1,1}) + c_1(R) \right).\]
In the case $d \neq 1$, we can solve this equation for $c_1(\mathcal{H}_{1,1})$ and express it as an intersection of boundary divisors, $\mathcal{G}$ and only one copy of $c_1(\mathcal{H}_{1,1})$. With this we can reduce $a$ sucessively until we can assume $a \leq 1$.

However, for $d=1$ we need a different strategy. But note that we have already restricted to the cases $n=1,2$. The case $d=1,n=1$ is a base-case below. For $d=1,n=2$ one checks using Lemma \ref{Lem:semistableadvanced} that either $D_{1=\text{fix}}=0$ or $D_{2=\text{fix}}=0$, as one of these loci in $Y_{1,2}$ is unstable. But we have
\[D_{1=\text{fix}} = 2 \mathcal{H}_{1,1} + R, D_{2=\text{fix}}=-2 \mathcal{H}_{1,1} + R',\]
where $R,R'$ are sums of boundary divisors. Hence we can express $\mathcal{H}_{1,1}$ as such a sum and use the recursion above. 

What we have proven is that we can restrict to the case $a \leq 1$.
For $a=1$ we have $n=1,2$. For $d \neq 0$, we would have $b=0$, so our moduli space is one-dimensional, so $2d-2+n=1$, which implies $d=1,n=1$. This is one of the base cases covered below. On the other hand, for $d=0$ we need $n=2$ to have $2d-2+n \geq 0$, which will also be treated below as a base case for the recursion.

On the other hand for $a=0$ we again distinguish cases: if $b=0$, our moduli space must be zero-dimensional, so $2d-2+n=0$, which implies again $d=0, n=2$. For $b=1$ we have $d=0$ and so we are in the case of $M(0|c,d,e)$. This is covered in the following Lemma.
\begin{Lem}
 To compute the intersection number $(\mathcal{G})$ on the moduli spaces $M(0|c,d,e)$ we have to distinguish two cases
 \begin{itemize}
  \item If the sum of any two of $c,d,e$ is strictly larger than the third plus $1$, we have $M(0|c,d,e) \cong \PP^1$ and $\mathcal{G}=\mathcal{O}(1)$, hence $(\mathcal{G})=1$.
  \item If, say, $c+d \leq 1+e$ then we have
  \[(\mathcal{G}) = \frac{1}{2}\left(1 \underbrace{-1}_{\text{if }c+e<1+d} \underbrace{-1}_{\text{if }d+e<1+c}\right).\]
 \end{itemize}
\end{Lem}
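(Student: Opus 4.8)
The strategy is to analyze the moduli space $M(0|c,d,e)=\overline{M}_{0,3}(\PP^1\times\PP^1,(0,1))^{ss,(0|c,d,e)}/\text{PGL}_2$ directly via its construction. Recall from the discussion of the case $d=0$, $n=2$ that $\overline{M}_{0,3}(\PP^1\times\PP^1,(0,1))\cong \overline{M}_{0,3}(\PP^1,1)\times\PP^1$; and since $\overline{M}_{0,3}(\PP^1,1)\cong(\PP^1)^3$ via the evaluation maps (because any degree $1$ map with three marked points has an irreducible domain identified with $\PP^1$ by its evaluation map, up to reconstructing boundary points), we get $\overline{M}_{0,3}(\PP^1\times\PP^1,(0,1))\cong(\PP^1)^3\times\PP^1$, where the first three factors record the positions $p_1,p_2,p_3$ of the markings on the source $\PP^1$ and the last factor $q\in\PP^1$ records the common image of all markings under the constant-on-the-second-coordinate map (here the ``horizontal'' coordinate $\pi_1\circ\text{ev}_i$ records $p_i$ and the constant ``vertical'' value is $q$). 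First I would identify, using Lemma~\ref{Lem:semistableadvanced}, the GIT stability condition: a point $(p_1,p_2,p_3,q)$ is stable with respect to $(0|c,d,e)$ iff for every $p\in\PP^1$ one has $\delta_{p=\mathrm{fix}}+\sum_{i:p_i=p}w_i<\tfrac{1+w_1+w_2+w_3}{2}$ where $(w_1,w_2,w_3)=(c,d,e)$ and $\delta_{p=\mathrm{fix}}$ is $1$ exactly when $p=q$ (the only fixed point of the constant map); combined with the (automatically satisfied, here) condition on vertical sections.

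Next I would dispose of the two cases separately. In the first case — the sum of any two of $c,d,e$ exceeds the third plus $1$ — the weight conditions force all three $p_i$ to be pairwise distinct and distinct from $q$ (each of the inequalities $w_i+w_j>1+w_k$ rearranges to exactly the statement that $p_i,p_j$ cannot collide, etc., and $w_i+1>1+w_j+w_k$ forbids $p_i=q$ together with another collision; one must check the four-fold collision $p_1=p_2=p_3=q$ and the triple collisions likewise). So the stable locus is $\{(p_1,p_2,p_3,q)\in(\PP^1)^4 : p_1,p_2,p_3,q \text{ pairwise distinct}\}$, on which $\text{PGL}_2$ acts diagonally. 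Using strict $3$-transitivity of $\text{PGL}_2$ on $\PP^1$ to normalize $(p_1,p_2,p_3)=(0,1,\infty)$, the quotient is identified with $\PP^1\setminus\{0,1,\infty\}$'s projective completion; more precisely the quotient is $\PP^1$ with coordinate $q$ (after normalizing the $p_i$), and $\mathcal{G}=p^*\mathcal{O}_{\PP^1}(1)$ descends from the last-factor coordinate, so $\mathcal{G}\cong\mathcal{O}_{\PP^1}(1)$ and $(\mathcal{G})=\deg\mathcal{O}_{\PP^1}(1)=1$. This reproves the first bullet.

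For the second case, say $c+d\le 1+e$, I would instead normalize using the two markings $p_1,p_2$ that are ``allowed to collide'': generically $p_1\ne p_2$ and we can use $\text{PGL}_2$ to send $p_1\mapsto 0$, $p_2\mapsto\infty$, leaving a residual $\mathbb{G}_m$ which acts on the remaining data $(p_3,q)\in\PP^1\times\PP^1$ by simultaneous scaling. Thus $M(0|c,d,e)$ is birational to $(\PP^1\times\PP^1)/\!\!/\mathbb{G}_m$ for the scaling action (a Hirzebruch-type quotient, in fact $\cong\PP^1$), and $\mathcal{G}$ descends from $\mathcal{O}(0,1)$ (the coordinate $q$). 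One then computes $(\mathcal{G})$ either by toric/weighted-projective bookkeeping or, cleanly, by intersecting $\mathcal{G}$ with a test curve and using Proposition~\ref{Pro:iddivisors}: take the test curve $C_\mathcal{G}$ from the relevant Proposition (the family $\mathrm{id}:\PP^1\times\PP^1\to\PP^1\times\PP^1$ with constant horizontal sections), restricted to the stable locus, which maps to a curve in $M(0|c,d,e)$; alternatively write $M(0|c,d,e)$ explicitly as a GIT quotient of $(\PP^1)^4$ and compute $\mathcal{G}^{\,4}$-type data on the prequotient. The cleanest bookkeeping: on the prequotient $(\PP^1)^4$ the class of the GIT-unstable divisors removed consists of the diagonals $\{p_i=p_j\}$ and $\{p_i=q\}$ whose weight-sum exceeds $\tfrac{1+c+d+e}{2}$; by the excision/relation analysis of Section~\ref{Sect:Picard} the Picard group of the quotient becomes $\mathbb{Q}$ and $\mathcal{G}$ is expressed in the basis, after which $(\mathcal{G})$ is read off. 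I expect the degree to come out as $\mathcal{G}=\tfrac12\big(H-\sum(\text{contributions of collapsed diagonals})\big)$ evaluated on the line, giving precisely $\tfrac12\big(1-[c+e<1+d]-[d+e<1+c]\big)$: each of the two further inequalities, when \emph{strict}, corresponds to an additional unstable diagonal being removed which lowers the self-intersection of $\mathcal{G}$ by $\tfrac12$.

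\textbf{Main obstacle.} The genuinely delicate point is the careful case-by-case verification of exactly which collision loci in $(\PP^1)^4$ are unstable under each weight regime — there are collisions of two points (six types $p_i=p_j$, $p_i=q$), of three points, and the total collision, and the fixed-point contribution $\delta_{p=q}$ interacts with whichever collisions involve $q$ — and then tracking how removing each unstable divisor changes the rational Picard class of $\mathcal{G}$ (equivalently, the degree $(\mathcal{G})$ on the resulting $\PP^1$). This combinatorial stability analysis, together with keeping the normalization/residual-torus calculation consistent across the sub-cases, is where all the real work lies; once it is done, the intersection numbers $1$ and $\tfrac12(1-[\cdots]-[\cdots])$ drop out immediately. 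A cross-check is available: when $c=d=e=1$ both inequalities $c+e<1+d$ and $d+e<1+c$ fail, so the second formula would give $(\mathcal{G})=\tfrac12$, which is impossible for a line; but in that symmetric case $c+d=2>1+e=2$ is false and $c+d\le 1+e$ holds with equality, so one must confirm the boundary-equality case lands in the second bullet and that $M(0|1,1,1)$ is then a point (dimension $2\cdot 0-2+3=1$, hmm — actually it is one-dimensional, so $(\mathcal{G})=\tfrac12$ would signal a stacky degree; I would double-check whether $M(0|1,1,1)$ carries $\mathbb{Z}/2$-isotropy making $\tfrac12$ the correct orbifold intersection number, which is consistent with the finite-quotient-singularity structure established in Lemma~\ref{Lem:Mfinitequotsing}).
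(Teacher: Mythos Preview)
Your first case is close to the paper's argument but contains a genuine stability error. You claim the semistable locus is where $p_1,p_2,p_3,q$ are \emph{pairwise} distinct, but in fact the hypotheses $w_i+w_j>1+w_k$ only force the three markings $p_1,p_2,p_3$ to be distinct; the point $q$ (the unique fixed point of the constant map) is allowed to coincide with any $p_i$. Indeed the relevant inequality for $p_i=q$ is $1+w_i<\tfrac{1+c+d+e}{2}$, i.e.\ $1+w_i<w_j+w_k$, which \emph{holds} under your hypotheses. You even hint at this (``forbids $p_i=q$ together with another collision''), but then contradict yourself by concluding pairwise distinctness. If $q$ really had to avoid $0,1,\infty$, the quotient would be $\PP^1\setminus\{0,1,\infty\}$, not $\PP^1$; the paper explicitly checks that $p_i=q$ remains stable, so the quotient is the full $\PP^1$.

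For the second case your approach is genuinely different from the paper's, and also only sketched. The paper does not attempt a direct toric/GIT computation on $(\PP^1)^4/\!\!/\text{PGL}_2$. Instead it exploits a divisor relation: from $c+d\le 1+e$ one deduces that the locus $D_{3=\text{fix}}$ is entirely unstable, hence zero in $\text{Pic}(M(0|c,d,e))\otimes\mathbb{Q}$. On $Y_{0,3}$ one computes (via the identification-of-divisors recipe) that
\[
D_{3=\text{fix}}=\mathcal{G}-\tfrac12 D_{\{1,2\},0}+\tfrac12 D_{\{1,3\},0}+\tfrac12 D_{\{2,3\},0}+\tfrac12 D_{\{1,2,3\},0}.
\]
Setting the left side to zero and solving for $\mathcal{G}$, one then simply checks which of the four boundary divisors survive to the quotient: $D_{\{1,2\},0}$ always does, $D_{\{1,2,3\},0}$ never does, and $D_{\{1,3\},0}$, $D_{\{2,3\},0}$ survive precisely under the two bracketed inequalities. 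Each surviving divisor is a single reduced point in the quotient, so evaluates to $1$. This is much cleaner than normalizing two markings and wrestling with a residual $\mathbb{G}_m$-quotient, and it makes the $\tfrac12$'s and the sign pattern transparent.

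Finally, your cross-check with $c=d=e=1$ is moot: the tuple $(0|1,1,1)$ is not admissible, since $k(1+c+d+e)=4k$ is never odd. So the lemma does not apply there, and your worry about a stacky $\tfrac12$ is based on a case outside the statement's scope.
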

\begin{proof}
 In the first case, we see that by Lemma \ref{Lem:semistableadvanced} no two of the horizontal positions of the three markings can collide, so we can use our $\text{PGL}_2$-action to arrange these positions to be $0,1, \infty$, in a unique way. It also follows that all three markings can still be fixed points of the degree $0$ map (as $2+c<1+d+e$ and similar inequalities hold). Hence we see that the map 
 \begin{align*}
  \PP^1 &\to M_{0,3}(\PP^1 \times \PP^1, (1,0))^{ss,(c,d,e)}\\
  q &\mapsto (\text{id} \times q : \PP^1 \to \PP^1 \times \PP^1; 0,1, \infty)
 \end{align*}
 composed with the quotient map to $M(0|c,d,e)$ is an isomorphism. But from the definition it is clear that $\mathcal{G}=\mathcal{O}(1)$.
 
 For $c + d \leq 1+ e$, it follows $1+c+d \leq 2 + e$. Again by Lemma \ref{Lem:semistableadvanced} this implies that $\text{Fix}_3 = 0$. Using Proposition \ref{Pro:iddivisors}, one finds that on $Y_{0,3}$ we have
 \[\text{Fix}_3 =  \mathcal{G}-\frac{1}{2} D_{\{1,2\},0} + \frac{1}{2} D_{\{1,3\},0}+ \frac{1}{2} D_{\{2,3\},0} + \frac{1}{2} D_{\{1,2,3\},0}.
 \]
 A generic point of $D_{\{1,2\},0}$ is always stable and on the other hand all points in $D_{\{1,2,3\},0}$ are unstable (we need $a+b+c\geq 1$ because otherwise there are no semistable points at all). On the other hand, a generic point of $D_{\{1,3\},0}$ (or $D_{\{2,3\},0}$) is stable iff $c+e<1+d$ (or $d+e<1+c$).
 
 Then setting $\text{Fix}_3=0$, solving for $\mathcal{G}$ and observing that $D_{\{1,2\},0}$, $D_{\{1,3\},0}$ and $D_{\{2,3\},0}$ are reduced points in the quotient (if they are stable at all), we find the claimed formula.
\end{proof}
Finally, for $b \geq 2$ we have $n=b+2 \geq 4$ and we replace two of the factors $c_1(\mathcal{G})$ by $c_1(\mathcal{H}_{1,1})$. But $\mathcal{H}_{1,1}$ can be expanded in the basis of the Picard group of our moduli space, which by Corollary \ref{Cor:finalBasis} only contains boundary divisors and $\mathcal{G}$. But by Proposition \ref{Pro:iddivisors}, the coefficient $c_\mathcal{G}$ of $\mathcal{G}$ in this expansion is equal to $(C_\mathcal{G}, \mathcal{H}_{1,1})=0$. Thus only boundary divisors appear and we are able to enter the recursion described above.

We are left to do the base cases: for $d=0, n=2$ the only nonempty spaces are isomorphic to a point by Lemma \ref{Lem:M(0;1,1)}, so the empty intersection gives result $1$. For $d=1, n=1$ we have seen in Section \ref{Sect:d1n0} that all nonempty moduli spaces are isomorphic to $M(1|1) \cong \PP^1$. On this space we can use 
\[0 = \text{Fix}_1 = \mathcal{H}_{1,1} + \mathcal{H}_{1,2} = 2 \mathcal{H} + \frac{1}{2} D_{\emptyset,1}\]
by Proposition \ref{Pro:Hprime}. As $D_{\emptyset,1}$ consists of a single point with no stabilizer, it gives intersection number $1$ and $\mathcal{H}$ therefore gives intersection number $-\frac{1}{4}$.

This finishes the description of the algorithm. It has been implemented using \verb+SAGE+ (see \cite{sage}) and is available from the author upon request. In this implementation, we used a SAGE program to compute top-intersections on spaces $\overline{M}_{0,n}$ written by Drew Johnson. This program can be found at \cite{drewj}.

\begin{appendix}
\section{Notations} \label{App:notations}
\noindent\begin{tabular}{@{}p{0.22 \linewidth} p{0.75 \linewidth}@{}}
$\text{Rat}_d$ & the space $\text{Rat}_d$ of degree $d$ maps $\PP^1 \to \PP^1$\\
$Z_d$ & the compactification $Z_d=\PP(H^0(\PP^1 \times \PP^1, \mathcal{O}(d,1)))$ of $\text{Rat}_d$\\
$\mathcal{Y}_{d,n}$ & the moduli stack $\overline{ \mathcal{M}}_{0,n}(\PP^1 \times \PP^1, (1,d))$ of stable maps to $\PP^1 \times \PP^1$ of degree $(1,d)$ with $n$ marked points\\
$Y_{d,n}$ & the coarse moduli space $\overline M_{0,n}(\PP^1 \times \PP^1, (1,d))$ of $\mathcal{Y}_{d,n}$\\
$M(d,n)$ & the moduli space of degree $d$ self-maps with $n$ marked points (Corollary \ref{Cor:quotexist1})\\
$M(d|d_1, \ldots, d_n)$ & the moduli space of degree $d$ self-maps with $n$ marked points and weights $d_1, \ldots, d_n$ (Corollary \ref{Cor:quotexist2})\\
$\mathcal{M}(d|d_1, \ldots, d_n)$ & the quotient stack $\mathcal{Y}_{d,n}^{ss,\textbf{d}} / \text{PGL}_2$\\
$A_{d,n}$ & the locus of points in $Y_{d,n}$ with $\text{PGL}_2$-isotropy\\
$D_{B,k}$ & the boundary divisor $D(\{1, \ldots, n\} \setminus B, (1,d-k); B,(0,k))$ inside $Y_{d,n}$ or $M(d|d_1, \ldots, d_n)$\\
$\mathcal{H}_{i,j}$ & the divisor $(\pi_j \circ \text{ev}_i)^* \mathcal{O}_{\PP^1}(1)$ on $Y_{d,n}$\\
$\mathcal{H}_{i,2}'$ & the divisor $\mathcal{H}_{i,2} - d \mathcal{H}_{i,1}$ on $Y_{d,n}$\\
$D_{i=\text{fix}}$ & the divisor $\text{ev}_i^{-1}(\Delta) \subset Y_{d,n}$ for the evaluation map $\text{ev}_i : Y_{d,n} \to \PP^1 \times \PP^1$\\
$\mathcal{G}$ & the divisor $\pi_{\PP^1}^* \mathcal{O}_{\PP^1}(1)$ on $Y_{0,n} = \overline M_{0,n}(\PP^1 \times \PP^1,(1,0)) \cong \overline M_{0,n}(\PP^1 ,1)\times \PP^1$\\
$\Gene{d}{n}$ & the generators of the rational Picard group of $Y_{d,n}$ from Theorem \ref{Theo:Generators}\\
$\Basi{d}{n}$ & the basis of the rational Picard group of $Y_{d,n}$ from Corollary \ref{Cor:PicYbasis} \\
$\psi_{B,k}$ & the test curve $\PP^1 \to Y_{d,n}$ constructed in Definition \ref{Def:psiCBk}\\
$C_{B,k}$ & the image cycle $(\psi_{B,k})_* [\PP^1]$ in $Y_{d,n}$\\
$\compomo$ & the composition map $Y_{d_1,n} \times Y_{d_2,0} \dashrightarrow Y_{d_1 d_2, n}$ (Theorem \ref{Theo:compomo})\\
$\mathfrak{sc}_m$ & the $m$-fold self-composition map $Y_{d,n} \dashrightarrow Y_{d^m,n}$ (Section \ref{Sect:selfcompo})\\
$\text{Per}_m(\lambda)$ & the Weyl-divisors in $Y_{d,n}$ of maps with $m$-periodic orbits of multiplier $\lambda$
\end{tabular}

\section{Generalities} \label{App:generalities}
\begin{Theo} \label{Theo:flatforgetful}
 Let $X=G/P$ be a homogeneous space, where $G$ is an algebraic group and $P$ is a parabolic subgroup. Let $\beta \in A_1(X)$ be a curve class, $n \geq 3$ and consider the forgetful morphism
 \[F : S= \overline M_{0,n}(X,\beta) \to \overline M_{0,n}.\]
 Then $F$ is flat of relative dimension $\delta = \text{dim}(X) + \int_\beta c_1(T_X)$.
\end{Theo}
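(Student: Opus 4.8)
\textbf{Proof strategy for Theorem \ref{Theo:flatforgetful}.}

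The plan is to use the standard criterion that a morphism from a Cohen--Macaulay scheme to a smooth scheme is flat if and only if all fibres have the expected dimension (the ``miracle flatness'' theorem, cf.\ \cite[Theorem 23.1]{miracleflatness} used elsewhere in this paper). The target $\overline M_{0,n}$ is smooth of dimension $n-3$. The total space $S=\overline M_{0,n}(X,\beta)$ is, by \cite{fultonpandha} together with \cite{pandhaconnected} (or by Behrend--Manin, for $X$ a homogeneous space the moduli space of stable maps is unobstructed), a local complete intersection, hence Cohen--Macaulay, of pure dimension $\dim(X) + \int_\beta c_1(T_X) + n - 3 = \delta + (n-3)$. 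So the whole problem reduces to showing that every fibre $F^{-1}([C;p_1,\dots,p_n])$ has dimension exactly $\delta$.

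First I would reduce to the case where the base point is a maximally degenerate curve, i.e.\ a stable $n$-pointed curve $(C;p_1,\dots,p_n)$ whose dual graph is a trivalent tree; by upper semicontinuity of fibre dimension it suffices to bound $\dim F^{-1}([C])$ from above by $\delta$ for such $C$ (the generic fibre over the interior $M_{0,n}$ is $\overline M_{0,n}(X,\beta)$ restricted to a fixed smooth domain, which has dimension $\delta$ by the usual deformation count on a homogeneous space, so equality propagates). The fibre $F^{-1}([C])$ parametrizes stable maps $f\colon C'\to X$ whose stabilized domain is $C$; such a $C'$ is obtained from $C$ by attaching trees of rational curves at the nodes and marked points and at finitely many other points, distributing the curve class $\beta$ among the components. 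Then I would stratify $F^{-1}([C])$ by the combinatorial type (modified dual graph together with the distribution of the degree), and on each stratum compute the dimension: a component of $C'$ that maps with class $\beta_v$ and carries $n_v$ special points contributes $\int_{\beta_v} c_1(T_X) + \dim X \cdot(\text{something}) $ — more precisely one uses that the space of maps $\mathbb P^1\to X$ of class $\beta_v$ has dimension $\dim X + \int_{\beta_v}c_1(T_X)$, that gluing at a node imposes $\dim X$ conditions, and that contracted components contribute only moduli of marked points. Summing over the tree, the nodal gluing conditions exactly cancel the extra $\dim X$ factors coming from the domain-varying part that was already fixed in $C$, and the total comes out to $\sum_v \int_{\beta_v}c_1(T_X) + \dim X = \int_\beta c_1(T_X) + \dim X = \delta$, with the strata of nontrivial attaching trees or split degree being of strictly smaller dimension.

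The main obstacle I expect is the bookkeeping in this last dimension count: one must carefully check that attaching a bubble tree (rather than a single bubble) and splitting the class across it never increases the dimension, and that when a marked point $p_i$ of $C$ is ``pushed onto a bubble'' the count still balances. This is precisely the kind of argument carried out for $X=\mathbb P^r$ in the stable-maps literature (e.g.\ the proof that $\overline M_{0,n}(\mathbb P^r,d)\to\overline M_{0,n}$ is flat), and it goes through verbatim for $X=G/P$ because the only input needed is that $X$ is convex, i.e.\ $H^1(\mathbb P^1, g^*T_X)=0$ for every $g\colon\mathbb P^1\to X$, which holds for homogeneous spaces. An alternative, possibly cleaner route that I would mention is to invoke Oprea's or Behrend--Manin's results directly: the forgetful morphism factors through a sequence of maps each of which is a fibration by (quasi-)$\mathbb P^1$-bundles or universal curves over smooth stacks, and flatness is preserved under composition; but the miracle-flatness argument above is the most self-contained and is the one I would write out.
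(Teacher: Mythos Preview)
Your overall strategy---miracle flatness applied to a map from a Cohen--Macaulay source to the smooth target $\overline M_{0,n}$---is exactly the paper's approach. Two points deserve correction or comparison.

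First, your justification for Cohen--Macaulay has a gap. You claim $S=\overline M_{0,n}(X,\beta)$ is a local complete intersection. It is not, in general: for $X$ convex the \emph{stack} $\overline{\mathcal M}_{0,n}(X,\beta)$ is smooth, but its coarse moduli space is only locally of the form $V/H$ with $V$ smooth and $H$ a finite group (this is what \cite{fultonpandha} actually gives). Finite quotient singularities need not be l.c.i.\ (think of $\mathbb A^2/(\mathbb Z/3)$ acting diagonally). The paper fills this by invoking the Hochster--Roberts theorem: invariants of a linearly reductive group acting on a regular ring are Cohen--Macaulay, so each $V/H$ is Cohen--Macaulay and hence so is $S$.

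Second, your fibre-dimension argument is correct in outline but far more laborious than necessary. The paper bypasses the combinatorial stratification entirely: by \cite[Proposition 7.4]{behrendmanin} the forgetful morphism of \emph{stacks} $\overline{\mathcal M}_{0,n}(X,\beta)\to\overline{\mathcal M}_{0,n}$ is already flat of relative dimension $\delta$, and the property of having all fibres of the same dimension passes from this stack morphism to the induced morphism of coarse moduli spaces. So the constant-fibre-dimension input to miracle flatness comes for free from the stack-level result, and your tree-by-tree dimension count (while valid, including the upper-semicontinuity reduction to maximally degenerate curves) is not needed. You even mention Behrend--Manin as an alternative at the end; the paper uses it, but only to extract the fibre dimensions, not flatness directly.
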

\begin{proof}
 By \cite[Proposition 7.4]{behrendmanin} applied to $(V,\tau) = (X,g, n, \beta)$, we know that the morphism of Deligne-Mumford stacks $\overline M_{g,n}(X,\beta) \to \overline M_{g,n}$ is flat of dimension $\delta$. In general this will not imply that the corresponding morphism of the coarse moduli spaces is also flat.
 
 However in the case $g=0$, we can use the Miracle flatness theorem (see for instance \cite[Theorem 23.1]{miracleflatness}). It tells us that a map from a Cohen-Macaulay variety to a smooth variety with constant fibre dimension is flat. Now indeed by \cite{pandhaconnected} we have that $F$ is a map between irreducible varieties and $\overline M_{0,n}$ is smooth by \cite{KnudsenII}. By \cite[Theorem 2]{fultonpandha} the variety $\overline M_{0,n}(X,\beta)$ is locally the quotient $V/H$ of a smooth variety $V$ by a finite group $H$. Smooth varieties are Cohen-Macaulay and we now use the Hochster-Roberts theorem from \cite{Hochster} to show that then also $V/H$ is Cohen-Macaulay. The theorem says that if an affine linearly reductive group (like $H$) acts rationally on a Noetherian $k$-algebra, then the ring of invariants is Cohen-Macaulay. But the map $V \to V/H$ satisfies the conditions of \cite[1.§2, Theorem 1.1]{git} and thus $V/H$ is covered by the spectra of rings of invariants as above, hence it is Cohen-Macaulay.
 
 We are thus left to show that the fibres of $F$ are of dimension $\delta$. But this property is preserved when going from the stacks to the coarse moduli spaces, hence we are done by \cite[Proposition 7.4]{behrendmanin}. 
\end{proof}

\begin{Pro} \label{Pro:bijectiso}
 Let $k$ be an algebraically closed field of characteristic zero and $f:X \to Y$ a morphism between integral $k$-schemes of finite type over $k$. Then if $f$ is bijective on the closed points of $X$ and $Y$ and $Y$ is normal, the morphism $f$ is an isomorphism.
\end{Pro}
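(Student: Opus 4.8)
The statement is a variant of Zariski's main theorem, so the plan is to deduce it from the standard form: a birational (or more generally, generically finite) morphism with normal target that is quasi-finite is an open immersion onto its image, and one upgrades ``open immersion onto image'' to ``isomorphism'' using surjectivity. First I would reduce to showing $f$ is birational. Since we work over an algebraically closed field of characteristic zero, a bijective morphism of finite-type integral $k$-schemes is automatically generically injective on the level of function fields being separable; more precisely, the induced extension $k(Y) \hookrightarrow k(X)$ is finite (because $f$ is quasi-finite, being bijective and of finite type between varieties of the same dimension) and of degree equal to the number of points in a general fibre, which is $1$. Hence $k(X)=k(Y)$ and $f$ is birational.

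Next I would invoke Zariski's main theorem in the Grothendieck form (e.g. \cite[\href{http://stacks.math.columbia.edu/tag/05K0}{Tag 05K0}]{stacks}): since $f$ is quasi-finite, separated and of finite type, it factors as $X \xrightarrow{j} \overline{X} \xrightarrow{g} Y$ with $j$ an open immersion and $g$ finite. As $Y$ is normal and integral and $g$ is finite and birational (birationality of $g$ follows from that of $f$ once one notes $j$ is dominant), $g$ is an isomorphism onto a normal scheme — this is the standard consequence that a finite birational morphism to a normal scheme is an isomorphism. Therefore $f=j$ is an open immersion of $X$ onto an open subscheme $U\subset Y$. Finally, since $f$ is bijective on closed points and $Y$ is a finite-type $k$-scheme (so closed points are dense in every locally closed subset), the open set $U$ contains every closed point of $Y$, forcing $U=Y$. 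Thus $f$ is an isomorphism.

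The main obstacle — really the only nontrivial point — is establishing that $f$ is quasi-finite and birational purely from the hypothesis ``bijective on closed points,'' since a priori the scheme-theoretic fibres could be fat or the generic fibre could have length $>1$ if the residue field extensions were inseparable. This is where the hypotheses ``characteristic zero'' and ``$k$ algebraically closed'' are used: every closed point has residue field $k$, so there are no inseparable phenomena, the fibre over the generic point of $Y$ is a finite $k(Y)$-scheme whose geometric points biject with a general closed fibre of $f$ (one general enough to lie over a closed point of $Y$ and meet the finite locus), and all residue field extensions are trivial, giving length one. An alternative, slightly softer route avoiding explicit generic-fibre analysis is: normality of $Y$ gives that $f_*\mathcal{O}_X$ contains $\mathcal{O}_Y$ as a subsheaf, $f$ being dominant; quasi-finiteness plus ZMT make $f$ finite after restricting to a suitable open, and then the trace/norm argument on $\mathcal{O}_Y \hookrightarrow f_*\mathcal{O}_X$ together with injectivity on points forces equality. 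Either way the analytic content is light and the write-up should be short.
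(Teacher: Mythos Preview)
Your proposal is correct and follows essentially the same route as the paper's proof, which is only a two-line sketch invoking Zariski's Main Theorem together with the fact that for a dominant separable morphism of varieties over an algebraically closed field the degree equals the number of points in a general fibre. You have simply filled in the details of that sketch: establishing quasi-finiteness and birationality from the bijection on closed points (using characteristic zero for separability), applying ZMT to get an open immersion, and then using density of closed points to conclude.
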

\begin{proof}
 This can be proved using Zariski's Main Theorem together with the fact that for a dominant, separable morphism of varieties over an algebraically closed field $k$ which has degree $d$, the generic fibre consists of $d$ points. \detex{For a proof see \cite{bijectiso}.}
\end{proof}

\begin{Pro} \label{Pro:SectionEmbedding}
 Let $\pi : X \to Y$ be a locally finitely presented, flat, separated morphism of relative dimension $n$. Let $s:Y \to X$ be a section of $\pi$ such that $s(p)$ is a smooth point of $X_p$ for all geometric points $p \in Y$. Then $s$ is an effective Cartier divisor in $X$.
\end{Pro}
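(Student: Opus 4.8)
The plan is to realize $s$ as a \emph{relative} effective Cartier divisor of $X$ over $Y$ and then use that such a divisor is in particular an effective Cartier divisor. Everything in sight is local on $X$ and on $Y$, and — exactly as in the proof of Theorem \ref{Theo:modprop} — the property of being an effective Cartier divisor is fppf-local on $X$ (see \cite[Tag 0694]{stacks}), so I am free to pass to étale covers of $X$ when producing a local equation. The first step is to observe that $s$ is a closed immersion: a section of a separated morphism is a closed immersion, being obtained from the diagonal $\Delta_{X/Y}$ by base change along $(\mathrm{id}_X, s\circ\pi)$. Writing $Z=s(Y)$ for the resulting closed subscheme, the map $\pi|_Z\colon Z\to Y$ is an isomorphism with inverse $s$, so $Z\to Y$ is flat and locally of finite presentation — a point I will need below.

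Next I would upgrade the pointwise smoothness hypothesis to smoothness of $\pi$ near the section. For every geometric point $p$ of $Y$ the point $s(p)$ is by assumption a smooth point of the fibre $X_p$; since $\pi$ is flat and locally of finite presentation, the fibrewise criterion for smoothness (\cite[Tag 01V9]{stacks}) shows that $\pi$ is smooth at $s(p)$. The smooth locus is open and contains all of $s(Y)$, so after shrinking $X$ I may assume $\pi$ is smooth of relative dimension $n$; étale-locally on $X$ it is then standard over $\mathbb{A}^n_Y$ (\cite[Tag 054L]{stacks}), and in such a local model the section corresponds to coordinate functions $t_1,\dots,t_n$ and elements $g_1,\dots,g_n\in\mathcal{O}_Y$ with $Z=V(t_1-g_1,\dots,t_n-g_n)$. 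An effective Cartier divisor is of codimension one, and the image of a section has codimension equal to the relative dimension of $\pi$; the stated conclusion is therefore the case of relative dimension one, which is precisely the setting of the families of curves to which the proposition is applied in Theorem \ref{Theo:modprop}. In that case the single function $t-g$ cuts out $Z$ étale-locally, and it is a non-zero-divisor on $X$ because it is monic in the relative coordinate $t$ over the flat base $Y$; hence $\mathcal{I}_Z$ is locally generated by one non-zero-divisor. Since being an effective Cartier divisor is fppf-local on $X$, these local equations glue and $\mathcal{I}_Z$ is an invertible ideal sheaf, i.e.\ $s$ is an effective Cartier divisor. (Equivalently, one may invoke the fibrewise criterion for relative effective Cartier divisors: with $\pi$ and $Z\to Y$ both flat and locally of finite presentation, it suffices that each $Z_p=\{s(p)\}$ be an effective Cartier divisor of the one-dimensional smooth fibre $X_p$, where a reduced point is cut out by a regular parameter.)

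The main obstacle is the passage from smoothness of the fibres at the single points $s(p)$ to smoothness of $\pi$ on a whole neighbourhood of $s(Y)$, and the accompanying verification that the local equation $t-g$ is a genuine non-zero-divisor rather than merely a set-theoretic cut-out of $Z$. Both of these rest essentially on the flatness of $\pi$ over $Y$: it is flatness that makes the relative coordinate well behaved and that underlies the fibrewise criterion, and without it the single equation could fail to be regular. Everything else is a routine assembly of the local model with the fppf-descent of the Cartier-divisor property already exploited in the main text.
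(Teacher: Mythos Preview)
Your proposal is correct and follows essentially the same route as the paper: show $s$ is a closed immersion (section of a separated morphism), upgrade fibrewise smoothness at $s(p)$ to smoothness of $\pi$ on an open neighbourhood of $s(Y)$, and then conclude that the section of a smooth morphism is a regular embedding. The paper simply cites \cite[B.7.3]{inttheory} for this last step, whereas you unwind it via étale-local coordinates and the explicit equation $t-g$; you also make explicit what the paper leaves implicit, namely that the passage from ``regular embedding'' to ``effective Cartier divisor'' uses $n=1$, which is the only case needed in the application.
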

\begin{proof}
 As $s$ is a section of a separated morphism, it is a closed embedding (see \cite[\href{http://stacks.math.columbia.edu/tag/024T}{Tag 024T}]{stacks}). On the complement of the singular locus $S$ of the fibres $X_p$ we have that $\pi: X \setminus S \to Y$ is still locally finitely presented and flat and now also smooth, as the geometric fibres are smooth. Then by \cite[B7.3]{inttheory}, $s$ is a regular embedding into $X \setminus S$. But as $X \setminus S$ and $X \setminus s(Y)$ define an open cover of $X$, $s$ is also a regular embedding into $X$, that is a Cartier divisor. \todo{Union of singular loci of fibres is closed?}
\end{proof}


\detex{
\begin{Pro} \label{Pro:nodalsection}
 Let $X$ be a projective homogeneous variety, $\beta_1, \beta_2 \in H_2(X,\mathbb{Z})$ and $A,B \subset \{1, 2, \ldots, n\}$ with $A \amalg B = \{1, \ldots, n\}$. Consider the locally closed subscheme 
 \[N \subset D(\beta_1, A; \beta_2, B) \subset \overline M_{0,n}(X, \beta_1 + \beta_2) \]
 where the source curve has exactly one (separating) node and the components carrying the points $A, B$ map with degrees $\beta_1, \beta_2$, respectively.
 
 Then, restricted to the automorphism-free locus $N^*=N \cap \overline M_{0,n}^*(X, \beta_1 + \beta_2)$, the universal family $\pi: \mathcal{U} \to \overline M_{0,n}^*(X, \beta_1 + \beta_2)$ has a section $s$ corresponding to the position of the node. That is, identifying the fibre of $\pi$ over $q= (f: C \to X; p_1, \ldots, p_n)$ with $C$ we have that $s(q)$ is the node of $C$.
\end{Pro}
\begin{proof}
 It is clear that $N$ is a locally closed subscheme and it even carries the structure of a normal variety over $\mathbb{C}$ (see \cite[Corollary 2]{pandhaconnected}). Now from \cite[Section 6.2]{fultonpandha} we have a gluing map 
 \[g: \overline M_{0, A \cup \{p\}}(X, \beta_1) \times_{X} \overline M_{0, B \cup \{p\}}(X, \beta_2) \to D(\beta_1, A; \beta_2, B).\]
 But over $N^*$, as there is only one node and as the map is automorphism-free, $g$ is a bijection of normal varieties and hence an isomorphism by Proposition \ref{Pro:bijectiso}. Now in $\overline M_{0,n+1}(X, \beta_1 + \beta_2)$ we have the boundary stratum $D(\beta_1, A \cup \{p'\}; \beta_2, B)$ together with a gluing map
 \[\tilde g: \overline M_{0, A \cup \{p,p'\}}(X, \beta_1) \times_{X} \overline M_{0, B \cup \{p\}}(X, \beta_2) \to D(\beta_1, A \cup \{p'\}; \beta_2, B).\]
 But over the automorphism-free locus the forgetful map $\overline M_{0, A \cup \{p,p'\}}(X, \beta_1) \to \overline M_{0, A \cup \{p\}}(X, \beta_1)$ is exactly the universal family and hence has a section $\sigma_{p}$ corresponding to the locus where $p'=p$. The desired map is then
 \[s= \tilde g \circ (\sigma_p \times \text{id}) \circ (g|_{g^{-1}(N^*)})^{-1}.\]
\end{proof}

\begin{Pro} \label{Pro:transverseboundary}
 Let $(\pi: C  \to S; \sigma_1, \ldots, \sigma_n : S \to C; \mu: C \to X)$ be a family in $\overline M_{0,n}(X, \beta_1 + \beta_2)$ with assumptions as in Proposition \ref{Pro:nodalsection}. Here let $S$ be a smooth, one-dimensional base and let $0 \in S$ be a point such that the fibre $C_0$ is a curve with exactly one node. Assume that the components of $C_0$ carry markings $A,B$ and map with degrees $\beta_1, \beta_2$, such that $(\mu_0 : C_0 \to X; \sigma_1(0), \ldots, \sigma_n(0))$ has no automorphisms. This family induces a map $\psi: S \to \overline M_{0,n}(X, \beta_1 + \beta_2)$. If $C$ is smooth, $S$ intersects the divisor $D(\beta_1, A; \beta_2, B)$ transversally at $0$.
\end{Pro}
\begin{proof}
 Our assumptions imply that $0 \in S$ maps to $N^*$, which is an open subset of $D(\beta_1, A; \beta_2, B)$. But then if $S$ did not intersect transversally, all tangent vectors to $S$ at $0$ would lie in the tangent space of $N^*$ at $0$. But this implies that any map 
 \[\text{Spec}(\mathbb{C}[\epsilon]/(\epsilon^2)) \xrightarrow{v} S \xrightarrow{\psi} \overline M_{0,n}(X, \beta_1 + \beta_2)\]
 mapping the closed point to $0$ would factor through $N^*$. Note also that the family $\pi$ around $0$ is simply the pullback of the universal family of $\overline M_{0,n}(X, \beta_1 + \beta_2)$. Using the section of the universal family over $N^*$ constructed in Proposition \ref{Pro:nodalsection} we can conclude that the map $v$ factors through a map $\tilde v:  \text{Spec}(\mathbb{C}[\epsilon]/(\epsilon^2)) \to C$ corresponding to a tangent vector of $C$ at the node $p$ of $C_0$. But this is impossible, as the derivative of $\pi$ at $p$ vanishes. Indeed otherwise the preimage of $0$ would be smooth around $p$ by the implicit function theorem.
\end{proof}
The above result was also shown by Ravi Vakil in \cite[Section 4.4]{MR1718648}.}

\begin{Lem} \label{Lem:detpushforward}
 Let $\pi : X \to Y$ be a finite, flat morphism from a scheme $X$ to a normal, irreducible variety $Y$. Let $\mathcal{L}$ be a line bundle on $X$ and let $\varphi : \mathcal{L} \to \mathcal{L}$ be a morphisms such that $\pi(V(\varphi)) \subset Y$ is a proper subset.
 
 Then $\pi_* \mathcal{L}$ is a vector bundle on $Y$ and we have an equality of cycles
 \[\pi_*([V(\varphi)]) = [V(\text{det}(\pi_* \varphi: \pi_* \mathcal{L} \to \pi_* \mathcal{L}))].\]
\end{Lem}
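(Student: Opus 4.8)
The plan is to prove the statement by reducing both sides to a local computation over the smooth locus of $Y$ and then invoking the classical fact that the determinant of an endomorphism controls the length of its cokernel. First I would note that since $\pi$ is finite and flat and $Y$ is a variety (hence in particular Noetherian and with $\mathcal{O}_X$ locally free over $\mathcal{O}_Y$ of some rank $r$), the sheaf $\pi_*\mathcal{L}$ is locally free of rank $r$ on $Y$: flatness of $\pi$ together with $\mathcal{L}$ being a line bundle gives that $\pi_*\mathcal{L}$ is a finitely generated $\mathcal{O}_Y$-module which is flat, hence (on a Noetherian scheme) locally free; its rank equals $r$ by checking at the generic point. Thus $\det(\pi_*\varphi)$ is a well-defined section of $\mathcal{O}_Y$ (a priori of the trivial bundle $\det(\pi_*\mathcal{L})^\vee \otimes \det(\pi_*\mathcal{L}) = \mathcal{O}_Y$), and $V(\det(\pi_*\varphi))$ is an effective Cartier divisor whenever $\det(\pi_*\varphi)$ is not identically zero — which holds precisely because $\pi(V(\varphi))$ is a proper subset of the irreducible $Y$, so $\varphi$, and hence $\pi_*\varphi$, is generically an isomorphism.

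Next I would compare the two cycles. Both $\pi_*[V(\varphi)]$ and $[V(\det(\pi_*\varphi))]$ are effective divisors on $Y$ supported on the proper closed subset $\pi(V(\varphi))$. Since $Y$ is normal, it is regular in codimension $1$, so it suffices to check the equality of multiplicities along each codimension-$1$ component $Z$ of this support: we may localize at the generic point $\eta_Z$ of $Z$, where $\mathcal{O}_{Y,\eta_Z}$ is a discrete valuation ring $R$ with valuation $v$. Over $R$, the line bundle $\mathcal{L}$ and its pushforward become free modules, $\varphi$ becomes an endomorphism of a free $R'$-module of rank $1$ where $R' = \Gamma(\pi^{-1}(\Spec R), \mathcal{O}_X)$ is a free $R$-algebra of rank $r$, and the statement reduces to the assertion that $v(\det_R(\pi_*\varphi)) = \text{length}_R\big(\text{coker}(\varphi)\big)$, where on the left $\pi_*\varphi$ is viewed as multiplication by an element $a \in R'$ on $R'$. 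The coefficient of $Z$ in $\pi_*[V(\varphi)]$ is exactly $\text{length}_R(R'/aR')$ by the definition of proper pushforward of cycles, while $\det_R(\pi_*\varphi) = N_{R'/R}(a)$ is the norm; so the needed identity is the standard fact $v(N_{R'/R}(a)) = \text{length}_R(R'/aR')$ for $a$ a nonzerodivisor in the finite free $R$-algebra $R'$. This in turn follows, e.g., from the multiplicativity of both sides in $a$ and reduction to the case where $R'$ is itself a DVR (splitting $\Spec R'$ into its finitely many points over the closed point of $\Spec R$), or directly by choosing an $R$-basis and using the Smith/elementary-divisor form of the matrix of $a$.

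I expect the main obstacle to be purely bookkeeping: making the reduction to the DVR case clean, in particular checking that $\det(\pi_*\varphi)$ as an abstract section of $\mathcal{O}_Y$ really does have, at a codimension-one point, valuation equal to $\text{length}_R(R'/aR')$ — i.e. carefully matching the intrinsic ``$\det$ of an endomorphism of a rank-$r$ bundle'' with the norm $N_{R'/R}$ and with the cycle-theoretic length. Once one is localized at a DVR this is classical commutative algebra (it is essentially the computation underlying the definition of the order function and the compatibility of $\text{div}$ with norms, cf.\ \cite[Appendix A]{inttheory}), so no genuinely hard input is needed; the care is in the dévissage. A streamlined alternative, which I would mention, is to invoke directly the compatibility of refined Gysin/divisor constructions with pushforward along finite flat maps as developed in \cite[Section 1.4, Appendix A]{inttheory}: the cycle $[V(\varphi)]$ is $c_1(\mathcal{L})\cdot[X]$ computed via the section $\varphi$ of $\mathcal{H}om(\mathcal{L},\mathcal{L}) = \mathcal{O}_X$, and pushing forward a zero-scheme of a section of a line bundle along a finite flat morphism corresponds on the target to the zero-scheme of the determinant of the induced map on pushforwards. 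Either route gives the claimed equality of cycles.
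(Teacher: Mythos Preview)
Your proposal is correct and follows essentially the same route as the paper: reduce to checking multiplicities at codimension-one points of $Y$, localize at the corresponding DVR, and invoke the length--determinant identity from \cite[Appendix A]{inttheory} (the paper cites Example A.2.3 there, which is exactly your identity $v(N_{R'/R}(a)) = \text{length}_R(R'/aR')$). Your write-up is in fact somewhat more careful than the paper's in justifying why $\pi_*\mathcal{L}$ is locally free and why $\det(\pi_*\varphi)$ is a nonzero section, but the core argument is identical.
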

\begin{proof}
 For any $V \subset Y$ irreducible of codimension $1$, we have to check that the multiplicities of $[V]$ in both sides of the claimed equation agree. Hence we may take a base change by the spectrum of $A=\mathcal{O}_{V,Y}$ and we obtain a map $\pi_V : \text{Spec}(B) \to \text{Spec}(A)$, where $B$ is a finite $A$-algebra.
 
 As the fibres of $\pi$ are finite, by general theorems about base-change and cohomology the sheaf $\pi_* \mathcal{L}$ is locally free. Moreover, we have
 \[(\pi_* \mathcal{L})|_{\text{Spec}(A)} = (\pi_V)_* \mathcal{L}|_{\text{Spec}(B)}.\]
 The restriction of $\mathcal{L}$ to $\text{Spec}(B)$ corresponds to a $B$-module $M$. The pushforward $(\pi_V)_* \mathcal{L}|_{\text{Spec}(B)}$ still corresponds to $M$, which is now a locally free $A$-module. As $A$ is a discrete valuation ring, $M$ is thus even free as an $A$-module. The endomorphism $\varphi$ of $\mathcal{L}$ is given by multiplication with some function, so on $\text{Spec}(B)$ it corresponds to multiplication with some $b \in B$. Then by \cite[Example A.2.3]{inttheory} we have
 \[l_A(M/(b)) = l_A(A/(\text{det}(b)).\]
 The left side gives the multiplicity of $[V]$ in $\pi_*([V(\varphi)])$ and the right side gives its multiplicity in $[V(\text{det}(\pi_* \varphi: \pi_* \mathcal{L} \to \pi_* \mathcal{L}))]$. 
\end{proof}


\section{Group actions on stacks} \label{App:stackaction}
The following results use definitions and techniques for group actions on stacks introduced by Romagny in \cite{romagny2005}. 
\begin{Lem} \label{Lem:quotfibreprod}
 Let $S$ be a scheme, $G$ a flat, separated group scheme of finite presentation over S. Let $\mathcal{M},\mathcal{N}$ be $G$-algebraic stacks over $S$ and let $f: \mathcal{M} \to \mathcal{N}$ be a morphism of $G$-stacks. Then there exists a canonical commutative diagram
 \begin{equation}
  \begin{CD}
 \mathcal{M}     @>f>>  \mathcal{N}\\
@VVV        @VVV\\
\mathcal{M}/G     @>\tilde f>>  \mathcal{N}/G
\end{CD}, \label{eqn:quotfibreprodgen}
 \end{equation}
 and this diagram is a fibre product.
\end{Lem}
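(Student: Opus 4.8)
\textbf{Proof plan for Lemma \ref{Lem:quotfibreprod}.}

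The plan is to reduce the statement to the universal property of the quotient stack $\mathcal{M}/G$ as described in Romagny's paper, using the characterization of maps out of a quotient stack in terms of $G$-equivariant maps out of torsors. First I would recall that by \cite[Theorem 4.1]{romagny2005} (which is already invoked in the proof of Theorem \ref{Theo:modprop}), for any $S$-scheme $T$ the groupoid $(\mathcal{M}/G)(T)$ is equivalent to the groupoid of pairs $(P \to T, \varphi)$ where $P \to T$ is a $G$-torsor and $\varphi : P \to \mathcal{M}$ is a $G$-equivariant morphism; the same holds for $\mathcal{N}/G$. The canonical morphism $\mathcal{M} \to \mathcal{M}/G$ sends an object $x \in \mathcal{M}(T)$ to the trivial torsor $G \times T$ together with the equivariant map $G \times T \to \mathcal{M}$ induced by the action on $x$. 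Given the $G$-morphism $f : \mathcal{M} \to \mathcal{N}$, postcomposition with $f$ takes a $G$-equivariant $\varphi : P \to \mathcal{M}$ to the $G$-equivariant $f \circ \varphi : P \to \mathcal{N}$, and this construction is functorial in $T$ and visibly 2-commutes with the vertical maps; this produces the morphism $\tilde f$ and the square (\ref{eqn:quotfibreprodgen}).

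Next I would verify the square is a fibre product by checking the universal property on $T$-points for every $S$-scheme $T$. An object of $\mathcal{M}/G \times_{\mathcal{N}/G} \mathcal{N}$ over $T$ consists of: a $G$-torsor $P \to T$ with a $G$-equivariant $\varphi : P \to \mathcal{M}$, an object $y \in \mathcal{N}(T)$, and a $2$-isomorphism in $\mathcal{N}/G$ between the image of $(P,\varphi)$ under $\tilde f$ and the image of $y$ under $\mathcal{N} \to \mathcal{N}/G$. Unwinding the torsor description, such a $2$-isomorphism is precisely an isomorphism of $G$-torsors $P \cong G \times T$ compatible with the equivariant maps $f \circ \varphi$ and the action-induced map $G \times T \to \mathcal{N}$ out of $y$. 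But an isomorphism $P \cong G \times T$ of $G$-torsors is the same as a trivialization, i.e. a section $T \to P$, and transporting $\varphi$ along it yields an object $x := \varphi|_{T} \in \mathcal{M}(T)$ together with an isomorphism $f(x) \cong y$ in $\mathcal{N}(T)$ coming from the compatibility datum. Conversely such a pair $(x, f(x) \cong y)$ recovers the triple. One checks these two constructions are mutually quasi-inverse and natural in $T$, which is exactly the statement that $\mathcal{M} \to \mathcal{M}/G \times_{\mathcal{N}/G} \mathcal{N}$ is an equivalence of categories fibred in groupoids over $S$, hence an isomorphism of stacks.

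The main obstacle, such as it is, is purely bookkeeping with $2$-morphisms: one must be careful that all the compatibility $2$-cells (the one making $f$ a morphism of $G$-stacks, the one in the definition of the fibre product, and the ones produced by trivializing the torsor) compose correctly, and that the functor $\mathcal{M} \to \mathcal{M}/G \times_{\mathcal{N}/G} \mathcal{N}$ is well-defined on morphisms and not merely on objects. There is no geometric input beyond Romagny's torsor description; the flatness, separatedness and finite presentation hypotheses on $G$ are only needed so that $\mathcal{M}/G$ is again algebraic and so that \cite[Theorem 4.1]{romagny2005} applies, and they play no further role in the fibre product argument. I would therefore present the proof as a short formal verification, citing \cite[Theorem 4.1]{romagny2005} for the torsor description and leaving the diagram chase to the reader.
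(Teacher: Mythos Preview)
Your proposal is correct and shares its essential architecture with the paper's proof: both invoke \cite[Theorem 4.1]{romagny2005} to identify $\mathcal{M}/G$ and $\mathcal{N}/G$ with the corresponding stacks of $G$-torsors, and both construct $\tilde f$ by postcomposing the equivariant map $P \to \mathcal{M}$ with $f$. The one tactical difference lies in how the square is shown to be cartesian. You verify the universal property directly on $T$-points, observing that a $2$-isomorphism in $(\mathcal{N}/G)(T)$ between $\tilde f(P,\varphi)$ and the image of $y \in \mathcal{N}(T)$ amounts to a trivialization of $P$ over $T$, from which one reads off an object of $\mathcal{M}(T)$ together with an isomorphism to $y$ after applying $f$. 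The paper instead forms $\mathcal{Z} = \mathcal{N} \times_{\mathcal{N}/G} \mathcal{M}/G$, base-changes the comparison map $\mathcal{M} \to \mathcal{Z}$ along atlases $V \to \mathcal{M}/G$ and $U \to \mathcal{N}/G$ to reduce to the situation where $\mathcal{M}$ and $\mathcal{N}$ are algebraic spaces sitting as $G$-torsors over schemes, and then simply cites the universal property of the torsor $\mathcal{N} \to \mathcal{N}/G$. Your direct groupoid argument is slightly more self-contained and avoids the atlas bookkeeping; the paper's reduction trades that for a shorter endgame, since after base change the claim becomes literally the defining property of the universal torsor. Either is fine here.
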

\begin{proof}
 Our assumptions on $G$ are chosen in such a way, that the quotient stacks $\mathcal{M}/G, \mathcal{N}/G$ are isomorphic to the stacks of $G$-torsors $(\mathcal{M}/G)^*$, $(\mathcal{N}/G)^*$  by \cite[Theorem 4.1]{romagny2005}. 
 
 Recall that for a $G$-stack $(\mathcal{M}, \mu_\mathcal{M}:G \times \mathcal{M} \to \mathcal{M})$, the stack $(\mathcal{M}/G)^*$ has as objects over a scheme $T/S$ triples 
 \[t=(p:E \to T,h:E \to \mathcal{M},\sigma).\]
 Here $E$ is an algebraic space with a strict action $\nu: G \times E \to E$, $p$ is $G$-invariant such that fppf-locally on $T$ it is isomorphic to the projection $G \times T \to T$ and the map $(h, \sigma) : E \to \mathcal{M}$ is a morphism of $G$-stacks.
 
 Isomorphisms between $t,t'$ are pairs $(u,\alpha)$ with a $G$-morphism $u: E \to E'$ and a $2$-isomorphism $\alpha: (h,\sigma) \implies (h',\sigma') \circ u$. 
 
 What will be important for us is that $(\mathcal{M}/G)^*, (\mathcal{N}/G)^*$ are algebraic stacks, i.e. they have surjective, fppf atlases $V \to (\mathcal{M}/G)^*$, $U \to (\mathcal{N}/G)^*$ from schemes $U,V$. Moreover, the natural maps $\mathcal{M} \to (\mathcal{M}/G)^*$ and $\mathcal{N} \to (\mathcal{N}/G)^*$ are actually the universal $G$-torsors.
 
 As a first step, we want to construct the map $\tilde f: (\mathcal{M}/G)^* \to (\mathcal{N}/G)^*$. On objects over $T/S$ it is given by
 \[\left(\begin{CD}
 E     @>(h,\sigma)>>  \mathcal{M}\\
@VVV        \\
T     
\end{CD}\right) \mapsto \left(\begin{CD}
 E     @>f \circ (h,\sigma)>>  \mathcal{N}\\
@VVV        \\
T     
\end{CD}\right).\]
 One checks that the resulting diagram (\ref{eqn:quotfibreprodgen}) is commutative. Let $\mathcal{Z} = \mathcal{N} \times_{(\mathcal{N}/G)^*} (\mathcal{M}/G)^*$, then there is a natural map $\phi: \mathcal{M} \to \mathcal{Z}$. We want to show that it is an isomorphism by taking suitable base-changes with smooth maps from schemes. First take the base change with the atlas $V \to (\mathcal{M}/G)^*$ and we obtain
 \[\begin{CD}
\mathcal{M}_V @>>> \mathcal{Z}_V @>>> V\\
	    @VVV     @VVV  @VVV\\
 \mathcal{M} @>>> \mathcal{Z} @>>> (\mathcal{M}/G)^*\\
	    @.     @VVV  @VVV\\
	    @.   \mathcal{N} @>>>  (\mathcal{N}/G)^*
\end{CD}\]
It suffices to show $\mathcal{M}_V \to \mathcal{Z}_V$ is an isomorphisms. Because $\mathcal{M}_V \to V$ is a $G$-torsor we have $V \cong \mathcal{M}_V / G$ and $\mathcal{M}_V$ is an algebraic space. We have reduced the problem to the case where $\mathcal{M}$ is an algebraic space and $(\mathcal{M}/G)^*$ is a scheme. Similarly, by taking the base change of the entire diagram by the atlas $U \to (\mathcal{N}/G)^*$ we reduce to the case $\mathcal{N}$ an algebraic space. But here we see that the morphism $(\mathcal{M}/G)^* \to (\mathcal{N}/G)^*$ is induced by the $G$-equivariant map $\mathcal{M} \to \mathcal{N}$ of the $G$-torsor $\mathcal{M}$ over $(\mathcal{M}/G)^*$. Hence the fact that the corresponding diagram is cartesian is simply the definition of the universal property of the $G$-torsor $\mathcal{N} \to (\mathcal{N}/G)^*$. 
 \todo{Check this with someone competent!!!}
\end{proof}
\begin{Rmk} \label{Rmk:quotinherit}
 As the map $\mathcal{N} \to \mathcal{N}/G$ above is fppf (check on any atlas $U \to \mathcal{N}/G$ pulling back to a locally trivial $G$-bundle on $U$) any property of $f: \mathcal{M} \to \mathcal{N}$ that is fppf-local on the target is inherited by the induced map $\tilde f: \mathcal{M}/G \to \mathcal{N}/G$. Also, arguing as above, for any cartesian diagram of $G$-algebraic stacks where all morphisms are $G$-morphisms the induced diagram of the quotients is also cartesian.
\end{Rmk}

\begin{Pro} \label{Pro:smoothDMquotient}
 Let $\mathcal{M}$ be an orbifold, i.e. a smooth separated Deligne-Mumford stack with connected coarse moduli space and containing a non-empty open substack which is a scheme. Let the smooth group scheme $G$ act on $\mathcal{M}$ with finite, reduced stabilizers at geometric points. Then the quotient $\mathcal{M}/G$ is again a smooth DM stack.
\end{Pro}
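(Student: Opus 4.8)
The plan is to show that $\mathcal{M}/G$ is a smooth Deligne--Mumford stack by verifying the three relevant properties separately: that it is algebraic and smooth, that it has unramified (equivalently, étale, after the reducedness hypothesis) diagonal, and that it admits an étale atlas from a scheme. First I would recall that since $G$ is smooth and $\mathcal{M}$ is smooth, the quotient stack $\mathcal{M}/G$ is smooth: the universal $G$-torsor $\mathcal{M} \to \mathcal{M}/G$ is smooth and surjective (it is fppf and $G$ is smooth), so smoothness descends along it, and $\mathcal{M}$ is smooth by hypothesis. This already gives that $\mathcal{M}/G$ is an algebraic stack, locally of finite type, which is smooth over the base.

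The heart of the argument is to prove that $\mathcal{M}/G$ is Deligne--Mumford, i.e. that its diagonal is unramified. I would do this by computing the inertia: for a geometric point $x$ of $\mathcal{M}$ with image $\bar x$ in $\mathcal{M}/G$, the automorphism group of $\bar x$ is an extension of the $G$-stabilizer $G_x$ (which is finite and reduced by hypothesis) by the automorphism group $\mathrm{Aut}_{\mathcal{M}}(x)$ (which is finite and reduced, even étale, since $\mathcal{M}$ is a separated Deligne--Mumford stack in characteristic zero). Hence the automorphism groups of geometric points of $\mathcal{M}/G$ are finite and reduced. To pass from this pointwise statement to unramifiedness of the diagonal, I would use that the inertia stack $I(\mathcal{M}/G) \to \mathcal{M}/G$ is of finite type, and a finite-type group algebraic space over a field of characteristic zero which is finite (quasi-finite plus the properness coming from separatedness of $\mathcal{M}$ and of $G$'s stabilizers) is automatically étale; one then concludes that the inertia is unramified over $\mathcal{M}/G$, which is equivalent to the diagonal being unramified, which characterizes Deligne--Mumford stacks among algebraic stacks. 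Separatedness of $\mathcal{M}/G$ follows from separatedness of $\mathcal{M}$ and of $G$ together with a standard valuative-criterion argument, or by noting the diagonal of $\mathcal{M}/G$ is the composite of proper maps built from the diagonals of $\mathcal{M}$ and $BG$.

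Finally, to produce an étale atlas: since $\mathcal{M}/G$ is now known to be a separated Deligne--Mumford stack, it automatically admits an étale surjection from a scheme, so this point requires no extra work beyond invoking the standard structure theory once the DM property is established. I would also remark that the hypothesis ``contains a non-empty open substack which is a scheme'' is what one uses if one additionally wants the coarse space to be useful / the stack to be generically trivial, and that the connectedness of the coarse moduli space of $\mathcal{M}$ is inherited by $\mathcal{M}/G$ since $\mathcal{M} \to \mathcal{M}/G$ is surjective.

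The main obstacle I anticipate is the careful handling of the automorphism/inertia computation: one must correctly identify $\mathrm{Aut}_{\mathcal{M}/G}(\bar x)$ as an extension involving both the stacky automorphisms of $\mathcal{M}$ and the $G$-stabilizer, using the explicit description of morphisms in the quotient stack (objects over $T$ being pairs of a $G$-torsor $E \to T$ and a $G$-equivariant map $E \to \mathcal{M}$), and then argue that finiteness plus reducedness of both pieces — crucially using $\mathrm{char} = 0$ so that finite type group schemes over fields are smooth, hence étale — forces the inertia to be unramified. The reducedness hypothesis on the $G$-stabilizers is exactly what prevents infinitesimal automorphisms from appearing, and keeping track of why it suffices (rather than needing, say, the stabilizers to be étale a priori) is the subtle point.
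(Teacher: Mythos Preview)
Your approach is correct in outline but genuinely different from the paper's. The paper does not compute inertia directly; instead it rigidifies $\mathcal{M}$ via the frame bundle $\mathcal{F}=\mathrm{Fr}(T_{\mathcal{M}})$. Because $\mathcal{M}$ is an orbifold (smooth, separated DM, with a nonempty open substack that is a scheme), $\mathcal{F}$ is an honest algebraic space with $\mathcal{M}=[\mathcal{F}/\mathrm{GL}_n]$. The $G$-action on $\mathcal{M}$ lifts to $\mathcal{F}$ and commutes with the $\mathrm{GL}_n$-action, so one has a combined $G\times\mathrm{GL}_n$-action on $\mathcal{F}$; the key observation is that its stabilizers at geometric points are again finite and reduced (the $\mathrm{GL}_n$-part acts simply transitively on fibers, so the stabilizer of a frame over $p$ is isomorphic to the $G$-stabilizer of $p$). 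Then, using Romagny's identification $\mathcal{M}/G=[\mathcal{F}/\mathrm{GL}_n]/G=\mathcal{F}/(\mathrm{GL}_n\times G)$, one is reduced to the standard fact that the quotient of a smooth algebraic space by a smooth group with finite reduced stabilizers is a smooth DM stack.

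What each approach buys: the paper's frame-bundle trick bypasses entirely the delicate description of $\mathrm{Aut}_{\mathcal{M}/G}(\bar x)$ in terms of both stacky automorphisms and $G$-stabilizers---everything is pushed to a situation where the total space is an algebraic space and a single citation suffices. It also makes clear exactly where the ``open scheme substack'' hypothesis enters (it is what forces $\mathcal{F}$ to be an algebraic space), whereas in your write-up that hypothesis is left as an afterthought. Your direct route is more conceptual and does not rely on the frame-bundle construction, but the step you flag as the ``main obstacle'' really is the crux: you must produce a precise exact sequence relating $\mathrm{Aut}_{\mathcal{M}/G}(\bar x)$, $\mathrm{Aut}_{\mathcal{M}}(x)$, and the stabilizer in $G$, and then argue carefully that finite-type plus fiberwise finite and reduced over a characteristic-zero base gives unramifiedness of the inertia. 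That is doable, but it is exactly the bookkeeping the paper's proof is designed to avoid.
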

\begin{proof}
 We want to use the frame bundle $\mathcal{F} = \text{Fr}\left( T_\mathcal{M} \right)$ of the tangent bundle $T_\mathcal{M}$ of $\mathcal{M}$. By \cite[Exercise 1.183]{brambila2014moduli}, $\mathcal{F}$ is an algebraic space. For $n=\dim(\mathcal{M})$, the group $\text{GL}_n$ acts on $\mathcal{F}$ on the right by
 \begin{equation} \label{eqn:GLnaction}
   (v_1, \ldots, v_n) . (a_{ij})_{i,j=1}^n = (\sum_{i=1}^n a_{i1} v_i, \ldots, \sum_{i=1}^n a_{in} v_i)
 \end{equation}
 and we have that $\mathcal{M} = [\mathcal{F}/\text{GL}_n]$. On the other hand, the action of $G$ on $\mathcal{M}$ induces an action of $G$ on $\mathcal{F}$ by
 \begin{equation} \label{eqn:Gaction}
   g . (v_1, \ldots, v_n) = (g_* v_1, \ldots, g_* v_n),
 \end{equation}
 where $g_*$ denotes the pushforward under the map $p \mapsto gp$ on $\mathcal{M}$. 
 
 Note that the actions of $G$ and $\text{GL}_n$ commute, because $g_* : T_\mathcal{M} \to T_\mathcal{M}$ is linear in the fibres. Indeed, we have
 \begin{align*}
  g. ((v_1, \ldots, v_n) . (a_{ij})) &= g.(\sum_i a_{i1} v_i, \ldots, \sum_i a_{in} v_i) \\
  &=(\sum_i a_{i1} g_* v_i, \ldots, \sum_i a_{in} g_*v_i)\\
  &=(g. (v_1, \ldots, v_n)) . (a_{ij}).
 \end{align*} 
 This means that we can combine these two actions to an action of $G \times \text{GL}_n$ on $\mathcal{F}$. As $\mathcal{F}$ is an algebraic space, every action of an algebraic group on $\mathcal{F}$ (in the usual sense) is automatically a strict action in the sense of \cite{romagny2005}.

 Note that a geometric point $\sigma=(v_1, \ldots, v_n) \in \mathcal{F}$ in the fibre of $p \in \mathcal{M}$ has finite, reduced stabilizer in $G \times \text{GL}_n$. Indeed, for a pair $(g,A)$ to stabilize $\sigma$, the element $g \in G$ must stabilize $p$. By assumption, the stabilizer of $p$ in $G$ is finite and reduced. Hence the claim follows, as the action of $\text{GL}_n$ on the fibre of $\mathcal{F}$ over $p$ is simply transitive, so the stabilizer of $\sigma$ is isomorphic to the stabilizer of $p$ in $G$.
 
 With these preparations done, we simply note that
 \[\mathcal{M}/G =  [\mathcal{F}/\text{GL}_n]/G = (\mathcal{F}/\text{GL}_n)/G = \mathcal{F}/(\text{GL}_n \times G).\]
 The second isomorphism is a consequence of \cite[Theorem 4.1]{romagny2005}, the third isomorphism comes from \cite[Remark 2.4]{romagny2005}, as $\text{GL}_n \subset \text{GL}_n \times G$ is a normal subgroup with quotient group $G$. But now $\mathcal{F}$ is a smooth algebraic space and the action of $G \times \text{GL}_n$ has finite, reduced stabilizers at geometric points. Then by \cite[Proposition 5.27]{algstacks}, the quotient $\mathcal{F}/(\text{GL}_n \times G)$ is again a Deligne-Mumford stack and as $\mathcal{F}$ is smooth and a locally trivial $\text{GL}_n \times G$-torsor over it, it is also smooth. 
\end{proof}

\begin{Lem} \label{Lem:quotcoarsemod}
 Let $\mathcal{M}$ be an Deligne-Mumford stack with a strict action $\mu_{\mathcal{M}}$ of flat, separated group scheme $G$ of finite presentation. Let $M$ be a coarse moduli space carrying the induced action $\mu_{M}$ of $G$ and assume $M$ is locally Noetherian. Let $N$ be an algebraic space and $[M/G] \to N$ a tame moduli space (in the sense of \cite{alpergood}). Then there are natural maps \[\mathcal{M}/G \to [M/G] \to N\]
 making $N$ a coarse moduli space for $\mathcal{M}/G$ and $[M/G]$.
\end{Lem}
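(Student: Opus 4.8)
The statement asks us to produce a chain $\mathcal{M}/G \to [M/G] \to N$ and to check that $N$ is a coarse moduli space for both $\mathcal{M}/G$ and $[M/G]$. First I would construct the map $\mathcal{M}/G \to [M/G]$. The coarse moduli morphism $c : \mathcal{M} \to M$ is $G$-equivariant by hypothesis (the action $\mu_M$ on $M$ is the one \emph{induced} by $\mu_{\mathcal{M}}$, i.e.\ it is characterized by the commutativity of $c$ with the two actions), so by the functoriality of the quotient construction of \cite{romagny2005} (and the fact that $G$ is flat, separated, of finite presentation, so Lemma \ref{Lem:quotfibreprod} and its preparations apply) it descends to a morphism $\bar c : \mathcal{M}/G \to M/G$. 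Composing with the canonical $[M/G]=M/G$ (both are the quotient stack, with $M$ a scheme this is literally the quotient stack) and with the tame moduli map $\pi: [M/G] \to N$ gives the desired chain. So the only content is: (i) $N$ is initial among maps from $\mathcal{M}/G$ to algebraic spaces, and (ii) $\mathcal{M}/G \to N$ is a bijection on geometric points.

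For the bijection on geometric points, I would argue in two stages, using that $c$ is a coarse moduli space (hence a bijection on geometric points) and that $\pi$ is a tame moduli space (hence, by Alper's results in \cite{alpergood}, also a bijection on geometric points, as tame moduli spaces of finite type restrict to geometric points as orbit spaces). Concretely, geometric points of $\mathcal{M}/G$ over an algebraically closed field $k$ are, by the torsor description recalled in the proof of Lemma \ref{Lem:quotfibreprod}, pairs of a $G$-torsor over $\operatorname{Spec} k$ (necessarily trivial) together with a $G$-equivariant map from it to $\mathcal{M}$, i.e.\ just geometric points of $\mathcal{M}$; so the geometric points of $\mathcal{M}/G$ are $\mathcal{M}(k)/{\sim}$ where two points are identified iff they lie in the same $G$-orbit (and are isomorphic as objects of $\mathcal{M}$). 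Since $c$ induces a bijection $\mathcal{M}(k)/\!\cong\ \to M(k)$ compatible with the $G$-actions, it induces a bijection on $G$-orbit sets, i.e.\ $(\mathcal{M}/G)(k) \to (M/G)(k)$ is a bijection. Then $\pi$ being a tame moduli space gives a bijection $(M/G)(k) \to N(k)$. Composing, $\mathcal{M}/G \to N$ is a bijection on geometric points.

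For the universal property (initiality), I would use the two-step factorization again: the canonical maps $\mathcal{M} \to \mathcal{M}/G$ and $M \to M/G = [M/G]$ are the universal $G$-torsors, hence fppf, and this is exactly what makes $\mathcal{M}/G$ initial among $G$-invariant maps out of $\mathcal{M}$. Given any algebraic space $Z$ and a morphism $g : \mathcal{M}/G \to Z$ — or rather, to prove $N$ is initial I should start with any $h : \mathcal{M}/G \to Z$ and produce a unique $N \to Z$ — I would compose $\mathcal{M} \to \mathcal{M}/G \xrightarrow{h} Z$ to get a $G$-invariant map $\mathcal{M} \to Z$, which by the coarse-moduli universal property of $c$ factors uniquely through a $G$-invariant map $M \to Z$, which in turn (since $Z$ is an algebraic space and $\pi$ is a tame moduli space, hence initial among maps to algebraic spaces by \cite{alpergood}) factors uniquely through $N \to Z$. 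Uniqueness at each stage, together with the fact that $\mathcal{M} \to \mathcal{M}/G$ is an epimorphism of stacks (being an fppf torsor), pins down $N \to Z$ uniquely and shows the composite $\mathcal{M}/G \to N \to Z$ recovers $h$. The same argument with $\mathcal{M}$ replaced by $M$ shows $N$ is also initial for $[M/G]$; combined with the geometric-point bijection this proves $N$ is a coarse moduli space for $[M/G]$ as well, which is the remaining assertion.

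\textbf{Main obstacle.} The genuinely delicate point is matching up the \emph{two-categorical} initiality built into the quotient-stack formalism of \cite{romagny2005} with the \emph{one-categorical} notion of coarse moduli space (a map to an algebraic space, initial among such). The subtlety is that a morphism $\mathcal{M}/G \to Z$ to an algebraic space is not literally the same data as a $G$-invariant morphism $\mathcal{M} \to Z$ together with a trivialization of $2$-cocycle data — one must check that since $Z$ has no nontrivial automorphisms the descent data is trivial and the two notions coincide, and that the relevant $2$-morphisms all become identities. I expect the cleanest way around this is to phrase everything through the torsor description $\mathcal{M}/G = (\mathcal{M}/G)^*$ used in Lemma \ref{Lem:quotfibreprod}, where $\mathcal{M} \to \mathcal{M}/G$ is honestly a $G$-torsor and hence an fppf-epimorphism, so that maps out of $\mathcal{M}/G$ to a sheaf $Z$ are exactly $G$-invariant maps out of $\mathcal{M}$; then the factorization argument above goes through verbatim. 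A secondary technical point, needed to invoke Alper's theory, is ensuring $[M/G]$ is of the type for which ``tame moduli space'' $\Rightarrow$ ``coarse moduli space / initial among algebraic spaces'' holds — this is where the hypothesis that $M$ is locally Noetherian (and $G$ acts with finite stabilizers, which we have from $\mathcal{M}$ being Deligne–Mumford) is used.
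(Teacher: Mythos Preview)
Your proposal is correct and follows essentially the same route as the paper: construct $\mathcal{M}/G \to [M/G]$ via Lemma~\ref{Lem:quotfibreprod}, identify $\operatorname{Hom}(\mathcal{M}/G,Z)$ with $G$-invariant maps $\mathcal{M}\to Z$ via the torsor description, factor through the coarse moduli space $M$, then through $N$ via Alper's universality theorem for tame moduli spaces (using the locally Noetherian hypothesis), and finally check geometric points by unwinding the torsor description.

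One small point you gloss over that the paper makes explicit: when you say the $G$-invariant map $\mathcal{M}\to Z$ factors through a \emph{$G$-invariant} map $M\to Z$, the $G$-invariance of the factored map is not automatic from the universal property of $c$ alone. One needs that $G\times M$ is the coarse moduli space of $G\times\mathcal{M}$, so that the two maps $G\times M\to Z$ (via action and projection) are determined by their pullbacks to $G\times\mathcal{M}$, where they agree. The paper handles this by noting that $\mathcal{M}$, being Deligne--Mumford, is tame, so formation of its coarse moduli space commutes with base change. This is the step where the DM hypothesis on $\mathcal{M}$ is actually used; your ``main obstacle'' paragraph worries instead about the $2$-categorical descent to $\mathcal{M}/G$, which (as you correctly note) evaporates because the target $Z$ is an algebraic space.
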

\begin{proof}
 By definition, we have 
 \[\text{Hom}(\mathcal{M}/G,S) = \text{Hom}_G(\mathcal{M},S) \]
 for any algebraic space $S$, where the right means homomorphisms of $G$-stacks with the trivial $G$ action on $S$. 
 But any morphism $\psi: \mathcal{M} \to S$ must factor uniquely through a morphism $\tilde \psi: M \to S$.
 Because $S$ is an algebraic space, the category $\text{Hom}_G(\mathcal{M},S)$ is actually a set(oid) and by definition it is given by those morphisms $f:\mathcal{M} \to S$ of stacks such that 
 \[f \circ \mu_{\mathcal{M}} = f \circ \pi_{\mathcal{M}} : G \times \mathcal{M} \to S.\]
 Now as $\mathcal{M}$ is Deligne-Mumford, it is tame and so the formation of its moduli space $M$ commutes with base change. 
 Hence $G \times M$ is a coarse moduli space for $G \times \mathcal{M}$, so the morphisms from these spaces to $S$ agree. Thus the above condition is equivalent to asking for a morphism $\tilde f: M \to S$ such that
 \[\tilde f \circ \mu_{M} = \tilde f \circ \pi_{M} : G \times M \to S.\]
 By now we have shown $\text{Hom}(\mathcal{M}/G,S) = \text{Hom}_G(M,S)$, but as $M \to [M/G]$ is the universal $G$-torsor over $[M/G]$ this clearly equals $\text{Hom}([M/G],S)$. As $M$ is locally Noetherian, so is $[M/G]$. Then, by \cite[Theorem 6.6]{alpergood}, the good moduli space $[M/G] \to N$ is universal among maps to algebraic spaces, so $\text{Hom}([M/G],S) = \text{Hom}(N,S)$.
 
 Note that the map $\mathcal{M}/G \to [M/G]$ is obtained from the morphism $\mathcal{M} \to M$ of $G$-stacks via Lemma \ref{Lem:quotfibreprod}, using that $M/G = [M/G]$ by Theorem \cite[Theorem 4.1]{romagny2005}.
 
 Finally, any geometric point $p \in \mathcal{M}/G$ corresponds to a (necessarily) trivial $G$-torsor $E \to p$ together with a $G$-equivariant map $E \to \mathcal{M}$. This data is equivalent to specifying an orbit of some geometric point $\widehat{p} \in \mathcal{M}$. But as the geometric points of $\mathcal{M}$ and $M$ agree, this is equivalent to specifying an orbit of a geometric point in $M$. In turn, this is equivalent to a geometric point of $[M/G]$. By the definition of a tame moduli space, the geometric points of $[M/G]$ and $N$ agree. This shows that geometric points of $\mathcal{M}/G, [M/G]$ and $N$ coincide.
\end{proof}
\begin{Rmk} \label{Rmk:quotcoarsemod}
 A sufficient condition for the morphism $[M/G] \to N$ to be a tame moduli space is to require that $G$ is a smooth, affine, linearly reductive group scheme over a field $k$ and $M=M^{ss}=M^s$ for some $G$-linearized line bundle $\mathcal{L}$ on $M$ and $N=M \sslash G$. This is the only situation in which we are going to use the above result.
 
 Note that in the Lemma above, it does not suffice to ask for instance $N$ to be a geometric quotient of $M$ by $G$, if $M$ is a scheme. Indeed, in Example 8.6 of \cite{alpergood} we have a geometric quotient $X \to \mathbb{A}^1$ of a scheme $X$ by $\text{SL}_2$ such that $X/G=[X/G]$ is the non-locally separated affine line, which is an algebraic space. Hence $\mathbb{A}^1$ is not universal for morphisms from $X/G$ to algebraic spaces.
\end{Rmk}


The following type of group action on a stack appears in the study of self-maps. 
\begin{Lem} \label{Lem:stackyaction}
 Let $X$ be a projective, algebraic scheme over $\mathbb{C}$ and $\beta \in A_1 X$. Let $G$ be a group scheme over $S= \text{Spec}(\mathbb{C})$ with multiplication $m: G \times G \to G$ and unit $e: S \to G$. Assume that $\sigma: G \times X \to X$ is an algebraic action leaving $\beta$ invariant. 
 
 Then there is an induced action of $G$ on $\mathcal{M}=\overline{\mathcal{M}}_{0,n}(X, \beta)$, which sends a $\mathbb{C}$-point $(g,(f:C \to X; p_1, \ldots, p_n))$ to $(\sigma(g,-) \circ f: C \to X; p_1, \ldots, p_n)$.
\end{Lem}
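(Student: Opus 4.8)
The plan is to construct the action of $G$ on the stack $\mathcal{M}=\overline{\mathcal{M}}_{0,n}(X,\beta)$ in the sense of \cite{romagny2005}, that is, to give the data of an action morphism $a\colon G\times \mathcal{M}\to\mathcal{M}$ together with the associativity and unit $2$-isomorphisms satisfying the pentagon and triangle coherences, and then verify that on $\mathbb{C}$-points it has the stated description. First I would recall that $\mathcal{M}$ is the stack whose $T$-points (for a $\mathbb{C}$-scheme $T$) are families $(\mathcal{C}\to T;\,s_1,\dots,s_n;\,f\colon\mathcal{C}\to X)$ of $n$-pointed genus $0$ stable maps to $X$ of class $\beta$. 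Given such a family and a $T$-point $g\in G(T)$, I would define $g\cdot(\mathcal{C}\to T;s_i;f)=(\mathcal{C}\to T;s_i;\,\sigma_g\circ f)$, where $\sigma_g\colon X_T\to X_T$ is the $T$-automorphism of $X_T=X\times T$ induced by $g$ via $\sigma$. The key point is that this is again a stable map: the source family and its markings are unchanged, so quasi-stability of the curve and the three-special-points condition on contracted components are preserved; moreover, since $\sigma_g$ is an automorphism of $X_T$, it preserves the fibrewise curve class, and by hypothesis this class is $\beta$ (it acts trivially on $A_1X$), and it sends the finite automorphism group of $f$ to that of $\sigma_g\circ f$, so stability is preserved. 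This construction is plainly functorial in $T$, hence defines a morphism of stacks $a\colon G\times\mathcal{M}\to\mathcal{M}$.

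Next I would produce the coherence data. The associativity isomorphism $\alpha\colon a\circ(\mathrm{id}_G\times a)\Rightarrow a\circ(m\times\mathrm{id}_\mathcal{M})$ on $G\times G\times\mathcal{M}$ comes, objectwise over $T$, from the equality $\sigma_g\circ\sigma_h=\sigma_{m(g,h)}$ of automorphisms of $X_T$, which itself follows from $\sigma$ being an action of the group scheme $G$; since both composites literally output families with the same underlying curve, markings, and composed map, $\alpha$ can in fact be taken to be the identity $2$-morphism, and then the pentagon coherence is automatic. Similarly the unit isomorphism comes from $\sigma_{e}=\mathrm{id}_{X_T}$, again giving the identity, so the triangle coherence holds trivially. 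This verifies that we have a (strict) action of $G$ on $\mathcal{M}$ in the sense of \cite[Section 2]{romagny2005}.

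Finally I would check the description on $\mathbb{C}$-points: for $T=\Spec(\mathbb{C})$ and $g\in G(\mathbb{C})$, the family $(f\colon C\to X;p_1,\dots,p_n)$ has $\sigma_g=\sigma(g,-)\colon X\to X$, so $a(g,(f;p_\bullet))=(\sigma(g,-)\circ f\colon C\to X;p_1,\dots,p_n)$, exactly as stated. I do not expect any serious obstacle here: the only thing one must be slightly careful about is checking that stability is genuinely preserved (both the condition on contracted rational components and the finiteness of automorphisms), and that $\sigma_g$ fixes the curve class $\beta$ — both of which are immediate because $\sigma_g$ is an isomorphism of $X$ and the source curve with its markings is untouched. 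One could also phrase the whole construction more economically by invoking $2$-functoriality of $\overline{\mathcal{M}}_{0,n}(-,\beta)$ applied to the automorphisms $\sigma_g$ of $X$, but writing it out on $T$-points as above is the most transparent route. (Note that in the excerpt this Lemma is stated and its proof is not yet given; the above is the intended argument, and it is the content invoked in the Corollary immediately following Lemma~\ref{Lem:jConstruction} and in Lemma~\ref{Lem:evflat}.)
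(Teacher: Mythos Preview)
Your proposal is correct and follows essentially the same route as the paper's proof: both define the action morphism on $T$-points by postcomposing the stable map with the automorphism $\sigma_g$ of $X$ (the paper writes this as $g.f=\sigma\circ((g\circ\pi)\times f)$), and both observe that the associativity and unit constraints hold \emph{strictly} (i.e.\ the coherence $2$-morphisms are identities) because $\sigma$ is a group action on $X$. The paper's proof is in fact given in the appendix and spells out the verification on morphisms (checking $(g'.f')\circ\overline\varphi=g.f$) a bit more explicitly than your ``plainly functorial in $T$'', but the content is the same.
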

\begin{proof}
 By \cite[Definition 2.1]{romagny2005}, an action of $G$ on $\mathcal{M}$ is a morphism of stacks $\mu: G \times \mathcal{M} \to \mathcal{M}$ such that the diagram
 \begin{align}
  \begin{CD}
G \times G \times \mathcal{M}     @>m \times \text{id}_{\mathcal{M}}>>  G \times \mathcal{M}\\
@VV\text{id}_G \times \mu V        @VV\mu V\\
G \times \mathcal{M}     @>\mu>>  \mathcal{M}
\end{CD} \label{eqn:comdiagaction}
 \end{align}
 commutes and such that $\mu \circ (e \times \text{id}_{\mathcal{M}}) = \text{id}_{\mathcal{M}}$. Note that we want these equalities of $1$-morphisms to hold strictly, that is not up to a choice of $2$-morphism between them. We now proceed to construct $\mu$ and check the relations above.
 
 As all our fibre products are over the base category $\mathcal{S}=\text{Sch}/S$, an object in $G \times \mathcal{M}$ over a scheme $T \to S$ consists of a tuple
  \[\left(\begin{tikzcd} T \arrow{d}{g} \\G\end{tikzcd}\   , \  
  \begin{tikzcd} C \arrow{d}{\pi} \arrow{r}{f} & X \\ T & \end{tikzcd}  \  , p_1, \ldots, p_n : T \to C
 \right)\]
 
with $p_i$ sections of $\pi$. The functor $\mu$ assigns to this the stable family $(\pi : C \to T, g.f : C \to X, p_1, \ldots, p_n)$ where 
\begin{equation}
 g.f = \sigma \circ ((g \circ \pi)\times f).
\end{equation}
One checks that this still defines an element of $\mathcal{M}(T)$ as the $G$-action on $X$ preserves $\beta$. 
Now assume we have a morphism between objects
\begin{equation*}
\begin{array}{rccccr}
 P=(&T \xrightarrow{g} G, &C \xrightarrow{\pi} T, &C \xrightarrow{f} X, &p_1, \ldots, p_n : T \to C&),\\
 P'=(&T' \xrightarrow{g'} G, &C' \xrightarrow{\pi'} T',& C' \xrightarrow{f'} X, &p_1', \ldots, p_n' : T' \to C'&).
\end{array}
\end{equation*}
This morphism is given by the data $\varphi: T \to T'$, $\overline \varphi: C \to C'$ such that $g' \circ \varphi = g$, $f' \circ \overline \varphi = f$ and such that
\[\begin{CD}
C     @>\overline \varphi >>  C'\\
@VV\pi V        @VV\pi' V\\
T    @>\varphi >>  T'
\end{CD}\]
becomes a cartesian diagram. Then the induced morphism $\mu(P \to P')$ between $\mu(P)$ and $\mu(P')$ shall be given by the same data $(\varphi, \overline \varphi)$. To see that this is well-defined, we have to check that $(g'.f')\circ \overline \varphi = g.f$. Indeed
\begin{align*}
 (g'.f')\circ \overline \varphi &= \sigma \circ ((g' \circ \pi') \times f')\circ \overline \varphi\\
 &= \sigma \circ ((g' \circ \pi'\circ \overline \varphi) \times f'\circ \overline \varphi)\\
 &= \sigma \circ ((g' \circ \varphi \circ \pi) \times f)\\
 &= \sigma \circ ((g \circ \pi) \times f) = g.f.
\end{align*}
Now that $\mu$ is defined, we first check the commutative diagram in (\ref{eqn:comdiagaction}). First we start with an object
\[\left(\begin{tikzcd} T \arrow{d}{g} \\G\end{tikzcd}\ , \begin{tikzcd} T \arrow{d}{h} \\G \end{tikzcd}\   ,\  \begin{tikzcd} C \arrow{d}{\pi} \arrow{r}{f} & X \\ T & \end{tikzcd} \  , p_1, \ldots, p_n : T \to C
 \right) \in G \times G \times \mathcal{M}.\]
Then the images under the upper-right and lower-left corners of the diagram (\ref{eqn:comdiagaction}) are both stable families $\pi: C \to T$ with identical markings $p_i$ so we only have to show that the maps $C \to X$ coincide. Spelling out the definitions we obtain the equation
\[(m \circ (g \times h)).f = g.(h.f).\]
It is shown by the following diagram
\[\begin{CD}
C     @=  C\\
@VV((g \times h)\circ \pi) \times fV        @VV((g \times h)\circ \pi) \times fV\\
G \times G \times X     @=  G \times G \times X\\
@VV m \times \text{id}V        @VV\text{id} \times \mu V\\
G \times  X     @.  G \times  X\\
@VV \mu V        @VV\mu V\\
X     @=  X
\end{CD}\]
where the left and right vertical side are the desired morphisms. Here the lower diagram commutes because $\sigma$ is an action. The fact that $\mu$ acts identically on morphisms is clear, because by definition $\mu$ does not change the data $(\varphi, \overline \varphi)$ of the morphism at all.

The fact that the action $\mu$ is compatible with the identity $e$ is also straightforward.
\end{proof}
\end{appendix}

\bibliographystyle{alpha} 
\bibliography{Biblio}

\end{document}